\date{\today}
\newcommand{\bbN}{{\mathbb{N}}}
\newcommand{\bbR}{{\mathbb{R}}}
\newcommand{\bbZ}{{\mathbb{Z}}}
\newcommand{\bbC}{{\mathbb{C}}}
\newcommand{\cA}{{\mathcal A}}
\newcommand{\cB}{{\mathcal B}}
\newcommand{\cC}{{\mathcal C}}
\newcommand{\cD}{{\mathcal D}}
\newcommand{\cH}{{\mathcal H}}
\newcommand{\cM}{{\mathcal M}}
\newcommand{\cN}{{\mathcal N}}
\newcommand{\cS}{{\mathcal S}}
\newcommand{\cV}{{\mathcal V}}
\newcommand{\cX}{{\mathcal X}}
\newcommand{\dott}{\,\cdot\,}
\newcommand{\no}{\notag}
\newcommand{\lb}{\label}
\newcommand{\f}{\frac}
\newcommand{\ol}{\overline}
\newcommand{\wti}{\widetilde}
\newcommand{\Oh}{O}
\newcommand{\ran}{\text{\rm{ran}}}
\newcommand{\dom}{\text{\rm{dom}}}
\newcommand{\supp}{\text{\rm{supp}}}
\newcommand{\bi}{\bibitem}
\newcommand{\hatt}{\widehat}
\newcommand{\beq}{\begin{equation}}
\newcommand{\eeq}{\end{equation}}
\newcommand{\ba}{\begin{align}}
\newcommand{\ea}{\end{align}}
\newcommand{\abs}[1]{\lvert#1\rvert}
\renewcommand{\Im}{\text{\rm Im}}
\renewcommand{\ln}{\text{\rm ln}}
\renewcommand{\ge}{\geqslant}
\renewcommand{\le}{\leqslant}
\newcommand{\norm}[1]{\left\Vert#1\right\Vert}
\newcommand{\Om}{\Omega}
\newcommand{\dOm}{{\partial\Omega}}
\newcommand{\si}{\sigma}
\newcommand{\ga}{\gamma}
\newcommand{\eps}{\varepsilon}
\newcommand{\LOm}{L^2(\Om;d^nx)}
\newcommand{\LdOm}{L^2(\dOm;d^{n-1} \omega)}
\allowdisplaybreaks \numberwithin{equation}{section}
\newtheorem{theorem}{Theorem}[section]
\newtheorem{lemma}[theorem]{Lemma}
\newtheorem{corollary}[theorem]{Corollary}
\newtheorem{hypothesis}[theorem]{Hypothesis}
\theoremstyle{definition}
\newtheorem{remark}[theorem]{Remark}
\begin{document}

\title[Robin-to-Dirichlet Maps and Krein-Type Resolvent Formulas]
{Generalized Robin Boundary Conditions, Robin-to-Dirichlet Maps, and  
Krein-Type Resolvent Formulas for Schr\"odinger Operators on Bounded 
Lipschitz Domains}
\author[F.\ Gesztesy and M.\ Mitrea]{Fritz Gesztesy and Marius Mitrea}
\address{Department of Mathematics,
University of Missouri, Columbia, MO 65211, USA}
\email{fritz@math.missouri.edu}
\urladdr{http://www.math.missouri.edu/personnel/faculty/gesztesyf.html}
\address{Department of Mathematics, University of
Missouri, Columbia, MO 65211, USA}
\email{marius@math.missouri.edu}
\urladdr{http://www.math.missouri.edu/personnel/faculty/mitream.html} 
\thanks{Based upon work partially supported by the US National Science
Foundation under Grant Nos.\ DMS-0400639 and FRG-0456306.}
\dedicatory{Dedicated with great pleasure to Vladimir Maz'ya on the 
occasion of his 70th birthday.}
\thanks{To appear in {\it  Perspectives in Partial Differential 
Equations, Harmonic Analysis and Applications}, D.\ Mitrea and 
M.\ Mitrea (eds.), Proceedings of Symposia in Pure Mathematics, 
American Mathematical Society, Providence, RI, 2008.}
\date{\today}
\subjclass[2000]{Primary: 35J10, 35J25, 35Q40; Secondary: 35P05, 47A10, 47F05.}
\keywords{Multi-dimensional Schr\"odinger operators, bounded Lipschitz domains,
Robin-to-Dirichlet and Dirichlet-to-Neumann maps.}

\begin{abstract}
We study generalized Robin boundary conditions, Robin-to-Dirichlet maps, 
and Krein-type resolvent formulas for Schr\"odinger operators on bounded 
Lipschitz domains in $\bbR^n$, $n\ge 2$. We also discuss the case of bounded 
$C^{1,r}$-domains, $(1/2)<r<1$.
\end{abstract}

\maketitle

\section{Introduction}\label{s1}

This paper is a continuation of the earlier papers \cite{GLMZ05} and 
\cite{GMZ07}, where we studied general, not necessarily self-adjoint, 
Schr\"odinger operators on $C^{1,r}$-domains $\Om\subset \bbR^n$, $n\in\bbN$, 
$n\ge 2$, with compact boundaries $\dOm$, $(1/2)<r<1$ (including unbounded 
domains, i.e., exterior domains) with Dirichlet and Neumann boundary conditions
on $\dOm$. Our results also applied to convex domains $\Om$ and to domains 
satisfying a uniform exterior ball condition. In addition, a careful 
discussion of locally singular potentials $V$ with close to optimal local 
behavior of $V$ was provided in \cite{GLMZ05} and \cite{GMZ07}. 

In this paper we push the envelope in a different direction: Rather than 
discussing potentials with close to optimal local behavior, we will assume 
that $V \in L^\infty(\Om; d^n x)$ and hence essentially 
replace it by zero nearly everywhere in this paper. On the other hand, 
instead of treating Dirichlet and Neumann boundary conditions at $\dOm$, we 
now consider generalized Robin and again Dirichlet boundary conditions, but 
under minimal smoothness conditions on the domain $\Om$, that is, we now 
consider Lipschitz domains $\Om$. Additionally, to reduce some technicalities, 
we will assume that $\Om$ is bounded throughout this paper. Occasionally we 
also discuss the case of bounded $C^{1,r}$-domains, $(1/2)<r<1$. The principal 
new result in this paper is a derivation of Krein-type resolvent formulas for 
Schr\"odinger operators on bounded Lipschitz domains $\Om$ in connection with 
the case of Dirichlet and generalized Robin boundary conditions on $\dOm$.

In Section \ref{s2} we provide a detailed discussion of self-adjoint 
Laplacians with generalized Robin (and Dirichlet) boundary conditions 
on $\dOm$. In Section \ref{s3} we then treat generalized Robin and 
Dirichlet boundary value problems and introduce associated Robin-to-Dirichlet 
and Dirichlet-to-Robin maps. Section \ref{s4} contains the principal new 
results of this paper; it is devoted to Krein-type resolvent formulas 
connecting Dirichlet and generalized Robin Laplacians with the help of the 
Robin-to-Dirichlet map. Appendix \ref{sA} collects useful material on Sobolev 
spaces and trace maps for $C^{1,r}$ and Lipschitz domains. 
Appendix \ref{sB} summarizes pertinent facts on sesquilinear forms and 
their associated linear operators. Estimates on the fundamental solution 
of the Helmholtz equation in $\bbR^n$, $n \ge 2$, are recalled in Appendix 
\ref{sC}. Finally, certain results on Calder\'on--Zygmund theory on 
Lipschitz surfaces of fundamental relevance to the material in the main 
body of this paper are presented in Appendix \ref{sD}.  

While we formulate and prove all results in this paper for self-adjoint 
generalized Robin Laplacians and Dirichlet Laplacians, we emphasize that 
all results in this paper immediately extend to closed Schr\"odinger operators 
$H_{\Theta,\Om} = -\Delta_{\Theta,\Om} + V$, 
$\dom\big(H_{\Theta,\Om}\big) = \dom\big(-\Delta_{\Theta,\Om}\big)$  
in $\LOm$ for (not necessarily real-valued) potentials $V$ satisfying 
$V \in L^\infty(\Om; d^n x)$, by consistently replacing $-\Delta$ by 
$-\Delta + V$, etc. More generally, all results extend directly to 
Kato--Rellich bounded potentials $V$ relative to $-\Delta_{\Theta,\Om}$ 
with bound less than one.  

Next, we briefly list most of the notational conventions used throughout 
this paper. Let $\cH$ be a separable complex Hilbert space, 
$(\dott,\dott)_{\cH}$ the scalar product in $\cH$ 
(linear in the second factor), and $I_{\cH}$ the identity operator in $\cH$.
Next, let $T$ be a linear operator mapping (a subspace of) a
Banach space into another, with $\dom(T)$ and $\ran(T)$ denoting the
domain and range of $T$. The spectrum (resp., essential spectrum) of a 
closed linear operator in $\cH$ will be 
denoted by $\sigma(\dott)$ (resp., $\sigma_{\rm ess}(\dott)$). 
The Banach spaces of bounded and compact linear 
operators in $\cH$ are denoted by $\cB(\cH)$ and $\cB_\infty(\cH)$, 
respectively. Similarly, $\cB(\cH_1,\cH_2)$ and $\cB_\infty (\cH_1,\cH_2)$ 
will be used for bounded and compact operators between two Hilbert spaces 
$\cH_1$ and $\cH_2$. Moreover, $\cX_1 \hookrightarrow \cX_2$ denotes the 
continuous embedding of the Banach space $\cX_1$ into the Banach space 
$\cX_2$. Throughout this manuscript, if $X$ denotes a Banach space, $X^*$ 
denotes the {\it adjoint space} of continuous conjugate linear functionals 
on $X$, that is, the {\it conjugate dual space} of $X$ (rather than the usual 
dual space of continuous linear functionals 
on $X$). This avoids the well-known awkward distinction between adjoint 
operators in Banach and Hilbert spaces (cf., e.g., the pertinent discussion 
in \cite[p.\,3--4]{EE89}). 

Finally, a notational comment: For obvious reasons in connection 
with quantum mechanical applications, we will, with a slight abuse of 
notation, dub $-\Delta$ (rather than $\Delta$) as the ``Laplacian'' in 
this paper. 

\section{Laplace Operators with Generalized Robin Boundary Conditions}
\label{s2}

In this section we primarily focus on various properties of general 
Laplacians $-\Delta_{\Theta,\Om}$ in $L^2(\Om;d^n x)$ including Dirichlet,
$-\Delta_{D,\Om}$, and Neumann, $-\Delta_{N,\Om}$, Laplacians, generalized 
Robin-type Laplacians, and Laplacians corresponding to classical Robin 
boundary conditions associated with open sets $\Omega\subset \bbR^n$, 
$n\in\bbN$, $n\geq 2$, introduced in Hypothesis \ref{h2.1} below. 

We start with introducing our assumptions on the set $\Omega$ and the 
boundary operator $\Theta$ which subsequently will be employed in defining 
the boundary condition on $\dOm$:

\begin{hypothesis}\lb{h2.1}
Let $n\in\bbN$, $n\geq 2$, and assume that $\Om\subset{\bbR}^n$ is
an open, bounded, nonempty Lipschitz domain.
\end{hypothesis}

We refer to Appendix \ref{sA} for more details on Lipschitz domains.

For simplicity of notation we will denote the identity operators in $\LOm$ and 
$\LdOm$ by $I_{\Om}$ and $I_{\dOm}$, respectively.
Also, we refer to Appendix \ref{sA} for our notation in
connection with Sobolev spaces.
 
\begin{hypothesis} \lb{h2.2}
Assume Hypothesis \ref{h2.1} and suppose that $a_{\Theta}$ is a closed 
sesquilinear form in $\LdOm$ with domain $H^{1/2}(\dOm)\times H^{1/2}(\dOm)$, 
bounded from below by $c_{\Theta}\in\bbR$ $($hence, in particular, $a_{\Theta}$
is symmetric$)$. Denote by $\Theta \ge c_{\Theta}I_{\dOm}$ the self-adjoint 
operator in $\LdOm$ 
uniquely associated with $a_{\Theta}$ $($cf.\ \eqref{B.25}$)$ and by 
$\wti \Theta \in \cB\big(H^{1/2}(\dOm),H^{-1/2}(\dOm)\big)$ the extension 
of $\Theta$ as discussed in \eqref{B.24a} and \eqref{B.28a}.
\end{hypothesis}

Thus one has  
\begin{align}
& \big\langle f,\wti\Theta\,g\big\rangle_{1/2} 
= \ol{\big\langle g, \wti \Theta \, f \big\rangle_{1/2}}, 
 \quad f, g \in H^{1/2}(\dOm).     \lb{2.1}  \\
& \big\langle f, \wti \Theta \, f \big\rangle_{1/2} 
\geq c_{\Theta}\|f\|_{L^2(\dOm;d^{n-1}\omega)}^2,  
\quad f \in H^{1/2}(\dOm).  \lb{2.2} 
\end{align}

Here the sesquilinear form 
\begin{equation}
\langle \dott, \dott \rangle_{s}={}_{H^{s}(\dOm)}\langle\dott,\dott 
\rangle_{H^{-s}(\dOm)}\colon H^{s}(\dOm)\times H^{-s}(\dOm) 
\to \bbC, \quad s\in [0,1],   
\end{equation}
(antilinear in the first, linear in the second factor), denotes the duality 
pairing between $H^s(\dOm)$ and  
\begin{equation}
H^{-s}(\dOm) = \big(H^s(\dOm)\big)^*,  \quad s\in [0,1],    \lb{2.3}
\end{equation}
such that
\begin{equation}
\langle f, g \rangle_s = \int_{\dOm} d^{n-1} \omega(\xi) \, \ol{f(\xi)} 
g(\xi), \quad 
f \in H^s(\dOm), \; g \in L^2(\dOm; d^{n-1} \omega) \hookrightarrow 
H^{-s}(\dOm),  
\; s\in [0,1],  \lb{2.4}
\end{equation}
and $d^{n-1} \omega$ denotes the surface measure on $\dOm$. 

Hypothesis \ref{h2.1} on $\Om$ is used throughout this paper. 
Similarly, Hypothesis \ref{h2.2} is assumed whenever the boundary 
operator $\wti \Theta$ is involved. (Later in this section, and the next, 
we will occasionally strengthen our hypotheses.) 

We introduce the boundary trace operator $\ga_D^0$ (the Dirichlet trace) by
\begin{equation}
\ga_D^0\colon C(\ol{\Om})\to C(\dOm), \quad \ga_D^0 u = u|_\dOm.   \lb{2.5}
\end{equation}
Then there exists a bounded, linear operator $\gamma_D$ (cf., e.g., 
\cite[Theorem 3.38]{Mc00}),
\begin{align}
\begin{split}
& \ga_D\colon H^{s}(\Om)\to H^{s-(1/2)}(\dOm) \hookrightarrow \LdOm,
\quad 1/2<s<3/2, \lb{2.6}  \\
& \ga_D\colon H^{3/2}(\Om)\to H^{1-\varepsilon}(\dOm) \hookrightarrow \LdOm,
\quad \varepsilon \in (0,1), 
\end{split}
\end{align}
whose action is compatible with that of $\ga_D^0$. That is, the two
Dirichlet trace  operators coincide on the intersection of their
domains. Moreover, we recall that 
\begin{equation}
\ga_D\colon H^{s}(\Om)\to H^{s-(1/2)}(\dOm) \, \text{ is onto for  
$1/2<s<3/2$.} \lb{2.6a}
\end{equation}

While, in the class of bounded Lipschitz subdomains in $\bbR^n$, 
the end-point cases $s=1/2$ and $s=3/2$ of 
$\gamma_D\in\cB\bigl(H^{s}(\Om),H^{s-(1/2)}(\dOm)\bigr)$ fail, we nonetheless
have
\begin{eqnarray}\label{A.62x}
\ga_D\in \cB\big(H^{(3/2)+\eps}(\Om), H^{1}(\dOm)\big), \quad \eps>0.
\end{eqnarray}
See Lemma \ref{lA.6x} for a proof. Below we augment this with the following
result: 

\begin{lemma}\label{Gam-L1}
Assume Hypothesis \ref{h2.1}. Then for each $s>-3/2$, the restriction 
to boundary operator \eqref{2.5} extends to a linear operator 
\begin{eqnarray}\label{Mam-1}
\gamma_D:\bigl\{u\in H^{1/2}(\Omega)\,\big|\,\Delta u\in H^{s}(\Omega)\bigr\}
\to L^2(\partial\Omega;d^{n-1}\omega),
\end{eqnarray}
is compatible with \eqref{2.6}, and is bounded when 
$\{u\in H^{1/2}(\Omega)\,|\,\Delta u\in H^{s}(\Omega)\bigr\}$ is equipped
with the natural graph norm $u\mapsto \|u\|_{H^{1/2}(\Omega)}
+\|\Delta u\|_{H^{s}(\Omega)}$. In addition, this operator has a
linear, bounded right-inverse $($hence, in particular, it is onto$)$. 

Furthermore, for each $s>-3/2$, the restriction 
to boundary operator \eqref{2.5} also extends to a linear operator 
\begin{eqnarray}\label{Mam-2}
\gamma_D:\bigl\{u\in H^{3/2}(\Omega)\,\big|\,\Delta u\in H^{1+s}(\Omega)\bigr\}
\to H^1(\partial\Omega), 
\end{eqnarray}
which, once again, is compatible with \eqref{2.6}, and is bounded when 
$\{u\in H^{3/2}(\Omega)\,|\,\Delta u\in H^{1+s}(\Omega)\bigr\}$ is equipped
with the natural graph norm $u\mapsto \|u\|_{H^{3/2}(\Omega)}
+\|\Delta u\|_{H^{1+s}(\Omega)}$. Once again, this operator has a
linear, bounded right-inverse $($hence, in particular, it is onto$)$. 
\end{lemma}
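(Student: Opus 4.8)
The plan is to reduce the claim to the mapping properties of the Dirichlet trace on \emph{harmonic} functions, for which the sharp $L^2$-boundary value theory on bounded Lipschitz domains is available. First I treat \eqref{Mam-1}. Given $u\in H^{1/2}(\Om)$ with $f:=\De u\in H^{s}(\Om)$, $s>-3/2$, fix a bounded linear extension $\ti f\in H^{s}(\bbR^n)$ of $f$ (available since $\Om$ is a bounded Lipschitz domain; multiplying by a cutoff, assume $\ti f$ compactly supported), and let $w$ be obtained from $\ti f$ by convolution with the fundamental solution of $-\De$, normalized so that $\De w=\ti f$ in $\bbR^n$. Since the Newtonian potential gains two derivatives locally and $\Om$ is bounded, $w|_{\Om}\in H^{s+2}(\Om)$ with $\|w|_{\Om}\|_{H^{s+2}(\Om)}\le C\|f\|_{H^{s}(\Om)}$. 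Because $s+2>1/2$, the ordinary Dirichlet trace $\ga_D(w|_\Om)\in \LdOm$ is defined via \eqref{2.6}, \eqref{A.62x}, with $\|\ga_D(w|_\Om)\|_{\LdOm}\le C\|f\|_{H^s(\Om)}$; and $h:=u-w|_\Om$ satisfies $\De h=f-\ti f|_\Om=0$ in $\Om$, $h\in H^{1/2}(\Om)$, $\|h\|_{H^{1/2}(\Om)}\le C\big(\|u\|_{H^{1/2}(\Om)}+\|f\|_{H^s(\Om)}\big)$.

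The key input is that the Dirichlet trace restricts to a bounded linear surjection $\{h\in H^{1/2}(\Om)\,|\,\De h=0\}\to \LdOm$ with a bounded linear right-inverse, the latter being the solution operator $g\mapsto h_g$ of the $L^2$-Dirichlet problem $\De h_g=0$ in $\Om$, $\ga_D h_g=g$; this combines the well-posedness results of Dahlberg and of Jerison--Kenig with the identification of harmonic functions in $H^{1/2}(\Om)$ with those whose nontangential maximal function lies in $\LdOm$ (with equivalent norms), as recalled through layer potentials in Appendix~\ref{sD}. Granting this, set $\ga_D u:=\ga_D h+\ga_D(w|_\Om)$. This is independent of the extension: two admissible choices $\ti f_1,\ti f_2$ give potentials $w_1,w_2$ with $w_1|_\Om-w_2|_\Om$ harmonic in $\Om$ (since $\De(w_1-w_2)=\ti f_1-\ti f_2=0$ on $\Om$), so the resulting decompositions obey $h_1-h_2=w_2|_\Om-w_1|_\Om$ and hence $\ga_D h_1+\ga_D(w_1|_\Om)=\ga_D h_2+\ga_D(w_2|_\Om)$ by linearity of the two traces. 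Linearity of $\ga_D$ on the domain of \eqref{Mam-1} and the bound $\|\ga_D u\|_{\LdOm}\le C\big(\|u\|_{H^{1/2}(\Om)}+\|\De u\|_{H^s(\Om)}\big)$ then follow from the three displayed estimates.

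Compatibility with \eqref{2.6} is checked as follows: for $1/2<\si<3/2$ one has the continuous inclusion $H^\si(\Om)\hookrightarrow\{u\in H^{1/2}(\Om)\,|\,\De u\in H^{\si-2}(\Om)\}$ with $\si-2>-3/2$, so both \eqref{2.6} and the extension just built are bounded linear maps $H^\si(\Om)\to \LdOm$ that agree on the dense subspace $C^\infty(\ol{\Om})$ (where both equal $\ga_D^0$ of \eqref{2.5}), hence on all of $H^\si(\Om)$. Moreover $g\mapsto h_g$ is the claimed bounded right-inverse of \eqref{Mam-1}, since $h_g\in H^{1/2}(\Om)$ with $\De h_g=0\in H^s(\Om)$, $\|h_g\|_{H^{1/2}(\Om)}+\|\De h_g\|_{H^s(\Om)}\le C\|g\|_{\LdOm}$, and $\ga_D h_g=g$; in particular \eqref{Mam-1} is onto.

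The second assertion \eqref{Mam-2} follows by the identical scheme, replacing $H^{1/2}(\Om)$, $\LdOm$, and the $L^2$-Dirichlet problem by $H^{3/2}(\Om)$, $H^1(\dOm)$, and the $L^2$-\emph{regularity} problem, respectively: here $\De u\in H^{1+s}(\Om)$ with $1+s>-1/2$, its Newtonian potential lies in $H^{3+s}(\Om)$ with $3+s>3/2$, so $\ga_D(w|_\Om)\in H^1(\dOm)$ by \eqref{A.62x} (Lemma~\ref{lA.6x}), the remainder $h=u-w|_\Om\in H^{3/2}(\Om)$ is harmonic, and one invokes solvability of the regularity problem (with its nontangential estimate on $\nabla h$) together with the identification of harmonic functions in $H^{3/2}(\Om)$ with those for which $\nabla h$ has nontangential maximal function in $\LdOm$. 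I expect the genuine obstacle to be precisely this boundary value input — the well-posedness of the $L^2$-Dirichlet and regularity problems for the Laplacian on bounded Lipschitz domains and the accompanying Sobolev-versus-nontangential-maximal-function identifications; once those are quoted (as in Appendix~\ref{sD}, and the work of Dahlberg, Jerison--Kenig, Verchota, and Fabes--Mendez--Mitrea), everything else is routine bookkeeping of Sobolev exponents.
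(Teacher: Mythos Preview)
Your argument for \eqref{Mam-1} is essentially the paper's own: decompose $u$ as a Newtonian potential (landing in $H^{s+2}(\Om)$) plus a harmonic remainder in $H^{1/2}(\Om)$, take the Sobolev trace of the former and the nontangential trace of the latter, and quote Dahlberg/Jerison--Kenig for the identification $H^{1/2}_{\De}(\Om)=\{h\text{ harmonic}: M(h)\in\LdOm\}$ and the solvability of the $L^2$-Dirichlet problem (which supplies the right-inverse). One point you slide past: both your well-definedness argument (``by linearity of the two traces'') and your density-based compatibility check implicitly use that the nontangential trace and the Sobolev trace agree on their common domain, i.e.\ on harmonic $H^{(1/2)+\eps}(\Om)$ functions. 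This is true but not automatic on a Lipschitz domain; the paper proves it by localizing to star-like pieces of $\Om$ and using dilations $u_t(x)=u(tx)$, $t\to 1$.

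For \eqref{Mam-2} your route and the paper's diverge. You simply repeat the decomposition scheme one order higher, invoking the identification of $H^{3/2}_{\De}(\Om)$ with harmonic functions satisfying $M(\nabla h)\in\LdOm$ and the well-posedness of the $L^2$-regularity problem (Verchota) for both the trace bound and the right-inverse. The paper instead \emph{reduces} \eqref{Mam-2} to \eqref{Mam-1}: it first establishes a divergence theorem \eqref{Ht-r3} for vector fields with components in the domain of \eqref{Mam-1}, uses it to derive the tangential-derivative identity $\partial(\ga_D u)/\partial\tau_{j,k}=\nu_j\ga_D(\partial_k u)-\nu_k\ga_D(\partial_j u)$, and then applies \eqref{Mam-1} to each $\partial_j u\in\{w\in H^{1/2}(\Om)\,|\,\De w\in H^s(\Om)\}$ to put all tangential derivatives of $\ga_D u$ into $\LdOm$; the characterization \eqref{A.64} of $H^1(\dOm)$ finishes. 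Your approach is shorter and perfectly valid; the paper's buys the auxiliary formula \eqref{Ht-r3}, which it needs again immediately in Lemma~\ref{Neu-tr}.
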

\begin{proof}
For each $s\in\bbR$ set $H^{s}_{\Delta}(\Omega)=
\{u\in H^{s}(\Omega) \,|\,\Delta u=0\mbox{ in }\Omega\}$ 
and observe that this is a closed subspace of $H^{s}(\Omega)$. 
In particular, $H^{s}_{\Delta}(\Omega)$ is a Banach space when 
equipped with the norm inherited from $H^{s}(\Omega)$. 
Next we recall the nontangential maximal operator $M$ defined in \eqref{Rb-2}. 
According to \cite{Fa88}, or Corollary~5.7 in \cite{JK95}, one has 
\begin{equation}\label{Mam-4.1}
H^{1/2}_{\Delta}(\Omega)=\bigl\{u\mbox{ harmonic in }\Omega\,\big|\, 
M(u)\in L^2(\partial\Omega;d^{n-1}\omega)\bigr\}
\end{equation}
and $u\mapsto \|M(u)\|_{L^2(\partial\Omega;d^{n-1}\omega)}$ is an 
equivalent norm on $H^{1/2}_{\Delta}(\Omega)$. 
To continue, fix some $\kappa>0$ and set 
$d(y)={\rm dist}\,(y,\partial\Omega)$ for 
$y\in\Omega$. According to \cite{Da77}, the nontangential trace operator
\begin{equation}\label{Mam-4}
(\gamma_{\rm n.t.}u)(x)=\lim_{\stackrel{\Omega\ni y\to x}
{|x-y|<(1+\kappa)\,d(y)}}u(y)  
\end{equation}
is then well-defined when considered as a mapping 
\begin{eqnarray}\label{Mam-5}
\gamma_{\rm n.t.}:\bigl\{u\mbox{ harmonic in }\Omega\,\big|\, 
M(u)\in L^2(\partial\Omega;d^{n-1}\omega)\bigr\}
\to L^2(\partial\Omega;d^{n-1}\omega).
\end{eqnarray}
Furthermore, the operator \eqref{Mam-4}, \eqref{Mam-5} is bounded. 

Granted these results, for a fixed $s>-3/2$ we may then attempt to define 
\begin{eqnarray}\label{Mam-6}
\wti\gamma_D: H^{1/2}_{\Delta}(\Omega)+H^{s+2}(\Omega)
\to L^2(\partial\Omega;d^{n-1}\omega)
\end{eqnarray}
by setting 
\begin{eqnarray}\label{Mam-7}
\wti\gamma_D(u+v)=\gamma_{\rm n.t.}u+\gamma_Dv, \quad
u\in H^{1/2}_{\Delta}(\Omega), \; v\in H^{s+2}(\Omega).
\end{eqnarray}
A moment's reflection shows that, in order to establish that 
the mapping \eqref{Mam-6}, \eqref{Mam-7} is well-defined, 
it suffices to prove that 
\begin{eqnarray}\label{Mam-8}
\gamma_{\rm n.t.}u=\gamma_Du\mbox{ in $L^2(\partial\Omega;d^{n-1}\omega)$ 
whenever }u\in H^{(1/2)+\varepsilon}(\Omega),\,\,\varepsilon>0. 
\end{eqnarray}
in the case when $\Omega$ is a bounded Lipschitz domain which is 
star-like with respect to the origin in $\bbR^n$ (cf. \eqref{Lip-S}). 

Assuming that this is 
the case, pick $u\in H^{(1/2)+\varepsilon}(\Omega)$ for some $\varepsilon>0$, 
and for each $t\in(0,1)$ set $u_t(x)=u(tx)$, $x\in\Omega$. 
We claim that 
\begin{eqnarray}\label{Mam-9}
u_t\to u\,\,\mbox{ in }\,\,
H^{(1/2)+\varepsilon}(\Omega)\,\mbox{ as }\,t\to 1. 
\end{eqnarray}
To justify this, it suffices to prove that this is the case when 
$u\in C^\infty(\overline{\Omega})$ as the result in its full generality 
then follows from a standard density argument. However, for every 
$u\in C^\infty(\overline{\Omega})$ one trivially has
$u_t\to u$ as $t\to 1$ in $H^1(\Omega)$, hence in 
$H^{(1/2)+\varepsilon}(\Omega)$. Having disposed of \eqref{Mam-9}, 
we may then conclude that $\gamma_D u_t\to \gamma_D u$ in 
$L^2(\partial\Omega;d^{n-1}\omega)$ as $t\to 1$. 
Since for each $t\in(0,1)$ we have $u_t\in C(\overline{\Omega})$,  
it follows that $\gamma_D u_t=\gamma^0_D u_t=u_t|_{\partial\Omega}$. 
Thus, altogether, 
\begin{eqnarray}\label{Mam-10}
u_t|_{\partial\Omega}\to \gamma_D u\,\,\mbox{ in }\,\,
L^2(\partial\Omega;d^{n-1}\omega)\,\mbox{ as }\,t\to 1. 
\end{eqnarray}
On the other hand, for almost every $x\in\partial\Omega$, and every 
$t\in(0,1)$, we have that $y=tx$ belongs to $\Omega$, converges to $x$ as 
$t\to 1$, and $|x-y|\leq (1+\kappa)\,{\rm dist}\,(y,\partial\Omega)$ for some 
sufficiently large $\kappa=\kappa(\Omega)>0$ (independent of $x$ and $t$). 
This implies that 
\begin{eqnarray}\label{Mam-11}
u_t(x)\to (\gamma_{\rm n.t.}u)(x) \,\text{ pointwise, for a.e.\  
$x\in\partial\Omega$, as $t\to 1$.} 
\end{eqnarray}
Combining \eqref{Mam-10}, \eqref{Mam-11} we therefore conclude that 
the functions $\gamma_{\rm n.t.}u,\,\gamma_D u\in 
L^2(\partial\Omega;d^{n-1}\omega)$ coincide pointwise a.e. 
on $\partial\Omega$. This proves \eqref{Mam-8} and finishes the 
justification of the fact that the mapping \eqref{Mam-6}, \eqref{Mam-7}
is well-defined. 

Granted \eqref{Mam-6}, \eqref{Mam-7} is well-defined, it is implicit in its own 
definition that the mapping \eqref{Mam-6}, \eqref{Mam-7} is also bounded  
when we equip $H^{1/2}_{\Delta}(\Omega)+H^{s+2}(\Omega)$ with the 
canonical norm 
\begin{eqnarray}\label{Mam-12}
w\mapsto\inf_{\stackrel{w=u+v}{u\in H^{1/2}_{\Delta}(\Omega),\,
v\in H^{s+2}(\Omega)}}\,\|u\|_{H^{1/2}_{\Delta}(\Omega)}
+\|v\|_{H^{s+2}(\Omega)}. 
\end{eqnarray}

The same type of argument as above (i.e., restricting attention 
to pieces of $\Omega$ which are star-like Lipschitz domains, and using 
dilations with respect to the respective center of star-likeness) 
shows the following: If $w\in C(\overline{\Omega})$ can be 
decomposed as $u+v$ with $u\in H^{1/2}_{\Delta}(\Omega)$ and 
$v\in H^{s+2}(\Omega)$ for some $s>-3/2$, then 
$w|_{\partial\Omega}=\gamma_{\rm n.t.}u+\gamma_Dv$. In other words,
the action of the trace operator $\wti\gamma_D$ in \eqref{Mam-6}, \eqref{Mam-7} 
is compatible with that of \eqref{2.5}. This completes the study of 
the nature and properties of $\wti\gamma_D$ in \eqref{Mam-6}, \eqref{Mam-7}. 

Consider next the claim made about \eqref{Mam-1}. As regards
its boundedness and the fact that this acts in a compatible fashion 
with \eqref{2.6}, it suffices to prove that 
\begin{eqnarray}\label{Mam-13}
\{u\in H^{1/2}(\Omega)\,|\,\Delta u\in H^{s}(\Omega)\bigr\}
\hookrightarrow H^{1/2}_{\Delta}(\Omega)+H^{s+2}(\Omega),\quad s>-3/2,
\end{eqnarray}
continuously. To see this, pick $u\in H^{1/2}(\Omega)$ such 
that $\Delta u\in H^{s}(\Omega)$ and extend (cf. \cite{Ry99}) $\Delta u$ 
to a compactly supported distribution $w\in H^{s}(\bbR^n)$. Next, set 
\begin{equation}
v(x)=\int_{\bbR^n}d^ny\,E_n(x-y)w(y), \quad x\in\Omega, 
\end{equation}
where
\begin{equation}
E_n(x)=\begin{cases} \frac{1}{2\pi}\ln(|x|),  & n=2, \\ 
\frac{1}{(2-n)\omega_{n-1}}|x|^{2-n},  & n\geq 3, \end{cases}
\end{equation}
is the standard fundamental 
solution for the Laplacian in $\bbR^n$ (cf.\ \eqref{C.1} for $z=0$). Here 
$\omega_{n-1}=2\pi^{n/2}/\Gamma(n/2)$ ($\Gamma(\dott)$ the Gamma function, 
cf.\ \cite[Sect.\ 6.1]{AS72}) represents the area of the unit sphere 
$S^{n-1}$ in $\bbR^n$. Then $v\in H^{s+2}(\Omega)$ and $\Delta v=\Delta u$ 
in $\Omega$. As a consequence, the function $w=u-v$ is harmonic
and belongs to $H^{1/2}(\Omega)$, that is, $u=w+v$ with 
$w\in H^{1/2}_{\Delta}(\Omega)$, $v\in H^{s+2}(\Omega)$. Furthermore, the
estimate 
\begin{eqnarray}\label{Mam-14}
\|w\|_{H^{1/2}(\Omega)}+\|v\|_{H^{s+2}(\Omega)}\leq C\bigl(
\|u\|_{H^{1/2}(\Omega)}+\|\Delta u\|_{H^{s}(\Omega)}\bigr)
\end{eqnarray}
for some $C=C(\Omega,s)>0$ is implicit in the above construction. 
Thus, the inclusion \eqref{Mam-13} is well-defined and continuous, 
so that the claims about the boundedness of \eqref{Mam-1}, as well as
the fact that this acts in a compatible fashion with \eqref{2.6}, follow 
from this and the fact that $\wti\gamma_D$ in \eqref{Mam-6}, \eqref{Mam-7} 
is well-defined and bounded. As far as the existence of a linear, bounded,
right-inverse is concerned, it suffices to point out \eqref{Mam-4.1} and
recall that the mapping \eqref{Mam-5} is onto (cf. \cite{Da77}). 

We now digress momentarily for the purpose of developing an integration
by parts formula which will play a significant role shortly. First,
if $\Omega$ is a bounded star-like Lipschitz domain in $\bbR^n$ and
$G$ is a vector field with components in 
$H^{1/2}_{\Delta}(\Omega)+H^{s+2}(\Omega)$, $s>-3/2$, such that
${\rm div} (G)\in L^1(\Omega)$, then 
\begin{eqnarray}\label{Mam-a.1}
\int_{\Omega}dx^n\,{\rm div} (G)
=\int_{\partial\Omega}d^{n-1}\omega\,\nu\cdot\wti\gamma_D G.
\end{eqnarray}
Indeed, if as before $G_t(x)=G(tx)$, $x\in\Omega$, $t\in(0,1)$, then 
\begin{eqnarray}\label{Mam-a.2}
{\rm div} (G_t)=t({\rm div} (G))_t
\, \mbox{ in the sense of distributions in }\,\Omega.
\end{eqnarray}
Writing \eqref{Mam-a.1} for $G_t$ in place of $G$, with $0<t<1$, and 
then passing to the limit $t\to 1$ yields the desired result. 
As a corollary of \eqref{Mam-a.1} and \eqref{Mam-13}, we also have that 
\eqref{Mam-a.1} holds if $\Omega$ is a bounded star-like Lipschitz domain 
in $\bbR^n$ and $G$ is a vector field with components in 
$\{u\in H^{1/2}(\Omega)\,|\,\Delta u\in H^{s}(\Omega)\}$, $s>-3/2$, such that
${\rm div} (G)\in L^1(\Omega)$. Since the latter space is a module over 
$C^\infty_0(\bbR^n)$ and any Lipschitz domain is locally star-like, 
a simple argument based on a smooth partition of unity shows
that the star-likeness condition on $\Omega$ can be eliminated. 
More precisely, 
\begin{eqnarray}\label{Ht-r3}
\left.
\begin{array}{l}
\mbox{Hypothesis \ref{h2.1},}
\\[4pt]
G\in\bigl\{u\in H^{1/2}(\Omega)\,|\,\Delta u\in H^{s}(\Omega)\bigr\}^n,\,\,
s>-3/2,
\\[4pt]
{\rm div} (G)\in L^1(\Omega; d^n x)
\end{array}
\right\}\Longrightarrow \eqref{Mam-a.1}\,\,\mbox{ holds}.
\end{eqnarray} 

Moving on, consider the operator \eqref{Mam-2}. To get started, we fix
$s>-3/2$ and assume that the function $u\in H^{3/2}(\Omega)$ is such that 
$\Delta u\in H^{1+s}(\Omega)$. Then, by the second line in \eqref{2.6}, 
\begin{eqnarray}\label{Mam-15}
\gamma_D u\in H^{1-\varepsilon}(\partial\Omega)\, \mbox{ for every }\,
\varepsilon>0.
\end{eqnarray}

To continue, we recall the discussion (results and notation) in the paragraph
containing \eqref{def-TAU}--\eqref{A.64} in Appendix \ref{sA}.
For every $j,k\in\{1,...,n\}$, we now claim that 
\begin{eqnarray}\label{Mam-17}
\frac{\partial(\gamma_D u)}{\partial\tau_{j,k}}=
\nu_j\gamma_D(\partial_k u)-\nu_k\gamma_D(\partial_j u).
\end{eqnarray}
Since the functions $\partial_ju,\partial_k u$ belong to the space 
$\{w\in H^{1/2}(\Omega)\,|\,\Delta w\in H^{s}(\Omega)\bigr\}$, we may then 
conclude from \eqref{Mam-17} and \eqref{Mam-1} that 
\begin{eqnarray}\label{Mam-18}
\frac{\partial(\gamma_D u)}{\partial\tau_{j,k}}
\in L^2(\partial\Omega;d^{n-1}\omega), 
\end{eqnarray}
and, in addition, 
\begin{eqnarray}\label{Mam-19}
\Bigl\|\frac{\partial(\gamma_D u)}{\partial\tau_{j,k}}
\Bigr\|_{L^2(\partial\Omega;d^{n-1}\omega)}
\leq C\bigl(\|u\|_{H^{3/2}(\Omega)}+\|\Delta u\|_{H^{1+s}(\Omega)}\bigr),
\end{eqnarray}
for every $j,k\in\{1,...,n\}$. In concert with \eqref{Mam-19} and 
\eqref{Mam-15}, the characterization in (\ref{A.64}) then entails that 
$\gamma_D u\in H^1(\partial\Omega)$ and 
$\|\gamma_D u\|_{H^1(\partial\Omega)}\leq 
C\bigl(\|u\|_{H^{3/2}(\Omega)}+\|\Delta u\|_{H^{1+s}(\Omega)}\bigr)$. 
In summary, the proof of the claims made about \eqref{Mam-2} is finished, 
modulo establishing \eqref{Mam-17}. 

To deal with \eqref{Mam-17}, let $\psi\in C^\infty_0(\bbR^n)$ 
and fix $j,k\in\{1,...,n\}$. Consider next the vector fields 
\begin{eqnarray}\label{Mam-21}
\begin{array}{l}
F_{j,k}=\bigl(0,...,0,u\partial_k\psi,0...,0,-u\partial_j\psi,0...,0\bigr),
\\[4pt]
G_{j,k}=\bigl(0,...,0,\psi\partial_k u,0...,0,-\psi\partial_j u,0...,0\bigr),
\end{array}
\end{eqnarray}
with the nonzero components on the $j$-th and $k$-th slots. 
Then $F_{j,k},\,G_{j,k}$ have components in the space 
$\{u\in H^{1/2}(\Omega)\,|\,\Delta u\in H^{s}(\Omega)\}$ with 
$s>-3/2$ and satisfy
\begin{eqnarray}\label{Mam-22}
{\rm div} (F_{j,k})=-{\rm div} (G_{j,k})
=(\partial_ju\partial_k\psi-\partial_ku\partial_j\psi) \in L^2(\Omega;d^nx),
\end{eqnarray}
in the sense of distributions. Also, 
\begin{eqnarray}\label{Mam-23}
\begin{array}{l}
\nu\cdot\wti\gamma_D(F_{j,k})
=(\gamma_Du)\bigl(\nu_k\partial_j\psi-\nu_j\partial_k\psi\bigr),
\\[4pt]
\nu\cdot\wti\gamma_D(G_{j,k})=\psi\bigl(\nu_k\gamma_D(\partial_j u)
-\nu_j\gamma_D(\partial_ku)\bigr).
\end{array}
\end{eqnarray}
Hence, using \eqref{Ht-r3}, we obtain 
\begin{align}\label{Mam-24}
\int_{\partial\Omega}d^{n-1}\omega\,(\gamma_Du)
\bigl(\nu_k\partial_j\psi-\nu_j\partial_k\psi\bigr)
&= \int_{\partial\Omega}d^{n-1}\omega\,\nu\cdot\wti\gamma_D(F_{j,k})
\nonumber\\
&=\int_{\Omega}d^nx\,{\rm div} (F_{j,k})
=-\int_{\Omega}d^nx\,{\rm div} (G_{j,k})
\nonumber\\
&= -\int_{\partial\Omega}d^{n-1}\omega\,\psi\bigl(\nu_k\gamma_D(\partial_j u)
-\nu_j\gamma_D(\partial_ku)\bigr).
\end{align}
This justifies \eqref{Mam-17} and shows that the operator \eqref{Mam-2} 
is well-defined and bounded. Clearly, this acts in a compatible fashion 
with \eqref{2.6} and \eqref{Mam-1}. To finish the proof of Lemma \ref{Gam-L1},
there remains to show that this operator also has a bounded, linear, 
right-inverse. This, however, is a consequence of the well-posedness 
of the boundary value problem 
\begin{eqnarray}\lb{Kaz}
u\in H^{3/2}(\Om),\quad\Delta u=0\mbox{ in }\Omega,\quad\ga_D(u)=f\in H^1(\dOm),
\end{eqnarray}
a result which appears in \cite{Ve84}.
\end{proof}

Next, we introduce the operator $\ga_N$ (the strong Neumann trace) by
\begin{align}\lb{2.7} 
\ga_N = \nu\cdot\ga_D\nabla \colon H^{s+1}(\Om)\to \LdOm, \quad 1/2<s<3/2, 
\end{align}
where $\nu$ denotes the outward pointing normal unit vector to
$\partial\Om$. It follows from \eqref{2.6} that $\ga_N$ is also a
bounded operator. We seek to extend the action of the Neumann trace
operator \eqref{2.7} to other (related) settings. To set the stage, 
assume Hypothesis~\ref{h2.1} and recall that the inclusion 
\begin{equation}\lb{inc-1}
\iota:H^s(\Omega)\hookrightarrow \bigl(H^1(\Omega)\bigr)^*,\quad s>-1/2,
\end{equation}
is well-defined and bounded. We then introduce the weak Neumann trace 
operator 
\begin{equation}\lb{2.8}
\wti\ga_N\colon\big\{u\in H^1(\Om)\,\big|\,\Delta u\in H^s(\Om)\big\} 
\to H^{-1/2}(\dOm),\quad s>-1/2,  
\end{equation}
as follows: Given $u\in H^1(\Om)$ with $\Delta u \in H^s(\Om)$ 
for some $s>-1/2$, we set (with $\iota$ as in \eqref{inc-1})
\begin{align} \lb{2.9}
\langle \phi, \wti\ga_N u \rangle_{1/2}
=\int_\Om d^n x\,\ol{\nabla \Phi(x)} \cdot \nabla u(x)  
+ {}_{H^1(\Om)}\langle \Phi, \iota(\Delta u)\rangle_{(H^1(\Om))^*}, 
\end{align}
for all $\phi\in H^{1/2}(\dOm)$ and $\Phi\in H^1(\Om)$ such that
$\ga_D\Phi = \phi$. We note that this definition is
independent of the particular extension $\Phi$ of $\phi$, and that
$\wti\ga_N$ is a bounded extension of the Neumann trace operator
$\ga_N$ defined in \eqref{2.7}.

The end-point case $s=1/2$ of \eqref{2.7} is discussed separately below. 

\begin{lemma}\label{Neu-tr}
Assume Hypothesis \ref{h2.1}. 
Then the Neumann trace operator \eqref{2.7} also extends to
\begin{eqnarray}\label{MaX-1}
\wti\gamma_N:\bigl\{u\in H^{3/2}(\Omega)\,\big|\,\Delta u\in L^2(\Omega;d^nx)\bigr\}
\to L^2(\partial\Omega;d^{n-1}\omega) 
\end{eqnarray}
in a bounded fashion when the space 
$\{u\in H^{3/2}(\Omega)\,|\,\Delta u\in L^2(\Omega;d^nx)\bigr\}$ is equipped
with the natural graph norm $u\mapsto \|u\|_{H^{3/2}(\Omega)}
+\|\Delta u\|_{L^2(\Omega;d^nx)}$. This extension is compatible 
with \eqref{2.8} and has a linear, bounded, right-inverse $($hence, 
as a consequence, it is onto$)$.  

Moreover, the Neumann trace operator \eqref{2.7} further extends to
\begin{eqnarray}\label{MaX-1U}
\wti\gamma_N:\bigl\{u\in H^{1/2}(\Omega)\,\big|\,\Delta u\in L^2(\Omega;d^nx)\bigr\}
\to H^{-1}(\dOm) 
\end{eqnarray}
in a bounded fashion when the space 
$\{u\in H^{1/2}(\Omega)\,|\,\Delta u\in L^2(\Omega;d^nx)\bigr\}$ is equipped
with the natural graph norm $u\mapsto \|u\|_{H^{1/2}(\Omega)}
+\|\Delta u\|_{L^2(\Omega;d^nx)}$. Once again, this extension is compatible 
with \eqref{2.8} and has a linear, bounded, right-inverse $($thus, in particular,
it is onto$)$.  
\end{lemma}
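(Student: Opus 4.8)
The plan is to reduce both extensions of $\wti\gamma_N$ to the corresponding extensions of the Dirichlet trace $\gamma_D$ established in Lemma \ref{Gam-L1}, via the identity $\gamma_N u = \nu\cdot\gamma_D(\nabla u)$ together with the integration by parts formula \eqref{Ht-r3}. First I would treat \eqref{MaX-1}. Given $u\in H^{3/2}(\Omega)$ with $\Delta u\in L^2(\Omega;d^nx)$, each component $\partial_j u$ lies in $H^{1/2}(\Omega)$ and satisfies $\Delta(\partial_j u)=\partial_j(\Delta u)\in H^{-1}(\Omega)$, so $\partial_j u$ belongs to the space $\{w\in H^{1/2}(\Omega)\,|\,\Delta w\in H^{s}(\Omega)\}$ with $s=-1>-3/2$. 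Hence Lemma \ref{Gam-L1} (the operator \eqref{Mam-1}) applies and yields $\gamma_D(\partial_j u)\in L^2(\partial\Omega;d^{n-1}\omega)$ with the bound $\|\gamma_D(\partial_j u)\|_{L^2(\partial\Omega;d^{n-1}\omega)}\le C(\|u\|_{H^{3/2}(\Omega)}+\|\Delta u\|_{L^2(\Omega;d^nx)})$, using $\|\partial_j u\|_{H^{1/2}(\Omega)}\le\|u\|_{H^{3/2}(\Omega)}$ and $\|\partial_j(\Delta u)\|_{H^{-1}(\Omega)}\le\|\Delta u\|_{L^2(\Omega;d^nx)}$. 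Setting $\wti\gamma_N u=\nu\cdot\gamma_D(\nabla u)=\sum_j\nu_j\gamma_D(\partial_j u)$ then gives a bounded map into $L^2(\partial\Omega;d^{n-1}\omega)$, since $\nu\in L^\infty(\partial\Omega;d^{n-1}\omega)$. Compatibility with \eqref{2.8}: for $u\in H^{s+1}(\Omega)$, $1/2<s<3/2$, the classical Neumann trace \eqref{2.7} is literally $\nu\cdot\gamma_D\nabla u$ and agrees with $\wti\gamma_N$ from \eqref{2.8} by construction, while the compatibility of \eqref{Mam-1} with \eqref{2.6} (already in Lemma \ref{Gam-L1}) ensures the two meanings of $\gamma_D(\partial_j u)$ coincide; alternatively, one verifies directly that the right-hand side of \eqref{2.9} equals $\int_{\partial\Omega}d^{n-1}\omega\,\overline{\gamma_D\Phi}\,(\nu\cdot\gamma_D\nabla u)$ by applying \eqref{Ht-r3} to the vector field $G=\overline{\Phi}\,\nabla u$, whose divergence is $\overline{\nabla\Phi}\cdot\nabla u+\overline{\Phi}\,\Delta u\in L^1(\Omega;d^nx)$ and whose components lie in $\{w\in H^{1/2}(\Omega)\,|\,\Delta w\in H^{-3/2+\epsilon}(\Omega)\}$ — here one must check the hypotheses of \eqref{Ht-r3} are met, which is the one genuinely technical point (it requires $\Delta(\overline\Phi\,\partial_j u)$ to have the claimed Sobolev regularity; the worst term is $\overline\Phi\,\partial_j\Delta u$, controlled since $\Phi\in H^1(\Omega)$ and $\partial_j\Delta u\in H^{-1}(\Omega)$, placing the product in $H^{-3/2-\epsilon}$ after accounting for the low regularity of the multiplier, still within the range $s>-3/2$).

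For the existence of a bounded linear right-inverse of \eqref{MaX-1}, I would invoke the well-posedness of the Neumann problem on bounded Lipschitz domains: given $g\in L^2(\partial\Omega;d^{n-1}\omega)$ with $\int_{\partial\Omega}g\,d^{n-1}\omega=0$, there is a unique (up to constants) harmonic $u$ with $\wti\gamma_N u=g$ and $u\in H^{3/2}(\Omega)$, with $\|u\|_{H^{3/2}(\Omega)}\le C\|g\|_{L^2(\partial\Omega;d^{n-1}\omega)}$ — this is the $L^2$ Neumann regularity result of Jerison–Kenig (cf.\ the reference already used for \eqref{Mam-4.1}). To handle the full target without the mean-zero constraint and without the quotient by constants, one adds to $u$ an explicit particular solution (e.g.\ harmonically extending a fixed function, or exploiting that the constant functions and a reference solution span the cokernel), so that $g\mapsto u$ is a genuine bounded linear right-inverse on all of $L^2(\partial\Omega;d^{n-1}\omega)$. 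This also gives ontoness.

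For the second extension \eqref{MaX-1U}, the argument is parallel but one degree lower: given $u\in H^{1/2}(\Omega)$ with $\Delta u\in L^2(\Omega;d^nx)$, I would define $\wti\gamma_N u\in H^{-1}(\partial\Omega)$ by the weak formulation
\begin{equation}
\langle\phi,\wti\gamma_N u\rangle_{1}=\int_\Omega d^nx\,\overline{\nabla\Phi(x)}\cdot\nabla u(x)+{}_{H^1(\Omega)}\langle\Phi,\iota(\Delta u)\rangle_{(H^1(\Omega))^*},\quad\phi\in H^1(\partial\Omega),
\end{equation}
where $\Phi$ is chosen via the bounded right-inverse of the Dirichlet trace from Lemma \ref{Gam-L1} (the operator \eqref{Mam-2}, with the roles of boundary data in $H^1(\partial\Omega)$), so that $\Phi\in H^{3/2}(\Omega)$ with $\Delta\Phi=0$ and $\gamma_D\Phi=\phi$; then $\nabla\Phi\in H^{1/2}(\Omega)^n$, making the first integral a bounded bilinear form in $(\phi,u)$, and the second term is bounded since $\iota(\Delta u)\in(H^1(\Omega))^*$ and $\Phi\in H^1(\Omega)$. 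Independence of the choice of $\Phi$ (among extensions with $\gamma_D\Phi=\phi$) follows from \eqref{2.9} and density, as $H^{3/2}(\Omega)\cap\{\Delta\cdot=0\}$ extensions suffice by the surjectivity in \eqref{Mam-2}; this yields $\wti\gamma_N\in\cB(\{u\in H^{1/2}(\Omega)\,|\,\Delta u\in L^2(\Omega;d^nx)\},H^{-1}(\partial\Omega))$ and compatibility with \eqref{2.8} (hence with \eqref{2.7}) is immediate from comparing \eqref{2.9} with the displayed formula. Finally, the bounded linear right-inverse comes from well-posedness of the Dirichlet problem at the $H^{1/2}(\Omega)$ level: given $g\in H^{-1}(\partial\Omega)$, solve $\Delta u=0$ in $\Omega$, $\wti\gamma_N u=g$, with $u\in H^{1/2}(\Omega)$ and $\|u\|_{H^{1/2}(\Omega)}\le C\|g\|_{H^{-1}(\partial\Omega)}$ — the adjoint of the mapping $H^1(\partial\Omega)\ni f\mapsto u_f\in H^{3/2}_\Delta(\Omega)$ from \eqref{Kaz}, combined with the trace pairing, realizes precisely this map, and its continuity is dual to the boundedness in \eqref{Kaz}. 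The main obstacle throughout is bookkeeping the Sobolev indices so that every application of Lemma \ref{Gam-L1}, of \eqref{Ht-r3}, and of the Jerison–Kenig/Verchota boundary value theory lands strictly inside the permitted ranges ($s>-3/2$, $s>-1/2$, etc.), particularly verifying the distributional Laplacian regularity of the products $\overline\Phi\,\partial_j u$ appearing in the integration by parts; once those inclusions are checked, each assertion of the lemma follows by transporting the corresponding property of $\gamma_D$ through the first-order relation $\gamma_N=\nu\cdot\gamma_D\nabla$ and its weak counterpart.
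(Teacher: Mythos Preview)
Your approach to \eqref{MaX-1} is essentially the paper's: define $\wti\gamma_N u=\nu\cdot\gamma_D(\nabla u)$ via Lemma~\ref{Gam-L1} applied to $\partial_j u$, and the bound is exactly \eqref{MaX-4}. However, your compatibility argument has a gap. You try to apply \eqref{Ht-r3} directly to $G=\overline{\Phi}\,\nabla u$ with $\Phi\in H^1(\Omega)$, but the components $\overline{\Phi}\,\partial_j u$ need not lie in $\{w\in H^{1/2}(\Omega)\mid\Delta w\in H^{s}(\Omega)\}$ for any $s>-3/2$: the product of an $H^1$ function with an $H^{1/2}$ function is generally not in $H^{1/2}$ in dimension $n\ge 3$, and you yourself conclude that $\overline{\Phi}\,\partial_j\Delta u$ lands in ``$H^{-3/2-\epsilon}$\ldots still within the range $s>-3/2$'', which is a contradiction. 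The paper avoids this by first taking $\psi\in C^\infty(\overline{\Omega})$ (so that \eqref{Ht-r3} applies cleanly to $\overline{\psi}\,\nabla u$), and only afterward approximating $\Phi\in H^1(\Omega)$ by smooth $\psi_j$ and passing to the limit in the resulting identity \eqref{MaX-2}.

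The more serious issue is your definition of \eqref{MaX-1U}. You write
\[
\langle\phi,\wti\gamma_N u\rangle_{1}=\int_\Omega d^nx\,\overline{\nabla\Phi}\cdot\nabla u+\langle\Phi,\Delta u\rangle_{L^2(\Omega;d^nx)}
\]
with $\Phi\in H^{3/2}(\Omega)$ harmonic. But for $u\in H^{1/2}(\Omega)$ one only has $\nabla u\in H^{-1/2}(\Omega)^n$, and the pairing with $\nabla\Phi\in H^{1/2}(\Omega)^n$ is \emph{not} a priori well-defined: on a Lipschitz domain the duality $(H^{1/2}(\Omega))^*=H^{-1/2}(\Omega)$ fails (cf.\ \eqref{dual-xxx}, valid only for $|s|<1/2$). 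The paper's definition \eqref{MaX-3U} sidesteps this completely by using the ``second'' Green identity
\[
\langle\phi,\wti\gamma_n u\rangle_1
=\langle\wti\gamma_N\Phi,\gamma_D u\rangle_{0}
+\langle\Phi,\Delta u\rangle_{L^2(\Omega;d^nx)}
-\langle\Delta\Phi,u\rangle_{L^2(\Omega;d^nx)},
\]
where $\Phi\in H^{3/2}(\Omega)$ with $\Delta\Phi\in L^2(\Omega;d^nx)$; every pairing here is an honest $L^2$ pairing, thanks to \eqref{MaX-1} (giving $\wti\gamma_N\Phi\in L^2(\partial\Omega)$) and \eqref{Mam-1} (giving $\gamma_D u\in L^2(\partial\Omega)$). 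Your formula can in fact be rescued by integrating the gradient term by parts once more---which, for harmonic $\Phi$, reproduces exactly the paper's expression---but as written it does not stand on its own.
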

\begin{proof}
Fix $\psi\in C^\infty(\overline{\Omega})$. 
Applying \eqref{Ht-r3} to the vector field $G=\overline{\psi}\nabla u$ yields 
\begin{eqnarray}\label{MaX-2}
\int_{\partial\Omega}d^{n-1}\omega\,\overline{\psi}\,\nu\cdot
\gamma_D(\nabla u)
=\int_{\Omega}dx^n\,\overline{\nabla\psi}\cdot\nabla u
+\int_{\Omega}dx^n\,\overline{\psi}\,\Delta u.
\end{eqnarray}
Consider now $\phi\in H^{1/2}(\dOm)$ and $\Phi\in H^1(\Om)$ such that
$\ga_D\Phi = \phi$. Since $C^\infty(\overline{\Omega})\hookrightarrow H^1(\Om)$
is dense, it is possible to select a sequence 
$\psi_j\in C^\infty(\overline{\Omega})$, $j\in\bbN$, such that 
$\psi_j\to\Phi$ in $H^1(\Om)$ as $j\to\infty$. This entails 
$\nabla\psi_j\to\nabla\Phi$ in $L^2(\Om;d^nx)$ and 
$\psi_j|_{\partial\Omega}\to\phi$ in $H^{1/2}(\partial\Om)$ as $j\to\infty$.
Writing \eqref{MaX-2} for $\psi_j$ in place of $\psi$ and passing to the limit 
$j\to\infty$ then yields 
\begin{eqnarray}\label{MaX-3}
\int_{\partial\Omega}d^{n-1}\omega\,\overline{\phi}\,\nu\cdot
\gamma_D(\nabla u)
=\int_{\Omega}dx^n\,\overline{\nabla\Phi}\cdot\nabla u
+\int_{\Omega}dx^n\,\overline{\Phi}\,\Delta u.
\end{eqnarray}
This shows that the Neumann trace of $u$ in the sense of 
\eqref{2.8}, \eqref{2.9} is actually $\nu\cdot\gamma_D(\nabla u)$. 
In addition, 
\begin{align}\label{MaX-4}
\|\wti\gamma_N u\|_{L^2(\partial\Om;d^{n-1}\omega)}
&=\|\nu\cdot\gamma_D(\nabla u)\|_{L^2(\partial\Om;d^{n-1}\omega)}
\nonumber\\[4pt]
&\leq  \|\gamma_D(\nabla u)\|_{L^2(\partial\Om;d^{n-1}\omega)^n}
\nonumber\\
&\leq  C\bigl(\|\nabla u\|_{H^{1/2}(\Om)^n}
+\|\Delta(\nabla u)\|_{H^{-1}(\Om)^n}\bigr)
\nonumber\\
&= C\bigl(\|\nabla u\|_{H^{1/2}(\Om)^n}
+\|\nabla(\Delta u)\|_{H^{-1}(\Om)^n}\bigr)
\nonumber\\
&\leq  C\bigl(\|u\|_{H^{3/2}(\Om)}+\|\Delta u\|_{L^2(\Om;d^nx)}\bigr),
\end{align}
where we have used the boundedness of the Dirichlet trace operator in 
\eqref{Mam-1} with $s=-1$. This shows that, in the context of \eqref{MaX-1}, 
the Neumann trace operator 
\begin{eqnarray}\label{MaX-5}
\wti\gamma_N u=\nu\cdot\gamma_D(\nabla u)
\end{eqnarray}
has is well-defined, linear, bounded and is compatible 
with \eqref{2.8}. The fact that this has a linear, bounded, 
right-inverse is a consequence of the well-posedness result in 
Theorem \ref{t3.2}, proved later. 

As far as \eqref{MaX-1U} is concerned, let us temporarily introduce 
\begin{eqnarray}\label{MaX-2U}
\wti\gamma_n:\bigl\{u\in H^{1/2}(\Omega)\,\big|\,\Delta u\in L^2(\Omega;d^nx)\bigr\}
\to H^{-1}(\dOm)=\bigl(H^1(\dOm)\bigr)^*,
\end{eqnarray}
by setting 
\begin{eqnarray}\label{MaX-3U}
\langle \phi, \wti\ga_n u \rangle_{1}
= \langle \wti\ga_N(\Phi),\ga_D u \rangle_{0}
+\langle \Phi,\Delta u\rangle_{L^2(\Om;d^nx)}
-\langle \Delta\Phi,u\rangle_{L^2(\Om;d^nx)}, 
\end{eqnarray}
for all $\phi\in H^1(\dOm)$, where $\Phi\in H^{3/2}(\Om)$ is such that
$\ga_D\Phi = \phi$ and $\Delta\Phi\in L^2(\Om;d^nx)$. That such a $\Phi$ 
can be found (with the additional properties that the dependence 
$\phi\mapsto\Phi$ linear, and that $\Phi$ satisfies a natural estimate) 
is a consequence of the fact that the mapping \eqref{Mam-2} has a linear, 
bounded, right-inverse. 
Let us also note that the first pairing in the right hand-side of 
\eqref{MaX-3U} is meaningful, thanks to the first part of 
Lemma~\ref{Gam-L1} and what we have established in connection with 
\eqref{MaX-1}. 

We now wish to show that the definition \eqref{MaX-3U} is independent of 
the particular choice of $\Phi$. For this purpose, we recall the 
following useful approximation result: 
\begin{eqnarray}\lb{Tan-C26}
C^\infty(\ol{\Omega})\hookrightarrow 
\{u\in H^s(\Omega)\,|\,\Delta u\in L^2(\Omega;d^nx)\} \,\text{ densely, 
whenever $s<2$}, 
\end{eqnarray}
where the latter space is equipped with the natural graph norm 
$u\mapsto \|u\|_{H^s(\Omega)}+\|\Delta u\|_{L^2(\Omega;d^nx)}$. 
When $s=1$ this appears as Lemma~1.5.3.9 on p.\,60 of \cite{Gr85}, 
and the extension to $s<2$ has been worked out, along similar lines, in 
\cite{CD98}. Returning to the task ast hand, by linearity and density
is suffices to show that 
\begin{eqnarray}\label{MaX-4U}
\langle \wti\ga_N(\Phi),\ga_D u \rangle_{0}
+\langle \Phi,\Delta u\rangle_{L^2(\Om;d^nx)}
-\langle \Delta\Phi,u\rangle_{L^2(\Om;d^nx)}=0
\end{eqnarray}
whenever $\Phi\in H^{3/2}(\Om)$ is such that
$\ga_D\Phi = 0$, $\Delta\Phi\in L^2(\Om;d^nx)$, and 
$u\in C^\infty(\ol{\Om})$. Note, however, that by \eqref{2.9} with the
roles of $\Phi$ and $u$ reversed we have 
\begin{align} \lb{2.9U}
\langle \wti\ga_N(\Phi),\ga_D u\rangle_{0}
=\int_\Om d^n x\,\ol{\nabla \Phi(x)} \cdot \nabla u(x)  
+\langle \Delta\Phi,u\rangle_{L^2(\Om;d^nx)}, 
\end{align}
so matters are reduce to showing that
\begin{eqnarray}\label{MaX-5U}
\int_\Om d^n x\,\ol{\nabla \Phi(x)} \cdot \nabla u(x)  
=-\langle \Phi,\Delta u\rangle_{L^2(\Om;d^nx)}.
\end{eqnarray}
Nonetheless, this is a consequence of Green's formula \eqref{Ht-r3} written 
for the vector field $G= \ol{\Phi}\nabla u$ (which has the 
property that $\ga_DG=0$). In summary, the operator 
\eqref{MaX-2U}, \eqref{MaX-3U} is well-defined, linear and bounded. 

Next, we will show that this operator is compatible with 
\eqref{2.8}, \eqref{2.9}. After re-denoting $\wti\ga_n$ by $\wti\ga_N$, then 
this becomes the extension of the weak Neumann trace operator, considered 
in \eqref{MaX-1U}. To this end, assume that 
$u\in H^1(\Om)$ has $\Delta u\in L^2(\Om;d^nx)$. Our goal is to show that 
\begin{eqnarray}\lb{2.9Hg}
\langle \phi, \wti\ga_N u \rangle_{1/2}
=\langle \phi, \wti\ga_n u \rangle_{1}
\end{eqnarray}
for every $\phi\in H^1(\dOm)$ or, equivalently, 
\begin{eqnarray}\lb{2.9Hj}
\int_\Om d^n x\,\ol{\nabla \Phi(x)} \cdot \nabla u(x)  
= \langle \wti\ga_N(\Phi),\ga_D u \rangle_{0}
-\langle \Delta\Phi,u\rangle_{L^2(\Om;d^nx)}, 
\end{eqnarray}
for $\Phi\in H^{3/2}(\Om)$ such that $\Delta\Phi\in L^2(\Om;d^nx)$.
However, 
\begin{eqnarray}\lb{2.9Hs}
\langle \wti\ga_N(\Phi),\ga_D u \rangle_{0}
=\langle \wti\ga_N(\Phi),\ga_D u \rangle_{1/2}
=\int_\Om d^n x\,\ol{\nabla \Phi(x)} \cdot \nabla u(x)  
+\langle \Delta\Phi,u\rangle_{L^2(\Om;d^nx)}, 
\end{eqnarray}
where the first equality is a consequence of 
what we have proved about the operator \eqref{MaX-1}, and the 
second follows from \eqref{2.9} with the roles of $u$ and $\Phi$ reversed.
This justifies \eqref{2.9Hj} and finishes the proof of the lemma. 
\end{proof}

For future purposes, we shall need yet another extension of the concept of 
Neumann trace. This requires some preparations 
(throughout, Hypothesis~\ref{h2.1} is enforced). First, we recall that, 
as is well-known (see, e.g., \cite{JK95}), one has the natural identification 
\begin{eqnarray}\lb{jk-9}
\big(H^{1}(\Om)\big)^*\equiv
\big\{u\in H^{-1}(\bbR^n)\,\big|\, \supp \, (u)\subseteq\overline{\Omega}\big\}.
\end{eqnarray} 
Note that the latter is a closed subspace of $H^{-1}(\bbR^n)$. 
In particular, if $R_{\Omega}u= u|_{\Omega}$ denotes the operator 
of restriction to $\Omega$ (considered in the sense of distributions), then 
\begin{eqnarray}\lb{jk-10}
R_{\Omega}:\big(H^{1}(\Om)\big)^*\to H^{-1}(\Om)
\end{eqnarray}
is well-defined, linear and bounded. Furthermore, the 
composition of $R_\Omega$ in \eqref{jk-10} with $\iota$ from \eqref{inc-1}
is the natural inclusion of $H^s(\Om)$ into $H^{-1}(\Om)$. 
Next, given $z\in\bbC$, set 
\begin{eqnarray}\lb{2.88X}
W_z(\Om)= \bigl\{(u,f)\in H^1(\Om)\times\bigl(H^1(\Om)\bigr)^*\,\big|\,
(-\Delta-z)u=f|_{\Omega}\mbox{ in }\mathcal{D}'(\Om)\bigr\}, 
\end{eqnarray}
equipped with the norm inherited from 
$H^1(\Om)\times\bigl(H^1(\Om)\bigr)^*$. We then denote by
\begin{equation}\lb{2.8X}
\wti\ga_{\cN}\colon W_z(\Om)\to H^{-1/2}(\dOm)  
\end{equation}
the ultra weak Neumann trace operator defined by
\begin{align}\lb{2.9X}
\begin{split}
\langle\phi,\wti\ga_{\cN} (u,f)\rangle_{1/2}
&=  \int_\Om d^n x\,\ol{\nabla \Phi(x)} \cdot \nabla u(x)  \\ 
& \quad  -z\,\int_\Om d^n x\,\ol{\Phi(x)}u(x)  
-{}_{H^1(\Om)}\langle \Phi, f\rangle_{(H^1(\Om))^*}, \quad 
(u,f)\in W_z(\Om), 
\end{split}
\end{align}
for all $\phi\in H^{1/2}(\dOm)$ and $\Phi\in H^1(\Om)$ such that 
$\ga_D\Phi=\phi$. Once again, this definition is independent of the 
particular extension $\Phi$ of $\phi$. Also, as was the case of the Dirichlet 
trace, the ultra weak Neumann trace operator \eqref{2.8X}, \eqref{2.9X} 
is onto (this is a corollary of Theorem \ref{t3.XV}). For additional details 
we refer to equations \eqref{A.11}--\eqref{A.16}. 

The relationship between the ultra weak Neumann trace operator 
\eqref{2.8X}, \eqref{2.9X} and the weak Neumann trace operator 
\eqref{2.8}, \eqref{2.9} can be described as follows: Given 
$s>-1/2$ and $z\in\bbC$, denote by 
\begin{eqnarray}\lb{2.10X}
j_z:\{u\in H^1(\Om)\,\big|\,\Delta u\in H^s(\Om)\big\} \to W_z(\Om)
\end{eqnarray}
the injection 
\begin{eqnarray}\lb{2.11X}
j_z(u)= (u,\iota(-\Delta u -zu)),\quad u\in H^1(\Om),\; 
\Delta u\in H^s(\Om),
\end{eqnarray}
where $\iota$ is as in \eqref{inc-1}. Then 
\begin{eqnarray}\lb{2.12X}
\wti\gamma_{\cN}\circ j_z=\wti\ga_N.
\end{eqnarray}
Thus, from this perspective, $\wti\ga_{\cN}$ can also be regarded as a bounded 
extension of the Neumann trace operator $\ga_N$ defined in \eqref{2.7}.


Moving on, we now wish to discuss generalized Robin Laplacians in 
Lipschitz subdomains of $\bbR^n$. Before initiating this discussion 
in earnest, however, we formulate and prove the following useful result:
\begin{lemma} \lb{l2.2a}
Assume Hypothesis~\ref{h2.1}. Then for every $\varepsilon >0$ 
there exists a $\beta(\varepsilon)>0$ 
$($$\beta(\varepsilon)\underset{\varepsilon\downarrow 0}{=}
\Oh(1/\varepsilon)$$)$ such that
\begin{equation}\lb{2.38c}
\|\gamma_D u\|_{\LdOm}^2 \le 
\varepsilon \|\nabla u\|_{\LOm^n}^2 + \beta(\varepsilon) \|u\|_{\LOm}^2 \, 
\text{ for all }\,u\in H^1(\Om).    
\end{equation}
\end{lemma}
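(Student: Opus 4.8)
The plan is to derive \eqref{2.38c}, together with the stated $\Oh(1/\eps)$ behavior of $\be(\eps)$, from the Gauss--Green formula on $\Om$ applied to the vector field $|u|^2F$, where $F$ is a fixed smooth vector field that is \emph{uniformly transversal} to $\dOm$. Precisely, I would first produce $F\in C^\infty(\ol{\Om};\bbR^n)$ which is bounded, has bounded divergence, and satisfies $\nu\cdot(\ga_D F)\ge\ka$ a.e.\ on $\dOm$ for some constant $\ka=\ka(\Om)>0$; granting such an $F$, the rest is a one-line estimate followed by a density argument.

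For the construction of $F$, I would cover $\dOm$ by finitely many open sets $U_1,\dots,U_N$ in each of which, after a rigid motion, $\Om\cap U_j$ coincides with the region lying above the graph of a Lipschitz function of Lipschitz constant at most $M$; in the associated rotated coordinates the outward unit normal then satisfies $\nu\cdot(-e_n)\ge(1+M^2)^{-1/2}$ a.e.\ on $\dOm\cap U_j$. Adjoining an open set $U_0$ with $\ol{U_0}\subset\Om$ so that $\{U_j\}_{j=0}^N$ covers $\ol{\Om}$, and choosing a smooth partition of unity $\{\chi_j\}_{j=0}^N$ subordinate to this cover, I set $F=\sum_{j=1}^N\chi_j\,\widehat{e}_{n,j}$, where $\widehat{e}_{n,j}$ denotes the constant unit vector field equal to $-e_n$ in the $j$-th coordinate frame. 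Since $\chi_0$ is supported away from $\dOm$, one has $\sum_{j=1}^N\chi_j\equiv 1$ on $\dOm$, whence $\nu\cdot F\ge(1+M^2)^{-1/2}=:\ka$ a.e.\ on $\dOm$, while plainly $F\in C^\infty(\ol{\Om};\bbR^n)$ with $\|F\|_{L^\infty}$ and $\|{\rm div}\,F\|_{L^\infty}$ finite. (This is a standard fact for Lipschitz domains, relying only on the local graph representation of $\dOm$.)

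With $F$ at our disposal, for $u\in C^\infty(\ol{\Om})$ the classical divergence theorem on the bounded Lipschitz domain $\Om$, applied to $|u|^2F$, gives
\[
\ka\,\|\ga_D u\|_{\LdOm}^2
\le\int_{\dOm}d^{n-1}\om\,|\ga_D u|^2\,(\nu\cdot F)
=\int_\Om d^nx\,({\rm div}\,F)\,|u|^2
+2\,\Re\int_\Om d^nx\,\ol{u}\,(F\cdot\nabla u).
\]
Using the elementary bound $2|\ol{u}\,(F\cdot\nabla u)|\le\de\,|\nabla u|^2+\de^{-1}\|F\|_{L^\infty}^2\,|u|^2$ for an arbitrary $\de>0$ and dividing by $\ka$, one obtains
\[
\|\ga_D u\|_{\LdOm}^2
\le\f{\de}{\ka}\,\|\nabla u\|_{\LOm^n}^2
+\f{1}{\ka}\Bigl(\|{\rm div}\,F\|_{L^\infty}+\f{\|F\|_{L^\infty}^2}{\de}\Bigr)\|u\|_{\LOm}^2.
\]
Taking $\de=\ka\eps$ yields \eqref{2.38c} for $u\in C^\infty(\ol{\Om})$, with $\be(\eps)=\ka^{-1}\|{\rm div}\,F\|_{L^\infty}+(\ka^2\eps)^{-1}\|F\|_{L^\infty}^2=\Oh(1/\eps)$. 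Finally, since $C^\infty(\ol{\Om})$ is dense in $H^1(\Om)$ and the maps $u\mapsto\ga_D u$ (bounded from $H^1(\Om)$ into $\LdOm$ by \eqref{2.6}), $u\mapsto\nabla u$, and $u\mapsto u$ are continuous on $H^1(\Om)$, passing to the limit extends \eqref{2.38c} to all $u\in H^1(\Om)$.

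The only genuinely nonroutine step is the existence of the transversal field $F$, which is where the Lipschitz hypothesis on $\Om$ is used (through the uniform lower bound on $|\nu\cdot e_n|$ in each graph chart) and which makes $\ka$, and hence the profile of $\be(\eps)$, depend on $\Om$ alone. An interpolation argument --- estimating $\|\ga_D u\|_{\LdOm}\le C\|u\|_{H^{(1/2)+\eta}(\Om)}$ and interpolating $H^{(1/2)+\eta}(\Om)$ between $\LOm$ and $H^1(\Om)$ --- would only furnish $\be(\eps)=\Oh(\eps^{-1-\eta'})$ for each $\eta'>0$, so the sharp $\Oh(1/\eps)$ rate genuinely requires the vector-field computation above.
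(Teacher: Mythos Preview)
Your proof is correct and follows essentially the same approach as the paper's: both construct a smooth vector field uniformly transversal to $\dOm$, apply the divergence theorem to $|u|^2$ times this field, and then use Young's inequality to split the cross term. The only differences are cosmetic: the paper cites \cite[Lemma 1.5.1.9]{Gr85} for the existence of the transversal field $h\in C_0^\infty(\bbR^n)^n$ with $\nu\cdot h\ge\kappa$ rather than building it from graph charts as you do, and the paper applies Green's formula directly for $u\in H^1(\Om)$ instead of first working with $C^\infty(\ol{\Om})$ and passing to the limit by density.
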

\begin{proof} 
Since $\Omega$ is a bounded Lipschitz domain, there exists an  
$h\in C_0^\infty(\bbR^n)^n$ with real-valued components and 
$\kappa>0$ such that (cf., \cite[Lemma 1.5.1.9, p.\ 40]{Gr85})
\begin{equation}
(\nu \cdot h)_{\bbC^n} \ge \kappa \, \text{ a.e.\ on $\dOm$.}
\end{equation}
Thus,
\begin{align}
\|\gamma_D u\|_{\LdOm}^2 & \le \f{1}{\kappa} \int_{\dOm} d^{n-1} \omega \, 
(\nu \cdot h)_{\bbC^n} |\gamma_D u|^2  \no \\
& = \f{1}{\kappa} \int_{\Om} d^n x \, {\rm div} (|u|^2 h),   \no \\
& = \f{1}{\kappa} \bigg(\int_{\Om} d^n x \, \big(\nabla |u|^2,h\big)_{\bbC^n} 
+ |u|^2 {\rm div} (h)\bigg),   \quad u \in H^1(\Om),   \lb{2.38g}
\end{align}
using the divergence theorem in the second step. 
Since for arbitrary $\varepsilon>0$, 
\begin{equation}
|2u \nabla u| \le \varepsilon |\nabla u|^2 +(1/\varepsilon) |u|^2, 
\quad u \in H^1(\Om),
\end{equation}
and $h\in C_0^\infty(\bbR^n)^n$, one arrives at \eqref{2.38c}. 
\end{proof}

Next we describe a family of self-adjoint Laplace operators 
$-\Delta_{\Theta,\Om}$ in $L^2(\Om; d^n x)$ indexed by the boundary 
operator $\Theta$. We will refer to $-\Delta_{\Theta,\Om}$ as the 
generalized Robin Laplacian. 

\begin{theorem}  \lb{t2.3}
Assume Hypothesis \ref{h2.2}. Then the generalized Robin Laplacian, 
 $-\Delta_{\Theta,\Om}$, defined by 
\begin{equation}
-\Delta_{\Theta,\Om} = -\Delta, \quad \dom(-\Delta_{\Theta,\Om}) = 
\big\{u\in H^1(\Om)\,\big|\, \Delta u \in L^2(\Om;d^nx); \, 
\big(\wti\gamma_N + \wti \Theta \gamma_D\big) u =0 
\text{ in $H^{-1/2}(\dOm)$}\big\},  
\lb{2.20}
\end{equation}
is self-adjoint and bounded from below in $L^2(\Om;d^nx)$. Moreover,
\begin{equation}
\dom\big(|-\Delta_{\Theta,\Om}|^{1/2}\big) = H^1(\Om).   \lb{2.21}
\end{equation}
\end{theorem}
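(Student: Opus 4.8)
The natural route is through the sesquilinear form machinery summarized in Appendix \ref{sB}. The plan is to define on $H^1(\Om)$ the form
\begin{equation*}
\gA_{\Theta,\Om}(u,v) = \int_\Om d^n x\,\ol{\nabla u(x)}\cdot\nabla v(x)
+ \big\langle \gamma_D u, \wti\Theta\,\gamma_D v\big\rangle_{1/2},
\quad u,v\in H^1(\Om),
\end{equation*}
and to verify that $\gA_{\Theta,\Om}$ is densely defined, closed, symmetric, and bounded from below; then \eqref{B.25} (the form representation theorem) produces a self-adjoint, bounded-from-below operator, and it remains only to identify that operator with $-\Delta_{\Theta,\Om}$ as defined by \eqref{2.20}, and to read off \eqref{2.21} from the fact that the form domain is $H^1(\Om)$.

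First I would check symmetry (immediate from \eqref{2.1} and the reality of the gradient term) and semiboundedness: using Lemma \ref{l2.2a} with a suitably small $\varepsilon$ together with the lower bound \eqref{2.2} for $\wti\Theta$, one gets
\begin{equation*}
\gA_{\Theta,\Om}(u,u) \ge \|\nabla u\|_{\LOm^n}^2
- |c_\Theta|\,\|\gamma_D u\|_{\LdOm}^2
\ge (1-\varepsilon|c_\Theta|)\|\nabla u\|_{\LOm^n}^2
- |c_\Theta|\beta(\varepsilon)\|u\|_{\LOm}^2,
\end{equation*}
so for $\varepsilon$ small this is bounded below by $c\|u\|_{\LOm}^2$ for some $c\in\bbR$. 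The same estimate, run the other way, shows that the form norm $(\gA_{\Theta,\Om}(u,u) + (1+|c|)\|u\|_{\LOm}^2)^{1/2}$ is equivalent to the $H^1(\Om)$ norm; since $H^1(\Om)$ is complete, $\gA_{\Theta,\Om}$ is closed, and density is clear. This already gives \eqref{2.21}: the form domain of a semibounded closed form equals $\dom(|A|^{1/2})$ up to equivalent norms, and here it is $H^1(\Om)$.

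The substantive step is the operator identification. By the representation theorem, $-\Delta_{\Theta,\Om}$ (the operator from the form) consists of those $u\in H^1(\Om)$ for which there is $w\in\LOm$ with $\gA_{\Theta,\Om}(u,v)=(w,v)_{\LOm}$ for all $v\in H^1(\Om)$, and then $-\Delta_{\Theta,\Om}u=w$. Taking $v\in C_0^\infty(\Om)$ forces $-\Delta u = w$ in $\mathcal{D}'(\Om)$, so $\Delta u\in\LOm$ and in particular $u$ lies in the domain of the weak Neumann trace $\wti\gamma_N$ of \eqref{2.8}, \eqref{2.9}. Now, for general $v\in H^1(\Om)$, I would plug the defining identity \eqref{2.9} for $\wti\gamma_N u$ (with $\Phi=v$) into $\gA_{\Theta,\Om}(u,v)=(-\Delta u,v)_{\LOm}$; after using that $\iota(\Delta u)$ pairs with $v$ as the $\LOm$ inner product (since $\Delta u\in\LOm$), the volume terms cancel and one is left with
\begin{equation*}
\big\langle \gamma_D v, \wti\gamma_N u\big\rangle_{1/2}
+ \big\langle \gamma_D v, \wti\Theta\,\gamma_D u\big\rangle_{1/2} = 0
\quad\text{for all } v\in H^1(\Om).
\end{equation*}
Since $\gamma_D\colon H^1(\Om)\to H^{1/2}(\dOm)$ is onto (by \eqref{2.6a} with $s=1$), $\gamma_D v$ ranges over all of $H^{1/2}(\dOm)$, whence $\big(\wti\gamma_N + \wti\Theta\gamma_D\big)u = 0$ in $H^{-1/2}(\dOm)$; the reverse inclusion is the same computation read backwards. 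This proves the domain description \eqref{2.20}.

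The main obstacle I anticipate is bookkeeping with the various duality pairings and the embedding $\iota$ of \eqref{inc-1}: one must be careful that when $\Delta u\in\LOm$ the term ${}_{H^1(\Om)}\langle \Phi,\iota(\Delta u)\rangle_{(H^1(\Om))^*}$ in \eqref{2.9} genuinely coincides with $\int_\Om \ol{\nabla\Phi}\cdot\nabla u$ being balanced against $(\Delta u, \Phi)_{\LOm}$ with the correct sign and complex conjugation, so that the cancellation producing the boundary identity is exact. Everything else—closedness, semiboundedness, the identification of the form domain with $H^1(\Om)$—is routine once Lemma \ref{l2.2a} is in hand.
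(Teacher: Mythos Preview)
Your proposal is correct and the operator identification step is essentially identical to the paper's argument (testing first against $C_0^\infty(\Om)$ to get $\Delta u\in L^2(\Om;d^nx)$, then using \eqref{2.9} and the surjectivity of $\gamma_D$ onto $H^{1/2}(\dOm)$ to extract the boundary condition, with the reverse inclusion being the same computation read backwards).

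The one point of divergence is in establishing that the form is closed. You argue directly that the form norm is equivalent to the $H^1(\Om)$ norm, using \eqref{2.2} together with Lemma~\ref{l2.2a} for the lower bound and the hypothesis $\wti\Theta\in\cB\big(H^{1/2}(\dOm),H^{-1/2}(\dOm)\big)$ for the upper bound (note: the upper bound is not literally ``the same estimate run the other way''---it needs the boundedness of $\wti\Theta$ and of $\gamma_D\colon H^1(\Om)\to H^{1/2}(\dOm)$, not Lemma~\ref{l2.2a}, but this is routine). The paper instead takes a more elaborate route: it rewrites $\langle\gamma_D u,\wti\Theta\gamma_D v\rangle_{1/2}$ via \eqref{2.25} as a nonnegative piece $\big\|(\Theta+(1-c_\Theta)I_{\dOm})^{1/2}\gamma_D u\big\|_{\LdOm}^2$ minus the nonclosable term $(1-c_\Theta)\|\gamma_D u\|_{\LdOm}^2$, shows that the gradient form plus the nonnegative piece is $H^1$-coercive and hence closed, and then invokes the KLMN-type perturbation criterion \eqref{B.45}--\eqref{B.46} via Lemma~\ref{l2.2a} to add the nonclosable piece back as an infinitesimally small perturbation. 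Your direct norm-equivalence argument is shorter and perfectly adequate here; the paper's decomposition has the advantage of isolating exactly which part of the boundary form is the genuinely ``small'' perturbation and which is structurally nonnegative, a distinction that becomes relevant if one later wants to weaken the hypotheses on $\Theta$.
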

\begin{proof}
We introduce the sesquilinear form $a_{-\Delta_{\Theta,\Om}}(\dott,\dott)$ 
on the domain $H^1(\Om) \times H^1(\Om)$ by 
\begin{equation}
a_{-\Delta_{\Theta,\Om}}(u,v) 
= a_{-\Delta_{0,\Om}}(u,v) + 
\big\langle \gamma_D u, \wti \Theta \gamma_D v \big\rangle_{1/2}, 
\quad u, v \in H^1(\Om),   \lb{2.22}
\end{equation}
where $a_{-\Delta_{0,\Om}}(\dott,\dott)$ on $H^1(\Om) \times H^1(\Om)$ 
denotes the Neumann Laplacian form
\begin{equation}
a_{-\Delta_{0,\Om}}(u,v) 
= \int_{\Om} d^nx \, \ol {(\nabla u)(x)}\cdot(\nabla v)(x), 
\quad u, v \in H^1(\Om).   \lb{2.23}
\end{equation}
One verifies that $a_{-\Delta_{\Theta,\Om}}(\dott,\dott)$ is well-defined on 
$H^1(\Om)\times H^1(\Om)$ since 
$\gamma_D\in\cB\big(H^1(\Om),H^{1/2}(\dOm)\big)$,  
$\wti \Theta \in \cB\big(H^{1/2}(\dOm),H^{-1/2}(\dOm)\big)$, and 
$(\Theta+(1-c_{\Theta})I_{\dOm})^{1/2}\in\cB\big(H^{1/2}(\dOm),\LdOm\big)$ 
(cf.\ \eqref{B.40}). This also implies that 
\begin{equation}\lb{2.24}
(\Theta+(1-c_{\Theta})I_{\dOm})^{1/2}\gamma_D \in \cB\big(H^1(\Om),\LdOm\big).
\end{equation}
Employing \eqref{2.1} and \eqref{2.2}, $a_{-\Delta_{\Theta,\Om}}$ is 
symmetric and bounded from below by $c_{\Theta}$. Next, we intend to show 
that $a_{-\Delta_{\Theta,\Om}}$ is a closed form in $\LOm\times\LOm$. 
For this purpose we rewrite 
$\big\langle\gamma_D u,\wti\Theta\gamma_D v\big\rangle_{1/2}$ as 
\begin{align}
& \big\langle \gamma_D u, \wti \Theta \gamma_D v \big\rangle_{1/2}  \no \\
& \quad = \big((\Theta+(1-c_{\Theta})I_{\dOm})^{1/2}\gamma_D u,
(\Theta+(1-c_{\Theta})I_{\dOm})^{1/2}\gamma_D v\big)_{\LdOm}  \no \\
& \qquad -(1- c_{\Theta}) (\gamma_D u,\gamma_D v)_{\LdOm}, 
\quad u, v \in H^1(\Om),   \lb{2.25}
\end{align}
(cf.\ \eqref{B.28}, \eqref{B.28a}), and notice that the last form on the 
right-hand side of \eqref{2.25} is nonclosable in $\LOm$ since $\gamma_D$ 
is nonclosable as an operator defined on a dense subspace from $\LOm$ into 
$\LdOm$ (cf.\ the discussion in connection with \eqref{B.42}). 

To deal with this noncloseability issue, we now split off the last form 
on the right-hand side of \eqref{2.25} and hence introduce
\begin{align} 
& b_{-\Delta_{\Theta,\Om}}(u,v) = (\nabla u, \nabla v)_{\LOm^n}  \no \\
& \quad + \big((\Theta+(1-c_{\Theta})I_{\dOm})^{1/2}\gamma_D u,
(\Theta+(1-c_{\Theta})I_{\dOm})^{1/2}\gamma_D v\big)_{\LdOm}   \no \\
& \quad + d_b (u,v)_{\LOm}, 
\quad u, v \in H^1(\Om),   \lb{2.26}
\end{align}
for $d_b>0$. Then due to the nonnegativity of the second form on the 
right-hand side in \eqref{2.26}, $b_{-\Delta_{\Theta,\Om}}$ is 
$H^1(\Om)$-coercive, that is, for some $c_1>0$, 
\begin{equation}
b_{-\Delta_{\Theta,\Om}}(u,u) \ge c_1 \|u\|_{H^1(\Om)}^2,   \lb{2.27}
\end{equation}
where $ \|u\|_{H^1(\Om)}^2 = \|\nabla u\|_{\LOm^n}^2+\|u\|_{\LOm}^2$. 
Next, we note that by \eqref{2.24}, 
\begin{align}
& \Big|\big((\Theta+(1-c_{\Theta})I_{\dOm})^{1/2}\gamma_D u,
(\Theta+(1-c_{\Theta})I_{\dOm})^{1/2}\gamma_D v\big)_{\LdOm}\Big| \lb{2.28} \\
& \quad \le \big\|(\Theta +(1 -c_{\Theta})I_{\dOm})^{1/2} 
\gamma_D\big\|_{\cB(H^1(\Om),\LdOm)}^2 
\|u\|_{H^1(\Om)} \|v\|_{H^1(\Om)}, \quad u, v \in H^1(\Om).   \no 
\end{align}
Since trivially, $\|\nabla u\|_{\LOm}^2 
+d_b\|u\|_{\LOm}^2\le c\|u\|_{H^1(\Om)}^2$ for some $c>0$, one infers 
that $b_{-\Delta_{\Theta,\Om}}$ is also $H^1(\Om)$-bounded, that is, for 
some $c_2>0$, 
\begin{equation}
b_{-\Delta_{\Theta,\Om}}(u,u) \le c_2 \|u\|_{H^1(\Om)}^2.    \lb{2.29}
\end{equation}
Thus, the symmetric form $b_{-\Delta_{\Theta,\Om}}$ is $H^1(\Om)$-bounded 
and $H^1(\Om)$-coercive and hence densely defined and closed in 
$\LOm \times \LOm$ by the discussion following \eqref{B.43a}.

To deal with the nonclosable form $(\gamma_D u,\gamma_D v)_{\LdOm}$,  
$u, v \in H^1(\Om)$, it suffices to note that by Lemma \ref{l2.2a} this 
form is infinitesimally bounded with respect to the Neumann Laplacian form 
$a_{-\Delta_{0,\Om}}$ on $H^1(\Om) \times H^1(\Om)$, and since the form   
$\big((\Theta+1-c_{\Theta})^{1/2}\gamma_D u,
(\Theta+1-c_{\Theta})^{1/2}\gamma_D v\big)_{\LdOm}$, $u, v \in H^1(\Om)$, 
is nonnegative, the form $(\gamma_D u,\gamma_D v)_{\LdOm}$ is also 
infinitesimally bounded with respect to the form $b_{-\Delta_{\Theta,\Om}}$. 
By the discussion in connection with \eqref{B.45}, \eqref{B.46}, the form 
$a_{-\Delta_{\Theta,\Om}}$ (possibly shifted by an irrelevant real constant) 
defined on $H^1(\Om) \times H^1(\Om)$, is thus densely defined in 
$\LOm \times \LOm$, bounded from below, and closed.

According to \eqref{B.30} we thus introduce the operator 
$-\Delta_{\Theta,\Om}$ in $L^2(\Om;d^n x)$ by
\begin{align}
& \dom(-\Delta_{\Theta,\Om}) = \bigg\{v \in H^1(\Om)\,\bigg|\, \, 
\text{there exists an $w_v \in L^2(\Om;d^n x)$ such that}  \no \\
& \quad \int_{\Om} d^n x \, \ol{\nabla w} \, \nabla v 
+ \big\langle \gamma_D w, \wti \Theta \gamma_D v \big\rangle_{1/2}  
=  \int_{\Om} d^n x \, \ol{w} w_v
\text{ for all $w\in H^1(\Om)$}\bigg\},    \no  \\
& -\Delta_{\Theta,\Om} u = w_u, \quad u \in \dom(-\Delta_{\Theta,\Om}).
\lb{2.31} 
\end{align} 
By the formalism displayed in \eqref{B.19}--\eqref{B.40} 
(cf., in particular \eqref{B.25}), $-\Delta_{\Theta,\Om}$ is self-adjoint 
in $L^2(\Om;d^n x)$ and \eqref{2.21} holds. We recall that
\begin{equation}\label{H-zer}
H_0^1(\Om)=\{u\in H^1(\Om)\,|\,\gamma_Du=0\mbox{ on }\partial\Omega\}.
\end{equation}
Taking $v\in C_0^\infty(\Omega) \hookrightarrow H^1_0(\Om) 
\hookrightarrow H^1(\Om)$, one concludes   
\begin{equation}\lb{2.32}
\int_{\Om}d^nx\,{\ol v}w_u=-\int_{\Om}d^nx\,{\ol v}\,\Delta u\,\
\text{ for all $v\in C_0^\infty(\Om)$, and hence }\,w_u 
= - \Delta u \, \text{ in } \, \cD^\prime(\Om),     
\end{equation}
with $\cD^\prime(\Om) = C_0^\infty(\Omega)^\prime$ the space of 
distributions in $\Om$.

Next, we suppose that $u \in \dom(-\Delta_{\Theta,\Om})$ and $v\in H^1(\Om)$. 
We recall that $\gamma_D\colon H^1(\Om) \to H^{1/2}(\dOm)$ and compute 
\begin{align}
\int_{\Om} d^n x \, \ol{\nabla v} \, \nabla u 
& =  - \int_{\Om} d^n x \, {\ol v} \, \Delta u + 
\langle \gamma_D v, \wti\gamma_N u \rangle_{1/2}    \no \\
& = \int_{\Om} d^n x \, {\ol v} w_u 
+ \big\langle \gamma_D v, \big(\wti\gamma_N + \wti \Theta \gamma_D\big) u 
\big\rangle_{1/2} 
- \big\langle \gamma_D v, \wti \Theta \gamma_D u \big\rangle_{1/2}   \no \\
& = \int_{\Om} d^n x \, \ol{\nabla v} \, \nabla u  
+ \big\langle \gamma_D v, \big(\wti\gamma_N + \wti \Theta \gamma_D\big) u 
\big\rangle_{1/2}, 
\lb{2.33}
\end{align} 
where we used the second line in \eqref{2.31}. Hence, 
\begin{equation}
\big\langle \gamma_D v, \big(\wti\gamma_N + \wti \Theta \gamma_D\big) u 
\big\rangle_{1/2} =0.   
\lb{2.34}
\end{equation}
Since $v\in H^1(\Om)$ is arbitrary, and the map 
$\gamma_D\colon H^1(\Om) \to H^{1/2}(\dOm)$ is actually onto, 
one concludes that 
\begin{equation}
\big(\wti\gamma_N + \wti \Theta \gamma_D\big) u = 0 
\, \text{ in } \, H^{-1/2}(\dOm).   
\lb{2.35}
\end{equation}
Thus,
\begin{equation}
\dom(-\Delta_{\Theta,\Om}) \subseteq \big\{v\in H^1(\Om)\,\big|\, 
\Delta v \in L^2(\Om; d^n x); 
\, \big(\wti\gamma_N + \wti \Theta \gamma_D\big) v = 0 
\text{ in } H^{-1/2}(\dOm)\big\}.  \lb{2.36}
\end{equation}
Finally, assume that $u\in \big\{v\in H^1(\Om)\,\big|\, 
\Delta v \in L^2(\Om; d^n x); \, \big(\wti\gamma_N 
+ \wti\Theta \gamma_D\big) v = 0\big\}$,  
$w\in H^1(\Om)$, and let $w_u =-\Delta u \in L^2(\Om; d^n x)$. Then, 
\begin{align}
\int_{\Om} d^n x \, {\ol w} w_u 
&= - \int_{\Om} d^n x \, {\ol w} \, {\rm div} (\nabla u)   \no \\
&= \int_{\Om} d^n x \, \ol{\nabla w} \, \nabla u 
- \langle \gamma_D w, \wti \gamma_N u \rangle_{1/2} \no \\
&= \int_{\Om} d^n x \, \ol{\nabla w} \, \nabla u 
+ \big\langle \gamma_D w, \wti \Theta \gamma_D u \big\rangle_{1/2}.    
\lb{2.37}
\end{align}
Thus, applying \eqref{2.31}, one concludes that 
$u \in \dom(-\Delta_{\Theta,\Om})$ and hence
\begin{equation}
\dom(-\Delta_{\Theta,\Om}) \supseteq \big\{v\in H^1(\Om)\,\big|\, 
\Delta v \in L^2(\Om; d^n x);\,\big(\wti\gamma_N+\wti\Theta\gamma_D\big) v=0 
 \text{ in } H^{-1/2}(\dOm)\big\},  \lb{2.38}
\end{equation}
finishing the proof of Theorem \ref{t2.3}.
\end{proof}

\begin{corollary}  \lb{c2.3A}
Assume Hypothesis \ref{h2.2}. Then the generalized Robin Laplacian, 
 $-\Delta_{\Theta,\Om}$, has purely discrete spectrum bounded from below, 
 in particular, 
\begin{equation}
\sigma_{\rm ess}(-\Delta_{\Theta,\Om}) = \emptyset.  
\lb{2.21a}
\end{equation}
\end{corollary}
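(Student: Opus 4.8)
The plan is to show that $-\Delta_{\Theta,\Om}$ has compact resolvent; this at once yields that its spectrum is purely discrete and bounded from below, and in particular that $\sigma_{\rm ess}(-\Delta_{\Theta,\Om})=\emptyset$.

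First I would recall from Theorem \ref{t2.3} that $-\Delta_{\Theta,\Om}$ is self-adjoint and bounded from below (indeed $-\Delta_{\Theta,\Om}\ge c_\Theta I_\Om$), and that $\dom\big(|-\Delta_{\Theta,\Om}|^{1/2}\big)=H^1(\Om)$ by \eqref{2.21}. Pick a constant $c>\max\{0,-c_\Theta\}$, so that $A:=-\Delta_{\Theta,\Om}+c\,I_\Om\ge (c+c_\Theta)I_\Om>0$ and $\dom\big(A^{1/2}\big)=H^1(\Om)$. Since the underlying sesquilinear form (cf.\ $b_{-\Delta_{\Theta,\Om}}$ in \eqref{2.26}) is $H^1(\Om)$-bounded and $H^1(\Om)$-coercive by \eqref{2.27} and \eqref{2.29}, the graph norm $u\mapsto\|A^{1/2}u\|_{\LOm}$ on $\dom\big(A^{1/2}\big)$ is equivalent to the norm of $H^1(\Om)$; equivalently, this follows from \eqref{2.21} and the closed graph theorem. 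Consequently,
\begin{equation}
A^{-1/2}\in\cB\big(\LOm,H^1(\Om)\big).
\end{equation}

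Next I would invoke the Rellich--Kondrachov compactness theorem for bounded Lipschitz domains, which gives that the embedding $H^1(\Om)\hookrightarrow\LOm$ is compact (recall Hypothesis \ref{h2.1}). Composing this compact embedding with $A^{-1/2}$ above yields $A^{-1/2}\in\cB_\infty(\LOm)$, hence $A^{-1}=A^{-1/2}A^{-1/2}\in\cB_\infty(\LOm)$. Since $c$ is a fixed constant, this means $-\Delta_{\Theta,\Om}$ has compact resolvent. A self-adjoint operator in a separable Hilbert space with compact resolvent has purely discrete spectrum (consisting of isolated eigenvalues of finite multiplicity with $+\infty$ as the only possible accumulation point, by the lower bound), so in particular $\sigma_{\rm ess}(-\Delta_{\Theta,\Om})=\emptyset$.

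The only step requiring any care is the norm-equivalence assertion, namely that $\dom\big(A^{1/2}\big)$ endowed with its graph norm coincides with $H^1(\Om)$ as a Banach space (not merely as a set); but this is already implicit in the proof of Theorem \ref{t2.3} via the $H^1(\Om)$-boundedness and $H^1(\Om)$-coercivity of $b_{-\Delta_{\Theta,\Om}}$. Everything else is standard, so this is the main (and essentially only) obstacle.
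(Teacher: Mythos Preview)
Your proof is correct and follows essentially the same approach as the paper: both use \eqref{2.21} to identify the form domain with $H^1(\Om)$, invoke the compact embedding $H^1(\Om)\hookrightarrow\LOm$ to conclude that $(-\Delta_{\Theta,\Om}+cI_\Om)^{-1/2}\in\cB_\infty(\LOm)$, and hence obtain compactness of the resolvent. Your version is slightly more careful about the choice of shift constant and the norm-equivalence step, but the argument is the same.
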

\begin{proof} Since $\dom\big(|-\Delta_{\Theta,\Om}|^{1/2}\big) = H^1(\Om)$, 
by \eqref{2.21}, and $H^1(\Om)$ embeds compactly into $\LOm$ (cf., e.g., 
\cite[Theorem V.4.17]{EE89}), one infers that 
$(-\Delta_{\Theta,\Om}  +  I_{\Om})^{-1/2}\in \cB_\infty(\LOm)$. Consequently, 
one obtains 
\begin{equation}\lb{Nb-G1}
(-\Delta_{\Theta,\Om}  +  I_{\Om})^{-1}\in \cB_\infty(\LOm),
\end{equation}
which is equivalent to \eqref{2.21a}.
\end{proof}

The important special case where $\Theta$ corresponds to the operator of 
multiplication by a real-valued, essentially bounded function $\theta$ 
leads to Robin boundary conditions we discuss next:

\begin{corollary} \lb{c2.3a}
In addition to Hypothesis \ref{h2.1}, assume that $\Theta$ is the operator 
of multiplication in $L^2(\dOm; d^{n-1} \omega)$ by the real-valued 
function $\theta$ satisfying $\theta \in L^\infty(\dOm; d^{n-1} \omega)$. 
Then $\Theta$ satisfies the conditions in Hypothesis \ref{h2.2}
resulting in the self-adjoint and bounded from below Laplacian $-\Delta_{\theta,\Om}$ 
in $L^2(\Om; d^n x)$ 
with Robin boundary conditions on $\dOm$ in \eqref{2.20} given by
\begin{equation}
(\wti\gamma_N + \theta \gamma_D) u= 0\,\text{ in } \, H^{-1/2}(\dOm).   
\lb{2.38a}
\end{equation}
\end{corollary}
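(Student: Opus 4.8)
The plan is to realize the multiplication operator $M_\theta$ via the obvious bounded sesquilinear form on $\LdOm$, to check that the structural facts about $\Theta$ actually used in the proof of Theorem~\ref{t2.3} all hold trivially because $\theta\in L^\infty(\dOm;d^{n-1}\omega)$, and then to unwind the resulting boundary condition.

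First I would introduce
\begin{equation*}
a_\Theta(f,g)=\int_{\dOm}d^{n-1}\omega(\xi)\,\ol{f(\xi)}\,\theta(\xi)\,g(\xi),
\end{equation*}
viewed as a form on $\LdOm$. Since $\theta$ is real-valued and bounded, $a_\Theta$ is symmetric, $\LdOm$-bounded with bound $\|\theta\|_{L^\infty(\dOm;d^{n-1}\omega)}$, and bounded from below; one may take $c_\Theta=-\|\theta\|_{L^\infty(\dOm;d^{n-1}\omega)}$ (or, more sharply, the essential infimum of $\theta$). Being bounded, $a_\Theta$ is automatically closed, its domain contains $H^{1/2}(\dOm)\times H^{1/2}(\dOm)$, and the self-adjoint operator in $\LdOm$ associated with it via \eqref{B.25} is precisely $\Theta=M_\theta$, which satisfies $\Theta\ge c_\Theta I_{\dOm}$. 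Hence $\Theta$ meets the requirements of Hypothesis~\ref{h2.2}.

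Next I would identify the extension $\wti\Theta$. Because $M_\theta$ maps $\LdOm$ boundedly into itself and $H^{1/2}(\dOm)\hookrightarrow\LdOm\hookrightarrow H^{-1/2}(\dOm)$, the assignment $f\mapsto\theta f$ followed by the inclusion $\LdOm\hookrightarrow H^{-1/2}(\dOm)$ defines an operator in $\cB\big(H^{1/2}(\dOm),H^{-1/2}(\dOm)\big)$ which, by \eqref{2.4}, satisfies $\langle f,\wti\Theta g\rangle_{1/2}=\int_{\dOm}d^{n-1}\omega\,\ol f\,\theta\,g$ for all $f,g\in H^{1/2}(\dOm)$; this is exactly the extension of $\Theta$ in the sense of \eqref{B.24a}, \eqref{B.28a}, and \eqref{2.1}, \eqref{2.2} hold with the above $c_\Theta$. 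In particular $(\Theta+(1-c_\Theta)I_{\dOm})^{1/2}$ is multiplication by the bounded, strictly positive function $(\theta+1-c_\Theta)^{1/2}$, so it belongs to $\cB\big(H^{1/2}(\dOm),\LdOm\big)$ — which, together with \eqref{2.1}, \eqref{2.2}, is all that the proof of Theorem~\ref{t2.3} uses about $\Theta$.

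Consequently Theorem~\ref{t2.3} applies and produces the self-adjoint, bounded-from-below Laplacian $-\Delta_{\theta,\Om}:=-\Delta_{\Theta,\Om}$ in $\LOm$, with $\dom\big(|-\Delta_{\theta,\Om}|^{1/2}\big)=H^1(\Om)$. Finally I would rewrite the boundary condition in \eqref{2.20}: for $u\in H^1(\Om)$ with $\Delta u\in\LOm$ one has $\gamma_D u\in H^{1/2}(\dOm)\subset\LdOm$, so $\wti\Theta\gamma_D u$ is simply $\theta\,\gamma_D u$ regarded in $H^{-1/2}(\dOm)$, and the condition $\big(\wti\gamma_N+\wti\Theta\gamma_D\big)u=0$ in $H^{-1/2}(\dOm)$ becomes exactly \eqref{2.38a}. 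The only point demanding a bit of care — not a genuine obstacle — is the bookkeeping that ties the concrete multiplication form to the abstract form/operator correspondence of Appendix~\ref{sB} (confirming that it generates $M_\theta$ and that its dual-pairing extension is the evident one); once that is settled, the corollary is an immediate specialization of Theorem~\ref{t2.3}.
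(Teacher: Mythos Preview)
Your proof is correct, but it follows a slightly different route from the paper's own argument. You proceed by verifying that $M_\theta$ satisfies Hypothesis~\ref{h2.2} (form symmetric, bounded, bounded from below, with $\wti\Theta$ and $(\Theta+(1-c_\Theta)I_{\dOm})^{1/2}$ identified as the obvious bounded multiplication operators) and then invoke Theorem~\ref{t2.3} wholesale as a black box.

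The paper instead exploits the boundedness of $\theta$ more directly: since $|\langle\gamma_D u,\theta\gamma_D u\rangle_{1/2}|\le\|\theta\|_{L^\infty}\|\gamma_D u\|_{\LdOm}^2$, Lemma~\ref{l2.2a} makes the entire boundary form infinitesimally bounded with respect to the Neumann form $a_{-\Delta_{0,\Om}}$, so closedness of $a_{-\Delta_{\theta,\Om}}$ follows immediately from the perturbation result \eqref{B.45}--\eqref{B.46}, bypassing the decomposition into the nonnegative shifted piece and the nonclosable compensating piece that the general proof of Theorem~\ref{t2.3} requires. The paper then picks up the proof of Theorem~\ref{t2.3} only from \eqref{2.31} onward. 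Your approach is cleaner conceptually (it literally matches the phrasing of the corollary, ``$\Theta$ satisfies the conditions in Hypothesis~\ref{h2.2}''), while the paper's approach highlights that the special case admits a genuine simplification in the closedness argument. One small caveat with your route: Hypothesis~\ref{h2.2} literally asks for form domain equal to $H^{1/2}(\dOm)$, whereas your bounded form has closed domain all of $\LdOm$; this is harmless since the proof of Theorem~\ref{t2.3} only ever evaluates the form on $H^{1/2}(\dOm)$, but it may be why the paper opted for the direct perturbation argument rather than a literal verification of the hypothesis.
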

\begin{proof} By Lemma \ref{l2.2a}, the sesquilinear form 
\begin{equation} 
\langle \gamma_D u, \theta \gamma_D v \rangle_{1/2}, 
\quad u, v \in H^1(\Om),   \lb{2.38d}
\end{equation}
is infinitesimally form bounded with respect to the Neumann Laplacian form 
$a_{-\Delta_{0,\Om}}$. By \eqref{B.45} and \eqref{B.46} this in turn 
proves that the form $a_{-\Delta_{\Theta,\Om}}$ in \eqref{2.22} is closed 
and one can now follow the proof of Theorem \ref{t2.3} from \eqref{2.31} on, 
step by step. 
\end{proof}

\begin{remark}\lb{r2.4}
$(i)$ In the case of a smooth boundary $\partial\Om$, the boundary 
conditions in \eqref{2.38a} are also called ``classical'' boundary 
conditions (cf., e.g., \cite{Si78}); in the more general case of bounded 
Lipschitz domains we also refer to \cite{AW03} and \cite[Ch.\ 4]{Wa02} 
in this context. Next, we point out that, in \cite{LaSh}, the authors 
have dealt with the case of Laplace operators in bounded Lipschitz domains, 
equipped with local boundary conditions of Robin-type, with 
boundary data in $L^p(\dOm;d^{n-1}\omega)$, and produced nontangential 
maximal function estimates. For the case $p=2$, when our setting agrees
with that of \cite{LaSh}, some of our results in this section and the 
following are a refinement of those in \cite{LaSh}. Maximal $L^p$-regularity 
and analytic contraction semigroups of Dirichlet and Neumann Laplacians on 
bounded Lipschitz domains were studied in \cite{Wo07}. Holomorphic 
$C_0$-semigroups of the Laplacian with Robin boundary conditions on 
bounded Lipschitz domains have been discussed in \cite{Wa06}. 
Moreover, Robin boundary conditions for elliptic boundary value problems 
on arbitrary open domains were first studied by Maz'ya \cite{Ma81}, 
\cite[Sect.\ 4.11.6]{Ma85}, and subsequently in \cite{DD97} 
(see also \cite{Da00} which treats the case of the Laplacian). 
In addition, Robin-type boundary conditions involving measures on the 
boundary for very general domains $\Omega$ were intensively discussed 
in terms of quadratic forms and capacity methods in the literature, and 
we refer, for instance, to \cite{AW03}, \cite{AW03a}, \cite{BW06}, 
\cite{Wa02}, and the references therein.  \\
$(ii)$ In the special case $\theta=0$ (resp., $\wti \Theta =0$), that is, 
in the case of the Neumann Laplacian, we will also use the notation 
\begin{equation}
-\Delta_{N,\Om} = -\Delta_{0,\Om}.     \lb{2.38b}
\end{equation}
\end{remark}

The case of the Dirichlet Laplacian $-\Delta_{D,\Om}$ associated with 
$\Om$ formally corresponds to $\Theta =\infty$ and so we isolate it 
in the next result:

\begin{theorem}  \lb{t2.5}
Assume Hypothesis \ref{h2.1}. Then the Dirichlet Laplacian, 
$-\Delta_{D,\Om}$, defined by 
\begin{align}
-\Delta_{D,\Om} = -\Delta, \quad \dom(-\Delta_{D,\Om}) &= 
\big\{u\in H^1(\Om)\,\big|\, \Delta u \in L^2(\Om;d^n x); \, 
\gamma_D u =0 \text{ in $H^{1/2}(\dOm)$}\big\}   \no \\
&= \big\{u\in H_0^1(\Om)\,\big|\, \Delta u \in L^2(\Om;d^n x)\big\},  \lb{2.39}
\end{align}
is self-adjoint and strictly positive in $L^2(\Om;d^nx)$. Moreover,
\begin{equation}
\dom\big((-\Delta_{D,\Om})^{1/2}\big) = H^1_0(\Om).   \lb{2.40}
\end{equation}
\end{theorem}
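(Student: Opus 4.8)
The plan is to construct $-\Delta_{D,\Om}$ via its sesquilinear form, exactly as was done for the generalized Robin Laplacian in Theorem \ref{t2.3}, but now the form is much simpler because there is no boundary term. Specifically, I would introduce the form
\begin{equation*}
a_{-\Delta_{D,\Om}}(u,v) = \int_{\Om} d^n x\, \ol{(\nabla u)(x)}\cdot (\nabla v)(x), \quad u,v\in H^1_0(\Om),
\end{equation*}
with form domain $H^1_0(\Om)\times H^1_0(\Om)$. By the Poincar\'e inequality on the bounded domain $\Om$ (recall $\Om$ is bounded by Hypothesis \ref{h2.1}), there is $c>0$ with $\|u\|_{\LOm}^2 \le c\|\nabla u\|_{\LOm^n}^2$ for all $u\in H^1_0(\Om)$, so $a_{-\Delta_{D,\Om}}$ is $H^1_0(\Om)$-coercive; it is also trivially $H^1_0(\Om)$-bounded and symmetric. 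Since $H^1_0(\Om)$ is a closed subspace of $H^1(\Om)$, hence complete, the form is densely defined and closed in $\LOm\times\LOm$ by the criterion following \eqref{B.43a}. This immediately gives, via \eqref{B.30}, a self-adjoint operator which is strictly positive (the coercivity constant being a strictly positive lower bound), and \eqref{2.40} follows from the general form theory in \eqref{B.19}--\eqref{B.40}.

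The remaining work is to identify the operator domain produced by this form construction with the explicit set on the right-hand side of \eqref{2.39}. This is entirely parallel to the corresponding step in the proof of Theorem \ref{t2.3}. First I would show that the form operator is contained in $\{u\in H^1_0(\Om)\,|\,\Delta u\in L^2(\Om;d^nx)\}$: if $u$ is in the form domain of $-\Delta_{D,\Om}$ with $-\Delta_{D,\Om} u = w_u$, then testing against $v\in C_0^\infty(\Om)\hookrightarrow H^1_0(\Om)$ shows $w_u = -\Delta u$ in $\cD'(\Om)$, so $\Delta u\in L^2(\Om;d^nx)$, while membership $u\in H^1_0(\Om)$ is built into the form domain and is equivalent to $\ga_D u=0$ by \eqref{H-zer}. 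Conversely, if $u\in H^1_0(\Om)$ with $\Delta u\in L^2(\Om;d^nx)$, then for arbitrary $v\in H^1_0(\Om)$ the weak Neumann trace pairing $\langle\ga_D v,\wti\ga_N u\rangle_{1/2}$ vanishes because $\ga_D v=0$, so the Green-type identity \eqref{2.9} (or equivalently \eqref{Ht-r3} applied to $G=\ol{v}\nabla u$) gives $\int_\Om d^n x\,\ol{\nabla v}\,\nabla u = \int_\Om d^n x\,\ol{v}(-\Delta u)$, which is exactly the defining condition placing $u$ in the form operator's domain with $w_u=-\Delta u$. This establishes both inclusions in \eqref{2.39}; the equality of the two descriptions of the domain within \eqref{2.39} is just \eqref{H-zer}.

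I do not anticipate a serious obstacle here — this is a standard and substantially easier analogue of Theorem \ref{t2.3}, the main simplification being that the boundary form is absent and hence all the delicate noncloseability analysis surrounding \eqref{2.25}--\eqref{2.29} is unnecessary. The one point requiring a little care is ensuring that the integration-by-parts identity used in the converse inclusion is legitimately available at this level of generality: for $u\in H^1_0(\Om)$ with $\Delta u\in L^2(\Om;d^nx)$ and $v\in H^1_0(\Om)$, one must invoke the weak Neumann trace machinery of \eqref{2.8}, \eqref{2.9} (valid since $L^2(\Om;d^nx)\hookrightarrow H^s(\Om)$ for $s<0$, in particular $s>-1/2$ is not needed — one uses $s=0>-1/2$) rather than any classical divergence theorem, since $\Om$ is only Lipschitz. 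Once that identity is in hand, the argument closes exactly as in Theorem \ref{t2.3}.
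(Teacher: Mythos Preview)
Your proposal is correct and follows essentially the same route as the paper's own proof: introduce the Dirichlet form on $H^1_0(\Om)\times H^1_0(\Om)$, obtain coercivity from Poincar\'e, invoke the form machinery of Appendix~\ref{sB} to get self-adjointness, strict positivity, and \eqref{2.40}, and then verify both inclusions of the domain by testing first against $C_0^\infty(\Om)$ and then using the integration-by-parts identity (with the boundary term vanishing since $\gamma_D v=0$). The only minor remark is that your parenthetical alternative via \eqref{Ht-r3} with $G=\ol v\,\nabla u$ is not obviously applicable at this regularity, but your primary invocation of \eqref{2.9} is exactly right and matches what the paper does in \eqref{2.41g}.
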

\begin{proof}
We introduce the sesquilinear form $a_{D,\Om}(\dott,\dott)$ on the domain 
$H^1_0 (\Om) \times H^1_0 (\Om)$ by 
\begin{equation}
a_{D,\Om} (u,v) = \int_{\Om} d^nx \, \ol {(\nabla u)(x)} \, (\nabla v)(x), 
\quad u, v \in H^1_0(\Om).   \lb{2.41}
\end{equation}
Clearly, $a_{D,\Om}$ is symmetric, nonnegative, and well-defined on 
$H_0^1(\Om) \times H_0^1(\Om)$. 
Since $\Om$ is bounded, that is, $|\Om|<\infty$, $H_0^1(\Om)$-coercivity of 
$a_{D,\Om}$ then immediately follows from Poincar{\' e}'s inequality for 
$H_0^1(\Om)$-functions (cf., e.g., \cite[Theorem\ I.7.6]{Wl87}).

Next we introduce the operator $-\Delta_{D,\Om}$ in $L^2(\Om;d^n x)$ by
\begin{align}
& \dom(-\Delta_{D,\Om}) = \bigg\{v \in H_0^1(\Om)\,\bigg|\, \, 
\text{there exists an $w_v \in L^2(\Om;d^n x)$ such that}  \no \\
& \hspace*{2.8cm} \int_{\Om} d^n x \, \ol{\nabla w} \, 
\nabla v=\int_{\Om}d^nx\,\ol{w}w_v\text{ for all $w\in H_0^1(\Om)$}\bigg\},
\no  \\
& -\Delta_{D,\Om} u = w_u, \quad u \in \dom(-\Delta_{D,\Om}).    \lb{2.41a} 
\end{align}
By the formalism displayed in \eqref{B.1}--\eqref{B.15}, $-\Delta_{D,\Om}$ is 
self-adjoint in $L^2(\Om;d^n x)$ and \eqref{2.40} holds. Taking 
$v\in C_0^\infty(\Omega) \hookrightarrow H^1_0(\Om)$, one concludes   
\begin{equation}
\int_{\Om}d^nx\,{\ol v}w_u=-\int_{\Om}d^n x\,{\ol v}\,\Delta u \, \text{ in } 
\, \cD^\prime(\Om) \, 
\text{ and hence } \, w_u = - \Delta u \, \text{ in } \, \cD^\prime(\Om).     
\lb{2.41b}
\end{equation}

Since $v\in H^1_0(\Om)$ if and only if $v\in H^1(\Om)$ and $\gamma_D v=0$ in 
$H^{1/2} (\dOm)$ (cf., e.g., \cite[Corollary 1.5.1.6 ]{Gr85}), and 
$v\in \dom(-\Delta_{D,\Om})$ implies $\Delta v \in L^2(\Om; d^n x)$, 
one computes for $u \in \dom(-\Delta_{D,\Om})$ and $v\in H^1_0(\Om)$ that 
\begin{equation}
\int_{\Om} d^n x \, \ol{\nabla v} \, \nabla u 
= - \int_{\Om}  d^n x \, \ol v \Delta u 
= \int_{\Om} d^n x \, \ol v w_u.
\end{equation}
Thus, $w_u = - \Delta u \in L^2(\Om; d^n x)$ and hence 
\begin{equation}
\dom(-\Delta_{D,\Om}) \subseteq \big\{v\in H_0^1(\Om)\,\big|\, 
\Delta v \in L^2(\Om; d^n x)\big\}.  \lb{2.41f}
\end{equation}
Finally, assume that $u\in \big\{v\in H_0^1(\Om)\,\big|\, 
\Delta v \in L^2(\Om; d^n x)\big\}$,  
$w\in H_0^1(\Om)$, and let $w_u =-\Delta u \in L^2(\Om; d^n x)$. Then, 
\begin{equation}
\int_{\Om} d^n x \, {\ol w} w_u 
= - \int_{\Om} d^n x \, {\ol w} \, {\rm div} (\nabla u)  
= \int_{\Om} d^n x \, \ol{\nabla w} \, \nabla u,    \lb{2.41g}
\end{equation}
since $\gamma_D w =0$ in $L^2(\dOm; d^{n-1} \omega)$.
Thus, applying \eqref{2.41a}, one concludes that 
$u\in \dom(-\Delta_{D,\Om})$ and hence  
\begin{equation}
\dom(-\Delta_{D,\Om}) \supseteq \big\{v\in H_0^1(\Om)\,\big|\, 
\Delta v \in L^2(\Om; d^n x)\big\},  \lb{2.41h}
\end{equation}
finishing the proof of Theorem \ref{t2.5}.
\end{proof}

Since $\Om$ is open and bounded, it is well-known that $-\Delta_{D,\Om}$ has 
purely discrete spectrum contained in $(0,\infty)$, in particular,  
\begin{equation}
\sigma_{\rm ess}(-\Delta_{D,\Om})=\emptyset
\end{equation} 
(this follows from \eqref{2.40} since 
$H^1_0(\Om)$ embeds compactly into $\LOm$; the latter fact holds for arbitrary 
open, bounded sets $\Om\subset\bbR^n$, cf., e.g., \cite[Theorem V.4.18]{EE89}).

While the principal objective of this paper was to prove the results in 
this section and the subsequent for minimally smooth domains $\Omega$, 
it is of interest to study similar problems when Hypothesis \ref{h2.1} 
is further strengthen to: 

\begin{hypothesis} \lb{h2.8}
Let $n\in\bbN$, $n\geq 2$, and assume that $\Omega\subset{\bbR}^n$ is
a bounded domain of class $C^{1,r}$ for some $1/2 < r <1$. 
\end{hypothesis}

We refer to Appendix \ref{sA} for some details on $C^{1,r}$-domains.

Correspondingly, the natural strengthening of Hypothesis \ref{h2.2} reads: 

\begin{hypothesis} \lb{h2.9}
In addition to Hypothesis \ref{h2.2} and \ref{h2.8} assume that 
\begin{equation}
\wti \Theta \in \cB_{\infty}\big(H^{3/2}(\dOm), H^{1/2}(\dOm)\big).    
\lb{2.43}
\end{equation} 
\end{hypothesis}

We note that a sufficient condition for \eqref{2.43} to hold is
\begin{equation}
\wti \Theta \in \cB\big(H^{3/2-\varepsilon}(\dOm), H^{1/2}(\dOm)\big) \, 
\text{ for some } \, \varepsilon > 0.     \lb{2.45} 
\end{equation} 

\noindent{\bf Notational comment.} To avoid introducing an additional 
sub- or superscript into our notation of $-\Delta_{\Theta,\Om}$ and 
$-\Delta_{D,\Om}$, we will use the same symbol for these operators 
irrespective of whether the pair of Hypothesis \ref{h2.1} and \ref{h2.2} 
or the pair of Hypothesis \ref{h2.8} and \ref{h2.9} is involved. 
Our results will be carefully stated so that it is always evident 
which set of hypotheses is used.  

Next, we discuss certain regularity results for fractional powers of 
the resolvents of the Dirichlet and Robin Laplacians, first in Lipschitz
then in smoother domains. 

\begin{lemma} \lb{l2.6}
Assume Hypothesis \ref{h2.1} in connection with $-\Delta_{D,\Om}$ and 
Hypothesis \ref{h2.2} in connection with $-\Delta_{\Theta,\Om}$. Then 
the following boundedness properties hold for all $q\in [0,1]$ and 
$z\in\bbC\backslash[0,\infty)$, 
\begin{align}
(-\Delta_{D,\Om}-zI_{\Om})^{-q/2},\, (-\Delta_{\Theta,\Om}-zI_{\Om})^{-q/2}
\in\cB\big(\LOm,H^{q}(\Om)\big). \lb{2.41i}
\end{align}
\end{lemma}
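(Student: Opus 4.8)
The plan is to treat both operators simultaneously, writing $A$ for either $-\Delta_{D,\Om}$ or $-\Delta_{\Theta,\Om}$, and to reduce the claim \eqref{2.41i} to the two endpoint cases $q=0$ and $q=1$ via interpolation. The case $q=0$ is trivial, since $(A-zI_\Om)^{-1}$ (and hence $(A-zI_\Om)^{-0/2}=I_\Om$) is bounded on $\LOm$ for $z\in\bbC\backslash[0,\infty)$, both operators being self-adjoint with spectrum in $[0,\infty)$ (strictly positive in the Dirichlet case, bounded from below in the Robin case — here one first shifts $z$ if $c_\Theta<0$, but since $z\in\bbC\backslash[0,\infty)$ stays off the spectrum for $z$ in a suitable set, and in any case the resolvent is bounded). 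The case $q=1$ is the heart of the matter: I would show
\begin{equation}
(-\Delta_{D,\Om}-zI_\Om)^{-1/2}\in\cB\big(\LOm,H^1(\Om)\big), \qquad
(-\Delta_{\Theta,\Om}-zI_\Om)^{-1/2}\in\cB\big(\LOm,H^1(\Om)\big). \notag
\end{equation}
This follows from the form characterizations already established: by \eqref{2.21} one has $\dom\big(|-\Delta_{\Theta,\Om}|^{1/2}\big)=H^1(\Om)$, and by \eqref{2.40}, $\dom\big((-\Delta_{D,\Om})^{1/2}\big)=H^1_0(\Om)\hookrightarrow H^1(\Om)$. Since $A$ is self-adjoint and bounded from below, $(A-zI_\Om)^{-1/2}$ maps $\LOm$ boundedly onto $\dom\big(|A|^{1/2}\big)$ (after the harmless shift making $A$ strictly positive), and the graph norm of $|A|^{1/2}$ is equivalent to the $H^1(\Om)$-norm on that domain by the closed graph theorem together with the coercivity estimate \eqref{2.27} (in the Robin case) or Poincar\'e's inequality (in the Dirichlet case). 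Thus $(A-zI_\Om)^{-1/2}\in\cB\big(\LOm,H^1(\Om)\big)$.

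With the endpoints in hand, I would interpolate. For $q\in(0,1)$, write $(A-zI_\Om)^{-q/2}=\big[(A-zI_\Om)^{-1/2}\big]^{q}\cdot\big[I_\Om\big]^{1-q}$ in the sense of the functional calculus; more precisely, I would use that $\{(A-zI_\Om)^{-q/2}\}_{q\in[0,1]}$ is, up to the shift, an analytic family of bounded operators on $\LOm$ and that for $q=0$ it lands in $\LOm=H^0(\Om)$ while for $q=1$ it lands in $H^1(\Om)$. Complex interpolation of the Sobolev scale, $\big[\LOm,H^1(\Om)\big]_q=H^q(\Om)$ for $q\in(0,1)$ (valid on bounded Lipschitz domains), together with the Stein interpolation theorem for the analytic family $q\mapsto (A-zI_\Om)^{-q/2}$, then yields $(A-zI_\Om)^{-q/2}\in\cB\big(\LOm,H^q(\Om)\big)$ for all $q\in[0,1]$. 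An alternative, perhaps cleaner, route avoids Stein interpolation: observe that $(A-zI_\Om)^{-q/2}=(A-zI_\Om)^{-1/2}\,(A-zI_\Om)^{(1-q)/2}$, where $(A-zI_\Om)^{(1-q)/2}$ need not be bounded; instead factor as $(A-zI_\Om)^{-q/2}=\big[(A-zI_\Om)^{-1/2}\big]\circ\big[(A-zI_\Om)^{(1/2)-(q/2)}(A-zI_\Om)^{-1/2}\big]$ is not bounded either, so the genuinely robust argument is the interpolation one: $(A-zI_\Om)^{-q/2}$ maps $\LOm$ into $\dom\big(|A-zI_\Om|^{q/2}\big)$, and one identifies this domain, via the spectral theorem and quadratic interpolation of domains of fractional powers, with $\big[\LOm,\dom\big(|A-zI_\Om|^{1/2}\big)\big]_q=\big[\LOm,H^1(\Om)\big]_q=H^q(\Om)$ — the last step again using complex interpolation of Sobolev spaces on Lipschitz domains and the already-known identification of the half-power domains.

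The main obstacle is the interpolation identity $\big[\LOm,H^1(\Om)\big]_q=H^q(\Om)$ and, in the Dirichlet case, the parallel identity $\big[\LOm,H^1_0(\Om)\big]_q=H^q_0(\Om)\ (=H^q(\Om)$ for $q\le 1/2$, with a distinction at $q=1/2$); these are standard but require invoking the correct references for bounded Lipschitz domains (they hold since $\Om$ is an extension domain for the full Sobolev scale in the relevant range). One must also be careful that the shift $A\rightsquigarrow A+cI_\Om$ used to achieve strict positivity does not affect the domains of fractional powers (it does not, since $A$ and $A+cI_\Om$ have the same form domain), and that the $z$-dependence is uniform only on compact subsets of $\bbC\backslash[0,\infty)$, which is all that is claimed. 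Apart from bookkeeping of these points, the proof is a routine combination of the form-domain identifications \eqref{2.21}, \eqref{2.40} with abstract interpolation of fractional-power domains.
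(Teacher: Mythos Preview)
Your proposal is correct and follows essentially the same strategy the paper indicates: the paper does not give a detailed proof but refers to \cite[Lemma~A.2]{GLMZ05} and says the key ingredients are the domain inclusions into $H^1(\Om)$ together with real interpolation methods, which is precisely your endpoint-plus-interpolation scheme. One small point worth noting: the paper's brief hint lists $\dom(-\Delta_{D,\Om})\subset H^1(\Om)$ and $\dom(-\Delta_{\Theta,\Om})\subset H^1(\Om)$ as the relevant inclusions, whereas you (correctly) use the sharper form-domain identifications \eqref{2.21} and \eqref{2.40}; the latter are what is actually needed to get the $q=1$ endpoint $(A-zI_\Om)^{-1/2}\in\cB(L^2(\Om;d^nx),H^1(\Om))$, so your emphasis is the right one. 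Your use of complex interpolation in place of the paper's real interpolation is immaterial here, since the Sobolev scale $H^q(\Om)$ on a bounded Lipschitz domain is obtained by either method in the range $0\le q\le 1$.
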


The fractional powers in \eqref{2.41i} (and in subsequent analogous cases) 
are defined via the functional calculus implied by the spectral theorem 
for self-adjoint operators. As discussed in \cite[Lemma A.2]{GLMZ05} in the 
closely related situation of Lemma \ref{l2.12}, the key ingredients in 
proving Lemma \ref{l2.6} are the inclusions
\begin{equation}
\dom(-\Delta_{D,\Om}) \subset H^1(\Om), \quad
\dom(-\Delta_{\Theta,\Om}) \subset H^1(\Om)
\end{equation}
and real interpolation methods. The above results should be compared 
with its analogue for smoother domains. Specifically, we have: 

\begin{lemma} \lb{l2.12}
Assume Hypothesis \ref{h2.8} in connection with $-\Delta_{D,\Om}$ and 
Hypothesis \ref{h2.9} in connection with $-\Delta_{\Theta,\Om}$. Then the 
following boundedness properties hold for all $q\in [0,1]$ and 
$z\in\bbC\backslash[0,\infty)$, 
\begin{align}
(-\Delta_{D,\Om}-zI_{\Om})^{-q},\, (-\Delta_{\Theta,\Om}-zI_{\Om})^{-q}
\in\cB\big(\LOm,H^{2q}(\Om)\big). \lb{2.83}
\end{align}
\end{lemma}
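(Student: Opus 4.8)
The plan is to mimic the proof of Lemma~\ref{l2.6} (equivalently, the argument in \cite[Lemma A.2]{GLMZ05}), but now exploiting the stronger domain regularity available under Hypotheses~\ref{h2.8} and \ref{h2.9}. The two key inputs are: first, the elliptic regularity statements
\begin{equation*}
\dom(-\Delta_{D,\Om}) \subset H^2(\Om), \qquad
\dom(-\Delta_{\Theta,\Om}) \subset H^2(\Om),
\end{equation*}
valid for bounded $C^{1,r}$-domains with $(1/2)<r<1$ (this is where the $C^{1,r}$-hypothesis and the smoothing assumption \eqref{2.43} on $\wti\Theta$ enter, the latter ensuring that the generalized Robin boundary condition does not destroy the $H^2$-regularity of the domain of the operator); and second, the standard real (or complex) interpolation identity $[\,\LOm,\dom(A)\,]_\te = \dom(A^\te)$ for a nonnegative self-adjoint operator $A$, together with $[\,\LOm,H^2(\Om)\,]_q = H^{2q}(\Om)$ for $q\in[0,1]$ on a bounded $C^{1,r}$-domain.

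First I would treat the Dirichlet case. For $z\in\bbC\backslash[0,\infty)$ the operator $(-\Delta_{D,\Om}-zI_\Om)^{-1}$ maps $\LOm$ boundedly onto $\dom(-\Delta_{D,\Om})\subset H^2(\Om)$, so $(-\Delta_{D,\Om}-zI_\Om)^{-1}\in\cB(\LOm,H^2(\Om))$; trivially it also lies in $\cB(\LOm,\LOm)=\cB(\LOm,H^0(\Om))$. Interpolating these two endpoint mapping properties with parameter $q\in[0,1]$ gives
\begin{equation*}
(-\Delta_{D,\Om}-zI_\Om)^{-q}\in\cB\big(\LOm,H^{2q}(\Om)\big),
\end{equation*}
where on the operator side one uses that the fractional power $(-\Delta_{D,\Om}-zI_\Om)^{-q}$ is, up to bounded factors depending on $z$, the complex interpolant of $I_\Om$ and $(-\Delta_{D,\Om}-zI_\Om)^{-1}$ (this is the content of the Heinz–Kato/moment-inequality type argument used in \cite[Lemma A.2]{GLMZ05}; concretely one writes $(-\Delta_{D,\Om}-zI_\Om)^{-q}=(-\Delta_{D,\Om}-zI_\Om)^{-1}\,(-\Delta_{D,\Om}-zI_\Om)^{1-q}$ and controls the second factor on $\dom(-\Delta_{D,\Om})$ via the spectral theorem). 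The generalized Robin case is identical once we know $\dom(-\Delta_{\Theta,\Om})\subset H^2(\Om)$: the operator is self-adjoint and bounded from below by Theorem~\ref{t2.3}, so after shifting by a constant to make it invertible the same endpoint-plus-interpolation scheme applies. If $-\Delta_{\Theta,\Om}$ is not already positive, replace it by $-\Delta_{\Theta,\Om}+cI_\Om$ with $c>-c_\Theta$; the estimates \eqref{2.83} are insensitive to this shift since $\bbC\backslash[0,\infty)$-resolvents of the shifted and unshifted operators differ by a bounded, boundedly invertible operator on each $H^{2q}(\Om)$.

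The main obstacle is establishing $\dom(-\Delta_{\Theta,\Om})\subset H^2(\Om)$ under Hypothesis~\ref{h2.9}. For the Dirichlet Laplacian, $H^2$-regularity of the domain on bounded $C^{1,r}$-domains is classical (and was used in \cite{GMZ07}). For the generalized Robin operator one has $u\in H^1(\Om)$ with $\Delta u\in\LOm$ and $(\wti\gamma_N+\wti\Theta\gamma_D)u=0$ in $H^{-1/2}(\dOm)$; a bootstrap is needed. Since $u\in H^1(\Om)$, $\gamma_D u\in H^{1/2}(\dOm)$ and Lemma~\ref{Neu-tr} combined with $\Delta u\in\LOm$ gives some control of $\wti\gamma_N u$, but to reach $u\in H^2(\Om)$ one needs $\gamma_N u\in H^{1/2}(\dOm)$, i.e. $\wti\Theta\gamma_D u\in H^{1/2}(\dOm)$, which is precisely why \eqref{2.43} (or the sufficient condition \eqref{2.45}) is imposed: it upgrades $\gamma_D u\in H^{3/2}(\dOm)$ to $\wti\Theta\gamma_D u\in H^{1/2}(\dOm)$, after which the $C^{1,r}$-regularity theory for the Neumann problem (in the form used for $-\Delta_{N,\Om}$) closes the loop and yields $u\in H^2(\Om)$ with the corresponding estimate. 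The compactness clause ``$\cB_\infty$'' in \eqref{2.43} is not needed for \eqref{2.83} itself — mere boundedness suffices — but it is natural to record that it costs nothing here. Once $\dom(-\Delta_{\Theta,\Om})\subset H^2(\Om)$ is in hand, the interpolation step is routine and identical to the Dirichlet case, completing the proof.
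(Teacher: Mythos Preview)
Your approach is exactly the paper's: the authors explicitly say the key ingredients are the inclusions $\dom(-\Delta_{D,\Om})\subset H^2(\Om)$, $\dom(-\Delta_{\Theta,\Om})\subset H^2(\Om)$ together with real interpolation, and they defer to \cite[Lemma~A.2]{GLMZ05} for the interpolation step, just as you do.

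One remark on the auxiliary sketch you give for the Robin inclusion $\dom(-\Delta_{\Theta,\Om})\subset H^2(\Om)$: as written it is circular. You invoke \eqref{2.43} to send $\gamma_D u\in H^{3/2}(\dOm)$ to $\wti\Theta\gamma_D u\in H^{1/2}(\dOm)$, but a priori you only know $u\in H^1(\Om)$, hence $\gamma_D u\in H^{1/2}(\dOm)$; the membership $\gamma_D u\in H^{3/2}(\dOm)$ is what you are trying to prove. The paper establishes this inclusion separately (Theorem~\ref{t2.10}) by a layer-potential argument: one writes the solution of the associated boundary value problem as a Newtonian potential plus a single layer $\mathcal{S}_{-1}g$, and shows that $-\tfrac12 I_{\dOm}+K^{\#}_{-1}+\wti\Theta\gamma_D\mathcal{S}_{-1}$ is Fredholm of index zero on $H^{1/2}(\dOm)$, using \eqref{2.43} to guarantee $\wti\Theta\gamma_D\mathcal{S}_{-1}\in\cB_\infty(H^{1/2}(\dOm))$. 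This avoids the bootstrap circularity. For the purposes of Lemma~\ref{l2.12} itself, though, you may simply cite that theorem for the inclusion and your interpolation argument goes through.
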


As explained in \cite[Lemma\ A.2]{GLMZ05}, the key ingredients in 
proving Lemma \ref{l2.12} are the inclusions
\begin{equation}
\dom(-\Delta_{D,\Om}) \subset H^2(\Om), \quad
\dom(-\Delta_{\Theta,\Om}) \subset H^2(\Om)
\end{equation}
and real interpolation methods.

Moving on, we now consider mapping properties of powers of the 
resolvents of generalized Robin Laplacians multiplied (to the left) 
by the Dirichlet boundary trace operator:

\begin{lemma} \lb{l2.7}
Assume Hypothesis \ref{h2.1} and let $\eps > 0$, 
$z\in\bbC\backslash[0,\infty)$. Then,
\begin{align}
\ga_D (-\Delta_{\Theta,\Om}-zI_{\Om})^{-(1+\eps)/4} \in
\cB\big(\LOm,\LdOm\big).  \lb{2.42}
\end{align}
\end{lemma}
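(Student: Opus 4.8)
The plan is to factor $\gamma_D (-\Delta_{\Theta,\Om}-zI_\Om)^{-(1+\eps)/4}$ through an intermediate Sobolev space on $\Om$ and then invoke the mapping properties of $\gamma_D$ together with Lemma~\ref{l2.6}. First I would note that by Lemma~\ref{l2.6} (applied with $q = (1+\eps)/2 \in (0,1)$ for $\eps$ small, which suffices by monotonicity of the scale), one has
\begin{equation}
(-\Delta_{\Theta,\Om}-zI_{\Om})^{-(1+\eps)/4} \in \cB\big(\LOm, H^{(1+\eps)/2}(\Om)\big). \notag
\end{equation}
On the other hand, the Dirichlet trace operator $\gamma_D$ is bounded from $H^s(\Om)$ into $H^{s-(1/2)}(\dOm) \hookrightarrow \LdOm$ for $1/2 < s < 3/2$, by \eqref{2.6}. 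Taking $s = (1+\eps)/2$, which lies in $(1/2, 3/2)$ precisely when $\eps > 0$ is small, gives
\begin{equation}
\gamma_D \in \cB\big(H^{(1+\eps)/2}(\Om), H^{\eps/2}(\dOm)\big) \hookrightarrow \cB\big(H^{(1+\eps)/2}(\Om), \LdOm\big). \notag
\end{equation}
Composing the two displayed mappings yields \eqref{2.42}.

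The only point that needs a little care is the restriction $\eps > 0$ \emph{small} versus the statement's \emph{arbitrary} $\eps > 0$. This is harmless: the resolvent power $(-\Delta_{\Theta,\Om}-zI_\Om)^{-(1+\eps)/4}$ is, for larger $\eps$, obtained from the small-$\eps$ case by composing on the right with an additional bounded power of the (bounded) resolvent $(-\Delta_{\Theta,\Om}-zI_\Om)^{-1}$, which maps $\LOm$ into $\LOm$ boundedly (indeed into $H^1(\Om)$ by Lemma~\ref{l2.6} with $q=1$). Concretely, write $(1+\eps)/4 = (1+\eps_0)/4 + \delta$ with $\eps_0 \in (0,1)$ fixed and $\delta \ge 0$, and factor accordingly; boundedness of $\gamma_D(-\Delta_{\Theta,\Om}-zI_\Om)^{-(1+\eps_0)/4}$ then absorbs the extra factor $(-\Delta_{\Theta,\Om}-zI_\Om)^{-\delta} \in \cB(\LOm)$. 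Thus it suffices to treat $\eps \in (0,1)$, and in fact one need only verify the case of a single such $\eps$, the rest following by the same absorption argument.

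I do not anticipate a genuine obstacle here; the statement is essentially a bookkeeping composition of Lemma~\ref{l2.6} with the standard trace theorem \eqref{2.6}. The mild subtlety — ensuring the Sobolev exponent $(1+\eps)/2$ stays strictly inside the admissible window $(1/2,3/2)$ for the trace map, and that the non-endpoint hypothesis of Lemma~\ref{l2.6} ($q \in [0,1]$) is met — is resolved by the reduction to small $\eps$ described above, so the proof is short.
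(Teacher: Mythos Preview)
Your proof is correct and follows exactly the approach indicated in the paper: compose the mapping property of the fractional resolvent from Lemma~\ref{l2.6} with the trace theorem \eqref{2.6}. The paper merely cites \cite[Lemma~6.9]{GLMZ05} together with Lemma~\ref{l2.6} and \eqref{2.6}--\eqref{2.7}, so you have supplied precisely the argument the paper points to, including the harmless reduction to small~$\eps$.
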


As in \cite[Lemma 6.9]{GLMZ05}, Lemma \ref{l2.7} follows from Lemma \ref{l2.6}
and from \eqref{2.6} and \eqref{2.7}. Once again, we wish to contrast this
with the corresponding result for smoother domains, recorded below. 

\begin{lemma} \lb{l2.13}
Assume Hypothesis \ref{h2.8} in connection with $-\Delta_{D,\Om}$ and 
Hypothesis \ref{h2.9} in connection with $-\Delta_{\Theta,\Om}$, and let 
$\eps>0$, $z\in\bbC\backslash[0,\infty)$. Then,
\begin{align}
\ga_N(-\Delta_{D,\Om}-zI_{\Om})^{-(3+\eps)/4},
\ga_D(-\Delta_{\Theta,\Om}-zI_{\Om})^{-(1+\eps)/4} \in
\cB\big(\LOm,\LdOm\big).  \lb{2.85}
\end{align}
\end{lemma}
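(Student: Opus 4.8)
The plan is to reduce Lemma \ref{l2.13} to the Lipschitz-domain estimates of Lemma \ref{l2.6} and Lemma \ref{l2.7}, upgraded via the smoothness hypotheses in Hypothesis \ref{h2.8} and Hypothesis \ref{h2.9}. The second operator in \eqref{2.85}, namely $\ga_D(-\Delta_{\Theta,\Om}-zI_{\Om})^{-(1+\eps)/4}$, is literally the content of Lemma \ref{l2.7}, which holds already under Hypothesis \ref{h2.1}; since Hypothesis \ref{h2.8} strengthens Hypothesis \ref{h2.1} and Hypothesis \ref{h2.9} strengthens Hypothesis \ref{h2.2}, this half is immediate and requires no new argument. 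So the real work is the Neumann trace term $\ga_N(-\Delta_{D,\Om}-zI_{\Om})^{-(3+\eps)/4}$.

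First I would recall that under Hypothesis \ref{h2.8}, Lemma \ref{l2.12} gives the improved mapping property $(-\Delta_{D,\Om}-zI_{\Om})^{-q}\in\cB\big(\LOm,H^{2q}(\Om)\big)$ for all $q\in[0,1]$. Taking $q=(3+\eps)/4$ (which lies in $(0,1)$ for small $\eps$), one obtains
\begin{align}\lb{plan-1}
(-\Delta_{D,\Om}-zI_{\Om})^{-(3+\eps)/4}\in\cB\big(\LOm,H^{(3+\eps)/2}(\Om)\big).
\end{align}
Now I want to compose this on the left with $\ga_N$. Since $(3+\eps)/2 = 1 + (1+\eps)/2$ and $(1+\eps)/2\in(1/2,3/2)$ for $\eps\in(0,2)$, the strong Neumann trace operator in \eqref{2.7} applies with $s=(1+\eps)/2$, giving $\ga_N\in\cB\big(H^{(3+\eps)/2}(\Om),\LdOm\big)$. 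Chaining \eqref{plan-1} with this boundedness yields
\begin{align}\lb{plan-2}
\ga_N(-\Delta_{D,\Om}-zI_{\Om})^{-(3+\eps)/4}\in\cB\big(\LOm,\LdOm\big),
\end{align}
which is the assertion. Thus the structure of the proof exactly mirrors the remark following Lemma \ref{l2.7}: Lemma \ref{l2.13} follows from Lemma \ref{l2.12} together with \eqref{2.6} and \eqref{2.7}, in the same way Lemma \ref{l2.7} followed from Lemma \ref{l2.6} and the same trace mappings.

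The only subtlety — and the step I would flag as the place to be careful rather than a genuine obstacle — is the bookkeeping at the endpoint: one must ensure that for the chosen $\eps$ the exponent $(3+\eps)/4$ stays strictly below $1$ so that Lemma \ref{l2.12} is applicable, and simultaneously that the resulting Sobolev index $(3+\eps)/2$ is strictly above $1$ (so that one is genuinely in the range $1/2<s<3/2$ of \eqref{2.7} after subtracting $1$) and strictly below $5/2$. For $\eps\in(0,1)$ both requirements hold, and since the statement is for arbitrary $\eps>0$ one may harmlessly shrink $\eps$; by the ordering $(-\Delta_{D,\Om}-zI_\Om)^{-(3+\eps)/4}=(-\Delta_{D,\Om}-zI_\Om)^{-(3+\eps')/4}(-\Delta_{D,\Om}-zI_\Om)^{-(\eps-\eps')/4}$ with the second (bounded) factor mapping $\LOm$ into itself, the case of large $\eps$ reduces to that of small $\eps$. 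This monotonicity remark is what lets one state the lemma for all $\eps>0$ while only proving it for small $\eps$.
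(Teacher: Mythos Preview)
Your proposal is correct and matches the paper's own argument essentially verbatim: the paper simply remarks that Lemma \ref{l2.13} follows from Lemma \ref{l2.12} together with \eqref{2.6} and \eqref{2.7}, exactly as you spell out. Your extra care with the range of $\eps$ and the reduction of large $\eps$ to small $\eps$ via a resolvent-power splitting is a harmless (and welcome) elaboration of details the paper leaves implicit.
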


As in \cite[Lemma 6.9]{GLMZ05}, Lemma \ref{l2.13} follows from 
Lemma \ref{l2.12} and from \eqref{2.6} and \eqref{2.7}.
In contrast to Lemma \ref{l2.13} 
under the stronger Hypothesis \ref{h2.9}, we cannot obtain an analog 
of \eqref{2.85} for $-\Delta_{D,\Om}$ under the weaker Hypothesis \ref{h2.1}.

The analog of Theorem \ref{t2.3} for smoother domains reads as follows:

\begin{theorem}  \lb{t2.10}
Assume Hypothesis \ref{h2.9}. Then the generalized Robin Laplacian,  
$-\Delta_{\Theta,\Om}$, defined by 
\begin{equation}
-\Delta_{\Theta,\Om} = -\Delta, \quad \dom(-\Delta_{\Theta,\Om}) 
=\big\{u\in H^2(\Om)\,\big|\,  
\big(\gamma_N +\wti\Theta\gamma_D\big) u =0 
\text{ in $H^{1/2} (\dOm)$}\big\},    
\lb{2.46}
\end{equation}
is self-adjoint and bounded from below in $L^2(\Om;d^nx)$. Moreover,
\begin{equation}
\dom\big(|-\Delta_{\Theta,\Om}|^{1/2}\big) = H^1(\Om).   
\lb{2.47}
\end{equation}
\end{theorem}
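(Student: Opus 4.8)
The plan is to derive Theorem~\ref{t2.10} from Theorem~\ref{t2.3} together with a boundary regularity statement. Since Hypothesis~\ref{h2.9} implies Hypothesis~\ref{h2.2}, Theorem~\ref{t2.3} already provides a self-adjoint operator $-\Delta_{\Theta,\Om}$, bounded from below, with domain given by \eqref{2.20} and with $\dom\big(|-\Delta_{\Theta,\Om}|^{1/2}\big)=H^1(\Om)$, so that \eqref{2.47} is automatic. Hence everything comes down to showing that, under Hypothesis~\ref{h2.9}, the set in \eqref{2.20} coincides with the right-hand side of \eqref{2.46}. The inclusion ``$\supseteq$'' is immediate: if $u\in H^2(\Om)$, then $u\in H^1(\Om)$, $\Delta u\in L^2(\Om;d^nx)$, and for such $u$ the weak Neumann trace $\wti\gamma_N u$ of \eqref{2.8} coincides with the strong Neumann trace $\gamma_N u$ of \eqref{2.7}; thus $(\gamma_N+\wti\Theta\gamma_D)u=0$ is precisely the condition appearing in \eqref{2.20}.

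For the reverse inclusion $\dom(-\Delta_{\Theta,\Om})\subseteq H^2(\Om)$ --- the only place where the passage from Lipschitz to $C^{1,r}$ enters --- I would argue as follows. Fix $z\in(-\infty,0)$ lying in the resolvent set of both $-\Delta_{D,\Om}$ and $-\Delta_{\Theta,\Om}$ (possible since both operators are bounded from below). Given $u\in\dom(-\Delta_{\Theta,\Om})$, set $f:=(-\Delta-z)u\in L^2(\Om;d^nx)$ and $v:=(-\Delta_{D,\Om}-zI_{\Om})^{-1}f$. By the $C^{1,r}$ Dirichlet regularity $\dom(-\Delta_{D,\Om})\subset H^2(\Om)$ quoted after Lemma~\ref{l2.12}, one has $v\in H^2(\Om)$, and $w:=u-v\in H^1(\Om)$ satisfies $(-\Delta-z)w=0$ with $\gamma_D w=\gamma_D u=:\phi\in H^{1/2}(\dOm)$. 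Let $\Lambda_z$ be the energy-$z$ Dirichlet-to-Neumann operator, $\Lambda_z\psi:=\wti\gamma_N W$ where $W\in H^1(\Om)$ is the unique solution of $(-\Delta-z)W=0$, $\gamma_D W=\psi$; then $\Lambda_z\in\cB\big(H^{1/2}(\dOm),H^{-1/2}(\dOm)\big)$, and, using again the $C^{1,r}$ regularity theory (Dirichlet data in $H^{3/2}(\dOm)$ produces an $H^2(\Om)$ solution), also $\Lambda_z\in\cB\big(H^{3/2}(\dOm),H^{1/2}(\dOm)\big)$. Writing $\wti\gamma_N u=\gamma_N v+\Lambda_z\phi$ and substituting into the boundary condition $(\wti\gamma_N+\wti\Theta\gamma_D)u=0$ reduces matters to the single boundary equation
\[
(\Lambda_z+\wti\Theta)\phi=-\gamma_N v\in H^{1/2}(\dOm).
\]

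The heart of the matter is then to prove that $\Lambda_z+\wti\Theta$ is boundedly invertible as a map $H^{3/2}(\dOm)\to H^{1/2}(\dOm)$ and injective as a map $H^{1/2}(\dOm)\to H^{-1/2}(\dOm)$. For the former I would write $\Lambda_z=\Lambda_{-1}+(\Lambda_z-\Lambda_{-1})$, with $\Lambda_{-1}$ the Dirichlet-to-Neumann operator of $-\Delta+1$: a standard consequence of the $C^{1,r}$-regularity theory --- and precisely where $r>1/2$ is used --- is that $\Lambda_{-1}:H^{3/2}(\dOm)\to H^{1/2}(\dOm)$ is invertible; moreover $\Lambda_z-\Lambda_{-1}$ equals $\gamma_N(-\Delta_{D,\Om}-zI_{\Om})^{-1}(1+z)E_{-1}$, with $E_{-1}$ the $(-\Delta+1)$-harmonic extension, and hence factors through the compact embedding $H^2(\Om)\hookrightarrow L^2(\Om;d^nx)$, so it is compact $H^{3/2}(\dOm)\to H^{1/2}(\dOm)$; and $\wti\Theta$ is compact $H^{3/2}(\dOm)\to H^{1/2}(\dOm)$ by \eqref{2.43}. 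Thus $\Lambda_z+\wti\Theta$ is a compact perturbation of an invertible operator, hence Fredholm of index $0$. Injectivity (on both $H^{3/2}(\dOm)$ and $H^{1/2}(\dOm)$) follows from the choice of $z$: if $(\Lambda_z+\wti\Theta)\psi=0$, the associated $H^1(\Om)$-solution $W$ satisfies $(\wti\gamma_N+\wti\Theta\gamma_D)W=0$ and $-\Delta W=zW\in L^2(\Om;d^nx)$, so $W\in\dom(-\Delta_{\Theta,\Om})$ with $(-\Delta_{\Theta,\Om}-zI_{\Om})W=0$, forcing $W=0$ and $\psi=\gamma_D W=0$. Hence $\Lambda_z+\wti\Theta$ is boundedly invertible $H^{3/2}(\dOm)\to H^{1/2}(\dOm)$; applying this inverse to $-\gamma_N v\in H^{1/2}(\dOm)$ and using the injectivity on $H^{1/2}(\dOm)$ to identify the outcome with $\phi$, one obtains $\phi\in H^{3/2}(\dOm)$. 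Then $w$ solves $(-\Delta-z)w=0$ with $\gamma_D w\in H^{3/2}(\dOm)$, so $w\in H^2(\Om)$ by $C^{1,r}$-regularity, whence $u=v+w\in H^2(\Om)$. This proves the equality of \eqref{2.20} and the right-hand side of \eqref{2.46}, and the remaining assertions of Theorem~\ref{t2.10} are inherited from Theorem~\ref{t2.3}.

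The main obstacle is exactly this last boundary regularity step. A one-shot bootstrap fails because $\wti\Theta$ is only assumed bounded $H^{1/2}(\dOm)\to H^{-1/2}(\dOm)$ at the bottom of the Sobolev scale, so one cannot directly promote $\phi\in H^{1/2}(\dOm)$ to $H^{3/2}(\dOm)$ via a parametrix of $\Lambda_z$; the device above --- combining Fredholmness of $\Lambda_z+\wti\Theta$ at the ``top'' level $H^{3/2}(\dOm)\to H^{1/2}(\dOm)$, which uses the compactness in \eqref{2.43} in an essential way, with injectivity at the ``bottom'' level $H^{1/2}(\dOm)\to H^{-1/2}(\dOm)$ coming from $z$ being in the resolvent set --- is what circumvents it. An alternative to the explicit Dirichlet-to-Neumann bookkeeping would be to invoke directly the known global $H^2$-regularity for the Robin (oblique derivative) boundary value problem on bounded $C^{1,r}$ domains.
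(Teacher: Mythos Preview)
Your argument is correct and takes a genuinely different route from the paper. Both proofs start from Theorem~\ref{t2.3} and reduce matters to showing $\dom(-\Delta_{\Theta,\Om})\subseteq H^2(\Om)$, but they handle this regularity step through different boundary reductions. The paper represents $u$ as a Newtonian potential plus a single layer, $u=w+\mathcal{S}_{-1}g$, and solves for $g\in H^{1/2}(\dOm)$ from the boundary equation $\big({-\tfrac12}I_{\dOm}+K^{\#}_{-1}+\wti\Theta\gamma_D\mathcal{S}_{-1}\big)g=-\big(\gamma_N+\wti\Theta\gamma_D\big)w$; the Fredholm property here rests on $\mathcal{S}_{-1}\in\cB\big(H^{1/2}(\dOm),H^2(\Om)\big)$ (so that $\wti\Theta\gamma_D\mathcal{S}_{-1}$ is compact by \eqref{2.43}) and on the compactness of $K^{\#}_{-1}$ on $H^{1/2}(\dOm)$ for $C^{1,r}$ boundaries, which is Theorem~\ref{K-CPT} --- a substantial Calder\'on--Zygmund result occupying much of Appendix~\ref{sD}. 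You instead split $u$ via the Dirichlet resolvent and work with the Dirichlet-to-Neumann map, reducing to $(\Lambda_z+\wti\Theta)\phi=-\gamma_N v$. This packages the $C^{1,r}$-specific input as two black boxes: the $H^2$ Dirichlet regularity (Theorem~\ref{t2.11}) and the invertibility of $\Lambda_{-1}:H^{3/2}(\dOm)\to H^{1/2}(\dOm)$, the latter being equivalent to $H^2$ Neumann regularity (the $\Theta=0$ case, already in \cite[Lemma~A.1]{GLMZ05}, so there is no circularity). Your two-level device --- Fredholm invertibility at $H^{3/2}\to H^{1/2}$ combined with injectivity at $H^{1/2}\to H^{-1/2}$ to identify the preimage with $\phi$ --- is a clean substitute for a direct bootstrap, and in fact makes explicit an injectivity step that the paper leaves implicit (the paper stops at ``Fredholm of index zero''). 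The trade-off is that the paper's layer-potential machinery is developed for reuse throughout Sections~\ref{s3} and \ref{s4}, whereas your argument is more modular but imports the Dirichlet and Neumann endpoint cases as prerequisites.
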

\begin{proof}
We adapt the proof of \cite[Lemma A.1]{GLMZ05}, dealing with the special 
case of Neumann boundary conditions (i.e., in the case $\wti \Theta =0$),  
to the present situation. For convenience of the reader we produce a 
complete proof below.

By Theorem \ref{t2.3}, the operator $T_{\Theta,\Om}$ in $L^2(\Om;d^nx)$, 
defined by 
\begin{equation}
T_{\Theta,\Om} = -\Delta, \quad \dom(T_{\Theta,\Om}) = 
\big\{u\in H^1(\Om)\,\big|\, \Delta u \in L^2(\Om;d^nx); \, 
\big(\wti\gamma_N + \wti \Theta \gamma_D\big) u =0 
\text{ in $H^{-1/2}(\dOm)$}\big\},  
\lb{2.47a}
\end{equation}
is self-adjoint and bounded from below, and 
\begin{equation}
\dom\big(|T_{\Theta,\Om}|^{1/2}\big) = H^1(\Om)   
\lb{2.47b}
\end{equation}
holds. Thus, we need to prove that $\dom(T_{\Theta,\Om}) \subseteq H^2(\Om)$.  
 
Consider $u\in\dom(T_{\Theta,\Om})$ and set 
$f= - \Delta u + u\in L^2(\Omega;d^nx)$. Viewing $f$ as an element in 
$\big(H^1(\Omega)\big)^*$, the classical Lax--Milgram Lemma implies 
that $u$ is the unique solution of the boundary-value problem
\begin{equation}\label{BVP}
\left\{
\begin{array}{l}
(-\Delta + I_{\Omega})u=f\in L^2(\Omega)\hookrightarrow
\bigl(H^{1}(\Omega)\bigr)^*, \\[.5mm]
u\in H^{1}(\Omega), \\[.5mm]
\big(\wti\gamma_N + \wti\Theta \gamma_D\big) u=0.
\end{array}
\right.
\end{equation}
One convenient way to actually show that  
\begin{equation}\label{goal}
u\in H^{2}(\Omega),
\end{equation}
is to use layer potentials. Specifically, let $E_n(z;x)$ be the 
fundamental solution of the Helmholtz differential expression 
$(-\Delta -z)$ in $\bbR^n$, $n\in\bbN$, $n\geq 2$, that is,
\begin{align}
& E_n(z;x) = \begin{cases}
(i/4) \big(2\pi |x|/z^{1/2}\big)^{(2-n)/2} H^{(1)}_{(n-2)/2} 
\big(z^{1/2}|x|\big), & n\geq 2, 
\; z\in\bbC\backslash \{0\}, \\
\f{-1}{2\pi} \ln(|x|), & n=2, \; z=0, \\ 
\f{1}{(n-2)\omega_{n-1}}|x|^{2-n}, & n\geq 3, \; z=0, 
\end{cases}     \lb{2.52} \\
& \hspace*{6.8cm} \Im\big(z^{1/2}\big)\geq 0,\; x\in\bbR^n\backslash\{0\}. 
\no
\end{align}
Here $H^{(1)}_{\nu}(\dott)$ denotes the Hankel function of the first kind 
with index $\nu\geq 0$ (cf.\ \cite[Sect.\ 9.1]{AS72}). 

We also define the associated single layer potential
\begin{equation}\label{sing-layer}
({\mathcal S_z}g)(x)=\int_{\partial\Omega}d^{n-1}\omega(y)\,E_n(z;x-y)g(y),
\quad x\in\Omega, \; z\in\bbC, 
\end{equation}
where $g$ is an arbitrary measurable function on $\partial\Omega$. As
is well-known (the interested reader may consult, e.g., \cite{MMT01},
\cite{Ve84} for jump relations in the context of Lipschitz domains), if
\begin{equation}\label{Ksharp}
(K^{\#}_zg)(x)={\rm p.v.}\int_{\partial\Omega}d^{n-1} \omega(y)\,
\partial_{\nu_x}E_n(z;x-y)g(y),
\quad x\in\partial\Omega, \; z\in\bbC, 
\end{equation}
stands for the so-called adjoint double layer on $\partial\Omega$,
the following jump formula holds
\begin{equation}\label{jump}
\wti\gamma_N {\mathcal S_{z}}g
=\big({\textstyle{-\frac12}}I_\dOm+K^{\#}_{z}\big)g.
\end{equation}
It should be noted that 
\begin{equation}
K^{\#}_z \in \cB\big(\LdOm), \quad z\in\bbC,   \lb{Kb}
\end{equation}
whenever $\Om$ is a bounded Lipschitz domain. See Lemma \ref{L-Kb}. 

Now, if we denote by $w$ the convolution of $f\in L^2(\Omega;d^nx)$
with $E_n(-1;\dott)$ in $\Omega$, then $w\in H^2(\Omega)$ and the 
solution $u$ of \eqref{BVP} is given by
\begin{equation}\label{sol}
u=w+{\mathcal S_{-1}}g
\end{equation}
for a suitably chosen function $g$ on $\partial\Omega$. Concretely, 
we shall then require that
\begin{equation} 
\big(\ga_N + \wti\Theta \gamma_D\big){\mathcal S_{-1}}g
= - \big(\ga_N + \wti\Theta \gamma_D\big)w, 
\end{equation}
or equivalently, 
\begin{equation}
\big({\textstyle{-\frac12}}I_\dOm+K^{\#}_{-1}\big)g 
+ \wti\Theta \gamma_D {\mathcal S_{-1}} g
= - \big(\ga_N + \wti\Theta \gamma_D\big)w\in H^{1/2}(\partial\Omega).
\end{equation}
By hypothesis, $\wti\Theta\in \cB_\infty\big(H^{3/2}(\Om),H^{1/2}(\dOm)\big)$ 
and hence 
\begin{equation}
\wti\Theta \gamma_D {\mathcal S_{-1}} \in 
\cB_\infty\big(H^{1/2}(\Om),H^{1/2}(\dOm)\big)
\end{equation}
as soon as one proves that ${\mathcal S_{-1}}$ satisfies 
\begin{equation}\label{goal-2}
{\mathcal S_{-1}}\in \cB\big(H^{1/2}(\partial\Omega), H^{2}(\Omega)\big). 
\end{equation}

To prove this, as a preliminary step we note (cf. \cite{MT00}) that
\begin{equation}\label{S-map}
{\mathcal S_{-1}}\colon H^{-s}(\partial\Omega)\rightarrow H^{-s+3/2}(\Omega)
\end{equation}
is well-defined and bounded for each $s\in [0,1]$, even when $\Omega$ 
is only a bounded Lipschitz domain. 
For a fixed, arbitrary $j\in\{1,...,n\}$, consider next the operator
$\partial_{x_j}{\mathcal S_{-1}}$ whose integral kernel is
$\partial_{x_j}E_n(-1;x-y)=-\partial_{y_j}E_n(-1;x-y)$. We write
\begin{equation}
\partial_{y_j}=\sum_{k=1}^n \nu_k(y)\nu_k(y)\partial_{y_j}
=\sum_{k=1}^n\nu_k(y)\frac{\partial}{\partial\tau_{k,j}(y)}
+\nu_j(y)\nu(y)\cdot\nabla_y
\end{equation}
where the tangential derivative operators 
$\partial/\partial\tau_{k,j}=\nu_k\partial_j-\nu_j\partial_k$,
$j,k=1,\dots,n$, satisfy \eqref{Pf-2}. Using the boundary integration by parts 
formula \eqref{Ibp-w} it follows that
\begin{equation}\label{imp-id}
\partial_j{\mathcal S_{-1}}h={\mathcal D_{-1}}(\nu_j h)+\sum_{k=1}^n
{\mathcal S_{-1}}\bigg(\frac{\partial(\nu_k h)}{\partial\tau_{k,j}}\bigg), 
\quad h\in H^{1/2}(\dOm), 
\end{equation}
where, for $z\in\bbC$,  
\begin{eqnarray}\label{Df-R1}
{\mathcal D_{z}}h(x)=\int_{\partial\Omega}d^{n-1}\omega(y)\,
\nu(y)\cdot\nabla_y[E_n(z;x-y)]h(y),\quad x\in\Omega,
\end{eqnarray}
is the so-called (acoustic) double layer potential operator. 
Its mappings properties on the scale of Sobolev spaces have been analyzed 
in \cite{MT00} and we note here that
\begin{equation}\label{D-map}
{\mathcal D_{-1}}\colon H^{s}(\partial\Omega)\rightarrow
H^{s+1/2}(\Omega),\quad 0\leq s\leq 1,
\end{equation}
requires only that $\Omega$ is Lipschitz.
Assuming that multiplication by (the components of) $\nu$ preserves
the space $H^{1/2}(\partial\Omega)$ (which is the case if, e.g.,
$\Om$ is of class $C^{1,r}$ for some $(1/2)<r< 1$; cf. Lemma \ref{lA.6}),
the desired conclusion about the operator \eqref{goal-2} follows from 
\eqref{S-map}, \eqref{imp-id} and \eqref{D-map}. 

Going further, from Theorem \ref{K-CPT} we know that 
\begin{equation}\label{goal-1}
K^{\#}_{-1}\in\cB_{\infty}\bigl(H^{1/2}(\partial\Omega)\bigr), 
\end{equation}
so 
${\textstyle{-\frac12}}I_\dOm+K^{\#}_{-1}+\wti\Theta\gamma_D{\mathcal S_{-1}}$ 
is a Fredholm operator in $H^{1/2}(\dOm)$ with index zero. This finishes the
proof of \eqref{goal}. Hence, the fact that 
$\dom(T_{\Theta,\Om})\subseteq H^{2}(\Omega)$ has been established.
\end{proof}

Again we isolate the Neumann Laplacian $-\Delta_{N,\Om}$, that is, 
the special case 
$\wti \Theta=0$ in \eqref{2.46}, under Hypothesis \ref{h2.8},
\begin{equation}
-\Delta_{N,\Om}=-\Delta, \quad 
\dom(-\Delta_{N,\Om})=\big\{u\in H^2(\Om)\,\big|\, \wti\ga_N u=0 
\text{ in } H^{1/2}(\dOm)\big\}.    \lb{2.73}
\end{equation}

Similarly, one can now treat the case of the Dirichlet Laplacian. 
This has originally been done under more general conditions on 
$\Om$ (assuming the boundary of $\Om$ to be compact rather than 
$\Om$ bounded) in \cite[Lemmas A.1]{GLMZ05}. For completeness we repeat 
the short argument below:

\begin{theorem}  \lb{t2.11}
Assume Hypothesis \ref{h2.8}. Then the Dirichlet Laplacian, 
$-\Delta_{D,\Om}$, defined by 
\begin{equation}
-\Delta_{D,\Om} = -\Delta, \quad \dom(-\Delta_{D,\Om}) = 
\big\{u\in H^2 (\Om)\,\big|\, \gamma_D u =0 \text{ in $H^{3/2}(\dOm)$}\big\},
\lb{2.48}
\end{equation}
is self-adjoint and strictly positive in $L^2(\Om; d^nx)$. Moreover,
\begin{equation}
\dom\big((-\Delta_{D,\Om})^{1/2}\big) = H^1_0(\Om).   \lb{2.49}
\end{equation}
\end{theorem}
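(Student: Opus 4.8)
The plan is to reduce the statement to Theorem \ref{t2.5} and then sharpen the description of the operator domain by an elliptic-regularity argument based on layer potentials, exactly as in \cite[Lemma A.1]{GLMZ05} and in the proof of Theorem \ref{t2.10} above. By Theorem \ref{t2.5}, the operator $T_{D,\Om}$ in $L^2(\Om;d^n x)$ defined by $T_{D,\Om}=-\Delta$, $\dom(T_{D,\Om})=\{u\in H^1_0(\Om)\,|\,\Delta u\in L^2(\Om;d^n x)\}$, is self-adjoint, strictly positive, and satisfies $\dom\big((T_{D,\Om})^{1/2}\big)=H^1_0(\Om)$. Every $u\in H^2(\Om)$ with $\ga_D u=0$ in $H^{3/2}(\dOm)$ has $\ga_D u=0$ in $H^{1/2}(\dOm)$ as well (the embedding $H^{3/2}(\dOm)\hookrightarrow H^{1/2}(\dOm)$ being injective with compatible traces), hence lies in $H^1_0(\Om)$ by \eqref{H-zer} and has $\Delta u\in L^2(\Om;d^n x)$; thus the domain in \eqref{2.48} is contained in $\dom(T_{D,\Om})$. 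Conversely, once one shows $\dom(T_{D,\Om})\subseteq H^2(\Om)$, then for $u\in\dom(T_{D,\Om})$ one has $u\in H^2(\Om)$ with $\ga_D u\in H^{3/2}(\dOm)$ (trace theorem on $C^{1,r}$-domains, cf.\ Appendix \ref{sA}) and $\ga_D u=0$ in $H^{1/2}(\dOm)$, which forces $\ga_D u=0$ in $H^{3/2}(\dOm)$. Hence the two operator domains coincide, $-\Delta_{D,\Om}$ in \eqref{2.48} equals $T_{D,\Om}$, and self-adjointness, strict positivity, and \eqref{2.49} all follow from Theorem \ref{t2.5}. So everything comes down to the inclusion $\dom(T_{D,\Om})\subseteq H^2(\Om)$.

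To prove it, fix $u\in\dom(T_{D,\Om})$ and set $f=-\Delta u+u\in L^2(\Om;d^n x)$. Let $w$ be the convolution of $f$ with $E_n(-1;\dott)$ over $\Om$; as in the proof of Theorem \ref{t2.10}, $w\in H^2(\Om)$ and $(-\Delta+I_\Om)w=f$ in $\Om$, so $v:=u-w\in H^1(\Om)$ solves $(-\Delta+I_\Om)v=0$ in $\Om$ with $\ga_D v=-\ga_D w\in H^{3/2}(\dOm)$. It remains to establish $H^2$-regularity for the homogeneous interior Dirichlet problem of $-\Delta+I_\Om$ with data in $H^{3/2}(\dOm)$, that is, $v\in H^2(\Om)$. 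For this one uses the single layer potential ${\mathcal S_{-1}}$: as shown in the proof of Theorem \ref{t2.10} (cf.\ \eqref{S-map}, \eqref{imp-id}, \eqref{D-map} and the surrounding discussion, which exploit that multiplication by the components of $\nu$ preserves $H^{1/2}(\dOm)$ on $C^{1,r}$-domains, see Lemma \ref{lA.6}), one has ${\mathcal S_{-1}}\in\cB\big(H^{1/2}(\dOm),H^2(\Om)\big)$, whence $\ga_D{\mathcal S_{-1}}\in\cB\big(H^{1/2}(\dOm),H^{3/2}(\dOm)\big)$. This boundary single layer operator is boundedly invertible: it is injective, since a density $g$ in its kernel produces, via the jump relation \eqref{jump} and the unique solvability of the interior and exterior Dirichlet problems for $-\Delta+I$ (interior uniqueness being equivalent to $-1\notin\sigma(-\Delta_{D,\Om})$, i.e.\ strict positivity), a single layer potential vanishing on all of $\bbR^n$, forcing $g=0$; and it is Fredholm of index zero, because on the rougher scale $\ga_D{\mathcal S_{-1}}\colon H^{-1/2}(\dOm)\to H^{1/2}(\dOm)$ the invertibility is classical on Lipschitz domains and self-improves to the $H^{1/2}(\dOm)\to H^{3/2}(\dOm)$ level via the $C^{1,r}$-mapping property just recalled (the compactness behind \eqref{goal-1} and Theorem \ref{K-CPT} entering here). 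Consequently $g:=\big(\ga_D{\mathcal S_{-1}}\big)^{-1}(-\ga_D w)\in H^{1/2}(\dOm)$ is well defined, ${\mathcal S_{-1}}g\in H^2(\Om)$ solves $(-\Delta+I_\Om)\phi=0$ in $\Om$ with $\ga_D\phi=\ga_D v$, and by uniqueness for the interior Dirichlet problem ${\mathcal S_{-1}}g=v$. Therefore $v\in H^2(\Om)$, and $u=v+w\in H^2(\Om)$, which finishes the proof.

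The main obstacle is the pair of $C^{1,r}$-inputs ${\mathcal S_{-1}}\in\cB\big(H^{1/2}(\dOm),H^2(\Om)\big)$ and the bijectivity of $\ga_D{\mathcal S_{-1}}$ at the $H^{1/2}(\dOm)\to H^{3/2}(\dOm)$ level: both genuinely fail for a merely Lipschitz domain and are precisely what force the strengthening to Hypothesis \ref{h2.8}. However, both are already at hand — the mapping property is exactly the content around \eqref{goal-2} in the proof of Theorem \ref{t2.10}, and the invertibility is the $C^{1,r}$ case of the classical single-layer isomorphism theorem (cf.\ \cite{MT00}). Alternatively, since the corresponding regularity statement is established in \cite[Lemma A.1]{GLMZ05} under even weaker hypotheses on $\Om$, one may simply invoke that result directly.
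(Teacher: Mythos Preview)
Your proof is correct and follows essentially the same route as the paper's: both reduce to Theorem \ref{t2.5}, both reduce the remaining inclusion $\dom(T_{D,\Om})\subseteq H^2(\Om)$ to the single-layer representation with parameter $z=-1$, and both identify the crux as the isomorphism $\ga_D{\mathcal S_{-1}}\colon H^{1/2}(\dOm)\to H^{3/2}(\dOm)$. The only difference is in how that isomorphism is justified: the paper simply invokes \cite[Proposition~7.9]{Ta96} (and its $C^{1,r}$ extension by inspection of Taylor's argument), whereas your hands-on injectivity/Fredholm sketch is a bit loose (the ``self-improvement'' step is not really argued), though your fallback citations to \cite{MT00} and \cite[Lemma~A.1]{GLMZ05} suffice.
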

\begin{proof}
For convenience of the reader we reproduce the short proof 
of \cite[Lemma A.1]{GLMZ05} in the special case of Dirichlet boundary 
conditions, given the proof of Theorem \ref{t2.10}.   

By Theorem \ref{t2.5}, the operator $T_{D,\Om}$ in $L^2(\Om;d^n x)$, 
defined by 
\begin{equation}
T_{D,\Om} = -\Delta, \quad \dom(T_{D,\Om}) = 
\big\{u\in H_0^1(\Om)\,\big|\, \Delta u \in L^2(\Om;d^n x); \, 
\gamma_D u =0 \text{ in $L^2(\dOm; d^{n-1} \omega)$}\big\},     \lb{2.49a}
\end{equation}
is self-adjoint and strictly positive, and 
\begin{equation}
\dom\big((T_{D,\Om})^{1/2}\big) = H^1_0(\Om)   \lb{2.49b}
\end{equation}
holds. Thus, we need to prove that $\dom(T_{D,\Om}) \subseteq H^2(\Om)$. 
To achieve this, we follow the proof of Theorem \ref{t2.10}, starting with 
the same representation \eqref{sol}. This time, the requirement on $g$ is 
that $\ga_D{\mathcal{S}_{-1}}g=h=\ga_Dw\in H^{3/2}(\partial\Omega)$. Thus, it 
suffices to know that
\begin{equation}\label{SS-iso}
\ga_D{\mathcal{S}_{-1}}\colon H^{1/2}(\partial\Omega)
\rightarrow H^{3/2}(\partial\Omega)
\end{equation}
is an isomorphism. When $\partial\Omega$ is of class $C^\infty$, it
has been proved in \cite[Proposition 7.9]{Ta96} that
$\ga_D{\mathcal{S}_{-1}}\colon H^{s}(\partial\Omega)\to 
H^{s+1}(\partial\Omega)$ is an
isomorphism for each $s\in\bbR$ and, if $\Om$ is of class $C^{1,r}$ with
$(1/2)<r< 1$, the validity range of this result is limited to
$-1-r<s<r$, which covers \eqref{SS-iso}. The latter fact follows from an
inspection of Taylor's original proof of \cite[Proposition 7.9]{Ta96}.
Here we just note that the only significant difference is that if
$\partial\Omega$ is of class $C^{1,r}$ (instead of class $C^\infty$), then
$S$ is a pseudodifferential operator whose symbol exhibits a limited
amount of regularity in the space-variable. Such classes of operators have
been studied in, e.g., \cite{MMT01}, \cite[Chs.\ 1, 2]{Ta91}.
\end{proof}

\begin{remark} \lb{r2.15}
We emphasize that all results in this section extend to closed 
Schr\"odinger operators 
\begin{equation}
H_{\Theta,\Om} = -\Delta_{\Theta,\Om} + V, \quad  
\dom\big(H_{\Theta,\Om}\big) = \dom\big(-\Delta_{\Theta,\Om}\big)  \lb{2.86} 
\end{equation} 
for (not necessarily real-valued) potentials $V$ satisfying 
$V \in L^\infty(\Om; d^n x)$, consistently replacing $-\Delta$ by 
$-\Delta + V$, etc. More generally, all results extend to Kato--Rellich 
bounded potentials $V$ relative to $-\Delta_{\Theta,\Om}$ with bound less 
than one. Extensions to potentials permitting stronger local singularities, 
and an extensions to (not necessarily bounded) Lipschitz domains with 
compact boundary, will be pursued elsewhere.
\end{remark}

\section{Generalized Robin and Dirichlet Boundary Value Problems \\
and Robin-to-Dirichlet and Dirichlet-to-Robin Maps} \label{s3}

This section is devoted to generalized Robin and Dirichlet boundary 
value problems associated with the Helmholtz differential expression 
$-\Delta - z$ in connection with the open set $\Omega$. In addition, 
we provide a detailed discussion of Robin-to-Dirichlet maps, 
$M_{\Theta,D,\Om}^{(0)}$,  in $\LdOm$.

In this section we strengthen Hypothesis \ref{h2.2} by adding assumption 
\eqref{3.5} below: 

\begin{hypothesis} \lb{h3.1}
In addition to Hypothesis \ref{h2.2} suppose that 
\begin{equation}\lb{3.5}
\wti \Theta \in \cB_{\infty}\big(H^1(\dOm),L^2(\dOm;d^{n-1} \omega)\big).   
\end{equation}
\end{hypothesis}

\noindent We note that \eqref{3.5} is satisfied whenever there 
exists some $\varepsilon>0$ such that
\begin{equation}\lb{3.5bis}
\wti \Theta \in \cB\big(H^{1-\varepsilon}(\dOm),L^2(\dOm;d^{n-1} \omega)\big). 
\end{equation}

We recall the definition of the weak Neumann trace operator $ \wti\ga_N$ in 
\eqref{2.8}, \eqref{2.9} and start with the Helmholtz Robin boundary value 
problems:

\begin{theorem} \lb{t3.2} 
Assume Hypothesis \ref{h3.1} and suppose that 
$z\in\bbC\backslash\si(-\Delta_{\Theta,\Om})$. Then for every $g\in\LdOm$, 
the following generalized Robin boundary value problem,
\begin{equation} \lb{3.6}
\begin{cases}
(-\Delta - z)u = 0 \text{ in }\,\Om,\quad u \in H^{3/2}(\Om), \\
\big(\wti\ga_N + \wti \Theta \gamma_D\big) u = g \text{ on } \,\dOm,
\end{cases}
\end{equation}
has a unique solution  $u=u_\Theta$. This solution $u_\Theta$ satisfies
\begin{eqnarray}\lb{3.6a}
\begin{array}{l}
\ga_D u_\Theta \in H^1(\dOm), \quad \wti \ga_N u_\Theta 
\in L^2(\dOm;d^{n-1}\omega),
\\[4pt]
\|\ga_D u_\Theta\|_{H^1(\dOm)}+\|\wti\ga_N u_\Theta\|_{L^2(\dOm;d^{n-1}\omega)},
\leq C\|g\|_{\LdOm}
\end{array}
\end{eqnarray}
and
\begin{equation}
\|u_\Theta\|_{H^{3/2}(\Omega)} \leq C\|g\|_{\LdOm}, \lb{3.7}
\end{equation}
for some constant constant $C= C(\Theta,\Omega,z)>0$. Finally,    
\begin{equation}
\big[\ga_D (-\Delta_{\Theta,\Om}-\ol{z}I_\Om)^{-1}\big]^* \in
\cB\big(\LdOm, H^{3/2}(\Om)\big),   \lb{3.8}
\end{equation}
and the solution $u_\Theta$ is given by the formula  
\begin{equation}
u_\Theta = \big(\ga_D (-\Delta_{\Theta,\Om}-\ol{z}I_\Om)^{-1}\big)^*g. 
\lb{3.9}
\end{equation}
\end{theorem}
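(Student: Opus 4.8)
The plan is to establish the existence and uniqueness of the solution $u_\Theta$ to \eqref{3.6} first, then to derive the regularity bounds \eqref{3.6a}, \eqref{3.7}, and finally to identify $u_\Theta$ via the abstract resolvent formula \eqref{3.9}. For existence and uniqueness, I would consider the map $\widetilde\gamma_N + \widetilde\Theta\gamma_D$ acting on the space $\{u \in H^1(\Om)\,|\,(-\Delta-z)u = 0 \text{ in }\Om\}$. Since $z \notin \sigma(-\Delta_{\Theta,\Om})$ and $-\Delta_{\Theta,\Om}$ has purely discrete spectrum (Corollary \ref{c2.3A}), the operator $(-\Delta_{\Theta,\Om} - zI_\Om)$ is boundedly invertible on $\LOm$. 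For $g \in \LdOm \hookrightarrow H^{-1/2}(\dOm)$, the standard approach is to reduce the inhomogeneous boundary problem to a homogeneous-interior one: pick any $\Phi \in H^{3/2}(\Om)$ with $(\widetilde\gamma_N + \widetilde\Theta\gamma_D)\Phi$ matching $g$ modulo something in the range of the relevant operators, or — more cleanly — use a single layer ansatz $u = \mathcal{S}_z h$ as in the proof of Theorem \ref{t2.10}, where the jump relation \eqref{jump} converts the Robin condition into the boundary integral equation $(-\tfrac12 I_\dOm + K^\#_z + \widetilde\Theta\gamma_D\mathcal{S}_z)h = g$. The operator $\widetilde\Theta\gamma_D\mathcal{S}_z$ is compact on $\LdOm$ by Hypothesis \ref{h3.1} together with the mapping property $\mathcal{S}_z \in \cB(\LdOm, H^1(\Om))$ (from \eqref{S-map} with $s=0$ combined with Lemma \ref{Gam-L1}, giving $\gamma_D\mathcal{S}_z \in \cB(\LdOm,H^1(\dOm))$, then applying $\widetilde\Theta$); and $-\tfrac12 I_\dOm + K^\#_z$ is Fredholm of index zero by \eqref{Kb}. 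Hence the whole boundary operator is Fredholm of index zero, and injectivity — which is exactly the statement that $z \notin \sigma(-\Delta_{\Theta,\Om})$ forces no nontrivial homogeneous Robin solution — yields invertibility, producing a unique $h$, hence a unique $u_\Theta \in H^{3/2}(\Om)$ by \eqref{S-map}.

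For the regularity claims, once $u_\Theta = \mathcal{S}_z h$ is in hand with $h$ depending boundedly and linearly on $g \in \LdOm$, the bound \eqref{3.7} follows immediately from $\mathcal{S}_z \in \cB(\LdOm, H^{3/2}(\Om))$ and the continuity of the inverse boundary operator. For \eqref{3.6a}, I would observe that $\gamma_D u_\Theta = \gamma_D\mathcal{S}_z h$, and the refined mapping $\gamma_D\mathcal{S}_z \in \cB(\LdOm, H^1(\dOm))$ — obtained by feeding $\mathcal{S}_z h \in H^{3/2}(\Om)$ with $\Delta(\mathcal{S}_z h) = -z\,\mathcal{S}_z h \in H^{3/2}(\Om) \subset L^2(\Om;d^n x)$ into the second part of Lemma \ref{Gam-L1} (the operator \eqref{Mam-2}) — gives $\gamma_D u_\Theta \in H^1(\dOm)$ with the stated estimate. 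For the Neumann trace, $\widetilde\gamma_N u_\Theta = g - \widetilde\Theta\gamma_D u_\Theta$; since $\gamma_D u_\Theta \in H^1(\dOm)$ and $\widetilde\Theta \in \cB_\infty(H^1(\dOm), \LdOm)$ by \eqref{3.5}, the right-hand side lies in $\LdOm$, and the estimate propagates. Alternatively one can invoke the extended Neumann trace of Lemma \ref{Neu-tr} (the operator \eqref{MaX-1}) directly on $u_\Theta \in H^{3/2}(\Om)$ with $\Delta u_\Theta \in L^2(\Om;d^n x)$.

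Finally, for the identification \eqref{3.8}--\eqref{3.9}: by Lemma \ref{l2.7}, $\gamma_D(-\Delta_{\Theta,\Om} - \bar z I_\Om)^{-1}$, being the composition of the bounded $\gamma_D(-\Delta_{\Theta,\Om}-\bar z I_\Om)^{-(1+\eps)/4} \in \cB(\LOm,\LdOm)$ with the bounded $(-\Delta_{\Theta,\Om}-\bar z I_\Om)^{-1+(1+\eps)/4} \in \cB(\LOm)$, is a bounded operator from $\LOm$ to $\LdOm$, so its adjoint $[\gamma_D(-\Delta_{\Theta,\Om}-\bar z I_\Om)^{-1}]^*$ lies in $\cB(\LdOm, \LOm)$; the improvement to $\cB(\LdOm, H^{3/2}(\Om))$ in \eqref{3.8} is then a consequence of the regularity already proven, namely that this adjoint is precisely the solution operator $g \mapsto u_\Theta$. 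To verify the formula \eqref{3.9} itself, I would compute, for arbitrary $\phi \in \LOm$, the pairing $(u_\Theta, \phi)_{\LOm}$ against $([\gamma_D(-\Delta_{\Theta,\Om}-\bar z I_\Om)^{-1}]^* g, \phi)_{\LOm} = (g, \gamma_D(-\Delta_{\Theta,\Om}-\bar z I_\Om)^{-1}\phi)_{\LdOm}$. Writing $v = (-\Delta_{\Theta,\Om}-\bar z I_\Om)^{-1}\phi \in \dom(-\Delta_{\Theta,\Om})$, so that $(-\Delta - \bar z)v = \phi$ and $(\widetilde\gamma_N + \widetilde\Theta\gamma_D)v = 0$, I would apply the Green-type / integration-by-parts identity (the weak Neumann trace definition \eqref{2.9} and the form identity underlying Theorem \ref{t2.3}, as used in \eqref{2.33}) to both $u_\Theta$ and $v$ simultaneously, which produces $(u_\Theta,\phi)_{\LOm} = \langle \gamma_D v, (\widetilde\gamma_N + \widetilde\Theta\gamma_D)u_\Theta\rangle_{1/2} - \langle(\widetilde\gamma_N+\widetilde\Theta\gamma_D)v, \gamma_D u_\Theta\rangle_{1/2} = \langle\gamma_D v, g\rangle_{1/2}$, which is exactly $(g, \gamma_D v)_{\LdOm}$ after taking the complex conjugate dictated by the convention for $\langle\cdot,\cdot\rangle_{1/2}$. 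Since $\phi$ is arbitrary, \eqref{3.9} follows.

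\textbf{Main obstacle.} The delicate point is the bookkeeping in the Green's identity step that establishes \eqref{3.9}: both $u_\Theta$ (which is only in $H^{3/2}(\Om)$, not $H^2(\Om)$) and $v$ need their Neumann traces interpreted in the weak sense of \eqref{2.8}--\eqref{2.9}, and one must justify the symmetric integration-by-parts formula $\int_\Om \overline{\nabla v}\cdot\nabla u_\Theta - z\int_\Om \overline{v}\,u_\Theta = \langle\gamma_D v, \widetilde\gamma_N u_\Theta\rangle_{1/2} - \overline{\langle\gamma_D u_\Theta, \widetilde\gamma_N v\rangle_{1/2}}$ at this level of regularity, making sure all the duality pairings (some in the $\langle\cdot,\cdot\rangle_{1/2}$ sense on $H^{\pm1/2}(\dOm)$, some reducing to $\LdOm$ inner products because the traces turn out to be genuinely $L^2$) are consistently paired and that the $\widetilde\Theta$-terms cancel using symmetry \eqref{2.1}. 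Everything else is an assembly of results already available in the excerpt.
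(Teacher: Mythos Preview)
Your overall strategy---single layer ansatz, Fredholm reduction on the boundary, Green's formula to identify the solution operator---matches the paper's. The regularity bookkeeping and the Green's-formula verification of \eqref{3.9} are essentially as the paper does them, and your ``main obstacle'' paragraph is well targeted.

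There is, however, a genuine gap in the injectivity step. You write that injectivity of the boundary operator $-\tfrac12 I_{\dOm}+K^\#_z+\wti\Theta\gamma_D\mathcal{S}_z$ on $\LdOm$ ``is exactly the statement that $z\notin\sigma(-\Delta_{\Theta,\Om})$ forces no nontrivial homogeneous Robin solution.'' This conflates two different kernels. If $h\in\LdOm$ lies in the kernel of the boundary operator, then $w=\mathcal{S}_z h$ solves the homogeneous Robin problem, so $w=0$ in $\Om$ by the spectral assumption. But that only tells you $\mathcal{S}_z h=0$ in $\Om$, not that $h=0$: the single layer potential is not a priori injective as a map $\LdOm\to H^{3/2}(\Om)$. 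The paper closes this gap by passing to the exterior domain: from $\gamma_D\mathcal{S}_z h=0$ and the continuity of the single layer across $\dOm$ one gets that $w^{ext}=\mathcal{S}_{ext,z}h$ has zero Dirichlet data, and for $z\in\bbC\setminus\bbR$ (using exponential decay of $E_n(z;\cdot)$) an energy identity forces $w^{ext}=0$; then the Neumann jump relation yields $h=\wti\gamma_N^{ext}w^{ext}-\wti\gamma_N w=0$.

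This argument only works directly for non-real $z$, so the paper then invokes analytic Fredholm theory to obtain invertibility of the boundary operator for $z\in\bbC\setminus D$ with $D\subset\bbR$ discrete. A second omission in your proposal is that $D$ need not equal $\sigma(-\Delta_{\Theta,\Om})$: the layer-potential construction may fail at points of $D\setminus\sigma(-\Delta_{\Theta,\Om})$. The paper bridges this by first establishing \eqref{3.9} for $z\in\bbC\setminus(D\cup\sigma(-\Delta_{\Theta,\Om}))$ and then analytically continuing the operator $[\gamma_D(-\Delta_{\Theta,\Om}-\ol{z}I_\Om)^{-1}]^*$ (via a Cauchy integral) across the isolated points of $D$, using that the resolvent side is already known to be holomorphic on all of $\bbC\setminus\sigma(-\Delta_{\Theta,\Om})$. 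Without this step your argument does not reach every $z$ claimed in the theorem.
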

\begin{proof}
It is clear from Lemma \ref{Gam-L1} and Lemma \ref{Neu-tr} that 
the boundary value problem \eqref{3.6} has a meaningful formulation and
that any solution satisfies the first line in \eqref{3.6a}.
Uniqueness for \eqref{3.6} is an immediate consequence of the fact that
$z\in\bbC\backslash \sigma(-\Delta_{\Theta,\Omega})$. As for existence, 
as in the proof of Theorem \ref{t2.10}, we look for a candidate expressed as
\begin{equation}
u(x) = (\cS_z h)(x), \quad x\in\Om   \lb{3.11}
\end{equation}
for some $h\in L^2(\dOm; d^{n-1} \omega)$. This ensures that 
$u\in H^{3/2}(\Om)$ and $(-\Delta - z)u = 0$ in $\Om$. Above, the 
single layer potential $\cS_z$ has been defined in \eqref{sing-layer}.
The boundary condition $\big(\wti\ga_N + \wti \Theta \gamma_D\big) u = g$ on 
$\dOm$ is then equivalent to
\begin{equation}
\big(\wti\ga_N + \wti \Theta \ga_D\big)(\cS_z h) =g,
\end{equation}
respectively, to
\begin{equation}
\big({\textstyle{-\frac12}}I_\dOm+K^{\#}_{z}\big) h 
+ \wti \Theta \gamma_D {\mathcal S_{z}} h = g.     \lb{3.12}
\end{equation}
Here $K^{\#}_{z}$ has been defined in \eqref{Ksharp}. 

To obtain unique solvability of \eqref{3.12} for $h \in \LdOm$, 
given $g\in\LdOm$, at least when $z\in\bbC\backslash D$, where 
$D\subset\bbC$ is a discrete set, we proceed in a series of steps. 
The first step is to observe that the operator in question 
is Fredholm with index zero for every $z\in\bbC$. To see this, we decompose 
\begin{equation}
\big({\textstyle{-\frac12}}I_\dOm+K^{\#}_{z}\big) = 
\big({\textstyle{-\frac12}}I_\dOm+K^{\#}_{0}\big) 
+ \big( K^{\#}_{z} - K^{\#}_{0}\big),     
\lb{3.13}
\end{equation}
and recall that $\big(K^{\#}_{z} - K^{\#}_{0}\big)\in \cB_\infty(\LdOm)$ 
(cf. Lemma \ref{L-Kb}) and that 
${\textstyle{-\frac12}}I_\dOm+K^{\#}_{0}$ is a Fredholm operator 
in $\LdOm$ with Fredholm index equal to zero as proven by Verchota \cite{Ve84}.
In addition, we note that $\wti \Theta \ga_D \cS_{z} \in \cB_\infty(\LdOm)$ 
which follows from Hypothesis \ref{h3.1} and the fact that the 
following operators are bounded
\begin{eqnarray}\label{Ine-2}
\begin{array}{l}
\cS_{z}:\LdOm\to\{u\in H^{3/2}(\Om)\,|\,\Delta u\in L^2(\Omega;d^nx)\},
\\[4pt]
\gamma_D:\{u\in H^{3/2}(\Om)\,|\,\Delta u\in L^2(\Omega;d^nx)\}\to H^1(\dOm),
\end{array}
\end{eqnarray} 
(where the space $\{u\in H^{3/2}(\Om)\,|\,\Delta u\in L^2(\Omega;d^nx)\}$ 
is equipped with the natural graph norm 
$u\mapsto \|u\|_{H^{3/2}(\Om)}+\|\Delta u\|_{L^2(\Omega;d^nx)}$). 
See Lemma \ref{Gam-L1} and Theorem \ref{T-SH}. 
Thus, $\big({\textstyle{-\frac12}}I_\dOm+K^{\#}_{z}\big)+\wti\Theta\gamma_D 
{\mathcal S_{z}}$ is a Fredholm operator in $\LdOm$ with Fredholm index 
equal to zero, for every $z\in\bbC$. In particular, it is invertible 
if and only if it is injective. 

In the second step, we study the injectivity of 
$\big({\textstyle{-\frac12}}I_\dOm+K^{\#}_{z}\big)+\wti\Theta\gamma_D 
{\mathcal S_{z}}$ on $\LdOm$. For this purpose we now suppose that 
\begin{equation}
\big({\textstyle{-\frac12}}I_\dOm+K^{\#}_{z}\big) k  
+ \wti\Theta\gamma_D {\mathcal S_{z}} k = 0 \, \text{ for some } \, k\in \LdOm.
\end{equation}
Introducing $w=\cS_{z}k$ in $\Om$ one then infers that $w$ satisfies
\begin{equation}  
\begin{cases}
(-\Delta - z)w = 0 \text{ in }\,\Om,\quad w \in H^{3/2}(\Om), \\
\big(\wti\ga_N + \wti\Theta \gamma_D\big) w = 0 \text{ on }\,\dOm.
\end{cases}     \lb{3.17}
\end{equation}
Thus one obtains,
\begin{align}
0\leq \int_{\Om} d^n x \, |\nabla w|^2 & = \sum_{j=1}^n \int_{\Om} d^n x \, 
\ol{\partial_j w} \partial_j w = - \int_{\Om} d^n x \, \ol{\Delta w} w  
+\sum_{j=1}^n\int_{\dOm} d^{n-1}\omega\,\big(\ga_D\ol{\partial_j w}\big)\,
\nu_j\ga_D w   \no \\
& = \ol z \int_{\Om} d^n x \, |w|^2 
+  (\ga_D w, \wti\ga_N w)_{\LdOm}  
= \ol z \int_{\Om} d^n x \, |w|^2 
+ \langle \ga_D w, \wti\ga_N w \rangle_{1/2}   \no \\ 
&= \ol z \int_{\Om} d^n x \, |w|^2 - \big\langle \ga_D w, \wti \Theta \ga_D w 
\big\rangle_{1/2}.    \lb{3.18}
\end{align}
At this point we will first consider the case when $z\in\bbC\backslash\bbR$
(so that, in particular, $\Im (z)\not=0$). In this scenario, recalling 
\eqref{2.1} and taking the imaginary parts of the two most extreme sides 
of \eqref{3.18} imply that $\int_{\Om} d^n x \, |w|^2=0$ and, hence, 
$w=0$ in $\Om$. 

To continue, let $\wti\ga^{ext}_{N}$ and 
$\ga^{ext}_{D}$ denote, respectively, the Neumann and Dirichlet traces 
for the exterior domain $\bbR^n\backslash\overline{\Omega}$. Also, 
parallel to \eqref{sing-layer}, set 
\begin{equation}\label{si2-layer}
({\mathcal S_{ext,z}}g)(x)=\int_{\partial\Omega}d^{n-1}\omega(y)\,
E_n(z;x-y)g(y),\quad x\in\bbR^n\backslash\overline{\Omega}, \; z\in\bbC, 
\end{equation}
where $g$ is an arbitrary measurable function on $\partial\Omega$. Then, due
to the weak singularity in the integral kernel of $\cS_z$, 
\begin{equation}\label{jum-S1}
\ga_{D}\cS_{z}g=\ga^{ext}_{D}\cS_{ext,z}g\quad\mbox{ for every }\,\,
g\in L^2(\dOm;d^{n-1}\omega),
\end{equation}
whereas the counterpart of \eqref{jump} becomes
\begin{equation}\label{jumpX}
\wti\gamma^{ext}_N {\mathcal S_{ext,z}}g
=\big({\textstyle{\frac12}}I_\dOm+K^{\#}_{z}\big)g
\quad\mbox{ for every }\,\,g\in L^2(\dOm;d^{n-1}\omega). 
\end{equation}
Compared with \eqref{jump}, the change in sign is due to the fact 
that the outward unit normal to $\bbR^n\backslash\overline{\Omega}$ is $-\nu$.

Moving on, if we set $w^{ext}(x)=(\cS_{ext,z} k)(x)$ for 
$x\in\bbR^n\backslash\overline{\Omega}$, then from what we have proved so far
\begin{equation}\lb{3.19}
0= \ga_D w = \ga_D \cS_z k=\ga^{ext}_{D}w^{ext}
\quad\mbox{ in }\,\,L^2(\dOm;d^{n-1}\omega). 
\end{equation}\lb{3.18XX}
Fix now a sufficiently large $R>0$ such that 
$\overline{\Omega}\subset B(0;R)$ and write the analogue of \eqref{3.18}  
for the restriction of $w^{ext}$ to $B(0;R)\backslash\overline{\Omega}$: 
\begin{align} \label{New-Inx}
\int_{B(0;R)\backslash\overline{\Omega}}d^n x \, |\nabla w^{ext}|^2 
&= \ol z \int_{B(0;R)\backslash\overline{\Omega}} d^n x \, |w^{ext}|^2 
-\langle \ga^{ext}_D w^{ext}, \wti\ga^{ext}_N w^{ext} \rangle_{1/2} 
\nonumber\\
& \quad -\int_{|\xi|=R} d^{n-1}\omega(\xi)\, w^{ext}(\xi)\frac{\xi}{|\xi|}
\cdot\nabla w^{ext}(\xi). 
\end{align}
In view of \eqref{3.19}, the above identity reduces to 
\begin{equation}\label{New-InY}
\int_{B(0;R)\backslash\overline{\Omega}}d^n x \, |\nabla w^{ext}|^2 
=\ol z \int_{B(0;R)\backslash\overline{\Omega}} d^n x \, |w^{ext}|^2 
-\int_{|\xi|=R} d^{n-1}\omega(\xi)\, w^{ext}(\xi)\frac{\xi}{|\xi|}
\cdot\nabla w^{ext}(\xi). 
\end{equation}
Recall that we are assuming $z\in\bbC\backslash\bbR$. 
Given that, by \eqref{C.15} (and the comment following right after it), 
the integral kernel of $\cS_{ext,z} k$ has exponential decay at infinity,  
it follows that $w^{ext}$ decays exponentially at infinity. Thus, after 
passing to limit $R\to\infty$, we arrive at 
\begin{equation}\lb{3.18x}
\int_{\bbR^n\backslash\overline{\Omega}} d^n x \, |\nabla w^{ext}|^2 
= \ol z \int_{\bbR^n\backslash\overline{\Omega}} d^n x \, |w^{ext}|^2. 
\end{equation}
Consequently, taking the imaginary parts of both sides we arrive 
at the conclusion that $w^{ext}=0$ in $\bbR^n\backslash\overline{\Omega}$.  
With this in hand, we may then invoke \eqref{jump}, \eqref{jumpX}  
to deduce that 
\begin{equation}\label{Hf-2}
k=\wti\gamma^{ext}_{N}\cS_z k-\wti\gamma_{N}\cS_z k
=\wti\gamma^{ext}_{N}w^{ext}-\wti\gamma_{N}w=0,
\end{equation}
given that $w,w^{ext}$ vanish in $\Omega$ and 
$\bbR^n\backslash\overline{\Omega}$, respectively. 
Hence, $k=0$ in $\LdOm$. This proves that the operator 
$\big({\textstyle{-\frac12}}I_\dOm+K^{\#}_{z}\big) 
+ \wti \Theta \gamma_D {\mathcal S_{z}}$ is injective, hence, invertible 
on $\LdOm$ whenever $z\in\bbC\backslash\bbR$. 

In the third step, the goal is to extend this result to other values of 
the parameter $z$. To this end, fix some $z_0\in\bbC\backslash\bbR$, and for 
$z\in\bbC$, consider 
\begin{equation}
\cA_z = [\big({\textstyle{-\frac12}}I_\dOm+K^{\#}_{z_0}\big) 
+ \wti \Theta \gamma_D {\mathcal S_{z_0}}]^{-1}
[\big(K^{\#}_{z}-K^{\#}_{z_0}\big) + \wti \Theta \gamma_D ({\mathcal S_{z}}
- {\mathcal S_{z_0}})].
\end{equation}
Observe that the operator-valued mapping 
$z\mapsto \cA_z\in\cB\big(\LdOm\big)$ is analytic and, thanks to 
Lemma \ref{L-Kb}, $\cA_z \in \cB_\infty\big(\LdOm\big)$. The analytic 
Fredholm theorem then yields invertibility of $I + \cA_z$ except for 
$z$ in a discrete set $D\subset \bbC$. Thus,
\begin{equation}
\big({\textstyle{-\frac12}}I_\dOm+K^{\#}_{z}\big) 
+ \wti \Theta \gamma_D {\mathcal S_{z}}
=[\big({\textstyle{-\frac12}}I_\dOm+K^{\#}_{z_0}\big) 
+ \wti \Theta \gamma_D {\mathcal S_{z_0}}][I + \cA_z]
\end{equation}
is invertible for $z\in\bbC\backslash D$ where $D$ is a 
discrete set which, by the invertibility result proved in the previous 
paragraph, is contained in $\bbR$. 

The above argument proves unique solvability of \eqref{3.6} for 
$z\in\bbC\backslash D$, where $D$ is a discrete subset of $\bbR$. 
The representation \eqref{3.11} and the fact that 
$\ga_D \cS_z\colon \LdOm \to H^1(\dOm)$ boundedly then yields 
$\ga_D u_\Theta\in H^1(\dOm)$. Moreover, by \eqref{jump} and \eqref{3.11},
\begin{equation}
\wti\ga_N u_\Theta = \wti\ga_N \cS_z h 
= \big({\textstyle{-\frac12}}I_\dOm+K^{\#}_{z}\big) h \in \LdOm
\end{equation}
since by \eqref{Kb}, $K^{\#}_{z} \in \cB\big(\LdOm\big)$. 
This proves  \eqref{3.6a} when $z\in\bbC\backslash D$. 
For $z\in\bbC\backslash D$, the natural estimate \eqref{3.7} is a 
consequence of the integral representation formula \eqref{3.11} 
and \eqref{Sz-M6}. 

Next, fix a complex number
$z\in\bbC\backslash (D\cup\sigma(-\Delta_{\Theta,\Omega}))$ along with two 
functions, $v\in \LOm$ and $g\in L^2(\partial\Omega;d^{n-1}\omega)$. Also, 
let $u_{\Theta}$ solve \eqref{3.6}. One computes 
\begin{align}
\big(u_\Theta,v\big)_{\LOm} &=
\big(u_\Theta,(-\Delta-\ol{z})
(-\Delta_{\Theta,\Om} -\ol{z}I_\Om)^{-1}v\big)_{\LOm}
\no
\\ &=
\big((-\Delta-z)u_\Theta,(-\Delta_{\Theta,\Om} -\ol{z}I_\Om)^{-1}v\big)_{\LOm}
\no
\\ &\quad +
\big(\wti\ga_N u_\Theta, \ga_D
(-\Delta_{\Theta,\Om} -\ol{z}I_\Om)^{-1}v\big)_{\LdOm}  \no
\\ &\quad -
\big\langle \ga_D u_\Theta, \wti\ga_N
\big(-\Delta_{\Theta,\Om} -\ol{z}I_\Om\big)^{-1}v \big\rangle_{1/2} \no
\\  &=
\big(\wti\ga_N u_\Theta, \ga_D
(-\Delta_{\Theta,\Om} -\ol{z}I_\Om)^{-1}v\big)_{\LdOm}  \no
\\ &\quad +
\big\langle \ga_D u_\Theta, \wti \Theta \ga_D
(-\Delta_{\Theta,\Om} -\ol{z}I_\Om)^{-1}v \big\rangle_{1/2} \no
\\ 
&=
\big(\wti\ga_N u_\Theta, \ga_D
(-\Delta_{\Theta,\Om} -\ol{z}I_\Om)^{-1}v\big)_{\LdOm}  \no
\\ &\quad +
\ol{\big\langle \ga_D (-\Delta_{\Theta,\Om} -\ol{z}I_\Om)^{-1}v, 
\wti\Theta \ga_D u_\Theta \big\rangle_{1/2}}  \no 
\\ 
&=
\big(\wti\ga_N u_\Theta, \ga_D
(-\Delta_{\Theta,\Om} -\ol{z}I_\Om)^{-1}v\big)_{\LdOm}  \no
\\ &\quad +
\big\langle \wti \Theta \ga_D u_\Theta, 
\ga_D (-\Delta_{\Theta,\Om} -\ol{z}I_\Om)^{-1}v\big\rangle_{1/2}
\no
\\ 
&=
\big\langle \big(\wti\ga_N + \wti \Theta \ga_D\big) u_\Theta, \ga_D
(-\Delta_{\Theta,\Om} -\ol{z}I_\Om)^{-1}v\big\rangle_{1/2}  \no 
\\ 
&=
\big(\big(\wti\ga_N + \wti \Theta \ga_D\big) u_\Theta, \ga_D
(-\Delta_{\Theta,\Om} -\ol{z}I_\Om)^{-1}v\big)_{\LdOm}  \no 
\\ 
&=\big(g, \ga_D (-\Delta_{\Theta,\Om} -\ol{z}I_\Om)^{-1}v\big)_{\LdOm} \no
\\ &=
\big(\big(\ga_D (-\Delta_{\Theta,\Om} -\ol{z}I_\Om)^{-1}\big)^*g,v\big)_{\LOm}.
\end{align}
Since $v\in \LOm$ was arbitrary, this yields 
\begin{equation}
u_\Theta = \big(\ga_D (-\Delta_{\Theta,\Om}-\ol{z}I_\Om)^{-1}\big)^*g
\,\,\mbox{ in }\,\,\LOm,\quad\mbox{for }\,\,
z\in\bbC\backslash (D\cup\sigma(-\Delta_{\Theta,\Omega})), 
\lb{3.9W}
\end{equation}
which proves \eqref{3.9} for
$z\in\bbC\backslash (D\cup\sigma(-\Delta_{\Theta,\Omega}))$. 
From this and \eqref{3.7}, the membership \eqref{3.8} also follows when 
$z\in\bbC\backslash (D\cup\sigma(-\Delta_{\Theta,\Omega}))$. 

The extension to the more general case when 
$z\in\bbC\backslash\sigma(-\Delta_{\Theta,\Omega})$
is then done by resorting to analytic continuation with respect to $z$.
More specifically, fix $z_0\in\bbC\backslash\sigma(\Delta_{\Theta,\Om})$. 
Then there exists $r>0$ such that 
\begin{eqnarray}\label{An-C1}
\bigl(\ol{B(z_0,r)}\backslash \{z_0\}\bigr)\cap D=\emptyset,\quad
\ol{B(z_0,r)}\cap\sigma(-\Delta_{\Theta,\Omega})=\emptyset,
\end{eqnarray}
since $D$ is discrete and $\sigma(-\Delta_{\Theta,\Omega})$ is closed. 
We may then write 
\begin{eqnarray}\label{An-C2}
[\gamma_D(-\Delta_{\Theta,\Om} -\ol{z_0}I_\Om)^{-1}]^*
= \frac{1}{2\pi i}\oint_{C(z_0;r)} dz \, (z-z_0)^{-1}
[\gamma_D (-\Delta_{\Theta,\Om} -\ol{z}I_\Om)^{-1}]^*
\end{eqnarray}
as operators in $\cB(H^{-1/2}(\partial\Omega),L^2(\Omega;d^nx))$, 
where $C(z_0;r) \subset \bbC$ denotes the counterclockwise oriented 
circle with center $z_0$ and radius $r$. (This follows from dualizing 
the fact $\gamma_D(-\Delta_{\Theta,\Om} - z_0 I_\Om)^{-1} \in 
\cB(L^2(\Omega;d^nx), H^{1/2}(\partial\Omega))$, which in turn follows from the mapping properties $(-\Delta_{\Theta,\Om} - z_0 I_\Om)^{-1} \in 
\cB(L^2(\Omega;d^nx), H^{1}(\partial\Omega))$ and 
$\gamma_D \in \cB\big(H^{1}(\partial\Omega), H^{1/2}(\partial\Omega)\big)$.) 
However, granted \eqref{An-C1}, what we have shown so far yields that 
$[\gamma_D (-\Delta_{\Theta,\Om} -\ol{z}I_\Om)^{-1}]^*\in 
\cB(L^2(\partial\Omega;d^{n-1}\omega),H^{3/2}(\Omega))$ 
whenever $|z-z_0|=r$, with a bound 
\begin{eqnarray}\label{An-C3}
\|[\gamma_D (-\Delta_{\Theta,\Om} -\ol{z}I_\Om)^{-1}]^*\| 
_{\cB(L^2(\partial\Omega;d^{n-1}\omega),H^{3/2}(\Omega))}
\leq C=C(\Omega,\Theta,z_0,r)
\end{eqnarray}
independent of the complex parameter $z\in\partial B(z_0,r)$. 
This estimate and Cauchy's representation formula \eqref{An-C2} then imply 
that $[\gamma_D (-\Delta_{\Theta,\Om} -\ol{z_0}I_\Om)^{-1}]^*\in 
\cB(L^2(\partial\Omega;d^{n-1}\omega),H^{3/2}(\Omega))$. 
This further entails that 
$u= [\gamma_D (-\Delta_{\Theta,\Om} -\ol{z_0}I_\Om)^{-1}]^*g$ solves
\eqref{3.6}, written with $z_0$ in place of $z$, and satisfies \eqref{3.7}.
Finally, the memberships in \eqref{3.6a} (along with naturally accompanying
estimates) follow from Lemma \ref{Gam-L1} and Lemma \ref{Neu-tr}. 
This shows that \eqref{3.8}, along with the well-posedness of \eqref{3.6}
and all the desired properties of the solution, hold whenever 
$z\in\bbC\backslash\sigma(\Delta_{\Theta,\Om})$.
\end{proof}
The special case $\Theta=0$ of Theorem \ref{t3.2}, corresponding to 
the Neumann Laplacian, deserves to be mentioned separately. 
\begin{corollary} \lb{t3.2H} 
Assume Hypothesis \ref{h2.1} and suppose that 
$z\in\bbC\backslash\si(-\Delta_{N,\Om})$. Then for every $g\in\LdOm$, 
the following Neumann boundary value problem,
\begin{equation} \lb{3.6H}
\begin{cases}
(-\Delta - z)u = 0 \text{ in }\,\Om,\quad u \in H^{3/2}(\Om), \\
\wti\ga_N u = g \text{ on } \,\dOm,
\end{cases}
\end{equation}
has a unique solution  $u=u_N$. This solution $u_N$ satisfies
\begin{eqnarray}\lb{3.6aH}
\ga_D u_N\in H^1(\dOm)\quad\mbox{and}\quad 
\|\ga_D u_N\|_{H^1(\dOm)}+\|\wti\ga_N u_N\|_{L^2(\dOm;d^{n-1}\omega)} 
\leq C\|g\|_{\LdOm}
\end{eqnarray}
as well as 
\begin{equation}\lb{3.7H}
\|u_N\|_{H^{3/2}(\Omega)} \leq C\|g\|_{\LdOm}, 
\end{equation}
for some constant constant $C= C(\Theta,\Omega,z)>0$. Finally,    
\begin{equation}\lb{3.8H}
\big[\ga_D (-\Delta_{N,\Om}-\ol{z}I_\Om)^{-1}\big]^* \in
\cB\big(\LdOm, H^{3/2}(\Om)\big),   
\end{equation}
and the solution $u_N$ is given by the formula  
\begin{equation}\lb{3.9H}
u_\Theta = \big(\ga_D (-\Delta_{N,\Om}-\ol{z}I_\Om)^{-1}\big)^*g. 
\end{equation}
\end{corollary}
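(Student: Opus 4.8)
The plan is to derive Corollary~\ref{t3.2H} directly from Theorem~\ref{t3.2} by specializing to $\wti\Theta=0$, so that the only thing that actually needs checking is that the hypotheses of the more general result hold in this degenerate case. First I would take $a_{\Theta}$ to be the zero sesquilinear form on $H^{1/2}(\dOm)\times H^{1/2}(\dOm)$: it is closed and bounded from below by $c_{\Theta}=0$, the associated self-adjoint operator is $\Theta=0$ in $\LdOm$, and its extension is $\wti\Theta=0\in\cB\big(H^{1/2}(\dOm),H^{-1/2}(\dOm)\big)$. In particular $\wti\Theta=0$ lies trivially in $\cB_{\infty}\big(H^1(\dOm),\LdOm\big)$, so Hypothesis~\ref{h3.1} is automatically satisfied. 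Moreover, by the notational convention \eqref{2.38b} one has $-\Delta_{0,\Om}=-\Delta_{N,\Om}$, so the spectral restriction $z\in\bbC\backslash\sigma(-\Delta_{\Theta,\Om})$ becomes precisely $z\in\bbC\backslash\sigma(-\Delta_{N,\Om})$.

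With these identifications in place, I would then simply invoke Theorem~\ref{t3.2}. The boundary condition $\big(\wti\ga_N+\wti\Theta\gamma_D\big)u=g$ collapses to $\wti\ga_N u=g$, so the boundary value problem \eqref{3.6} becomes \eqref{3.6H}; the existence and uniqueness of the solution $u=u_N$, together with the regularity and estimates \eqref{3.6a}, \eqref{3.7}, become \eqref{3.6aH}, \eqref{3.7H}; the mapping property \eqref{3.8} becomes \eqref{3.8H}; and the representation formula \eqref{3.9} becomes \eqref{3.9H}. No separate argument is needed: each assertion is just the $\Theta=0$ reading of the corresponding assertion in Theorem~\ref{t3.2}.

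There is essentially no obstacle here; the corollary is a literal specialization, isolated only for ease of later reference (the Neumann Laplacian $-\Delta_{0,\Om}$ being distinguished, e.g., in the form \eqref{2.22} and in Lemma~\ref{l2.2a}). If one preferred a self-contained argument not relying on the compactness condition \eqref{3.5}, one could instead re-run the proof of Theorem~\ref{t3.2} with $\wti\Theta=0$: the integral equation \eqref{3.12} reduces to $\big({\textstyle{-\frac12}}I_\dOm+K^{\#}_{z}\big)h=g$, whose Fredholm-index-zero property and injectivity are furnished by Verchota's result \cite{Ve84} at $z=0$ together with the analytic-Fredholm and exterior-domain uniqueness steps for general $z$ exactly as in the proof of Theorem~\ref{t3.2}, while the duality computation yielding \eqref{3.9} carries over verbatim after deleting the $\wti\Theta$-terms. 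Either way the work is routine, and I would present the one-line deduction from Theorem~\ref{t3.2}.
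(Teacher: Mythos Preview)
Your proposal is correct and matches the paper's treatment exactly: the paper introduces Corollary~\ref{t3.2H} with the single sentence ``The special case $\Theta=0$ of Theorem~\ref{t3.2}, corresponding to the Neumann Laplacian, deserves to be mentioned separately,'' and provides no further proof. Your verification that $\wti\Theta=0$ satisfies Hypothesis~\ref{h3.1} and that the notational convention \eqref{2.38b} yields $-\Delta_{0,\Om}=-\Delta_{N,\Om}$ is precisely the routine check the paper leaves implicit.
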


Next, we turn to the Dirichlet case originally treated in 
\cite[Theorem 3.1]{GMZ07} but under stronger regularity conditions 
on $\Omega$. In order to facilitate the subsequent considerations, 
we isolate a useful technical result in the lemma below.

\begin{lemma}\label{L-BbbZ}
Assume Hypothesis \ref{h2.1} and suppose that 
$z\in\bbC\backslash\si(-\Delta_{D,\Om})$. Then 
\begin{equation}\lb{Kr-a.1}
(-\Delta_{D,\Om} - zI_\Om)^{-1}:\LOm\to\bigl\{u\in H^{3/2}(\Omega)\,|\,
\Delta u\in L^2(\Omega;d^nx)\bigl\}
\end{equation}
is a well-defined bounded operator, where the space 
$\{u\in H^{3/2}(\Omega)\,|\,\Delta u\in L^2(\Omega;d^nx)\}$ is 
equipped with the natural graph norm 
$u\mapsto\|u\|_{H^{3/2}(\Omega)}+\|\Delta u\|_{L^2(\Omega;d^nx)}$. 
\end{lemma}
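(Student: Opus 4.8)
The plan is to treat the two constituents of the graph norm separately, the trivial one being $\|\Delta u\|_{\LOm}$. Given $f\in\LOm$, set $u=(-\Delta_{D,\Om}-zI_\Om)^{-1}f$; by Theorem \ref{t2.5} one has $u\in H^1_0(\Om)$ with $\Delta u=-zu-f\in\LOm$, so $\|u\|_{\LOm}+\|\Delta u\|_{\LOm}\le C\|f\|_{\LOm}$ for some $C=C(\Om,z)>0$ governed by $\|(-\Delta_{D,\Om}-zI_\Om)^{-1}\|_{\cB(\LOm)}$. Hence the entire content is to show that in fact $u\in H^{3/2}(\Om)$ together with $\|u\|_{H^{3/2}(\Om)}\le C(\Om,z)\|f\|_{\LOm}$.

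To get the extra half-derivative at the boundary I would peel off the volume term via a Newtonian potential and reduce to the homogeneous Dirichlet problem. Put $g:=-\Delta u=zu+f\in\LOm$ and let $\ti g\in L^2(\bbR^n;d^nx)$ be the extension of $g$ by zero outside $\Om$; then $\ti g$ is compactly supported with $\|\ti g\|_{L^2(\bbR^n)}\le\|g\|_{\LOm}\le C(\Om,z)\|f\|_{\LOm}$. With $E_n(0;\dott)$ as in \eqref{2.52} (so that $-\Delta E_n(0;\dott)=\delta_0$) set $W(x)=\int_{\bbR^n}d^ny\,E_n(0;x-y)\ti g(y)$. Standard Calder\'on--Zygmund bounds for the Newtonian potential give $W\in H^2_{\loc}(\bbR^n)$, hence $W\in H^2(\Om)$ with $\|W\|_{H^2(\Om)}\le C\|\ti g\|_{L^2(\bbR^n)}\le C(\Om,z)\|f\|_{\LOm}$, and $-\Delta W=\ti g=g$ in $\Om$. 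Therefore $\phi:=u-W$ is harmonic in $\Om$, lies in $H^1(\Om)$, and satisfies $\ga_D\phi=-\ga_D W$ (using $\ga_D u=0$); moreover, since $H^2(\Om)\hookrightarrow H^{(3/2)+\eps}(\Om)$ for $\eps\in(0,1/2]$, the trace bound \eqref{A.62x} yields $\ga_D W\in H^1(\dOm)$ with $\|\ga_D W\|_{H^1(\dOm)}\le C\|W\|_{H^2(\Om)}\le C(\Om,z)\|f\|_{\LOm}$.

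Now I would invoke the well-posedness of the homogeneous Dirichlet problem \eqref{Kaz} (Verchota \cite{Ve84}): there is a unique $\psi\in H^{3/2}(\Om)$ with $\Delta\psi=0$ in $\Om$ and $\ga_D\psi=-\ga_D W$, and it obeys $\|\psi\|_{H^{3/2}(\Om)}\le C\|\ga_D W\|_{H^1(\dOm)}\le C(\Om,z)\|f\|_{\LOm}$. Since $\phi-\psi\in H^1(\Om)$ with $\ga_D(\phi-\psi)=0$ and $\Delta(\phi-\psi)=0\in\LOm$, one has $\phi-\psi\in\dom(-\Delta_{D,\Om})$ with $-\Delta_{D,\Om}(\phi-\psi)=0$, so strict positivity of $-\Delta_{D,\Om}$ (Theorem \ref{t2.5}) forces $\phi=\psi$, whence $\phi\in H^{3/2}(\Om)$. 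Consequently $u=\phi+W\in H^{3/2}(\Om)$ and $\|u\|_{H^{3/2}(\Om)}\le\|\psi\|_{H^{3/2}(\Om)}+\|W\|_{H^2(\Om)}\le C(\Om,z)\|f\|_{\LOm}$, which together with the first paragraph gives boundedness of \eqref{Kr-a.1}. The only substantive ingredient is the $H^{3/2}$-regularity in \eqref{Kaz} on a bounded Lipschitz domain (equivalently, the Jerison--Kenig regularity theory for the inhomogeneous Dirichlet problem, cf.\ \cite{JK95}); everything else is a routine reduction of the inhomogeneous problem to the homogeneous one by a volume potential, together with trace estimates already recorded.
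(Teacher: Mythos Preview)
Your argument is correct and follows the same overall architecture as the paper's proof: subtract a volume potential in $H^2(\Om)$ to reduce to a homogeneous Dirichlet problem with $H^1(\dOm)$ boundary data, then invoke the $H^{3/2}$-regularity theory on Lipschitz domains. The difference lies in the execution. The paper convolves $f$ with the Helmholtz fundamental solution $E_n(z;\dott)$ and is therefore led to solve the \emph{Helmholtz} Dirichlet problem \eqref{Bv-2Z}; this it does by a single-layer ansatz, proving invertibility of $\gamma_D\cS_z$ on $H^1(\dOm)$ first for $z$ off a discrete exceptional set $D$ via analytic Fredholm theory, and only afterwards reaches general $z\in\bbC\backslash\si(-\Delta_{D,\Om})$ by an analytic continuation argument. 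You instead absorb the spectral parameter into the right-hand side by convolving $-\Delta u = zu+f$ with the \emph{Newtonian} kernel $E_n(0;\dott)$, so the correction $\phi$ is genuinely harmonic and you may quote Verchota's well-posedness result \eqref{Kaz} directly, obtaining the bound for every $z\in\bbC\backslash\si(-\Delta_{D,\Om})$ in one stroke without layer-potential constructions or analytic continuation. Your route is more economical; the paper's route, on the other hand, develops the single-layer machinery for $\cS_z$ that is reused elsewhere (e.g., in Theorems \ref{t3.2} and \ref{t3.3}).
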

\begin{proof} 
Consider $z\in\bbC\backslash\si(-\Delta_{D,\Om})$, 
$f\in\LOm$ and set $w=(-\Delta_{D,\Om} - zI_\Om)^{-1}f$. 
It follows that $u$ is the unique solution of the problem
\begin{equation}\lb{Bv-1Z}
(-\Delta -z)w=f\, \text{ in } \, \Omega,\quad w\in H^1_0(\Omega).
\end{equation}
The strategy is to devise an alternative representation for $w$ 
from which it is clear that $w$ has the claimed regularity in $\Omega$. 
To this end, let $\widetilde{f}$ denote the extension of $f$ by zero to 
${\mathbb{R}}^n$ and denote by $E$ the operator of convolution by $E_n(z;\dott)$.
Since the latter is smoothing of order $2$, it follows that 
$v=\big(E\widetilde{f}\,\big)\big|_{\Omega}\in H^2(\Omega)$ and $(-\Delta -z)v=f$ in $\Omega$.
In particular, $g=-\gamma_D v\in H^1(\partial\Omega)$. 

We now claim that the problem 
\begin{equation}\lb{Bv-2Z}
(-\Delta -z)u=0\, \text{ in } \, \Omega,\quad u\in H^{3/2}(\Omega),
\quad \gamma_D u=g\text{ on } \,\partial\Omega,
\end{equation}
has a solution (satisfying natural estimates). To see this, we look for 
a solution in the form \eqref{3.11} for some $h\in L^2(\dOm; d^{n-1} \omega)$. 
This guarantees that $u\in H^{3/2}(\Om)$ by Theorem \ref{T-SH}, 
and $(-\Delta - z)u = 0$ in $\Om$. Ensuring that the boundary condition 
holds comes down to solving $\gamma_D{\mathcal{S}}_zh=g$. In this regard, 
we recall that
\begin{equation}\lb{Bv-3W}
\gamma_D{\mathcal{S}}_0 \colon \LdOm\to H^1(\partial\Omega) \, 
\text{ is invertible}
\end{equation}
(cf.\ \cite{Ve84}). With this in hand, by relying on Theorem \ref{T-SH} and 
arguing as in the proof of Theorem \ref{t3.2}, one can show that there 
exists a discrete set $D\subset\bbC$ such that 
\begin{equation}\lb{Bv-3T}
\gamma_D{\mathcal{S}}_z \colon \LdOm\to H^1(\partial\Omega) \, 
\text{ is invertible for }\, z\in\bbC\backslash D. 
\end{equation}
Thus, a solution of \eqref{Bv-2Z} is given by 
\begin{equation}\lb{Bv-4W}
u={\mathcal{S}}_z\bigl((\gamma_D{\mathcal{S}}_z)^{-1}(\gamma_D v)\bigr)
\, \text{ if }\, z\in\bbC\backslash D.
\end{equation}
Moreover, by Theorem \ref{T-SH}, this satisfies
\begin{equation}\lb{Bv-4Wb}
\|u\|_{H^{3/2}(\Omega)}\leq C(\Omega,z)\|g\|_{H^1(\partial\Omega)}
\leq C(\Omega,z)\|f\|_{\LOm},\quad z\in\bbC\backslash D.
\end{equation}
Consequently, if $z\in\bbC\backslash(D\cup\si(-\Delta_{D,\Omega}))$, then 
$u+v$ solves \eqref{Bv-1Z}. Hence, by uniqueness, $w=u+v$ in this case. 
This shows that $w=(-\Delta_{D,\Omega}-zI_{\Omega})^{-1}f$ belongs to 
$H^{3/2}(\Omega)$ and satisfies $\Delta w\in\LOm$ with 
\begin{equation}\lb{Bv-4Wc}
\|w\|_{H^{3/2}(\Omega)}+\|\Delta w\|_{\LOm}\leq C(\Omega,z)\|f\|_{\LOm},
\quad z\in\bbC\backslash(D\cup\si(-\Delta_{D,\Omega})).
\end{equation}
In summary, the above argument shows that 
the operator \eqref{Kr-a.1} is well-defined and bounded whenever 
$z\in\bbC\backslash(D\cup\si(-\Delta_{D,\Omega}))$. The extension to 
$z\in\bbC\backslash\si(-\Delta_{D,\Omega})$ is then achieved via analytic 
continuation (as in the last part of the proof of Theorem \ref{t3.2}).
\end{proof}

Having established Lemma \ref{L-BbbZ}, we can now readily prove the following
result. 

\begin{lemma}\label{L-Bbb}
Assume Hypothesis \ref{h2.1} and suppose that 
$z\in\bbC\backslash\si(-\Delta_{D,\Om})$. Then 
\begin{equation}\lb{3.23}
\wti\ga_N (-\Delta_{D,\Om} - zI_\Om)^{-1} \in \cB\big(\LOm,\LdOm\big),
\end{equation}
and 
\begin{equation}\lb{3.24}
\big[\wti\ga_N (-\Delta_{D,\Om} - zI_\Om)^{-1}\big]^* 
\in \cB\big(\LdOm,\LOm\big).  
\end{equation}
\end{lemma}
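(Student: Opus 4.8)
The plan is to obtain \eqref{3.23} simply by composing the two mapping properties that have already been established, and then to get \eqref{3.24} by passing to Hilbert space adjoints. Concretely, for $z\in\bbC\backslash\si(-\Delta_{D,\Om})$, Lemma \ref{L-BbbZ} guarantees that
\begin{equation*}
(-\Delta_{D,\Om} - zI_\Om)^{-1}:\LOm\to\bigl\{u\in H^{3/2}(\Omega)\,\big|\,\Delta u\in L^2(\Omega;d^nx)\bigr\}
\end{equation*}
is well-defined and bounded, when the target is endowed with the graph norm $u\mapsto\|u\|_{H^{3/2}(\Omega)}+\|\Delta u\|_{L^2(\Omega;d^nx)}$. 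On the other hand, the first part of Lemma \ref{Neu-tr} (i.e., the operator \eqref{MaX-1}) asserts precisely that the Neumann trace extends to a bounded operator
\begin{equation*}
\wti\gamma_N:\bigl\{u\in H^{3/2}(\Omega)\,\big|\,\Delta u\in L^2(\Omega;d^nx)\bigr\}\to L^2(\partial\Omega;d^{n-1}\omega),
\end{equation*}
and, moreover, that this extension is compatible with the weak Neumann trace \eqref{2.8}, \eqref{2.9}. Since any $u$ in the range of the Dirichlet resolvent lies in $H^1(\Om)$ with $\Delta u\in L^2(\Om;d^nx)\subset H^s(\Om)$ for $s\in(-1/2,0]$, the symbol $\wti\ga_N(-\Delta_{D,\Om}-zI_\Om)^{-1}$ is unambiguous, and the composition of the two displayed bounded maps yields \eqref{3.23} directly.

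For \eqref{3.24} I would simply invoke the fact that the Banach space adjoint of a bounded linear operator between two Hilbert spaces is again bounded, with the norm preserved; since $\LOm$ and $\LdOm$ are (identified with) Hilbert spaces, $\cB\big(\LOm,\LdOm\big)^{\,*}=\cB\big(\LdOm,\LOm\big)$, so \eqref{3.23} gives \eqref{3.24} at once. (If one prefers, one can alternatively note that this adjoint is the solution operator $g\mapsto u$ for a suitable Dirichlet problem with datum read off from $g$, paralleling \eqref{3.8}, \eqref{3.9}, but this is not needed here.)

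There is essentially no obstacle: the entire content has been pushed into Lemmas \ref{L-BbbZ} and \ref{Neu-tr}, and the only thing to verify is that the two compositions match up, i.e., that the Neumann trace appearing in \eqref{3.23} is exactly the bounded extension \eqref{MaX-1} evaluated on resolvents, which is immediate from the compatibility statement in Lemma \ref{Neu-tr}. The one point deserving a sentence of care is this consistency of notation for $\wti\ga_N$ across its various extensions; everything else is formal.
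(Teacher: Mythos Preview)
Your proof is correct and follows essentially the same approach as the paper: the paper's proof simply states that \eqref{3.23} is an immediate consequence of Lemma \ref{L-BbbZ} and Lemma \ref{Neu-tr}, and that \eqref{3.24} then follows, which is precisely the composition-then-adjoint argument you have written out in detail.
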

\begin{proof} Obviously, it suffices to only prove \eqref{3.23}. 
However, this is an immediate consequence of Lemma \ref{L-BbbZ}
and Lemma \ref{Neu-tr}.
\end{proof}

We note that Lemma \ref{L-Bbb} corrects an inaccuracy in the proof of 
\cite[Theorem 3.1]{GMZ07} in the following sense: The proof of (3.20) 
and (3.21) in \cite{GMZ07} relies on \cite[Lemma 2.4]{GMZ07}, which in 
turn requires the stronger assumptions \cite[Hypothesis 2.1]{GMZ07} on 
$\Om$ than merely the Lipschitz assumption on $\Om$. However, the current 
Lemmas \ref{l2.7} and \ref{L-Bbb} (and the subsequent Theorem \ref{t3.3}) 
show that (3.20) and (3.21) in \cite{GMZ07}, as well as the results  
stated in \cite[Theorem 3.1]{GMZ07}, are actually all correct.

After this preamble, we are ready to state the result about 
the well-posedness of the Dirichlet problem, alluded to above.

\begin{theorem} \lb{t3.3}
Assume Hypothesis \ref{h2.1} and suppose that 
$z\in\bbC\backslash\si(-\Delta_{D,\Om})$. Then for every $f \in H^1(\dOm)$, 
the following Dirichlet boundary value problem,
\begin{equation} \lb{3.31}
\begin{cases}
(-\Delta - z)u = 0 \text{ in }\, \Om, \quad u \in H^{3/2}(\Om), \\
\ga_D u = f \text{ on }\, \dOm,
\end{cases}
\end{equation}
has a unique solution $u=u_D$. This solution $u_D$ satisfies 
\begin{equation}\lb{Hh.3} 
\wti\ga_N u_D \in \LdOm\quad \mbox{and}\quad 
\|\wti\ga_N u_D\|_{\LdOm}\leq C_D\|f\|_{H^1(\dOm)},
\end{equation}
for some constant $C_D=C_D(\Omega,z)>0$. Moreover, 
\begin{equation}
\|u_D\|_{H^{3/2}(\Omega)} \leq C_D \|f\|_{H^1(\partial\Omega)}.  \lb{3.33}
\end{equation} 
Finally,      
\begin{equation}
\big[\wti\ga_N (-\Delta_{D,\Om}-{\ol z}I_\Om)^{-1}\big]^* \in 
\cB\big(H^1(\dOm), H^{3/2}(\Om)\big),   \lb{3.34} 
\end{equation}
and the solution $u_D$ is given by the formula
\begin{equation}
u_D = -\big[\wti\ga_N (-\Delta_{D,\Om}-\ol{z}I_\Om)^{-1}\big]^*f. 
\lb{3.35}
\end{equation}
\end{theorem}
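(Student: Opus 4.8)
The plan is to follow the same single-layer-potential strategy used in the proofs of Theorems \ref{t2.10} and \ref{t3.2}, now combining it with Lemma \ref{L-Bbb} to secure the weak Neumann trace estimate \eqref{Hh.3}. First I would establish \emph{uniqueness}: any two solutions of \eqref{3.31} differ by a function $u \in H^{3/2}(\Om)$ solving $(-\Delta-z)u=0$ in $\Om$ with $\ga_D u = 0$, hence $u \in \dom(-\Delta_{D,\Om})$ by \eqref{2.39}, and since $z \notin \si(-\Delta_{D,\Om})$ this forces $u=0$.

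For \emph{existence}, I would look for a candidate of the form $u = \cS_z h$ for some $h \in \LdOm$, exactly as in \eqref{3.11}; this automatically gives $u \in H^{3/2}(\Om)$ (by Theorem \ref{T-SH}, cited via \eqref{S-map}) and $(-\Delta-z)u=0$ in $\Om$. The boundary condition $\ga_D u = f$ becomes $\ga_D \cS_z h = f$, and by the invertibility statement \eqref{Bv-3T} established in Lemma \ref{L-BbbZ} (namely $\ga_D \cS_z \colon \LdOm \to H^1(\dOm)$ is invertible for $z \in \bbC \backslash D$, $D$ discrete), one solves $h = (\ga_D \cS_z)^{-1} f$, giving a solution $u_D = \cS_z (\ga_D \cS_z)^{-1} f$ together with the estimate \eqref{3.33} via \eqref{Sz-M6}. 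The weak Neumann trace then satisfies $\wti\ga_N u_D = \big({\textstyle\frac12}I_\dOm + K^\#_z\big) (\ga_D \cS_z)^{-1} f \in \LdOm$ by the jump relation \eqref{jump} and \eqref{Kb}, which yields \eqref{Hh.3}, at least for $z \in \bbC \backslash (D \cup \si(-\Delta_{D,\Om}))$. This handles everything off a discrete exceptional set.

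Next I would identify the solution with the operator in \eqref{3.35}. Fix $z \in \bbC \backslash (D \cup \si(-\Delta_{D,\Om}))$, $v \in \LOm$; writing $v = (-\Delta - \ol z)(-\Delta_{D,\Om} - \ol z I_\Om)^{-1} v$ and applying the Green-type integration-by-parts formula (as in the long computation in the proof of Theorem \ref{t3.2}), using that $\ga_D$ of the resolvent term vanishes, one obtains
\begin{equation}
(u_D, v)_{\LOm} = - \big\langle \ga_D u_D, \wti\ga_N (-\Delta_{D,\Om} - \ol z I_\Om)^{-1} v \big\rangle_{1/2} = - \big\langle f, \wti\ga_N (-\Delta_{D,\Om} - \ol z I_\Om)^{-1} v \big\rangle_{1/2}.
\end{equation}
Since $\wti\ga_N (-\Delta_{D,\Om} - \ol z I_\Om)^{-1} \in \cB(\LOm, \LdOm)$ by Lemma \ref{L-Bbb}, the pairing on the right equals $-\big((\wti\ga_N (-\Delta_{D,\Om} - \ol z I_\Om)^{-1})^* f, v\big)_{\LOm}$, and as $v$ is arbitrary this gives \eqref{3.35} and hence \eqref{3.34} on the non-exceptional set. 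Finally, the passage from $z \in \bbC \backslash (D \cup \si(-\Delta_{D,\Om}))$ to all $z \in \bbC \backslash \si(-\Delta_{D,\Om})$ is done by analytic continuation: represent $[\wti\ga_N(-\Delta_{D,\Om} - \ol{z_0}I_\Om)^{-1}]^*$ by a Cauchy integral over a small circle $C(z_0;r)$ avoiding $D$ and $\si(-\Delta_{D,\Om})$, exactly as in \eqref{An-C1}--\eqref{An-C3}, using the uniform bound on the circle.

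The main obstacle I anticipate is the weak Neumann trace control \eqref{Hh.3}: a priori $u_D$ lies only in $H^{3/2}(\Om)$ with $\Delta u_D = -z u_D \in H^{3/2}(\Om) \subset L^2(\Om;d^nx)$, so the trace $\wti\ga_N u_D$ lives in $H^{-1/2}(\dOm)$ by the generic Lemma \ref{Neu-tr}, and extracting genuine $L^2(\dOm)$-regularity requires either the single-layer representation combined with \eqref{jump} and \eqref{Kb} (which is clean but ties existence to the layer-potential construction) or, equivalently, invoking Lemma \ref{L-Bbb}, which already packages this $\LdOm$-mapping property of $\wti\ga_N$ applied to the Dirichlet resolvent. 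The analytic-continuation step, while routine in spirit, also demands care to check that all the relevant operator families depend analytically on $z$ in the correct operator topology; this is exactly the argument already carried out in the proof of Theorem \ref{t3.2} and I would simply cite it.
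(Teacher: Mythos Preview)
Your approach is correct and essentially matches the paper's own proof, which also constructs $u_D$ via the single-layer potential (referring back to \eqref{Bv-2Z}--\eqref{Bv-4W} in the proof of Lemma~\ref{L-BbbZ}), derives \eqref{3.35} by the same Green's-formula computation, and removes the discrete exceptional set by analytic continuation. Two minor remarks: the interior jump relation \eqref{jump} reads $\wti\ga_N\cS_z h=\big(-\tfrac12 I_{\dOm}+K^\#_z\big)h$, not $+\tfrac12$; and your ``main obstacle'' is not one, since Lemma~\ref{Neu-tr} (specifically \eqref{MaX-1}) already maps $\{u\in H^{3/2}(\Om)\,|\,\Delta u\in L^2(\Om;d^nx)\}$ boundedly into $\LdOm$, which is exactly how the paper obtains \eqref{Hh.3}.
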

\begin{proof}
Uniqueness for \eqref{3.31} is a direct consequence of the fact that 
$z\in\bbC\backslash \sigma(-\Delta_{D,\Omega})$. Existence, at least when 
$z\in\bbC\backslash  D$ for a discrete set $D\subset\bbC$, is implicit in the
proof of Lemma \ref{L-BbbZ} (cf. \eqref{Bv-2Z}). Note that a solution thus 
constructed obeys \eqref{3.33} and satisfies 
\eqref{Hh.3} (cf. Lemmas \ref{Gam-L1} and \ref{Neu-tr}). 

Next, we turn to the proof of \eqref{3.35}. Assume that 
$z\in\bbC\backslash (D\cup\sigma(-\Delta_{D,\Omega}))$ and denote by $u_D$ the 
unique solution of \eqref{3.31}. Also, recall \eqref{3.23}--\eqref{3.24}. 
Based on these and Green's formula, one computes
\begin{align}
(u_D,v)_{\LOm} &=
(u_D,(-\Delta-\ol{z})(-\Delta_{D,\Om}-\ol{z}I_\Om)^{-1}v)_{\LOm}
\no
\\ &=
((-\Delta-z)u_D, (-\Delta_{D,\Om}-\ol{z}I_\Om)^{-1}v)_{\LOm}
\no
\\ &\quad +
(\wti\ga_N u_D, \ga_D (-\Delta_{D,\Om}-\ol{z}I_\Om)^{-1}v)_{\LdOm}  \no
\\ &\quad -
\langle \ga_D u_D, \wti\ga_N
(-\Delta_{D,\Om}-\ol{z}I_\Om)^{-1}v\rangle_{1/2} \no
\\ &=
-\langle f, \wti\ga_N (-\Delta_{D,\Om}-\ol{z}I_\Om)^{-1}v\rangle_{1/2} 
\no
\\ &=
-\big(\big(\wti\ga_N (-\Delta_{D,\Om}-\ol{z}I_\Om)^{-1}\big)^*f,v\big)_{\LOm} 
\end{align}
for any $v\in\LOm$. This proves \eqref{3.35} with the operators involved 
understood in the sense of \eqref{3.24}. Given \eqref{3.33}, one 
obtains \eqref{3.34} granted that 
$z\in\bbC\backslash (D\cup\sigma(-\Delta_{D,\Omega}))$. 

Finally, the extension of the above results to the more general case in which 
$z\in\bbC\backslash \sigma(-\Delta_{D,\Omega})$ is done using analytic 
continuation, as in the last part of the proof of Theorem \ref{t3.2}. 
\end{proof}

Assuming Hypothesis \ref{h3.1}, we now introduce the
Dirichlet-to-Robin map $M_{D,\Theta,\Om}^{(0)}(z)$ 
associated with $(-\Delta-z)$ on $\Om$, as follows,
\begin{align}
M_{D,\Theta,\Om}^{(0)}(z) \colon
\begin{cases}
H^1(\dOm) \to \LdOm,  \\
\hspace*{10mm} f \mapsto - \big(\wti\ga_N +\wti\Theta\gamma_D\big)u_D,
\end{cases}  \quad z\in\bbC\backslash\si(-\Delta_{D,\Om}), \lb{3.44}
\end{align}
where $u_D$ is the unique solution of
\begin{align}
(-\Delta-z)u = 0 \,\text{ in }\Om, \quad u \in
H^{3/2}(\Om), \quad \ga_D u = f \,\text{ on }\dOm.   \lb{3.45}
\end{align}

Continuing to assume Hypothesis \ref{h3.1}, we next introduce the 
Robin-to-Dirichlet map $M_{\Theta,D,\Om}^{(0)}(z)$ associated with 
$(-\Delta-z)$ on $\Om$, as follows,
\begin{align}
M_{\Theta,D,\Om}^{(0)}(z) \colon \begin{cases} \LdOm \to H^1(\dOm),
\\
\hspace*{20.8mm} g \mapsto \ga_D u_{\Theta}, \end{cases}  \quad
z\in\bbC\backslash\si(-\Delta_{\Theta,\Om}), \lb{3.48}
\end{align}
where $u_{\Theta}$ is the unique solution of
\begin{align}
(-\Delta-z)u = 0 \,\text{ in }\Om, \quad u \in
H^{3/2}(\Om), \quad \big(\wti\ga_N + \wti \Theta\gamma_D\big)u = g 
\,\text{ on }\dOm.  \lb{3.49}
\end{align}
We note that Robin-to-Dirichlet maps have also been studied in \cite{Au04}.

We conclude with the following theorem, one of the main results of this paper:
 
\begin{theorem} \lb{t3.5} 
Assume Hypothesis \ref{h3.1}. Then 
\begin{equation}
M_{D,\Theta,\Om}^{(0)}(z) \in \cB\big(H^1(\partial\Om), \LdOm \big), \quad
z\in\bbC\backslash\si(-\Delta_{D,\Om}),   \lb{3.46}
\end{equation}
and 
\begin{equation}
M_{D,\Theta,\Om}^{(0)}(z) = \big(\wti\gamma_N+\wti\Theta\gamma_D\big)\big[
\big(\wti\gamma_N+\wti\Theta\gamma_D\big)
(-\Delta_{D,\Om} - \ol{z}I_\Om)^{-1}\big]^*, 
\quad z\in\bbC\backslash\si(-\Delta_{D,\Om}). \lb{3.47}
\end{equation}
Moreover, 
\begin{equation}
M_{\Theta,D,\Om}^{(0)}(z) \in \cB\big(\LdOm, H^1(\partial\Om) \big), \quad 
z\in\bbC\backslash\si(-\Delta_{\Theta,\Om}),    \lb{3.50}
\end{equation}
and, in fact,  
\begin{equation}
M_{\Theta,D,\Om}^{(0)}(z) \in \cB_\infty\big(\LdOm\big), \quad 
z\in\bbC\backslash\si(-\Delta_{\Theta,\Om}).  \lb{3.51}
\end{equation}
In addition, 
\begin{equation}
M_{\Theta,D,\Om}^{(0)}(z) = \gamma_D\big[\gamma_D
(-\Delta_{\Theta,\Om} - \ol{z}I_\Om)^{-1}\big]^*, \quad
z\in\bbC\backslash\si(-\Delta_{\Theta,\Om}). \lb{3.52}
\end{equation} 

Finally, let 
$z\in\bbC\backslash(\si(-\Delta_{D,\Om})\cup\si(-\Delta_{\Theta,\Om}))$. Then
\begin{equation}
M_{\Theta,D,\Om}^{(0)}(z) = - M_{D,\Theta,\Om}^{(0)}(z)^{-1}.   \lb{3.53}  
\end{equation}
\end{theorem}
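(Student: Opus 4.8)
The plan is to derive every part of Theorem \ref{t3.5} from the well-posedness and representation results already established for the generalized Robin problem (Theorem \ref{t3.2}) and for the Dirichlet problem (Theorem \ref{t3.3}), together with the trace lemmas \ref{Gam-L1} and \ref{Neu-tr}. First I would dispose of the boundedness claims \eqref{3.46} and \eqref{3.50}. For \eqref{3.46}, given $f\in H^1(\dOm)$, Theorem \ref{t3.3} produces the unique solution $u_D$ of \eqref{3.45} with $\|u_D\|_{H^{3/2}(\Om)}+\|\wti\ga_N u_D\|_{\LdOm}\le C_D\|f\|_{H^1(\dOm)}$; since $\ga_D u_D=f$ and $\wti\Theta\in\cB(H^1(\dOm),\LdOm)$ by Hypothesis \ref{h3.1}, the element $M_{D,\Theta,\Om}^{(0)}(z)f=-(\wti\ga_N+\wti\Theta\ga_D)u_D$ lies in $\LdOm$ with norm controlled by $\|f\|_{H^1(\dOm)}$, and linearity is automatic from uniqueness. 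Symmetrically, \eqref{3.50} follows from Theorem \ref{t3.2}, whose solution $u_\Theta$ of \eqref{3.49} satisfies $\|\ga_D u_\Theta\|_{H^1(\dOm)}\le C\|g\|_{\LdOm}$, so $M_{\Theta,D,\Om}^{(0)}(z)g=\ga_D u_\Theta$ maps $\LdOm$ boundedly into $H^1(\dOm)$. Then \eqref{3.51} is obtained by composing \eqref{3.50} with the compact Rellich embedding $H^1(\dOm)\hookrightarrow\LdOm$ on the compact Lipschitz boundary $\dOm$.

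Next I would establish the resolvent representations \eqref{3.47} and \eqref{3.52}. For \eqref{3.52}, insert \eqref{3.9}, namely $u_\Theta=\big(\ga_D(-\Delta_{\Theta,\Om}-\ol z I_\Om)^{-1}\big)^*g$, into $M_{\Theta,D,\Om}^{(0)}(z)g=\ga_D u_\Theta$; by \eqref{3.8} one has $u_\Theta\in H^{3/2}(\Om)$ with $\Delta u_\Theta=z u_\Theta\in\LOm$, so the outer $\ga_D$ is the extended trace of Lemma \ref{Gam-L1}, landing in $H^1(\dOm)$. For \eqref{3.47} the algebraic point is that $\ga_D(-\Delta_{D,\Om}-\ol z I_\Om)^{-1}=0$, because the range of the resolvent is $\dom(-\Delta_{D,\Om})\subset H^1_0(\Om)$, on which $\ga_D$ vanishes; hence $(\wti\ga_N+\wti\Theta\ga_D)(-\Delta_{D,\Om}-\ol z I_\Om)^{-1}=\wti\ga_N(-\Delta_{D,\Om}-\ol z I_\Om)^{-1}$, and substituting \eqref{3.35}, $u_D=-[\wti\ga_N(-\Delta_{D,\Om}-\ol z I_\Om)^{-1}]^*f$, into $M_{D,\Theta,\Om}^{(0)}(z)f=-(\wti\ga_N+\wti\Theta\ga_D)u_D$ yields \eqref{3.47}, the outer trace operators acting on $u_D\in H^{3/2}(\Om)$ with $\Delta u_D=-z u_D\in\LOm$ via Lemmas \ref{Gam-L1} and \ref{Neu-tr}.

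Finally, for \eqref{3.53}, fix $z\in\bbC\backslash(\si(-\Delta_{D,\Om})\cup\si(-\Delta_{\Theta,\Om}))$, so that both maps are defined. Given $f\in H^1(\dOm)$, let $u_D$ be the Dirichlet solution of \eqref{3.31}, so $g:=-M_{D,\Theta,\Om}^{(0)}(z)f=(\wti\ga_N+\wti\Theta\ga_D)u_D\in\LdOm$. But $u_D$ is then itself a solution of the generalized Robin problem \eqref{3.6} with datum $g$, so uniqueness in Theorem \ref{t3.2} forces $u_\Theta=u_D$, whence $M_{\Theta,D,\Om}^{(0)}(z)g=\ga_D u_\Theta=\ga_D u_D=f$; this gives $M_{\Theta,D,\Om}^{(0)}(z)\,M_{D,\Theta,\Om}^{(0)}(z)=-I_{H^1(\dOm)}$. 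Conversely, given $g\in\LdOm$, let $u_\Theta$ be the Robin solution of \eqref{3.49}, so $f:=M_{\Theta,D,\Om}^{(0)}(z)g=\ga_D u_\Theta\in H^1(\dOm)$; then $u_\Theta$ solves the Dirichlet problem \eqref{3.31} with datum $f$, so by uniqueness in Theorem \ref{t3.3} it equals $u_D$, and therefore $-M_{D,\Theta,\Om}^{(0)}(z)f=(\wti\ga_N+\wti\Theta\ga_D)u_D=(\wti\ga_N+\wti\Theta\ga_D)u_\Theta=g$, i.e.\ $M_{D,\Theta,\Om}^{(0)}(z)\,M_{\Theta,D,\Om}^{(0)}(z)=-I_{\LdOm}$. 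These two identities show that $M_{D,\Theta,\Om}^{(0)}(z)$ is a bijection of $H^1(\dOm)$ onto $\LdOm$ with inverse $-M_{\Theta,D,\Om}^{(0)}(z)$, which is \eqref{3.53}. I do not expect a genuine obstacle here: the only points demanding care are the bookkeeping of which extended trace operator (Lemma \ref{Gam-L1} vs.\ Lemma \ref{Neu-tr}) is in play on each function space, the verification that the various adjoints land in $H^{3/2}(\Om)$ as asserted in \eqref{3.8} and \eqref{3.34}, and the identity $\ga_D(-\Delta_{D,\Om}-\ol z I_\Om)^{-1}=0$ needed for \eqref{3.47}; everything else is a direct consequence of the already-proven well-posedness statements.
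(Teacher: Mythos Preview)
Your proposal is correct and follows essentially the same approach as the paper: boundedness from Theorems \ref{t3.2} and \ref{t3.3}, compactness from the Rellich embedding on $\dOm$, the representation formulas by inserting \eqref{3.9} and \eqref{3.35} (using $\ga_D(-\Delta_{D,\Om}-\ol z I_\Om)^{-1}=0$), and the inversion \eqref{3.53} via the uniqueness statements for both boundary value problems. The only cosmetic difference is that for \eqref{3.53} the paper argues with the difference $u_D-u_\Theta$ (resp.\ the sum $u_D+u_\Theta$) and invokes uniqueness to show it vanishes, whereas you observe directly that the same function solves both problems; these are equivalent formulations of the same idea.
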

\begin{proof}
The membership in \eqref{3.46} is a consequence of Theorem \ref{t3.3}. 
In this context we note that by the first line in \eqref{2.39}, 
$\ga_D (-\Delta_{D,\Om}-\ol{z}I_\Om)^{-1} =0$, and hence 
\begin{equation}
u_D = - \big[\wti\ga_N (-\Delta_{D,\Om}-\ol{z}I_\Om)^{-1}\big]^*f 
= - \big[\big(\wti\ga_N + \wti \Theta \ga_D \big) 
(-\Delta_{D,\Om}-\ol{z}I_\Om)^{-1}\big]^*f  
\lb{3.53a}
\end{equation}
by \eqref{3.35}. Moreover, applying 
$-\big(\wti\gamma_N+\wti\Theta\gamma_D\big)$ 
to $u_D$ in \eqref{3.35} implies formula \eqref{3.47}. 
Likewise, \eqref{3.50} follows from Theorem \ref{t3.2}. In addition, since 
$H^1(\partial\Om)$ embeds compactly into $L^2(\partial\Om; d^{n-1} \omega)$ 
(cf.\ \eqref{EQ1} and \cite[Proposition 2.4]{MM07}), 
$M_{\Theta,D,\Om}^{(0)}(z)$, $z\in\bbC\backslash\si(-\Delta_{\Theta,\Om})$, 
are compact operators in $L^2(\partial\Om; d^{n-1} \omega)$, justifying 
\eqref{3.51}. Applying $\gamma_D$ to $u_{\Theta}$ in \eqref{3.9} implies 
formula \eqref{3.52}. 

There remains to justify \eqref{3.53}. To this end, let $g\in \LdOm$
be arbitrary. Then
\begin{equation}
M_{D,\Theta,\Om}^{(0)}(z) M_{\Theta,D,\Om}^{(0)}(z) g 
= M_{D,\Theta,\Om}^{(0)}(z) \gamma_D u_\Theta
= - \big(\wti \gamma_N +\wti\Theta\gamma_D\big)u_D, 
\quad f=\gamma_D u_\Theta\in H^1(\dOm).  \lb{3.54}
\end{equation}
Here $u_\Theta$ is the unique solution of $(-\Delta-z)u=0$ with 
$u\in H^{3/2}(\Om)$ and $\big(\wti \gamma_N +\wti\Theta\gamma_D\big)u =g$, 
and $u_D$ is the unique solution of $(-\Delta-z)u=0$ with 
$u\in H^{3/2}(\Om)$ and $\gamma_D u = f \in H^1(\dOm)$. Since 
$(u_D - u_\Theta)\in H^{3/2}(\Om)$ and $\gamma_D u_D = f = \gamma_D u_\Theta$, 
one concludes
\begin{equation}
\gamma_D (u_D - u_\Theta)=0 \, \text{ and } \, (-\Delta - z)(u_D - u_\Theta)=0.
\lb{3.55}
\end{equation}
Uniqueness of the Dirichlet problem proved in Theorem \ref{t3.3} then 
yields $u_D=u_\Theta$ which further entails that 
$-\big(\wti\gamma_N +\wti\Theta\gamma_D\big)u_D
=-\big(\wti\gamma_N +\wti\Theta\gamma_D\big)u_\Theta =-g$. Thus, 
\begin{equation}
M_{D,\Theta,\Om}^{(0)}(z) M_{\Theta,D,\Om}^{(0)}(z) g 
= - \big(\wti \gamma_N +\wti\Theta\gamma_D\big)u_D = -g, \lb{3.56}
\end{equation}
implying $M_{D,\Theta,\Om}^{(0)}(z) M_{\Theta,D,\Om}^{(0)}(z) = -I_{\dOm}$. 
Conversely, let $f\in H^1(\dOm)$. Then
\begin{equation}\lb{3.57}
M_{\Theta,D,\Om}^{(0)}(z) M_{D,\Theta,\Om}^{(0)}(z) f 
= M_{\Theta,D,\Om}^{(0)}(z)
\big(-\big(\wti\gamma_N +\wti\Theta\gamma_D\big)u_D\big) 
= \gamma_D u_\Theta, 
\end{equation}
and we set 
\begin{equation}\lb{3.57bis}
g = - \big(\wti\gamma_N +\wti\Theta\gamma_D\big)u_D\in \LdOm.    
\end{equation}
Here $u_D,u_\Theta\in H^{3/2}(\Omega)$ are such that
$(-\Delta - z)u_\Theta=(-\Delta - z)u_D=0$ in $\Omega$ and 
$\gamma_D u_D =f$, $\big(\wti\gamma_N +\wti\Theta\gamma_D\big)u_\Theta=g$. 
Thus $\big(\wti\gamma_N+\wti\Theta\gamma_D\big)(u_\Theta + u_D)=0$, 
$(-\Delta - z)(u_\Theta + u_D)=0$ and $(u_D+u_\Theta) \in H^{3/2}(\Omega)$.
Uniqueness of the generalized Robin problem proved in Theorem \ref{t3.2} 
then yields $u_\Theta=-u_D$ and 
hence $\gamma_D u_\Theta =-\gamma_D u_D =-f$. Thus,
\begin{equation}
M_{\Theta,D,\Om}^{(0)}(z) M_{D,\Theta,\Om}^{(0)}(z) f 
= \gamma_D u_\Theta=-f,   \lb{3.58}
\end{equation}
implying $M_{\Theta,D,\Om}^{(0)}(z) M_{D,\Theta,\Om}^{(0)}(z) 
\subseteq -I_{\dOm}$. The desired conclusion now follows. 
\end{proof}

\begin{remark} \lb{r3.6X}
In the above considerations, the special case $\Theta =0$ represents 
the frequently studied Neumann-to-Dirichlet and Dirichlet-to-Neumann maps
$M_{N,D,\Om}^{(0)}(z)$ and $M_{D,N,\Om}^{(0)}(z)$, respectively. That is, 
$M_{N,D,\Om}^{(0)}(z)=M_{0,D,\Om}^{(0)}(z)$ and 
$M_{D,N,\Om}^{(0)}(z)=M_{D,0,\Om}^{(0)}(z)$. Thus, as a corollary 
of Theorem \ref{t3.5} we have 
\begin{equation}
M_{N,D,\Om}^{(0)}(z) = - M_{D,N,\Om}^{(0)}(z)^{-1},   \lb{3.53X}  
\end{equation}
whenever Hypothesis \ref{h2.1} holds and 
$z\in\bbC\backslash(\si(-\Delta_{D,\Om})\cup\si(-\Delta_{N,\Om}))$. 
\end{remark}

\begin{remark}\lb{r3.6}
We emphasize again that all results in this section immediately extend to  
Schr\"odinger operators $H_{\Theta,\Om} = -\Delta_{\Theta,\Om} + V$, 
$\dom\big(H_{\Theta,\Om}\big) = \dom\big(-\Delta_{\Theta,\Om}\big)$ in 
$\LOm$ for (not necessarily real-valued) potentials $V$ satisfying 
$V \in L^\infty(\Om; d^n x)$, or more generally, for potentials $V$ which 
are Kato--Rellich bounded with respect to $-\Delta_{\Theta,\Om}$ with bound 
less than one. Denoting the corresponding $M$-operators by $M_{D,N,\Om}(z)$ 
and $M_{\Theta,D,\Om}(z)$, respectively, we note, in particular, that 
\eqref{3.44}--\eqref{3.53} extend replacing $-\Delta$ by $-\Delta + V$ 
and restricting $z\in\bbC$ appropriately.
\end{remark}

Our discussion of Weyl--Titchmarsh operators follows the earlier papers  
\cite{GLMZ05} and \cite{GMZ07}. For related literature on Weyl--Titchmarsh 
operators, relevant in the context
of boundary value spaces (boundary triples, etc.), we refer, for
instance, to \cite{ABMN05}, \cite{AP04}, \cite{BL07}, \cite{BMN06},
\cite{BMN00}--\cite{BMNW08}, \cite{DHMS00}-- \cite{DM95}, \cite{GKMT01}, \cite{GM09}, \cite[Ch.\ 3]{GG91}, \cite[Ch.\ 13]{Gr09}, \cite{MM06}, \cite{Ma04}, 
\cite{MPP07}, \cite{Pa87}, \cite{Pa02}, \cite{Po04}, \cite{Po08}, 
\cite{Ry08}, \cite{Ry08a}, \cite{TS77}.

\section{Some Variants of Krein's Resolvent Formula} \label{s4}

In this section we present our principal new results, variants of 
Krein's formula for the difference of resolvents of generalized Robin 
Laplacians and Dirichlet Laplacians on bounded Lipschitz domains.

We start by weakening Hypothesis \ref{h3.1} by using assumption 
\eqref{3.5bisa} below: 

\begin{hypothesis} \lb{h3.1bis}
In addition to Hypothesis \ref{h2.2} suppose that 
\begin{equation}\lb{3.5bisa}
\wti\Theta\in\cB_\infty\big(H^{1/2}(\dOm),H^{-1/2}(\partial\Omega)\big). 
\end{equation}
\end{hypothesis}

We note that condition \eqref{3.5bisa} is satisfied 
if there exists some $\varepsilon>0$ such that 
\begin{equation}\lb{3.5D}
\wti\Theta\in\cB\big(H^{1/2-\varepsilon}(\dOm),H^{-1/2}(\partial\Omega)\big). 
\end{equation}

Before proceeding with the main topic of this section, we will comment to 
the effect that Hypothesis \ref{h3.1} is indeed stronger than 
Hypothesis~\ref{h3.1bis}, as the latter follows from the former 
via duality and interpolation, implying 
\begin{equation}\lb{4.3}
\wti\Theta  \in\cB_{\infty}\big(H^{s}(\dOm),H^{s-1}(\partial\Omega)\big),
\quad 0\leq s\leq 1.
\end{equation}

To see this, one first employs the fact that 
\begin{equation}\lb{Gh-a2}
(H^{s_0}(\partial\Omega),H^{s_1}(\partial\Omega))_{\theta,2}
=H^{s}(\partial\Omega)
\end{equation}
for $s=(1-\theta)s_0+\theta s_1$, $0<\theta<1$, $0\leq s_0,s_1\leq 1$, 
and $s_0\not=s_1$, where $(\cdot,\cdot)_{\theta,q}$ denotes the 
{\it real interpolation} method. 

Second, one uses the fact that if $T:X_j\to Y_j$, $j=0,1$, is a linear and 
bounded operator between two pairs of compatible Banach spaces,  
which is compact for $j=0$, then 
$T\in\cB_{\infty}( (X_0,X_1)_{\theta,p} , (Y_0,Y_1)_{\theta,p} )$ 
for every $\theta\in(0,1)$. This is a result due to Cwikel \cite{Cw92}: 

\begin{theorem} [\cite{Cw92}] \label{T-Cwikel}  
Let $X_j$, $Y_j$, $j=0,1$, be two compatible Banach space couples and suppose 
that the linear operator $T:X_j\to Y_j$ is bounded for $j=0$ and compact for 
$j=1$. Then $T:(X_0,X_1)_{\theta,p}\to (Y_0,Y_1)_{\theta,p}$ is compact for 
all $\theta\in(0,1)$ and $p\in[1,\infty]$. 
\end{theorem}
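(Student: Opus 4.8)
The statement is Cwikel's theorem on the persistence of compactness under the real interpolation method; its proof is somewhat intricate, and the plan is to outline the strategy, referring to \cite{Cw92} for the full details. Throughout I would work with the discrete $J$- and $K$-functional descriptions of real interpolation spaces: $x\in(X_0,X_1)_{\theta,p}$ precisely when $x=\sum_{\nu\in\bbZ}u_\nu$ (convergence in $X_0+X_1$) with $u_\nu\in X_0\cap X_1$ and $\big(2^{-\theta\nu}J(2^\nu,u_\nu)\big)_{\nu\in\bbZ}\in\ell^p$, where $J(t,u)=\max\big(\|u\|_{X_0},t\|u\|_{X_1}\big)$, the infimum of $\big\|(2^{-\theta\nu}J(2^\nu,u_\nu))_{\nu}\big\|_{\ell^p}$ over all such representations being an equivalent norm; there is a companion description via $\big(2^{-\theta\nu}K(2^\nu,x)\big)_{\nu}\in\ell^p$. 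Since $T$ is bounded from $X_j$ to $Y_j$ for $j=0,1$, the classical interpolation theorem already gives $T\in\cB\big((X_0,X_1)_{\theta,p},(Y_0,Y_1)_{\theta,p}\big)$, so the task is to upgrade boundedness to compactness, that is, to prove that the image under $T$ of the closed unit ball $B$ of $(X_0,X_1)_{\theta,p}$ is totally bounded in $(Y_0,Y_1)_{\theta,p}$. One may first perform the usual normalizations, e.g. pass to a regular couple so that $Y_0\cap Y_1$ is dense in $Y_1$.

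The first, routine, step is to peel off the dyadic tails. For $x\in B$ fix a near-optimal representation $x=\sum_\nu u_\nu$; then $\|u_\nu\|_{X_0}\le C2^{\theta\nu}$ and $\|u_\nu\|_{X_1}\le C2^{-(1-\theta)\nu}$, so $\sum_{\nu\le -N}u_\nu$ has small $X_0$-norm while $\sum_{\nu\ge N}u_\nu$ has small $X_1$-norm, and — more to the point — the $(X_0,X_1)_{\theta,p}$-norms of these two tails are $\ell^p$-tails of a sequence of $\ell^p$-norm $\lesssim 1$. The catch is that these tails are small only for each individual $x$, not uniformly over $B$: the unit ball of a real interpolation space is \emph{not}, in general, uniformly concentrated on finitely many dyadic scales. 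Overcoming precisely this non-uniformity is the heart of the matter, and is what makes the theorem genuinely difficult; it was established in full generality only in \cite{Cw92}, after earlier partial results valid under additional reflexivity or inclusion hypotheses on the couples.

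The decisive idea is a simultaneous, scale-by-scale approximation that exploits the compactness of $T:X_1\to Y_1$ on every dyadic level at once. For each $\nu$ one uses total boundedness of $T(B_{X_1})$ in $Y_1$ to fix a finite net $\cN_\nu\subset Y_0\cap Y_1$ of mesh $\de_\nu$ (with finite-dimensional linear span), and replaces each $Tu_\nu$ by a point $g_\nu$ with $\|Tu_\nu-g_\nu\|_{Y_1}\le\de_\nu\|u_\nu\|_{X_1}$; choosing the $\de_\nu$ to decay geometrically as $|\nu|\to\infty$ and reading off the $J$-functionals of the remainders $r_\nu=Tu_\nu-g_\nu$ then yields a splitting $Tx=\sum_\nu g_\nu+\sum_\nu r_\nu$ in which the error $\sum_\nu r_\nu$ has $(Y_0,Y_1)_{\theta,p}$-norm that is small \emph{uniformly} over $B$, whereas the principal part $\sum_\nu g_\nu$ ranges, as $x$ runs over $B$, over a totally bounded subset of $(Y_0,Y_1)_{\theta,p}$. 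For the latter one combines the finiteness of the nets with the elementary fact that a subset of $Y_0\cap Y_1$ which is bounded in $Y_0$ and totally bounded in $Y_1$ is totally bounded in $(Y_0,Y_1)_{\theta,p}$, a consequence of the estimate $\|y\|_{(Y_0,Y_1)_{\theta,p}}\le C\|y\|_{Y_0}^{1-\theta}\|y\|_{Y_1}^{\theta}$ applied to differences of net elements.

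The main obstacle — and where the genuine work lies — is carrying this out rigorously and \emph{uniformly in $x$}: one must control the equivalence constants between the $Y_0$- and $Y_1$-norms on the growing finite-dimensional spans of the nets, balance the geometric decay of the $\de_\nu$ against the growth of $\|u_\nu\|_{X_0}$ and $\|u_\nu\|_{X_1}$ recorded above, verify that the argument is insensitive to whether $p<\infty$ or $p=\infty$, and finally run a diagonal argument over $\eps\downarrow0$ to pass from ``within $\eps$ of a finite set, for every $\eps$'' to genuine total boundedness. Granting this — which is precisely what \cite{Cw92} accomplishes — combining the uniform smallness of the tails and of $\sum_\nu r_\nu$ with the total boundedness of the family of principal parts shows that $T(B)$ is totally bounded, that is, $T:(X_0,X_1)_{\theta,p}\to(Y_0,Y_1)_{\theta,p}$ is compact for all $\theta\in(0,1)$ and $p\in[1,\infty]$.
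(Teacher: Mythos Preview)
The paper does not give a proof of this theorem: it is simply quoted from \cite{Cw92} and used as a black box (to deduce \eqref{4.3} from Hypothesis~\ref{h3.1} via \eqref{Gh-a2}, and again in \eqref{Ks-4}). So there is no ``paper's own proof'' to compare against. Your write-up is a reasonable high-level sketch of the strategy in Cwikel's original argument, and you are explicit that you are deferring the hard part to \cite{Cw92}; in that sense your proposal and the paper are doing the same thing---invoking the reference---except that you add an informal narrative of what that reference contains.
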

\noindent (Interestingly, the corresponding result for the complex 
method of interpolation remains open.)

In our next two results below (Theorems \ref{t3.2bis}--\ref{t3.XV}) we 
discuss the solvability of the Dirichlet and Robin boundary value problems
with solution in the energy space $H^1(\Omega)$.  

\begin{theorem} \lb{t3.2bis} 
Assume Hypothesis \ref{h3.1bis} and suppose that 
$z\in\bbC\backslash\si(-\Delta_{\Theta,\Om})$. Then for every 
$g\in H^{-1/2}(\partial\Omega)$, 
the following generalized Robin boundary value problem,
\begin{equation} \lb{3.6bis}
\begin{cases}
(-\Delta - z)u = 0 \text{ in }\,\Om,\quad u \in H^{1}(\Om), \\
\big(\wti\ga_N + \wti \Theta \gamma_D\big) u = g \text{ on } \,\dOm,
\end{cases}
\end{equation}
has a unique solution $u=u_\Theta$. Moreover, there exists a constant 
$C= C(\Theta,\Omega,z)>0$ such that
\begin{equation}\lb{3.7bis}
\|u_\Theta\|_{H^{1}(\Omega)} \leq C\|g\|_{H^{-1/2}(\partial\Omega)}.  
\end{equation}
In particular, 
\begin{equation}\lb{3.8bis}
\big[\ga_D (-\Delta_{\Theta,\Om}-\ol{z}I_\Om)^{-1}\big]^* \in
\cB\big(H^{-1/2}(\partial\Omega), H^{1}(\Om)\big),  
\end{equation}
and the solution $u_\Theta$ of \eqref{3.6bis} is once again given by formula  
\eqref{3.9}. 
\end{theorem}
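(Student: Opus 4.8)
The plan is to follow the architecture of the proof of Theorem~\ref{t3.2}, but, since the solution is now only sought in the energy space $H^1(\Om)$, to replace the single--layer ansatz by a direct Lax--Milgram/Fredholm argument for the sesquilinear form of $-\Delta_{\Theta,\Om}$. \emph{Uniqueness} is immediate: if $u\in H^1(\Om)$ solves \eqref{3.6bis} with $g=0$, then $\Delta u=-zu\in\LOm$, so by the description of the domain in Theorem~\ref{t2.3} one has $u\in\dom(-\Delta_{\Theta,\Om})$ and $(-\Delta_{\Theta,\Om}-zI_\Om)u=0$; hence $u=0$ because $z\notin\si(-\Delta_{\Theta,\Om})$.

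For \emph{existence}, I would consider the bounded operator $S(z)\colon H^1(\Om)\to\bigl(H^1(\Om)\bigr)^*$ determined by $\bigl(S(z)u\bigr)(v)=a_{-\Delta_{\Theta,\Om}}(v,u)-z\,(v,u)_{\LOm}$, with $a_{-\Delta_{\Theta,\Om}}$ the closed form from \eqref{2.22}, together with the conjugate--linear functional $\ell_g(v)=\langle\gamma_D v,g\rangle_{1/2}$ on $H^1(\Om)$, which makes sense because $\gamma_D\in\cB\bigl(H^1(\Om),H^{1/2}(\dOm)\bigr)$ and $g\in H^{-1/2}(\dOm)=\bigl(H^{1/2}(\dOm)\bigr)^*$. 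Splitting $a_{-\Delta_{\Theta,\Om}}(v,u)-z\,(v,u)_{\LOm}=\bigl[a_{-\Delta_{\Theta,\Om}}(v,u)+M(v,u)_{\LOm}\bigr]-(z+M)(v,u)_{\LOm}$ for $M>0$ large, the bracketed form is $H^1(\Om)$--bounded and, by the infinitesimal form--boundedness of $\|\gamma_D\,\cdot\,\|_{\LdOm}^2$ supplied by Lemma~\ref{l2.2a} (exactly as in the proof of Theorem~\ref{t2.3}), also $H^1(\Om)$--coercive; Lax--Milgram then makes the corresponding operator an isomorphism of $H^1(\Om)$ onto $\bigl(H^1(\Om)\bigr)^*$, while the remaining term $u\mapsto[v\mapsto-(z+M)(v,u)_{\LOm}]$ factors through the compact embedding $H^1(\Om)\hookrightarrow\LOm$. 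Thus $S(z)$ is Fredholm of index zero, and by the uniqueness argument it is injective, hence an isomorphism; one sets $u_\Theta=S(z)^{-1}\ell_g$, and boundedness of $S(z)^{-1}$ together with $\|\ell_g\|\le C\|g\|_{H^{-1/2}(\dOm)}$ yields \eqref{3.7bis}.

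It then remains to check that this $u_\Theta$ actually solves \eqref{3.6bis}. Testing $S(z)u_\Theta=\ell_g$ against $v\in C_0^\infty(\Om)$ gives $(-\Delta-z)u_\Theta=0$ in $\cD'(\Om)$, so $\Delta u_\Theta=-zu_\Theta\in\LOm$ and, since $0>-1/2$, the weak Neumann trace $\wti\ga_N u_\Theta\in H^{-1/2}(\dOm)$ is well-defined by \eqref{2.8}, \eqref{2.9}. Comparing \eqref{2.9} (taken with $u=u_\Theta$ and $\phi=\gamma_D\Phi$ for an arbitrary $\Phi\in H^1(\Om)$) with the identity $S(z)u_\Theta=\ell_g$ written for $v=\Phi$ gives $\langle\gamma_D\Phi,\bigl(\wti\ga_N+\wti\Theta\gamma_D\bigr)u_\Theta\rangle_{1/2}=\langle\gamma_D\Phi,g\rangle_{1/2}$ for all $\Phi$; since $\gamma_D$ maps $H^1(\Om)$ onto $H^{1/2}(\dOm)$ (cf.\ \eqref{2.6a}), this is precisely $\bigl(\wti\ga_N+\wti\Theta\gamma_D\bigr)u_\Theta=g$ in $H^{-1/2}(\dOm)$. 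Finally, formula \eqref{3.9} is obtained verbatim as in Theorem~\ref{t3.2}: for $v\in\LOm$ write $v=(-\Delta-\ol z)(-\Delta_{\Theta,\Om}-\ol zI_\Om)^{-1}v$, apply the Green-type identity \eqref{2.9} to $u_\Theta$ and to $(-\Delta_{\Theta,\Om}-\ol zI_\Om)^{-1}v$ (legitimate, since both lie in $H^1(\Om)$ with $L^2$ Laplacian), use the boundary condition and the self-adjointness of $-\Delta_{\Theta,\Om}$, and read off $u_\Theta=\bigl[\gamma_D(-\Delta_{\Theta,\Om}-\ol zI_\Om)^{-1}\bigr]^*g$; combined with \eqref{3.7bis} this gives the operator membership \eqref{3.8bis}.

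There is no deep obstacle once the set-up is chosen correctly; the genuinely careful point is the bookkeeping of which duality/scalar--product slot is conjugate--linear, so that $S(z)$, $\ell_g$, and the pairings in \eqref{2.9} are all consistently interpreted, and the verification that the abstract solution of $S(z)u_\Theta=\ell_g$ satisfies the boundary condition in the sense of the weak Neumann trace. Alternatively one could mimic Theorem~\ref{t3.2} literally, seeking $u_\Theta=\cS_z h$ with $h\in H^{-1/2}(\dOm)$ and using $\cS_z\in\cB\bigl(H^{-1/2}(\dOm),H^1(\Om)\bigr)$ together with the Fredholm theory of $-\tfrac12 I_\dOm+K^{\#}_z$ on $H^{-1/2}(\dOm)$ and the analytic Fredholm/analytic continuation machinery; that route genuinely uses the compactness in Hypothesis~\ref{h3.1bis}, whereas the form argument above uses only $\wti\Theta\in\cB\bigl(H^{1/2}(\dOm),H^{-1/2}(\dOm)\bigr)$.
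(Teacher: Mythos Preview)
Your proof is correct and takes a genuinely different route from the paper. The paper proceeds via the single-layer ansatz $u=\cS_z h$ with $h\in H^{-1/2}(\dOm)$ (exactly the alternative you sketch at the end), reducing matters to inverting $\big(-\tfrac12 I_\dOm+K^{\#}_z\big)+\wti\Theta\gamma_D\cS_z$ on $H^{-1/2}(\dOm)$; this uses the compactness of $\wti\Theta$ in Hypothesis~\ref{h3.1bis} to make $\wti\Theta\gamma_D\cS_z$ compact, and then the Fredholm property of $-\tfrac12 I_\dOm+K^{\#}_z$ together with analytic Fredholm theory and analytic continuation in $z$. Your form/Lax--Milgram/compact-perturbation argument is more self-contained: it avoids boundary layer potentials entirely and, as you correctly observe, needs only $\wti\Theta\in\cB\big(H^{1/2}(\dOm),H^{-1/2}(\dOm)\big)$ from Hypothesis~\ref{h2.2}, not the additional compactness \eqref{3.5bisa}. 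In effect you establish directly that $-\wti\Delta_{\Theta,\Om}-z\wti I_\Om\colon H^1(\Om)\to\big(H^1(\Om)\big)^*$ is an isomorphism for $z\notin\si(-\Delta_{\Theta,\Om})$; the paper reaches this same conclusion later, in Theorem~\ref{t3.XV}, but only \emph{after} and \emph{using} Theorem~\ref{t3.2bis}, so your ordering short-circuits that dependency. The one place where ``verbatim as in Theorem~\ref{t3.2}'' is slightly optimistic is the derivation of \eqref{3.9}: the boundary pairings that were $L^2(\dOm)$ inner products there must here be read as $H^{1/2}$--$H^{-1/2}$ duality pairings throughout, but with that substitution the computation goes through unchanged.
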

\begin{proof}
The argument follows a pattern similar to that in the proof of  
Theorem \ref{t2.10}. In a first stage, we look for a solution for 
\eqref{3.6bis} in the form  
\begin{equation}
u(x) = (\cS_z h)(x), \quad x\in\Om,   \lb{4.5}
\end{equation}
for some $h\in H^{-1/2}(\partial\Omega)$. Here the single layer potential 
$\cS_z$ has been defined in \eqref{sing-layer}, and the 
fundamental Helmholtz solution $E_n$ is given by \eqref{2.52} (cf.\ also 
\eqref{C.1}). Any such choice of $h$ guarantees that $u$ belongs to 
$H^{1}(\Om)$ and satisfies $(-\Delta - z)u = 0$ in $\Om$. See \eqref{Sz-MM5}. 
To ensure that the boundary condition in \eqref{3.6bis} is verified, one 
then takes  
\begin{equation}\lb{3.12bis}
h = \big[\big({\textstyle{-\frac12}}I_\dOm+K^{\#}_{z}\big) 
+ \wti \Theta \gamma_D {\mathcal S_{z}}\big]^{-1}g.     
\end{equation}
That the operator 
\begin{equation}\lb{3.1Xbis}
\big({\textstyle{-\frac12}}I_\dOm+K^{\#}_{z}\big) 
+ \wti \Theta \gamma_D {\mathcal S_{z}}:H^{-1/2}(\partial\Omega)\to
H^{-1/2}(\partial\Omega)
\end{equation}
is invertible for all but a discreet set of real values of the parameter 
$z$, can be established based on Hypothesis \ref{h3.1bis} by reasoning as 
before. The key result in this context is that the operator
${\textstyle{-\frac12}}I_\dOm+K^{\#}_{z}\in\cB(H^{-1/2}(\partial\Omega))$
is Fredholm, with Fredholm index zero for every $z\in\bbC$. 
\end{proof}
The special case $\Theta=0$, corresponding to the Neumann Laplacian, is 
singled out below. 
\begin{corollary}\lb{CC3.2bis} 
Assume Hypothesis \ref{h2.1} and suppose that 
$z\in\bbC\backslash\si(-\Delta_{N,\Om})$. Then for every 
$g\in H^{-1/2}(\partial\Omega)$, 
the following Neumann boundary value problem,
\begin{equation}\lb{3.6bisY}
\begin{cases}
(-\Delta - z)u = 0 \text{ in }\,\Om,\quad u \in H^{1}(\Om), \\
\wti\ga_N u = g \text{ on } \,\dOm,
\end{cases}
\end{equation}
has a unique solution $u=u_N$. Moreover, there exists a constant 
$C=C(\Omega,z)>0$ such that
\begin{equation}\lb{3.7bisY}
\|u_N\|_{H^{1}(\Omega)} \leq C\|g\|_{H^{-1/2}(\partial\Omega)}.  
\end{equation}
In particular, 
\begin{equation}\lb{3.8bisY}
\big[\ga_D (-\Delta_{N,\Om}-\ol{z}I_\Om)^{-1}\big]^* \in
\cB\big(H^{-1/2}(\partial\Omega), H^{1}(\Om)\big),  
\end{equation}
and the solution $u_g$ of \eqref{3.6bis} is given by the formula  
\begin{equation}\lb{3.9YY}
u_N=\big(\ga_D(-\Delta_{N,\Om}-\ol{z}I_\Om)^{-1}\big)^*g. 
\end{equation}
Finally, as a byproduct of the well-posedness of \eqref{3.6bisY}, 
the weak Neumann trace $\widetilde\gamma_N$ in \eqref{2.8}, \eqref{2.9} is onto.
\end{corollary}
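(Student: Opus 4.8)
The plan is to obtain the corollary from Theorem~\ref{t3.2bis} in two short stages. First, I would point out that everything asserted before the final sentence — unique solvability of \eqref{3.6bisY}, the bound \eqref{3.7bisY}, the membership \eqref{3.8bisY}, and the representation \eqref{3.9YY} — is just the specialization of Theorem~\ref{t3.2bis} (and its proof) to the boundary operator $\wti\Theta=0$, rewritten via the conventions $-\Delta_{N,\Om}=-\Delta_{0,\Om}$ (cf.\ \eqref{2.38b}) and \eqref{3.9}. For this one only needs to record that $\wti\Theta=0$ trivially satisfies Hypothesis~\ref{h3.1bis} (the zero operator is compact), and that the Neumann form \eqref{2.23} is nonnegative, whence $\sigma(-\Delta_{N,\Om})\subseteq[0,\infty)$ and in particular $\bbC\backslash\sigma(-\Delta_{N,\Om})\neq\emptyset$. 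Consequently, the only genuinely new point left to prove is the surjectivity of the weak Neumann trace operator $\wti\ga_N$ from \eqref{2.8}, \eqref{2.9} onto $H^{-1/2}(\partial\Omega)$.

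For that, the argument I have in mind is essentially immediate. Let $g\in H^{-1/2}(\partial\Omega)$ be arbitrary and fix some $z\in\bbC\backslash[0,\infty)\subseteq\bbC\backslash\sigma(-\Delta_{N,\Om})$ (say $z=-1$). By the part of the corollary just established, \eqref{3.6bisY} has a (unique) solution $u_N\in H^1(\Om)$ with $(-\Delta-z)u_N=0$ in $\Om$ and $\wti\ga_N u_N=g$ on $\partial\Omega$. The crucial remark is that $(-\Delta-z)u_N=0$ forces $\Delta u_N=-z\,u_N\in H^1(\Om)$, so that $\Delta u_N\in H^s(\Om)$ for every $s\in(-1/2,1]$ by the embedding $H^1(\Om)\hookrightarrow H^s(\Om)$; hence $u_N$ lies in the domain $\{u\in H^1(\Om)\,|\,\Delta u\in H^s(\Om)\}$ of the operator \eqref{2.8} — in particular for $s=0$, the value actually used throughout this paper — and $\wti\ga_N u_N=g$ by construction. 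Since $g$ was arbitrary, $\wti\ga_N$ is onto. (If one wishes to cover the remaining values $s>1$ of the parameter in \eqref{2.8}, one would additionally correct $u_N$ by a function $v\in\dom(-\Delta_{N,\Om})\subseteq H^1(\Om)$ with $\wti\ga_N v=0$ solving a suitable inhomogeneous Neumann problem, the lone solvability obstruction being absorbable into the right-hand side; but this refinement is not needed here.)

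I expect the real obstacle to lie not in this deduction at all — which is a two-line consequence of Theorem~\ref{t3.2bis} — but entirely in Theorem~\ref{t3.2bis} itself, namely in the $H^1(\Om)$-well-posedness of the generalized Robin (here Neumann) problem with boundary datum merely in $H^{-1/2}(\partial\Omega)$. That hinges on the single layer ansatz $u=\cS_z h$ together with the facts that $-\tfrac12 I_{\dOm}+K^{\#}_z$ is Fredholm of index zero on $H^{-1/2}(\partial\Omega)$ for every $z\in\bbC$ (with $K^{\#}_z-K^{\#}_0$ compact there) and the injectivity argument via the associated exterior domain problem, carried out one Sobolev scale below the setting of the proof of Theorem~\ref{t3.2}. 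I would also add the remark that this sharpens the earlier fact that $\wti\ga_N$ maps onto $\LdOm$, a by-product of the $H^{3/2}$-theory (cf.\ Corollary~\ref{t3.2H}): well-posedness of \eqref{3.6bisY} pushes the range of $\wti\ga_N$ all the way up to $H^{-1/2}(\partial\Omega)$.
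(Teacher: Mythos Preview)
Your proposal is correct and matches the paper's approach: the corollary is stated there without proof, immediately after Theorem~\ref{t3.2bis}, as the specialization $\wti\Theta=0$, and the surjectivity of $\wti\ga_N$ is asserted as a ``byproduct'' of well-posedness in exactly the way you spell out (solve \eqref{3.6bisY} for arbitrary $g$ and note $\Delta u_N=-zu_N\in H^1(\Om)\hookrightarrow H^s(\Om)$ for $s>-1/2$). Your verification that $\wti\Theta=0$ satisfies Hypothesis~\ref{h3.1bis} and your remark on the $s>1$ case are more detail than the paper supplies, but entirely in the same spirit.
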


In the following we denote by $\wti I_{\Om}$ the continuous inclusion 
(embedding) map of $H^1(\Omega)$ into $\bigl(H^1(\Omega)\bigr)^*$. 
By a slight abuse of notation, we also denote the continuous inclusion 
map of $H^1_0(\Omega)$ into $\bigl(H^1_0(\Omega)\bigr)^*$ by the same 
symbol $\wti I_{\Om}$. We recall the ultra weak Neumann trace operator 
$\wti\ga_{\cN}$ in \eqref{2.8X}, \eqref{2.9X}. Finally, assuming 
Hypothesis~\ref{h3.1bis}, we denote by 
\begin{equation} \lb{3.JqY}
- \wti \Delta_{\Theta,\Om}\in\cB\big(H^1(\Om),\big(H^1(\Om)\big)^*\big)
\end{equation}
the extension of $- \Delta_{\Theta,\Om}$ in accordance with \eqref{B.24a}. 
In particular, 
\begin{equation} \lb{3.JqZ}
{}_{H^1(\Om)}\langle u,- \wti \Delta_{\Theta,\Om}v\rangle_{(H^1(\Om))^*}
=\int_{\Om}d^nx\,\ol{\nabla u(x)}\cdot\nabla v(x)
+\big\langle \gamma_D u, \wti \Theta \gamma_D v \big\rangle_{1/2}, 
\qquad u,v\in H^1(\Om),
\end{equation}
and $- \Delta_{\Theta,\Om}$ is the restriction of 
$- \wti \Delta_{\Theta,\Om}$ to $L^2(\Om;d^nx)$ (cf.\ \eqref{B.25}). 

\begin{theorem} \lb{t3.XV} 
Assume Hypothesis \ref{h3.1bis} and suppose that 
$z\in\bbC\backslash\si(-\Delta_{\Theta,\Om})$. Then for every 
$w\in (H^{1}(\Omega))^*$, the following generalized inhomogeneous Robin problem,
\begin{equation} \lb{3.Jq}
\begin{cases}
(-\Delta - z)u = w|_{\Om} \text{ in }\,{\mathcal{D}}'(\Om),
\quad u \in H^{1}(\Om), \\
\wti\ga_{\cN} (u,w)+ \wti \Theta \gamma_D u = 0 \text{ on } \,\dOm,
\end{cases}
\end{equation}
has a unique solution $u=u_{\Theta,w}$. Moreover, there exists a constant 
$C= C(\Theta,\Omega,z)>0$ such that
\begin{equation}\lb{3.Jq2}
\|u_{\Theta,w}\|_{H^{1}(\Omega)} \leq C\|w\|_{(H^{1}(\partial\Omega))^*}.  
\end{equation}
In particular, the operator $(-\Delta_{\Theta,\Om}-zI_\Om)^{-1}$,
$z\in\bbC\backslash\si(-\Delta_{\Theta,\Om})$, originally defined as a bounded 
operator on $\LOm$,  
\begin{align}\label{faH}
(-\Delta_{\Theta,\Om}-zI_\Om)^{-1} \in \cB\big(L^2(\Om;d^nx)\big),
\end{align}
can be extended to a mapping in $\cB\big(\big(H^{1}(\Om)\big)^*,H^1(\Om)\big)$, 
which in fact coincides with 
\begin{equation}\label{fcH}
\big(- \wti \Delta_{\Theta,\Om} -z \wti I_\Om\big)^{-1}
\in\cB\big(\big(H^{1}(\Om)\big)^*,H^1(\Om)\big).
\end{equation} 
\end{theorem}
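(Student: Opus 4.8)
The plan is to realize the solution operator of \eqref{3.Jq} as the inverse of the bounded operator $A_z:=-\wti\Delta_{\Theta,\Om}-z\wti I_\Om\in\cB\big(H^1(\Om),\big(H^1(\Om)\big)^*\big)$ defined via \eqref{3.JqZ}, and to show that this same inverse produces the extension of $(-\Delta_{\Theta,\Om}-zI_\Om)^{-1}$ asserted in \eqref{faH}--\eqref{fcH}. Thus the first---and main---task is to prove that, for every $z\in\bbC\backslash\si(-\Delta_{\Theta,\Om})$, the operator $A_z$ is boundedly invertible; one then sets $R_z:=A_z^{-1}\in\cB\big(\big(H^1(\Om)\big)^*,H^1(\Om)\big)$ and checks that $u:=R_z w$ has all of the advertised properties.

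First I would establish the invertibility of $A_z$. By Lemma \ref{l2.2a}, reasoning exactly as in the proof of Theorem \ref{t2.3} (where it is shown that $a_{-\Delta_{\Theta,\Om}}$ is a closed form with form domain $H^1(\Om)$), the sesquilinear form $a_{-\Delta_{\Theta,\Om}}(\dott,\dott)+M(\dott,\dott)_{\LOm}$ is $H^1(\Om)$-bounded and $H^1(\Om)$-coercive once $M>0$ is large enough; since this form represents $A_{-M}=-\wti\Delta_{\Theta,\Om}+M\wti I_\Om$, the Lax--Milgram lemma shows that $A_{z_0}$ is a bounded isomorphism of $H^1(\Om)$ onto $\big(H^1(\Om)\big)^*$ for $z_0:=-M$. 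Because $\wti I_\Om$ factors through the compact embedding $H^1(\Om)\hookrightarrow\LOm$ (cf.\ the proof of Corollary \ref{c2.3A}), the operator $A_z=A_{z_0}-(z-z_0)\wti I_\Om$ is a compact perturbation of $A_{z_0}$, hence Fredholm of index zero for every $z\in\bbC$; consequently $A_z$ is invertible if and only if it is injective. But if $A_z u=0$ for some $u\in H^1(\Om)$, then testing \eqref{3.JqZ} against $v\in C_0^\infty(\Om)$ gives $\Delta u=-zu\in\LOm$, and unwinding \eqref{3.JqZ} for arbitrary $v\in H^1(\Om)$ (using the definition \eqref{2.8}, \eqref{2.9} of the weak Neumann trace and the surjectivity \eqref{2.6a} of $\ga_D$) shows $\big(\wti\ga_N+\wti\Theta\ga_D\big)u=0$ in $H^{-1/2}(\dOm)$; hence $u\in\dom(-\Delta_{\Theta,\Om})$ by \eqref{2.20} and $(-\Delta_{\Theta,\Om}-zI_\Om)u=0$, which forces $u=0$ since $z\notin\si(-\Delta_{\Theta,\Om})$. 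Thus $R_z:=A_z^{-1}$ exists in $\cB\big(\big(H^1(\Om)\big)^*,H^1(\Om)\big)$ for all such $z$. (By Corollary \ref{c2.3A}, $\si(-\Delta_{\Theta,\Om})$ is in any case discrete, consistently with the exceptional set of this Fredholm family.) I expect this step---specifically the reduction from the single coercive value $z_0$ to all admissible $z$ via the Fredholm alternative, together with the identification of the exceptional set with $\si(-\Delta_{\Theta,\Om})$---to be the main point of the argument.

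Next I would verify that $R_z$ restricts to the resolvent on $\LOm$. Given $f\in\LOm$, set $u:=R_z\big(\iota(f)\big)$ with $\iota$ as in \eqref{inc-1}; unwinding $A_z u=\iota(f)$ through \eqref{3.JqZ} shows that, for every $w\in H^1(\Om)$, $\int_{\Om}d^nx\,\ol{\nabla w}\,\nabla u+\big\langle\ga_D w,\wti\Theta\ga_D u\big\rangle_{1/2}=\int_{\Om}d^nx\,\ol{w}\,(f+zu)$, which is precisely the defining relation \eqref{2.31} identifying $u\in\dom(-\Delta_{\Theta,\Om})$ with $-\Delta_{\Theta,\Om}u=f+zu$, i.e., $u=(-\Delta_{\Theta,\Om}-zI_\Om)^{-1}f$. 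Hence $R_z$ is a bounded extension of $(-\Delta_{\Theta,\Om}-zI_\Om)^{-1}$ from $\LOm$ to $\big(H^1(\Om)\big)^*$ and coincides with $\big(-\wti\Delta_{\Theta,\Om}-z\wti I_\Om\big)^{-1}$, which is \eqref{faH}--\eqref{fcH}.

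Finally I would show that $u:=R_z w$ solves \eqref{3.Jq} for arbitrary $w\in\big(H^1(\Om)\big)^*$, with \eqref{3.Jq2} immediate from $\|u\|_{H^1(\Om)}\le\|R_z\|_{\cB((H^1(\Om))^*,H^1(\Om))}\|w\|_{(H^1(\Om))^*}$. Testing $A_z u=w$ against $\phi\in C_0^\infty(\Om)$ (the $\wti\Theta$-term drops out since $\ga_D\phi=0$) identifies $(-\Delta-z)u=w|_{\Om}$ in $\cD'(\Om)$, where $w|_{\Om}=R_\Om w$ in the sense of \eqref{jk-10}; in particular $(u,w)\in W_z(\Om)$, so that $\wti\ga_{\cN}(u,w)$ is meaningful. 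For the boundary condition, fix $\phi\in H^{1/2}(\dOm)$ and $\Phi\in H^1(\Om)$ with $\ga_D\Phi=\phi$; subtracting the defining formula \eqref{2.9X} for $\big\langle\phi,\wti\ga_{\cN}(u,w)\big\rangle_{1/2}$ from the identity obtained by testing $A_z u=w$ against $\Phi$ yields $\big\langle\phi,\wti\ga_{\cN}(u,w)+\wti\Theta\ga_D u\big\rangle_{1/2}=0$, and since $\ga_D$ maps $H^1(\Om)$ onto $H^{1/2}(\dOm)$ (cf.\ \eqref{2.6a}), $\wti\ga_{\cN}(u,w)+\wti\Theta\ga_D u=0$ in $H^{-1/2}(\dOm)$. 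Uniqueness follows by running this computation in reverse: any $H^1(\Om)$-solution $u$ of \eqref{3.Jq} satisfies, via \eqref{2.9X} and the boundary condition, $a_{-\Delta_{\Theta,\Om}}(\Phi,u)-z(\Phi,u)_{\LOm}={}_{H^1(\Om)}\langle\Phi,w\rangle_{(H^1(\Om))^*}$ for all $\Phi\in H^1(\Om)$, i.e.\ $A_z u=w$, whence $u=R_z w$ by the injectivity of $A_z$ established above.
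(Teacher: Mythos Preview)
Your proof is correct and complete, but the route differs from the paper's. The paper establishes existence for \eqref{3.Jq} constructively: it writes $u=u_0+u_1$ where $u_0\in H^1(\Om)$ is obtained by convolving $w$ (viewed via \eqref{jk-9} as a compactly supported element of $H^{-1}(\bbR^n)$) with the Helmholtz fundamental solution $E_n(z;\dott)$ and restricting to $\Om$, and $u_1$ is the solution of the homogeneous Robin problem \eqref{3.6bis} with boundary datum $-\big(\wti\ga_{\cN}(u_0,w)+\wti\Theta\ga_D u_0\big)\in H^{-1/2}(\dOm)$, supplied by Theorem \ref{t3.2bis}. Only after this does the paper verify that $A_z=-\wti\Delta_{\Theta,\Om}-z\wti I_\Om$ is bijective (using the solution just built for surjectivity, and for injectivity exactly the argument you give). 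By contrast, you bypass the explicit construction and Theorem \ref{t3.2bis} altogether: you obtain invertibility of $A_z$ directly from Lax--Milgram at a coercive value $z_0$ together with the Fredholm alternative, exploiting the compactness of $\wti I_\Om\colon H^1(\Om)\to\big(H^1(\Om)\big)^*$ via the Rellich embedding. Your approach is more self-contained and purely functional-analytic (and, incidentally, uses only Hypothesis \ref{h2.2}, not the extra compactness assumption \eqref{3.5bisa} in Hypothesis \ref{h3.1bis}); the paper's approach is more explicit and ties the result to the layer-potential machinery developed earlier. The compatibility check with the $L^2$ resolvent and the verification that $u=R_z w$ solves \eqref{3.Jq} are handled essentially the same way in both proofs.
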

\begin{proof}
We recall \eqref{jk-9}. Hence, if $w\in \big(H^{1}(\Om)\big)^*$, taking the 
convolution of $w$ with $E_n(z;\dott)$ in \eqref{C.1} and then restricting 
back to $\Omega$ yields a function $u_0\in H^{1}(\Om)$ for which 
$(-\Delta-z)u_0=w|_{\Om}$ in ${\mathcal{D}}'(\Om)$. A solution of 
\eqref{3.Jq} is then given by $u=u_0+u_1$, where $u_1$ satisfies
\begin{equation} \lb{3.6bT}
\begin{cases}
(-\Delta - z)u_1 = 0 \text{ in }\,\Om,\quad u_1 \in H^{1}(\Om), \\
\big(\wti\ga_N + \wti \Theta \gamma_D\big) u_1 
= -\big(\wti\ga_{\cN}(u_0,w) + \wti \Theta \gamma_Du_0\bigr)
\in H^{-1/2}(\partial\Omega) \text{ on } \dOm.
\end{cases}
\end{equation}
Indeed, we have 
\begin{align} \lb{3.6bT2}
\begin{split}
\wti\ga_{\cN}(u,w) &= \wti\ga_{\cN}((u_0,w)+(u_1,0))
=\wti\ga_{\cN}(u_0,w)+\wti\ga_{\cN}(u_1,0)
=\wti\ga_{\cN}(u_0,w)+\wti\ga_Nu_1  \\ 
&= -\wti \Theta \gamma_Du_1-\wti \Theta \gamma_Du_0
=-\wti \Theta \gamma_Du,
\end{split}
\end{align}
by \eqref{2.12X}. 
That the latter boundary problem is solvable is guaranteed by 
Theorem \ref{t3.2bis}. We note that the solution thus constructed satisfies 
\eqref{3.Jq2}. Uniqueness for \eqref{3.Jq} follows from the corresponding 
uniqueness statement in Theorem \ref{t3.2bis}. 

Next, we observe that the inverse operator in \eqref{fcH} is well-defined. 
To prove that 
\begin{equation}\label{fcHX}
- \wti \Delta_{\Theta,\Om} -z \wti I_\Om:H^1(\Om) \to  
\big(H^{1}(\Om)\big)^*,\quad z\in\bbC\backslash \si(-\Delta_{\Theta,\Om}),
\end{equation} 
is onto, assume that $w\in \big(H^{1}(\Om)\big)^*$ is arbitrary and
that $u$ solves \eqref{3.Jq}. Then, for every $v\in H^1(\Om)$ we have
\begin{align}\label{JHK-1}
 {}_{H^1(\Om)}\langle v,(- \wti \Delta_{\Theta,\Om}-z \wti I_\Om)
u\rangle_{(H^1(\Om))^*}
& =\int_{\Om}d^nx\,\ol{\nabla v(x)}\cdot\nabla u(x)
-z\,\int_{\Om}d^nx\,\ol{v(x)}u(x) 
+\big\langle \gamma_D v, \wti \Theta \gamma_D u \big\rangle_{1/2}
\nonumber\\ 
& 
=\int_{\Om}d^nx\,\ol{\nabla v(x)}\cdot\nabla u(x)
-z\,\int_{\Om}d^nx\,\ol{v(x)}u(x) \no \\ 
& \quad -\langle \gamma_D v, \wti\ga_{\cN}(u,w)\rangle_{1/2}
\nonumber\\ 
& 
={}_{H^1(\Om)}\langle v,w\rangle_{(H^1(\Om))^*},
\end{align} 
on account of \eqref{2.9X}, \eqref{3.JqZ}, and \eqref{3.Jq}. 
Since the element $v\in H^1(\Om)$ was arbitrary, this proves that 
$(- \wti \Delta_{\Theta,\Om}-z \wti I_\Om)u=w$, hence the operator
\eqref{fcHX} is onto. In fact, this operator is also one-to-one.
Indeed, assume that $u\in H^1(\Om)$ is such that 
$(- \wti \Delta_{\Theta,\Om}-z \wti I_\Om)u=0$. 
Then, for every $v\in H^1(\Om)$, formula \eqref{3.JqZ} yields 
\begin{align}\label{JHK-2}
\begin{split} 
0 &= {}_{H^1(\Om)}\big\langle v,\big(- \wti \Delta_{\Theta,\Om}-z \wti I_\Om\big)
u\big\rangle_{(H^1(\Om))^*}  \\ 
&= \int_{\Om}d^nx\,\ol{\nabla v(x)}\cdot\nabla u(x)
-z\,\int_{\Om}d^nx\,\ol{v(x)}u(x)
+\big\langle \gamma_D v, \wti \Theta \gamma_D u \big\rangle_{1/2}.
\end{split} 
\end{align} 
Specializing \eqref{JHK-2} to the case when $v\in C^\infty_0(\Om)$ shows
that $(-\Delta -z)u=0$ in the sense of distributions in $\Om$. 
Returning with this into \eqref{JHK-2} we then obtain 
$\big\langle \gamma_D v, (\wti\ga_N+\wti \Theta \gamma_D)u \big\rangle_{1/2}=0$
for every $v\in H^1(\Om)$. Given that the Dirichlet trace $\ga_D$ maps
$H^1(\Om)$ onto $H^{1/2}(\dOm)$, this proves that 
$(\wti\ga_N+\wti \Theta \gamma_D)u=0$ in $H^{-1/2}(\dOm)$ so that ultimately 
$u=0$, since $z\in\bbC\backslash \si(-\Delta_{\Theta,\Om})$. In summary, 
the operator \eqref{fcHX} is an isomorphism. 

Finally, there remains to show that the operators \eqref{faH}, \eqref{fcH}
act in a compatible fashion. To see this, fix 
$z\in\bbC\backslash \si(-\Delta_{\Theta,\Om})$ and assume that 
$w\in L^2(\Om;d^nx)\hookrightarrow (H^1(\Om))^*$. If we then 
set $u= (- \wti \Delta_{\Theta,\Om}-z \wti I_\Om)^{-1}w\in H^1(\Om)$, 
it follows from \eqref{3.JqZ} that 
\begin{align} \lb{2.UU}
\begin{split}
{}_{H^1(\Om)}\langle v,w\rangle_{(H^1(\Om))^*}
&= {}_{H^1(\Om)}\big\langle v,\big(-\wti\Delta_{\Theta,\Om} -z\wti I_{\Om}\big)u
\big\rangle_{(H^1(\Om))^*} \\
&= \int_\Om d^n x\,\ol{\nabla v(x)} \cdot \nabla u(x)  
-z\,\int_\Om d^n x\,\ol{v(x)}u(x)  
 + \big\langle\ga_D v,\wti\Theta\ga_D u \big\rangle_{1/2},
 \end{split}
\end{align}
for every $v\in H^1(\Om)$. Specializing this identity to the case when
$v\in C^\infty_0(\Om)$ yields $(-\Delta -z)u=w\in L^2(\Omega;d^nx)$. 
When used back in \eqref{2.UU}, this observation and \eqref{2.9} 
permit us to conclude that 
\begin{align} \lb{2.UU4}
\langle\ga_D v, (\wti\ga_N+\wti\Theta\ga_D)u \rangle_{1/2}
&= \int_\Om d^n x\,\ol{\nabla v(x)} \cdot \nabla u(x)  
-z\,\int_\Om d^n x\,\ol{v(x)} u(x)  
\nonumber\\ 
& \quad  -{}_{H^1(\Om)}\langle v, \iota(-\Delta u-zu)\rangle_{(H^1(\Om))^*}
+ \big\langle\ga_D v,\wti\Theta\ga_D u \big\rangle_{1/2}
\nonumber\\ 
&= \int_\Om d^n x\,\ol{\nabla v(x)} \cdot \nabla u(x)  
-z\,\int_\Om d^n x\,\ol{v(x)} u(x)  
\nonumber\\[4pt]
& \quad  -{}_{H^1(\Om)}\langle v, w\rangle_{(H^1(\Om))^*}
+ \big\langle\ga_D v,\wti\Theta\ga_D u \big\rangle_{1/2}
\nonumber\\[4pt]
&= 0,
\end{align}
for every $v\in H^1(\Om)$. Upon recalling that the Dirichlet trace $\ga_D$ maps
$H^1(\Om)$ onto $H^{1/2}(\dOm)$, this shows that 
$(\wti\ga_N+\wti\Theta\ga_D)u=0$ in $H^{-1/2}(\dOm)$. 
Thus, $u=(-\Delta_{\Theta,\Om}-zI_{\Om})^{-1}w$, as desired. 
\end{proof}

\begin{remark} \lb{r3.YY}
Similar (yet simpler) considerations also show that the operator 
$(-\Delta_{D,\Om}-zI_\Om)^{-1}$, $z\in\bbC\backslash\si(-\Delta_{D,\Om})$, 
originally defined as bounded operator on $\LOm$,
\begin{equation}\label{fuTT}
(-\Delta_{D,\Om}-zI_\Om)^{-1} \in \cB\big(L^2(\Om;d^nx)\big),
\end{equation}
extends to a mapping
\begin{equation}\label{fcH3}
\big(-\wti \Delta_{D,\Om}-z \wti I_\Om\big)^{-1}
\in\cB\big(H^{-1}(\Om);H^1_0(\Om)\big).
\end{equation}
Here $- \wti \Delta_{D,\Om}\in\cB\big(H^1_0(\Om), H^{-1}(\Om)\big)$ is the 
extension of $- \Delta_{D,\Om}$ in accordance with \eqref{B.24a}. 
Indeed, the Lax--Milgram lemma applies and yields that 
\begin{equation}\label{fuRR}
\big(- \wti \Delta_{D,\Om}-z \wti I_\Om\big) \colon H^1_0(\Om)\to 
\big(H^1_0(\Om)\big)^* = H^{-1}(\Om) 
\end{equation}
is, in fact, an isomorphism whenever $z\in\bbC\backslash\si(-\Delta_{D,\Om})$.
\end{remark}

\begin{corollary} \lb{t3.XU} 
Assume Hypothesis \ref{h3.1bis} and suppose that 
$z\in\bbC\backslash\si(-\Delta_{\Theta,\Om})$. Then the operator 
$M_{\Theta,D,\Om}^{(0)}(z)\in \cB\big(\LdOm\big)$ in \eqref{3.48}, \eqref{3.49} 
extends $($in a compatible manner\,$)$ to 
\begin{equation}
\wti M_{\Theta,D,\Om}^{(0)}(z) 
\in \cB\big(H^{-1/2}(\partial\Om) , H^{1/2}(\partial\Om) \big), \quad 
z\in\bbC\backslash\si(-\Delta_{\Theta,\Om}).    \lb{3.50bis}
\end{equation}
In addition, $\wti M_{\Theta,D,\Om}^{(0)}(z)$ permits the representation 
\begin{equation}
\wti M_{\Theta,D,\Om}^{(0)}(z) 
= \gamma_D \big(-\wti\Delta_{\Theta,\Om} - z \wti I_\Om\big)^{-1}\gamma_D^*, 
\quad z\in\bbC\backslash\si(-\Delta_{\Theta,\Om}). \lb{3.52bis}
\end{equation} 
The same applies to the adjoint 
$M_{\Theta,D,\Om}^{(0)}(z)^*\in \cB\big(\LdOm\big)$ of 
$M_{\Theta,D,\Om}^{(0)}(z)$, resulting in the bounded extension 
$\big(\wti M_{\Theta,D,\Om}^{(0)}(z)\big)^* \in \cB\big(H^{-1/2}(\partial\Om)$, 
$H^{1/2}(\partial\Om) \big)$, $z\in\bbC\backslash\si(-\Delta_{\Theta,\Om})$.   
\end{corollary}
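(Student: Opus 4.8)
The plan is to deduce Corollary \ref{t3.XU} directly from Theorem \ref{t3.XV} together with the Dirichlet-trace mapping properties already established. First I would observe that Hypothesis \ref{h3.1bis} puts us in exactly the setting of Theorem \ref{t3.XV}, so the extended resolvent $\big(-\wti\Delta_{\Theta,\Om}-z\wti I_\Om\big)^{-1}\in\cB\big(\big(H^1(\Om)\big)^*,H^1(\Om)\big)$ is available for $z\in\bbC\backslash\si(-\Delta_{\Theta,\Om})$. Next, since $\ga_D\in\cB\big(H^1(\Om),H^{1/2}(\dOm)\big)$ is bounded and onto (cf.\ \eqref{2.6}, \eqref{2.6a}), its adjoint $\ga_D^*\in\cB\big(H^{-1/2}(\dOm),\big(H^1(\Om)\big)^*\big)$ is bounded (recall the conjugate-dual convention, so $\big(H^{1/2}(\dOm)\big)^*=H^{-1/2}(\dOm)$ and $\big(H^1(\Om)\big)^*$ is as in \eqref{jk-9}). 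Composing these three bounded maps,
\begin{equation}\lb{3.52bisX}
\gamma_D \big(-\wti\Delta_{\Theta,\Om} - z \wti I_\Om\big)^{-1}\gamma_D^*
\in \cB\big(H^{-1/2}(\dOm),H^{1/2}(\dOm)\big),
\end{equation}
which immediately yields \eqref{3.50bis} once we identify this composition with an extension of $M_{\Theta,D,\Om}^{(0)}(z)$; so I would \emph{define} $\wti M_{\Theta,D,\Om}^{(0)}(z)$ by the right-hand side of \eqref{3.52bis} and then verify compatibility with \eqref{3.48}, \eqref{3.49}.

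The compatibility check is the heart of the matter. Given $g\in\LdOm\hookrightarrow H^{-1/2}(\dOm)$, I must show that $\gamma_D \big(-\wti\Delta_{\Theta,\Om} - z \wti I_\Om\big)^{-1}\gamma_D^*g$ equals $\ga_D u_\Theta$, where $u_\Theta$ is the unique $H^{3/2}(\Om)$-solution of \eqref{3.49}. The natural route is to identify $w:=\gamma_D^* g\in\big(H^1(\Om)\big)^*$ explicitly and then use the characterization of the solution of \eqref{3.Jq} from Theorem \ref{t3.XV}. Unwinding the definition of $\gamma_D^*$, for $v\in H^1(\Om)$ one has ${}_{H^1(\Om)}\langle v,\gamma_D^* g\rangle_{(H^1(\Om))^*}=\langle\ga_D v,g\rangle_{1/2}$, i.e.\ $w$ is the functional $v\mapsto\langle\ga_D v,g\rangle_{1/2}$, which by \eqref{jk-9} is a distribution supported on $\ol\Omega$; in particular $w|_\Omega=0$ in $\cD'(\Om)$. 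Setting $u:=\big(-\wti\Delta_{\Theta,\Om}-z\wti I_\Om\big)^{-1}w$, Theorem \ref{t3.XV} (applied with this $w$, and using $\wti\ga_{\cN}(u,w)$) gives $(-\Delta-z)u=w|_\Omega=0$ in $\Omega$, $u\in H^1(\Om)$, and $\wti\ga_{\cN}(u,w)+\wti\Theta\ga_D u=0$. I would then insert this into the defining identity \eqref{2.9X} for $\wti\ga_{\cN}$: for all $\phi\in H^{1/2}(\dOm)$ with extension $\Phi\in H^1(\Om)$, $\ga_D\Phi=\phi$,
\begin{equation}\lb{compU}
\langle\phi,\wti\ga_{\cN}(u,w)\rangle_{1/2}
=\int_\Om d^nx\,\ol{\nabla\Phi}\cdot\nabla u
-z\int_\Om d^nx\,\ol{\Phi}u
-{}_{H^1(\Om)}\langle\Phi,w\rangle_{(H^1(\Om))^*},
\end{equation}
and the last term is $-\langle\phi,g\rangle_{1/2}$ by definition of $w$. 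Comparing with the weak formulation that $u_\Theta$ satisfies (which for an $H^1$-solution of the Robin problem with boundary datum $g$ reads $\langle\phi,\wti\ga_N u_\Theta\rangle_{1/2}+\langle\phi,\wti\Theta\ga_D u_\Theta\rangle_{1/2}=\langle\phi,g\rangle_{1/2}$), the boundary relation $\wti\ga_{\cN}(u,w)=-\wti\Theta\ga_D u$ forces $u$ to coincide with the (unique) solution of \eqref{3.49}; indeed $u$ and $u_\Theta$ solve the same $H^1$ Robin problem, and uniqueness in Theorem \ref{t3.2bis} (via the embedding $\LdOm\hookrightarrow H^{-1/2}(\dOm)$ for the datum) gives $u=u_\Theta$. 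Hence $\gamma_D u=\gamma_D u_\Theta=M_{\Theta,D,\Om}^{(0)}(z)g$, establishing both \eqref{3.52bis} and the asserted compatibility, and therefore \eqref{3.50bis}.

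For the adjoint statement, I would dualize. Since $M_{\Theta,D,\Om}^{(0)}(z)\in\cB\big(\LdOm\big)$ with the representation \eqref{3.52bis}, taking conjugate-dual adjoints and using that $\big(\gamma_D^*\big)^*=\gamma_D$ and that the adjoint of $\big(-\wti\Delta_{\Theta,\Om}-z\wti I_\Om\big)^{-1}\in\cB\big((H^1(\Om))^*,H^1(\Om)\big)$ lies in $\cB\big((H^1(\Om))^*,H^1(\Om)\big)$ (because $\big(-\wti\Delta_{\Theta,\Om}-z\wti I_\Om\big)^*=-\wti\Delta_{\Theta,\Om}-\ol{z}\wti I_\Om$ by the symmetry \eqref{2.1} of $\wti\Theta$, invoking Theorem \ref{t3.XV} at $\ol z$), one gets
\begin{equation}\lb{adjU}
M_{\Theta,D,\Om}^{(0)}(z)^*=\gamma_D\big(-\wti\Delta_{\Theta,\Om}-\ol{z}\wti I_\Om\big)^{-1}\gamma_D^*,
\end{equation}
and the right-hand side, by the argument just given applied at $\ol z$, extends boundedly to $\cB\big(H^{-1/2}(\dOm),H^{1/2}(\dOm)\big)$; this is exactly $\big(\wti M_{\Theta,D,\Om}^{(0)}(z)\big)^*$. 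The main obstacle I anticipate is the bookkeeping in identifying $w=\gamma_D^* g$ concretely and verifying that the two weak formulations (the $\wti\ga_{\cN}$-based one from \eqref{3.Jq} and the $\wti\ga_N$-based one from \eqref{3.6bis}) genuinely coincide when $g\in\LdOm$; this is where the compatibility relation \eqref{2.12X} between $\wti\ga_{\cN}$ and $\wti\ga_N$, together with the density statement \eqref{Tan-C26}, will have to be used carefully, much as in the proof of Theorem \ref{t3.XV} itself.
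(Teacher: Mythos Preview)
Your proposal is correct and reaches the same conclusion as the paper, but the two arguments are organized in mirror-image fashion. The paper \emph{defines} $\wti M_{\Theta,D,\Om}^{(0)}(z)$ via the $H^1$ Robin boundary value problem of Theorem~\ref{t3.2bis} (so \eqref{3.50bis} is immediate from \eqref{3.7bis}), and then obtains the representation \eqref{3.52bis} by citing \eqref{3.9}, \eqref{fcH}, and the boundedness of $\ga_D^*$ in \eqref{GGG}. You instead \emph{define} $\wti M_{\Theta,D,\Om}^{(0)}(z)$ by the composition on the right of \eqref{3.52bis} and then verify compatibility by showing, through Theorem~\ref{t3.XV}, that $u=(-\wti\Delta_{\Theta,\Om}-z\wti I_\Om)^{-1}\ga_D^*g$ solves \eqref{3.6bis}. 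The key identity you need, $\wti\ga_{\cN}(u,\ga_D^*g)=\wti\ga_N u - g$ (which drops out of your display \eqref{compU} once you recognize the first two terms as $\langle\phi,\wti\ga_N u\rangle_{1/2}$ via \eqref{2.9} since $\Delta u=-zu\in L^2$), makes this transparent; you do not actually need the density statement \eqref{Tan-C26} or even \eqref{2.12X} here, just the definitions \eqref{2.9} and \eqref{2.9X}. Your adjoint argument via \eqref{adjU} is likewise correct and matches the paper's one-line ``dualizing'' remark.
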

\begin{proof}
The claim \eqref{3.50bis} is a direct consequence of Theorem \ref{t3.2bis}, 
while the claim \eqref{3.52bis} follows from the fact that 
\begin{equation}\label{GGG}
\ga_D^*  \colon \big(H^{1/2}(\dOm)\big)^*
=H^{-1/2}(\dOm)\to \big(H^{1}(\Om)\big)^* 
\end{equation}
in a bounded fashion (cf. \eqref{ga*}, \eqref{fcH} and \eqref{3.52}). 
The rest follows from dualizing these claims.
\end{proof}

The following regularity result for the Robin resolvent will also play 
an important role shortly. 

\begin{lemma}\label{L-BbbZa}
Assume Hypothesis \ref{3.5} and suppose that 
$z\in\bbC\backslash\si(-\Delta_{\Theta,\Om})$. Then 
\begin{equation}\lb{Kr-a.1a}
(-\Delta_{\Theta,\Om} - zI_\Om)^{-1}:\LOm\to\bigl\{u\in H^{3/2}(\Omega)\,|\,
\Delta u\in L^2(\Omega;d^nx)\bigl\}
\end{equation}
is a well-defined bounded operator, where the space 
$\{u\in H^{3/2}(\Omega)\,|\,\Delta u\in L^2(\Omega;d^nx)\}$ is 
equipped with the natural graph norm 
$u\mapsto\|u\|_{H^{3/2}(\Omega)}+\|\Delta u\|_{L^2(\Omega;d^nx)}$. 
\end{lemma}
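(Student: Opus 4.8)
The plan is to follow the proof of Lemma~\ref{L-BbbZ} almost verbatim, the only structural change being that the ``homogeneous'' piece of the resolvent is now supplied by the Robin well-posedness result in Theorem~\ref{t3.2} rather than by inverting $\ga_D\cS_z$. Fix $z\in\bbC\backslash\si(-\Delta_{\Theta,\Om})$ and $f\in\LOm$, and set $w=(-\Delta_{\Theta,\Om}-zI_\Om)^{-1}f$. By Theorem~\ref{t2.3}, $w\in H^1(\Om)$ with $\Delta w\in\LOm$, $(-\Delta-z)w=f$ in $\Om$, and $\big(\wti\ga_N+\wti\Theta\ga_D\big)w=0$ in $H^{-1/2}(\dOm)$; the goal is to exhibit an alternative representation of $w$ that displays the claimed $H^{3/2}$-regularity.

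First I would let $\wti f$ denote the extension of $f$ by zero to $\bbR^n$ and let $v$ be the restriction to $\Om$ of the convolution of $\wti f$ with $E_n(z;\dott)$; since convolution with $E_n(z;\dott)$ is smoothing of order two, $v\in H^2(\Om)$, $(-\Delta-z)v=f$ in $\Om$, and $\|v\|_{H^2(\Om)}\le C\|f\|_{\LOm}$. Next I would record the Robin boundary data of $v$: by \eqref{2.7} (with $s=1$) one has $\ga_N v\in\LdOm$, while $\ga_D v\in H^1(\dOm)$ (by \eqref{A.62x}, as $v\in H^2(\Om)$) together with Hypothesis~\ref{h3.1}, which in particular makes $\wti\Theta\colon H^1(\dOm)\to\LdOm$ bounded, yields $\wti\Theta\ga_D v\in\LdOm$. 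Hence $g:=-\big(\ga_N+\wti\Theta\ga_D\big)v$ belongs to $\LdOm$ with $\|g\|_{\LdOm}\le C\|v\|_{H^2(\Om)}\le C\|f\|_{\LOm}$. Now Theorem~\ref{t3.2} applies to this datum $g$ and produces a unique $u=u_\Theta\in H^{3/2}(\Om)$ with $(-\Delta-z)u=0$ in $\Om$, $\big(\wti\ga_N+\wti\Theta\ga_D\big)u=g$ on $\dOm$, and $\|u\|_{H^{3/2}(\Om)}\le C\|g\|_{\LdOm}\le C\|f\|_{\LOm}$.

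The remaining step is to identify $w$ with $u+v$. One checks directly that $u+v\in H^1(\Om)$, that $\Delta(u+v)=-z(u+v)-f\in\LOm$, and that $\big(\wti\ga_N+\wti\Theta\ga_D\big)(u+v)=g+\big(-g\big)=0$ in $H^{-1/2}(\dOm)$ (here one uses that on $H^2(\Om)$ the weak Neumann trace of Lemma~\ref{Neu-tr} coincides with $\ga_N$). Thus $u+v\in\dom(-\Delta_{\Theta,\Om})$ and $(-\Delta_{\Theta,\Om}-zI_\Om)(u+v)=f$; since $z\notin\si(-\Delta_{\Theta,\Om})$, uniqueness of the resolvent equation forces $w=u+v$. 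Consequently $w\in H^{3/2}(\Om)$ and $\Delta w\in\LOm$, and adding the estimates above (together with $\|\Delta u\|_{\LOm}=|z|\,\|u\|_{\LOm}$ and $\|\Delta v\|_{\LOm}\le|z|\,\|v\|_{\LOm}+\|f\|_{\LOm}$) gives $\|w\|_{H^{3/2}(\Om)}+\|\Delta w\|_{\LOm}\le C(\Om,\Theta,z)\|f\|_{\LOm}$, which is the asserted boundedness of \eqref{Kr-a.1a}. In contrast with Lemma~\ref{L-BbbZ}, no exceptional discrete set and no analytic continuation in $z$ are required here, because Theorem~\ref{t3.2} already delivers the homogeneous solution for every $z\in\bbC\backslash\si(-\Delta_{\Theta,\Om})$.

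The only step that is not routine bookkeeping --- and the sole place where the hypothesis is used in full strength --- is the verification that $g\in\LdOm$, which rests precisely on $\wti\Theta\in\cB_\infty\big(H^1(\dOm),\LdOm\big)$ from Hypothesis~\ref{h3.1}. Under the weaker Hypothesis~\ref{h3.1bis} one would obtain only $g\in H^{-1/2}(\dOm)$, forcing the use of Theorem~\ref{t3.2bis} in place of Theorem~\ref{t3.2} and yielding a solution merely in $H^1(\Om)$; thus the $H^{3/2}$-conclusion genuinely needs the stronger assumption. As an alternative to invoking Theorem~\ref{t3.2} as a black box, one could reproduce the single-layer-potential construction of Lemma~\ref{L-BbbZ} directly, writing $u=\cS_z h$ with $h=\big[\big({\textstyle{-\frac12}}I_\dOm+K^{\#}_{z}\big)+\wti\Theta\ga_D\cS_z\big]^{-1}g$ (valid off a discrete set $D\subset\bbR$) and then continuing analytically in $z$, but the route through Theorem~\ref{t3.2} is shorter and already subsumes that argument.
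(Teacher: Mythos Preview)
Your proof is correct and follows essentially the same strategy as the paper: decompose the resolvent image as a Newtonian potential in $H^2(\Om)$ plus the solution of a homogeneous Robin problem with $L^2(\dOm)$ data, then conclude by uniqueness. The paper's proof actually cites Theorem~\ref{t3.2bis} for the homogeneous piece, but since the boundary datum $-(\ga_N+\wti\Theta\ga_D)w$ lies in $\LdOm$ and the conclusion requires $H^{3/2}(\Om)$, the intended reference is Theorem~\ref{t3.2}, exactly as you invoke it; your explicit verification that $g\in\LdOm$ (using \eqref{A.62x} for $\ga_D v$ and Hypothesis~\ref{h3.1} for $\wti\Theta$) is the key step, and your closing remark that no analytic continuation is needed here is also accurate.
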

\begin{proof} 
Consider $f\in \LOm$ and set $u=(-\Delta_{\Theta,\Om} - zI_\Om)^{-1}f$. 
It follows that $u$ is the unique solution of the problem
\begin{equation}\lb{Bv-1Za}
(-\Delta -z)u=f\, \text{ in } \, \Omega,\quad u\in H^1(\Omega),
\quad \big(\wti\gamma_N+\wti\Theta\gamma_D\big)u=0 \text{ on } \, \partial\Omega.
\end{equation}
The strategy is to devise an alternative representation for $u$ 
from which it is clear that $u$ has the claimed regularity in $\Omega$. 
To this end, let $\widetilde{f}$ denote the extension of $f$ by zero to 
${\mathbb{R}}^n$ and denote by $E$ the operator of convolution by 
$E_n(z;\dott)$. Since the latter is smoothing of order $2$, it follows that 
$w=(E\widetilde{f})|_{\Omega}\in H^2(\Omega)$ and $(-\Delta -z)w=f$ in $\Omega$.
Also, let $v$ be the unique solution of the problem 
\begin{equation}\lb{Bv-2Za}
(-\Delta -z)v=0\, \text{ in } \, \Omega,\quad v\in H^{3/2}(\Omega),
\quad \big(\wti\gamma_N+\wti\Theta\gamma_D\big)v
=-\big(\gamma_N+\wti\Theta\gamma_D\big)w\text{ on } \, \partial\Omega.
\end{equation}
That \eqref{Bv-2Za} is solvable is a consequence of Theorem \ref{t3.2bis}.
Then $v+w$ also solves \eqref{Bv-1Za} so that, by uniqueness, $u=v+w$.
This shows that $u$ has the desired regularity properties and, hence, 
the operator \eqref{Kr-a.1a} is well-defined and bounded. 
\end{proof}

Under the Hypothesis \ref{h3.1bis}, \eqref{fcH} and \eqref{2.6} yield 
\begin{eqnarray}\label{Obt.1}
\gamma_D\big(-\wti\Delta_{\Theta,\Omega}-zI_{\Omega}\big)^{-1}
\in \cB\bigl((H^1(\Omega))^*,H^{1/2}(\partial\Omega)\bigr). 
\end{eqnarray}
Hence, by duality, 
\begin{eqnarray}\label{Obt.2}
\big[\gamma_D\big(-\wti\Delta_{\Theta,\Omega}-zI_{\Omega}\big)^{-1}\big]^*
\in \cB\bigl(H^{-1/2}(\partial\Omega),H^1(\Omega)\bigr). 
\end{eqnarray}
We wish to complement this with the following result. 

\begin{corollary}\label{L-fR8}
Assume Hypothesis \ref{3.5} and suppose that 
$z\in\bbC\backslash\si(-\Delta_{\Theta,\Om})$. Then 
\begin{equation}\lb{Kr-a.2}
\gamma_D(-\Delta_{\Theta,\Om}-zI_\Om)^{-1}\in
\cB\bigl(\LOm,H^{1}(\partial\Omega)\bigr). 
\end{equation}
In particular, 
\begin{equation}\lb{Kr-a.3}
\big[\gamma_D(-\Delta_{\Theta,\Om}-zI_\Om)^{-1}\big]^*\in
\cB\bigl(H^{-1}(\partial\Omega),\LOm\bigr)\hookrightarrow
\cB\bigl(L^2(\partial\Omega;d^{n-1}\Omega),\LOm\bigr).
\end{equation}
In addition, the operator \eqref{Kr-a.3} is compatible with \eqref{Obt.2}
in the sense that 
\begin{eqnarray}\label{Obt.3}
\big[\gamma_D(-\Delta_{\Theta,\Omega}-zI_{\Omega})^{-1}\big]^*f
=\big[\gamma_D\big(-\wti\Delta_{\Theta,\Omega}-zI_{\Omega}\big)^{-1}\big]^*f
\mbox{ in } \LOm, \quad f\in H^{-1/2}(\partial\Omega).
\end{eqnarray}
As a consequence, 
\begin{eqnarray}\label{Obt.3bis}
\big[\gamma_D(-\Delta_{\Theta,\Omega}-zI_{\Omega})^{-1}\big]^*f
=\big[\gamma_D\big(-\wti\Delta_{\Theta,\Omega}-zI_{\Omega}\big)^{-1}\big]^*f\mbox{ in }\LOm, 
\quad f\in L^{2}(\partial\Omega;d^{n-1}\omega).
\end{eqnarray}
\end{corollary}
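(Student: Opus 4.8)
The plan is to obtain \eqref{Kr-a.2} by composing two results already in place, and then to deduce \eqref{Kr-a.3}, \eqref{Obt.3}, and \eqref{Obt.3bis} from it by dualization together with the compatibility statement in Theorem~\ref{t3.XV}. For \eqref{Kr-a.2} itself: by Lemma~\ref{L-BbbZa} (which holds under Hypothesis~\ref{h3.1}), the resolvent $(-\Delta_{\Theta,\Om}-zI_\Om)^{-1}$ is bounded from $\LOm$ into $\{u\in H^{3/2}(\Omega)\,|\,\Delta u\in L^2(\Omega;d^nx)\}$ equipped with its natural graph norm, while by the second part of Lemma~\ref{Gam-L1} (cf.\ \eqref{Mam-2} with $s=-1$, i.e.\ the second line of \eqref{Ine-2}) the Dirichlet trace $\gamma_D$ maps that space boundedly into $H^1(\partial\Omega)$. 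Composing these two operators gives \eqref{Kr-a.2}.

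Next, \eqref{Kr-a.3} follows by passing to adjoints. Writing $A:=\gamma_D(-\Delta_{\Theta,\Om}-zI_\Om)^{-1}\in\cB\bigl(\LOm,H^{1}(\partial\Omega)\bigr)$, one has $A^*\in\cB\bigl((H^1(\partial\Omega))^*,(\LOm)^*\bigr)=\cB\bigl(H^{-1}(\partial\Omega),\LOm\bigr)$, upon using $H^{-1}(\partial\Omega)=(H^1(\partial\Omega))^*$ and the (conjugate) self-duality of $\LOm$. The continuous embedding $L^2(\partial\Omega;d^{n-1}\omega)\hookrightarrow H^{-1}(\partial\Omega)$ then also yields the second inclusion asserted in \eqref{Kr-a.3}.

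For the compatibility \eqref{Obt.3}, I would fix $f\in H^{-1/2}(\partial\Omega)$ and test both operators against an arbitrary $v\in\LOm$. By the definition of the adjoint in \eqref{Kr-a.3}, the pairing of $[\gamma_D(-\Delta_{\Theta,\Om}-zI_\Om)^{-1}]^*f$ with $v$ in $\LOm$ is obtained (up to routine conjugation) by pairing $\gamma_D(-\Delta_{\Theta,\Om}-zI_\Om)^{-1}v\in H^1(\partial\Omega)$ against $f$; since $f\in H^{-1/2}(\partial\Omega)$, this may be read as the $H^{1/2}(\partial\Omega)$–$H^{-1/2}(\partial\Omega)$ duality $\langle\cdot,\cdot\rangle_{1/2}$, all the relevant pairings being mutually compatible. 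Likewise, using the definition of the adjoint in \eqref{Obt.2} and the inclusion $\LOm\hookrightarrow(H^1(\Om))^*$, the pairing of $[\gamma_D(-\wti\Delta_{\Theta,\Om}-zI_\Om)^{-1}]^*f$ with $v$ in $\LOm$ is given by the $\langle\cdot,\cdot\rangle_{1/2}$ pairing of $\gamma_D(-\wti\Delta_{\Theta,\Om}-zI_\Om)^{-1}v$ with $f$. But by Theorem~\ref{t3.XV} the operator $(-\wti\Delta_{\Theta,\Om}-zI_\Om)^{-1}$ extends $(-\Delta_{\Theta,\Om}-zI_\Om)^{-1}$, so the two resolvents agree on $\LOm$, hence so do $\gamma_D(-\wti\Delta_{\Theta,\Om}-zI_\Om)^{-1}v$ and $\gamma_D(-\Delta_{\Theta,\Om}-zI_\Om)^{-1}v$. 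Consequently the two test values coincide for every $v\in\LOm$, which forces $[\gamma_D(-\Delta_{\Theta,\Om}-zI_\Om)^{-1}]^*f=[\gamma_D(-\wti\Delta_{\Theta,\Om}-zI_\Om)^{-1}]^*f$ in $\LOm$, i.e.\ \eqref{Obt.3}. Finally, \eqref{Obt.3bis} is just the specialization of \eqref{Obt.3} to $f\in L^2(\partial\Omega;d^{n-1}\omega)\hookrightarrow H^{-1/2}(\partial\Omega)$.

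The only delicate point is the bookkeeping in the third step: one must keep the $H^1$–$H^{-1}$, $H^{1/2}$–$H^{-1/2}$, and $L^2$ duality pairings consistent, and reconcile the Hilbert-space adjoint used on $\LOm$ with the conjugate-dual (Banach-space) adjoint used on the Sobolev scale together with the reflexivity identification $((H^1(\Om))^*)^*\cong H^1(\Om)$. Once the conventions are pinned down this is entirely routine; all the analytic substance is carried by Lemma~\ref{L-BbbZa}, Lemma~\ref{Gam-L1}, and Theorem~\ref{t3.XV}.
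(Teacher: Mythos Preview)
Your proposal is correct and follows essentially the same route as the paper: \eqref{Kr-a.2} is obtained by composing Lemma~\ref{L-BbbZa} with the second part of Lemma~\ref{Gam-L1}, \eqref{Kr-a.3} by duality, and \eqref{Obt.3} by a pairing computation that reduces to the compatibility of \eqref{faH} and \eqref{fcH} established in Theorem~\ref{t3.XV} (the paper makes the $z\leftrightarrow\ol{z}$ switch explicit where you say ``up to routine conjugation,'' and tracks the passage $\langle\cdot,\cdot\rangle_1\to\langle\cdot,\cdot\rangle_{1/2}$ through the compatible embeddings, but this is exactly the bookkeeping you flag). The specialization to \eqref{Obt.3bis} is identical.
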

\begin{proof} 
The first part of the statement is an immediate consequence 
of Lemma \ref{L-BbbZa} and Lemma \ref{Gam-L1}. As for \eqref{Obt.3}, 
pick $f\in H^{-1/2}(\partial\Omega)\hookrightarrow H^{-1}(\partial\Omega)$ 
and $u\in\LOm\hookrightarrow (H^1(\Omega))^*$ arbitrary. 
We may then write 
\begin{align}\label{Obt.4}
\langle [\gamma_D(-\Delta_{\Theta,\Omega}-zI_{\Omega})^{-1}]^*f\,,\,u\rangle
_{\LOm} &= \langle f\,,\,\gamma_D
(-\Delta_{\Theta,\Omega}-\ol{z}I_{\Omega})^{-1}u\rangle_{1}
\nonumber\\
&= \langle f\,,\,\gamma_D
(-\Delta_{\Theta,\Omega}-\ol{z}I_{\Omega})^{-1}u\rangle_{1/2}
\nonumber\\
&= \big\langle f\,,\,\gamma_D
\big(-\wti\Delta_{\Theta,\Omega}-\ol{z}I_{\Omega}\big)^{-1}u\big\rangle_{1/2}
\nonumber\\
&= {}_{H^{1}(\Omega)}
\big\langle 
\big[\gamma_D\big(-\wti\Delta_{\Theta,\Omega}-zI_{\Omega}\big)^{-1}\big]^*f\,,\,u
\big\rangle_{(H^1(\Omega))^*}
\nonumber\\
&= \big\langle 
\big[\gamma_D\big(-\wti\Delta_{\Theta,\Omega}-zI_{\Omega}\big)^{-1}\big]^*f\,,\,u
\big\rangle_{\LOm},
\end{align}
since \eqref{faH} and \eqref{fcH} are compatible. This gives \eqref{Obt.3}.
Since $L^2(\partial\Omega;d^{n-1}\omega)\hookrightarrow 
H^{-1/2}(\dOm)$, \eqref{Obt.3bis} also follows. 
\end{proof}

We will need a similar compatibility result for the composition 
between the Neumann trace and resolvents of the Dirichlet Laplacian. 
To state it, recall the restriction operator $R_{\Om}$ from \eqref{jk-10}.
Also, denote by $I_{\bbR^n}$ the identity operator (for spaces of functions
defined in $\bbR^n$). Finally, recall the space \eqref{2.88X} and the 
ultra weak Neumann trace operator in \eqref{2.8X}, \eqref{2.9X}.

\begin{lemma}\label{new-L1}
Assume Hypothesis \ref{h2.1}. Then
\begin{eqnarray}\label{Obt.1P}
\big(\big(-\wti\Delta_{D,\Omega}-z\wti{I}_{\Omega}\big)^{-1}\circ R_{\Omega} 
,I_{\bbR^n}\big):(H^1(\Omega))^*\to W_z(\Om),
\quad z\in\bbC\backslash \si(-\Delta_{D,\Omega}),
\end{eqnarray}
is a well-defined, linear and bounded operator. Consequently, 
\begin{eqnarray}\label{Obt.2P}
\wti\gamma_{\cN}
\big(\big(-\wti\Delta_{D,\Omega}-z\wti{I}_{\Omega}\big)^{-1}\circ R_{\Omega} 
,I_{\bbR^n}\big)\in\cB\bigl((H^1(\Omega))^*,H^{-1/2}(\partial\Omega)\bigr),
\quad z\in\bbC\backslash \si(-\Delta_{D,\Omega}),
\end{eqnarray}
and, hence, 
\begin{eqnarray}\label{Obt.3P}
\big[\wti\gamma_{\cN}
\big(\big(-\wti\Delta_{D,\Omega}-z\wti{I}_{\Omega}\big)^{-1}\circ R_{\Omega} 
,I_{\bbR^n}\big)\big]^*
\in\cB\bigl(H^{1/2}(\partial\Omega),H^1(\Omega)\bigr),\quad
z\in\bbC\backslash \si(-\Delta_{D,\Omega}).
\end{eqnarray}
Furthermore, the operators \eqref{Obt.2P}, \eqref{Obt.3P} are
compatible with \eqref{3.23} and \eqref{3.24}, respectively, in the
sense that for each $z\in\bbC\backslash \si(-\Delta_{D,\Omega})$, 
\begin{align}\label{Obt.3q}
\wti\gamma_N(-\Delta_{D,\Omega}-zI_{\Omega})^{-1}f
=\wti\gamma_{\cN}
\big(\big(-\wti\Delta_{D,\Omega}-z\wti{I}_{\Omega}\big)^{-1}\circ R_{\Omega} 
,I_{\bbR^n}\big)f\mbox{ in }H^{-1/2}(\partial\Omega),\,\,f\in\LOm,
\end{align}
and 
\begin{align}\label{Obt.3bq} 
\begin{split}
& \big[\wti\gamma_N(-\Delta_{D,\Omega}-zI_{\Omega})^{-1}\big]^*f
=\big[\wti\gamma_{\cN}
\big(\big(-\wti\Delta_{D,\Omega}-z\wti{I}_{\Omega}\big)^{-1}\circ R_{\Omega} 
,I_{\bbR^n}\big)\big]^*f \, \text{ in }\LOm,  \\
& \hspace*{7.5cm}  \text{for every element }\, f\in H^{1/2}(\partial\Omega).
\end{split}
\end{align}
\end{lemma}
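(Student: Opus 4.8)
\textbf{Proof plan for Lemma \ref{new-L1}.}
The plan is to follow the same two-phase strategy used throughout Section \ref{s3}: first establish the well-definedness and boundedness of the raw operator built from the extended Dirichlet resolvent, then verify the compatibility assertions by testing against smooth functions and invoking the various definitions of the Neumann traces. For the first part, recall from Remark \ref{r3.YY} that $\big(-\wti\Delta_{D,\Om}-z\wti I_\Om\big)^{-1}\in\cB\big(H^{-1}(\Om);H^1_0(\Om)\big)$ for $z\in\bbC\backslash\si(-\Delta_{D,\Om})$, and from \eqref{jk-10} that $R_\Om\in\cB\big((H^1(\Om))^*,H^{-1}(\Om)\big)$; composing these gives a bounded map $(H^1(\Om))^*\to H^1_0(\Om)\hookrightarrow H^1(\Om)$. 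I would then check that if $w\in(H^1(\Om))^*$ and $u=\big(-\wti\Delta_{D,\Om}-z\wti I_\Om\big)^{-1}(R_\Om w)$, then the pair $(u,w)$ lies in $W_z(\Om)$: indeed $(-\Delta -z)u = R_\Om w|_\Om = w|_\Om$ in $\cD'(\Om)$ follows by testing against $v\in C^\infty_0(\Om)$ in \eqref{fuRR} and using that $\iota$ and $R_\Om$ compose to the natural inclusion. This shows \eqref{Obt.1P} is well-defined, linear, and bounded into $W_z(\Om)$, and then \eqref{Obt.2P} follows immediately from the boundedness of $\wti\ga_{\cN}\colon W_z(\Om)\to H^{-1/2}(\dOm)$ in \eqref{2.8X}, \eqref{2.9X}. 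The membership \eqref{Obt.3P} is then obtained by dualizing \eqref{Obt.2P}, using $\big(H^{-1/2}(\dOm)\big)^*=H^{1/2}(\dOm)$ and $\big((H^1(\Om))^*\big)^*=H^1(\Om)$ (reflexivity of the Hilbert--Sobolev spaces).

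For the compatibility \eqref{Obt.3q}, fix $f\in\LOm\hookrightarrow(H^1(\Om))^*$ and set $u=(-\Delta_{D,\Om}-zI_\Om)^{-1}f$. By Remark \ref{r3.YY} (the compatibility of \eqref{fuTT} with \eqref{fcH3}), $u$ equals $\big(-\wti\Delta_{D,\Om}-z\wti I_\Om\big)^{-1}(R_\Om f)$, and by Lemma \ref{L-BbbZ} one has $u\in\{v\in H^{3/2}(\Om)\,|\,\Delta v\in\LOm\}$ with $\Delta u=-f+zu\in\LOm$. Now invoke the relation \eqref{2.12X}, namely $\wti\gamma_{\cN}\circ j_z=\wti\ga_N$, where $j_z(u)=(u,\iota(-\Delta u -zu))$: since $-\Delta u-zu=-f$ here, the pair $\big(\big(-\wti\Delta_{D,\Om}-z\wti I_\Om\big)^{-1}(R_\Om f),I_{\bbR^n}f\big)$ agrees with $j_z(u)$ as an element of $W_z(\Om)$ (both have first component $u$ and second component the element of $(H^1(\Om))^*$ given by $f$, bearing in mind $\iota(-\Delta u-zu)=\iota(-f)$ and that $R_\Om$ identifies $(H^1(\Om))^*$ with a subspace of $H^{-1}(\bbR^n)$ supported in $\ol\Om$). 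Hence applying $\wti\gamma_{\cN}$ to both sides and using \eqref{2.12X} gives \eqref{Obt.3q}. Then \eqref{Obt.3bq} follows by dualizing \eqref{Obt.3q}: for $f\in H^{1/2}(\dOm)$ and $v\in\LOm$, one writes out $\big([\wti\gamma_N(-\Delta_{D,\Om}-zI_\Om)^{-1}]^*f,v\big)_{\LOm}=\langle f,\wti\gamma_N(-\Delta_{D,\Om}-\ol z I_\Om)^{-1}v\rangle_{1/2}$, replaces the inner Neumann trace using \eqref{Obt.3q} (with $\ol z$ in place of $z$), and recognizes the result as the pairing of $v$ with the right-hand side of \eqref{Obt.3bq}; the duality pairings $\langle\cdot,\cdot\rangle_{1/2}$ on $H^{1/2}\times H^{-1/2}$ are consistent with the $\LOm$ pairings because $f\in H^{1/2}(\dOm)$ and the relevant functions lie in $\LOm$.

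The main obstacle, I expect, will be keeping the identifications straight: one must carefully track the three distinct but compatible realizations of the inclusion of a distribution into a dual Sobolev space — the operator $\iota$ from \eqref{inc-1}, the restriction $R_\Om$ from \eqref{jk-10}, and the embedding $L^2(\Om;d^nx)\hookrightarrow(H^1(\Om))^*$ — and verify that the pair fed into $\wti\gamma_{\cN}$ in \eqref{Obt.3q} is \emph{literally} the pair $j_z(u)$ so that \eqref{2.12X} applies verbatim. The stated remark following \eqref{jk-10} (that $R_\Om\circ\iota$ is the natural inclusion of $H^s(\Om)$ into $H^{-1}(\Om)$) is exactly the bookkeeping lemma one needs here, together with \eqref{jk-9} identifying $(H^1(\Om))^*$ with distributions in $H^{-1}(\bbR^n)$ supported in $\ol\Om$. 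Once these identifications are nailed down, no analysis beyond what is already recorded in Section \ref{s2} and Section \ref{s3} is required; everything reduces to chasing definitions and dualizing.
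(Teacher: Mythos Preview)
Your approach is correct and matches the paper's proof essentially line for line: \eqref{Obt.1P} via Remark \ref{r3.YY} and \eqref{jk-10}, then \eqref{2.8X} and duality for \eqref{Obt.2P}--\eqref{Obt.3P}; the compatibility \eqref{Obt.3q} via Lemma \ref{L-BbbZ} and \eqref{2.10X}--\eqref{2.12X} (the paper also cites the compatibility clause in Lemma \ref{Neu-tr}, since the $\wti\ga_N$ in \eqref{3.23} is the $H^{3/2}$-version), and \eqref{Obt.3bq} by the same duality chain you describe. One slip to fix: from $(-\Delta_{D,\Om}-zI_\Om)u=f$ you get $-\Delta u - zu = f$, not $-f$, so $j_z(u)=(u,\iota(f))$, which is precisely the pair $(u,f)$ produced by \eqref{Obt.1P}; your conclusion is right but the intermediate sign is off.
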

\begin{proof}
Let $z\in\bbC\backslash \si(-\Delta_{D,\Omega})$.
If $f\in (H^1(\Omega))^*$ and 
$u=\big(-\wti\Delta_{D,\Omega}-zI_{\Omega}\big)^{-1}(f|_{\Omega})$,  
then $u\in H^1_0(\Omega)$ satisfies $(-\Delta-z)u=f|_{\Omega}$ in 
${\mathcal{D}}'(\Om)$. Hence, $(u,f)\in W_z(\Omega)$
which shows that the operator \eqref{Obt.1P} is well-defined and 
bounded. Then \eqref{Obt.2P} is a consequence of this and \eqref{2.8X}, 
whereas \eqref{Obt.3P} follows from \eqref{Obt.2P} and duality. 

Going further, \eqref{Obt.3q} is implied by Lemma \ref{L-BbbZ}, the 
compatibility statement in Lemma \ref{Neu-tr}, and \eqref{2.10X}--\eqref{2.12X}.
There remains to justify \eqref{Obt.3bq}. To this end, 
if $f\in H^{1/2}(\partial\Omega)\hookrightarrow\LdOm$ 
and $u\in\LOm\hookrightarrow (H^1(\Omega))^*$ are arbitrary, we may write
\begin{align}\label{Obt.4q}
\langle [\wti\gamma_N(-\Delta_{D,\Omega}-zI_{\Omega})^{-1}]^*f\,,\,u\rangle
_{\LOm} &= \langle f\,,\,\wti\gamma_N
(-\Delta_{D,\Omega}-\ol{z}I_{\Omega})^{-1}u\rangle_{0}
\nonumber\\
&= \langle f\,,\,
\wti\gamma_N\big(-\Delta_{D,\Omega}-\ol{z}I_{\Omega}\big)^{-1}u\big\rangle_{1/2}
u\rangle_{1/2}
\\
&= \big\langle f\,,\,
\wti\gamma_{\cN}
\big(\big(-\wti\Delta_{D,\Omega}-\ol{z}\wti{I}_{\Omega}\big)^{-1}\circ R_{\Omega} 
,I_{\bbR^n}\big)u\rangle_{1/2}
\nonumber\\
&= {}_{H^{1}(\Omega)}
\big\langle 
\big[\wti\gamma_{\cN}
(\big(-\wti\Delta_{D,\Omega}-z\wti{I}_{\Omega}\big)^{-1}\circ R_{\Omega} 
,I_{\bbR^n}\big)\big]^*f\,,\,u\big\rangle_{(H^1(\Omega))^*}
\nonumber\\
&= \big\langle 
\big[\wti\gamma_{\cN}
(\big(-\wti\Delta_{D,\Omega}-z\wti{I}_{\Omega}\big)^{-1}\circ R_{\Omega}
,I_{\bbR^n}\big)\big]^*f\,,\,u\big\rangle_{\LOm},
\nonumber
\end{align}
where the third equality is based on \eqref{Obt.3q}. 
This justifies \eqref{Obt.3bq} and finishes the proof of the lemma. 
\end{proof}

\begin{lemma} \lb{lA.3BB}
Assume Hypothesis \ref{h3.1bis} and suppose that 
$z\in\bbC\backslash(\si(-\Delta_{\Theta,\Om})\cup\si(-\Delta_{D,\Om}))$.
Then the following resolvent relation holds on $\bigl(H^1(\Omega)\bigr)^*$, 
\begin{align}\lb{Na1B}
\begin{split}
\big(-\wti\Delta_{\Theta,\Om}-z \wti I_\Om\big)^{-1} 
&=\big(-\wti\Delta_{D,\Om}-z \wti I_\Om\big)^{-1}\circ R_{\Omega}   \\
& \quad  + \big(-\wti\Delta_{\Theta,\Om}-z \wti I_\Om\big)^{-1}\ga_D^*
\wti\gamma_{\cN}\big(\big(-\wti\Delta_{D,\Om}-z \wti I_\Om\big)^{-1}\circ R_{\Omega}
,I_{{\bbR}^n}\big).
\end{split}
\end{align}
\end{lemma}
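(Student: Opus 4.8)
The plan is to verify the operator identity \eqref{Na1B} by evaluating both sides on an arbitrary element $w\in\bigl(H^1(\Omega)\bigr)^*$ and showing that the resulting functions in $H^1(\Omega)$ agree. Write $u_\Theta=\big(-\wti\Delta_{\Theta,\Om}-z\wti I_\Om\big)^{-1}w$, which by Theorem \ref{t3.XV} is the unique solution of the inhomogeneous Robin problem \eqref{3.Jq}. Similarly, let $u_D=\big(-\wti\Delta_{D,\Om}-z\wti I_\Om\big)^{-1}(w|_\Omega)$ be the function produced by Remark \ref{r3.YY}, so that $u_D\in H^1_0(\Omega)$ and $(-\Delta-z)u_D=w|_\Omega$ in $\mathcal{D}'(\Om)$. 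The goal is then to show that
\begin{equation*}
u_\Theta-u_D=\big(-\wti\Delta_{\Theta,\Om}-z\wti I_\Om\big)^{-1}\ga_D^*\,
\wti\gamma_{\cN}\big(\big(-\wti\Delta_{D,\Om}-z\wti I_\Om\big)^{-1}\circ R_\Omega,I_{\bbR^n}\big)w.
\end{equation*}

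First I would identify the difference $v:=u_\Theta-u_D\in H^1(\Omega)$ as the solution of a homogeneous equation with an explicit boundary datum. Since $(-\Delta-z)u_\Theta=w|_\Omega=(-\Delta-z)u_D$ in $\mathcal{D}'(\Om)$, we have $(-\Delta-z)v=0$ in $\Om$, so in particular $v\in H^1(\Omega)$ with $\Delta v\in L^2(\Om;d^nx)$ (indeed $\Delta v=-zv\in L^2$), and the weak and ultra weak Neumann traces of $v$ coincide via \eqref{2.12X}. Applying $\big(\wti\ga_N+\wti\Theta\ga_D\big)$ to $v$ and using the boundary condition in \eqref{3.Jq} for $u_\Theta$ (rewritten as $\wti\ga_{\cN}(u_\Theta,w)=-\wti\Theta\ga_D u_\Theta$) together with $\ga_D u_D=0$, a short computation gives
\begin{equation*}
\big(\wti\ga_N+\wti\Theta\ga_D\big)v
=-\big(\wti\ga_{\cN}(u_D,w)+\wti\Theta\ga_D u_D\big)
=-\wti\ga_{\cN}(u_D,w)
=-\wti\gamma_{\cN}\big(\big(-\wti\Delta_{D,\Om}-z\wti I_\Om\big)^{-1}\circ R_\Omega,I_{\bbR^n}\big)w,
\end{equation*}
where the last equality uses that $(u_D,w)\in W_z(\Om)$ by construction. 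So $v$ solves the homogeneous Robin problem with boundary datum $g:=-\wti\gamma_{\cN}\big(\big(-\wti\Delta_{D,\Om}-z\wti I_\Om\big)^{-1}\circ R_\Omega,I_{\bbR^n}\big)w\in H^{-1/2}(\partial\Omega)$, which by uniqueness in Theorem \ref{t3.2bis} is $v=\big[\ga_D(-\Delta_{\Theta,\Om}-\ol z I_\Om)^{-1}\big]^*g$ via \eqref{3.9} (in the $H^1$-version \eqref{3.8bis}).

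It then remains to match this with the right-hand side of the claimed identity, i.e.\ to show that $\big[\ga_D(-\Delta_{\Theta,\Om}-\ol z I_\Om)^{-1}\big]^*g=\big(-\wti\Delta_{\Theta,\Om}-z\wti I_\Om\big)^{-1}\ga_D^*g$ for $g\in H^{-1/2}(\partial\Omega)$. This is where the compatibility statements must be invoked carefully: by Corollary \ref{L-fR8} (specifically \eqref{Obt.3}) the left-hand side agrees with $\big[\gamma_D\big(-\wti\Delta_{\Theta,\Om}-zI_\Om\big)^{-1}\big]^*g$, and since $\big(-\wti\Delta_{\Theta,\Om}-z\wti I_\Om\big)$ is self-adjoint in the appropriate duality sense (its inverse is an isomorphism $(H^1(\Om))^*\to H^1(\Om)$ by Theorem \ref{t3.XV}, and taking adjoints reverses $z\leftrightarrow\ol z$), one has $\big[\gamma_D\big(-\wti\Delta_{\Theta,\Om}-zI_\Om\big)^{-1}\big]^*=\big(-\wti\Delta_{\Theta,\Om}-z\wti I_\Om\big)^{-1}\gamma_D^*$ as operators from $H^{-1/2}(\partial\Omega)$ into $H^1(\Om)$, using \eqref{GGG}. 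Finally, using \eqref{Obt.3bq} to replace $g$ (which equals $-\wti\gamma_N(-\Delta_{D,\Om}-zI_\Om)^{-1}(\cdot)$ modulo this identification) establishes the full identity \eqref{Na1B} on $(H^1(\Om))^*$. The main obstacle, I expect, is the bookkeeping of the various ``tilded'' extensions: one must apply the compatibility lemmas (Lemma \ref{new-L1}, Corollary \ref{L-fR8}, Theorem \ref{t3.XV}) in exactly the right order so that all operators are interpreted consistently as acting between the negative-order Sobolev spaces, rather than just on $L^2$, and so that the adjoints are taken with respect to the correct dualities.
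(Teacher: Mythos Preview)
Your approach is correct and genuinely different from the paper's. The paper argues by density: both sides of \eqref{Na1B} are bounded from $(H^1(\Om))^*$ to $H^1(\Om)$, so it suffices to test against $\phi_1,\psi_1\in L^2(\Om;d^nx)$; setting $\phi=(-\Delta_{\Theta,\Om}-\ol z I_\Om)^{-1}\phi_1$ and $\psi=(-\Delta_{D,\Om}-zI_\Om)^{-1}\psi_1$, everything is reduced via pairings to a single Green-type identity $((-\Delta_{\Theta,\Om}-\ol z)\phi,\psi)-(\phi,(-\Delta_{D,\Om}-z)\psi)=\langle\ga_D\phi,\wti\ga_N\psi\rangle_{1/2}$, which follows from \eqref{wGreen} and $\ga_D\psi=0$. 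Your route is more PDE-theoretic: you read the difference $v=u_\Theta-u_D$ directly as the solution of a homogeneous Robin problem with explicit datum, and then invoke uniqueness. This is conceptually cleaner and avoids the density step entirely.

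That said, your final paragraph is more convoluted than necessary and leans on Corollary~\ref{L-fR8}, which is stated under the stronger Hypothesis~\ref{h3.1} rather than Hypothesis~\ref{h3.1bis}. You do not need any of the compatibility lemmas here. The identification you want follows directly from Theorem~\ref{t3.XV}: for $h\in H^{-1/2}(\dOm)$, the function $u=\big(-\wti\Delta_{\Theta,\Om}-z\wti I_\Om\big)^{-1}\ga_D^* h$ is, by definition, the solution of \eqref{3.Jq} with $w=\ga_D^*h$; since $(\ga_D^*h)|_\Omega=0$ and a one-line computation with \eqref{2.9X} gives $\wti\ga_{\cN}(u,\ga_D^*h)=\wti\ga_N u-h$, this $u$ is exactly the unique $H^1$-solution of the homogeneous Robin problem with boundary datum $h$. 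Matching this against your computed datum for $v$ finishes the proof immediately, without passing through \eqref{Obt.3} or \eqref{Obt.3bq}, and entirely within Hypothesis~\ref{h3.1bis}.
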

\begin{proof}
To set the stage, we recall \eqref{2.8}--\eqref{2.9} and \eqref{GGG}.  
Together with \eqref{fcH} and \eqref{fcH3}, these ensure that 
the composition of operators appearing on the right-hand side of \eqref{Na1B} 
is well-defined. Next, let $\phi_1,\psi_1\in L^2(\Om;d^nx)$ be arbitrary 
and define
\begin{align}
\begin{split}
\phi &= (-\Delta_{\Theta,\Om}-\ol{z}I_\Om)^{-1}\phi_1 \in \dom(\Delta_{\Theta,\Om})
\subset \big(H^{1}(\Om) \cap \dom(\wti \gamma_N)\big),
\\
\psi &= (-\Delta_{D,\Om}-zI_\Om)^{-1}\psi_1 \in \dom(\Delta_{D,\Om})
\subset \big(H^{1}_0(\Om) \cap \dom(\wti \gamma_N)\big).
\end{split} \lb{Na2}
\end{align} 
As a consequence of our earlier results, 
both sides of \eqref{Na1B} are bounded operators from $(H^1(\Om))^*$ 
into $H^1(\Om)$. Since $L^2(\Om;d^nx)\hookrightarrow (H^1(\Om))^*$ densely, 
it therefore suffices to show that the following identity holds:
\begin{align}
\begin{split}
&(\phi_1,(-\Delta_{\Theta,\Om}-zI_\Om)^{-1}\psi_1)_{L^2(\Om;d^nx)} 
-(\phi_1,(-\Delta_{D,\Om}-zI_\Om)^{-1}\psi_1)_{L^2(\Om;d^nx)}
\\
&\quad = (\phi_1,(-\Delta_{\Theta,\Om}-zI_\Om)^{-1}\ga_D^*\wti\gamma_N
(-\Delta_{D,\Om}-zI_\Om)^{-1}\psi_1)_{L^2(\Om;d^nx)}.
\end{split}
\end{align}
We note that according to \eqref{Na2} one has,
\begin{align}
(\phi_1,(-\Delta_{D,\Om}-zI_\Om)^{-1}\psi_1)_{L^2(\Om;d^nx)}
&= ((-\Delta_{\Theta,\Om}-\ol{z}I_\Om)\phi,\psi)_{L^2(\Om;d^nx)},
\\
(\phi_1,(-\Delta_{\Theta,\Om}-zI_\Om)^{-1}\psi_1)_{L^2(\Om;d^nx)}
&= \big(\big((-\Delta_{\Theta,\Om}-zI_\Om)^{-1}\big)^*
\phi_1,\psi_1\big)_{L^2(\Om;d^nx)} \no
\\
&= ((-\Delta_{\Theta,\Om}-\ol{z}I_\Om)^{-1}\phi_1,\psi_1)_{L^2(\Om;d^nx)}
\no
\\
&= (\phi,(-\Delta_{D,\Om}-zI_\Om)\psi)_{L^2(\Om;d^nx)},
\end{align}
and, further, 
\begin{align}
&(\phi_1,(-\Delta_{\Theta,\Om}-zI_\Om)^{-1}\ga_D^*\wti\gamma_N
(-\Delta_{D,\Om}-zI_\Om)^{-1}\psi_1)_{L^2(\Om;d^nx)} \no
\\
&\quad ={}_{H^1(\Om)}
\langle{(-\Delta_{\Theta,\Om}-\ol{z}I_\Om)^{-1}\phi_1},\ga_D^*\wti\gamma_N
(-\Delta_{D,\Om}-zI_\Om)^{-1}\psi_1\rangle_{(H^1(\Om))^*} \no
\\
&\quad = \langle{\ga_D(-\Delta_{\Theta,\Om}-\ol{z}I_\Om)^{-1}\phi_1},
\wti\gamma_N (-\Delta_{D,\Om}-zI_\Om)^{-1}\psi_1\rangle_{1/2} 
=\langle{\ga_D\phi},\wti\gamma_N\psi\rangle_{1/2}.
\end{align}
Thus, matters have been reduced to proving that
\begin{align} \lb{Na3}
((-\Delta_{\Theta,\Om}-\ol{z}I_\Om)\phi,\psi)_{L^2(\Om;d^nx)} -
(\phi,(-\Delta_{D,\Om}-zI_\Om)\psi)_{L^2(\Om;d^nx)} =
\langle{\ga_D\phi},\wti\gamma_N\psi\rangle_{1/2}.
\end{align}
Using \eqref{wGreen} for the left-hand side of \eqref{Na3} one obtains
\begin{align}
&((-\Delta_{\Theta,\Om}-\ol{z}I_\Om)\phi,\psi)_{L^2(\Om;d^nx)} -
(\phi,(-\Delta_{D,\Om}-zI_\Om)\psi)_{L^2(\Om;d^nx)} \no
\\
&\quad = -(\Delta\phi,\psi)_{L^2(\Om;d^nx)} +
(\phi,\Delta\psi)_{L^2(\Om;d^nx)}
\\
&\quad = (\nabla\phi,\nabla\psi)_{L^2(\Om;d^nx)^n} -
\langle{\wti\gamma_N\phi},\ga_D\psi\rangle_{1/2} -
(\nabla\phi,\nabla\psi)_{L^2(\Om;d^nx)^n} +
\langle{\ga_D\phi},\wti\gamma_N\psi\rangle_{1/2} \no
\\
&\quad = -\langle{\wti\gamma_N\phi},\ga_D\psi\rangle_{1/2} +
\langle{\ga_D\phi},\wti\gamma_N\psi\rangle_{1/2}. \no
\end{align}
Observing that $\gamma_D\psi=0$ since $\psi\in\dom(\Delta_{D,\Om})$,
one concludes \eqref{Na3}.
\end{proof}

The stage is now set for proving the $L^2$-version of Lemma \ref{lA.3BB}. 

\begin{lemma}  \lb{l4.8}
Assume Hypothesis \ref{h3.1} and suppose that 
$z\in\bbC\backslash(\si(-\Delta_{\Theta,\Om})\cup\si(-\Delta_{D,\Om}))$.
Then the following resolvent relation holds on $\LOm$, 
\begin{align}
\begin{split}
(-\Delta_{\Theta,\Om}-zI_\Om)^{-1} 
&= (-\Delta_{D,\Om}-zI_\Om)^{-1} 
+ \big[\ga_D (-\Delta_{\Theta,\Om}-{\ol z} I_\Om)^{-1}\big]^* 
\big[\wti\gamma_N (-\Delta_{D,\Om}-zI_\Om)^{-1}\big]   \lb{Na1C}  \\
&= (-\Delta_{D,\Om}-zI_\Om)^{-1} 
+ \big[\wti\gamma_N (-\Delta_{D,\Om}-{\ol z}I_\Om)^{-1}\big]^*
\big[\ga_D (-\Delta_{\Theta,\Om}-z I_\Om)^{-1}\big]. 
\end{split}
\end{align}
\end{lemma}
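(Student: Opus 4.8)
The plan is to obtain \eqref{Na1C} by restricting the operator identity \eqref{Na1B} of Lemma \ref{lA.3BB} to $\LOm$. This is legitimate because $\LOm\hookrightarrow\big(H^1(\Om)\big)^*$ continuously and densely and, under Hypothesis \ref{h3.1} (which, as recorded near \eqref{4.3}, is stronger than Hypothesis \ref{h3.1bis} and hence makes Lemma \ref{lA.3BB} applicable), both sides of \eqref{Na1B} are bounded operators from $\big(H^1(\Om)\big)^*$ into $H^1(\Om)$. First I would identify the restrictions of the three terms on the right-hand side of \eqref{Na1B}. By Theorem \ref{t3.XV}, $\big(-\wti\Delta_{\Theta,\Om}-z\wti I_\Om\big)^{-1}$ restricts on $\LOm$ to $(-\Delta_{\Theta,\Om}-zI_\Om)^{-1}$; by Remark \ref{r3.YY}, $\big(-\wti\Delta_{D,\Om}-z\wti I_\Om\big)^{-1}\circ R_\Omega$ restricts on $\LOm$ to $(-\Delta_{D,\Om}-zI_\Om)^{-1}$ (here $R_\Omega$ acts on $\LOm\hookrightarrow\big(H^1(\Om)\big)^*$ as the inclusion into $H^{-1}(\Om)$). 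This takes care of the first two terms.

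For the third term, I would first use \eqref{Obt.3q} of Lemma \ref{new-L1}, which says that $\wti\gamma_{\cN}\big(\big(-\wti\Delta_{D,\Om}-z\wti I_\Om\big)^{-1}\circ R_\Omega,I_{\bbR^n}\big)$ agrees on $\LOm$ with $\wti\gamma_N(-\Delta_{D,\Om}-zI_\Om)^{-1}$, the latter being a bounded map from $\LOm$ into $\LdOm$ by Lemma \ref{L-Bbb}. It then remains to identify $\big(-\wti\Delta_{\Theta,\Om}-z\wti I_\Om\big)^{-1}\ga_D^*$, acting on $\LdOm\hookrightarrow H^{-1/2}(\dOm)$, with $\big[\ga_D(-\Delta_{\Theta,\Om}-\ol z I_\Om)^{-1}\big]^*$. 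Since the sesquilinear form in \eqref{3.JqZ} is Hermitian symmetric (cf.\ \eqref{2.1}), one has $\big(-\wti\Delta_{\Theta,\Om}-z\wti I_\Om\big)^* = -\wti\Delta_{\Theta,\Om}-\ol z\wti I_\Om$, whence $\big(-\wti\Delta_{\Theta,\Om}-z\wti I_\Om\big)^{-1}\ga_D^* = \big[\ga_D\big(-\wti\Delta_{\Theta,\Om}-\ol z I_\Om\big)^{-1}\big]^*$; and this, by \eqref{Obt.3bis} of Corollary \ref{L-fR8} (applied with $z$ replaced by $\ol z$), coincides on $\LdOm$ with $\big[\ga_D(-\Delta_{\Theta,\Om}-\ol z I_\Om)^{-1}\big]^*$. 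Each factor in this third term is individually a bounded operator between the relevant $L^2$-spaces (by Corollary \ref{L-fR8} and Lemma \ref{L-Bbb}), so the product is a genuine bounded operator on $\LOm$. Combining these identifications, the restriction of \eqref{Na1B} to $\LOm$ is precisely the first line of \eqref{Na1C}.

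Finally, the second line of \eqref{Na1C} is obtained by passing to Hilbert-space adjoints in the first line. Writing that first formula with $z$ replaced by $\ol z$ — permissible since $\si(-\Delta_{\Theta,\Om})\subset\bbR$ and $\si(-\Delta_{D,\Om})\subset(0,\infty)$, so $z$ lies in the relevant resolvent set iff $\ol z$ does — and using $\big((-\Delta_{\Theta,\Om}-\ol z I_\Om)^{-1}\big)^* = (-\Delta_{\Theta,\Om}- z I_\Om)^{-1}$, $\big((-\Delta_{D,\Om}-\ol z I_\Om)^{-1}\big)^* = (-\Delta_{D,\Om}- z I_\Om)^{-1}$, together with $(AB)^*=B^*A^*$ and $A^{**}=A$ for bounded $A$, one arrives at the second line of \eqref{Na1C}. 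The step I expect to require the most care — and the principal obstacle — is the bookkeeping that threads the various compatibility statements (Theorem \ref{t3.XV}, Remark \ref{r3.YY}, Lemma \ref{new-L1}, Corollary \ref{L-fR8}) together with the correct placement of $z$ versus $\ol z$ in each adjoint, so that the $\LOm$-restriction of \eqref{Na1B} lands exactly on the operators appearing in \eqref{Na1C} rather than on some larger extension thereof.
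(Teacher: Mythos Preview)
Your proposal is correct and follows essentially the same route as the paper: restrict the $(H^1(\Om))^*$-identity of Lemma \ref{lA.3BB} to $\LOm$, invoke the compatibility statements of Corollary \ref{L-fR8} and Lemma \ref{new-L1} (together with Theorem \ref{t3.XV} and Remark \ref{r3.YY} for the resolvents themselves) to land on the first line of \eqref{Na1C}, and obtain the second line by taking Hilbert-space adjoints. The paper's proof is terser but uses exactly the same ingredients.
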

\begin{proof}
Consider the first equality in \eqref{Na1C}. To begin with, we 
note that the following operators are well-defined, linear and bounded: 
\begin{eqnarray}\label{Kr-a.5q}
&& (-\Delta_{D,\Om}-zI_\Om)^{-1}\in\cB\bigl(\LOm\bigr),\quad
(-\Delta_{\Theta,\Om}-zI_\Om)^{-1}\in\cB\bigl(\LOm\bigr),
\\[4pt]
\label{Kr-a.6q}
&&\wti\gamma_N(-\Delta_{D,\Om}-zI_\Om)^{-1}
\in\cB\bigl(\LOm,\LdOm\bigr),
\\[4pt]
\label{Kr-a.9q}
&& \big[\gamma_D(-\Delta_{\Theta,\Om}-\ol{z}I_\Om)^{-1}\big]^* 
\in\cB\bigl(\LdOm,\LOm)\bigr).
\end{eqnarray}
Indeed, \eqref{Kr-a.5q} follows from the fact that
$z\in\bbC\backslash\big(\si(-\Delta_{D,\Om})\cup\si(-\Delta_{\Theta,\Om})\big)$,
\eqref{Kr-a.6q} is covered by \eqref{3.23}, and \eqref{Kr-a.9q} is taken 
care of by \eqref{Kr-a.3}. Together, these memberships show that both sides 
of \eqref{Na1C} are bounded operators on $\LOm$. Having established this, 
the first equality in \eqref{Na1C} follows from Lemma \ref{lA.3BB},
granted the compatibility results from Corollary \ref{L-fR8} and
Lemma \ref{new-L1}. Then the second equality in \eqref{Na1C} is a 
consequence of what we have proved so far and of duality. 
\end{proof}

We note that the special case $\Theta =0$ in Lemma \ref{l4.8} was 
discussed by Nakamura \cite{Na01} (in connection with cubic boxes $\Omega$) 
and subsequently in \cite[Lemma A.3]{GLMZ05} (in the case of a Lipschitz 
domain with a compact boundary).  

\begin{lemma} \lb{lA.3CC}
Assume Hypothesis \ref{h3.1bis} and suppose that 
$z\in\bbC\backslash\si(-\Delta_{\Theta,\Om})$. Then
\begin{equation} \lb{NaD}
\big[\wti M_{\Theta,D,\Om}^{(0)}(z)\big]^* = \wti M_{\Theta,D,\Om}^{(0)}(\ol{z})
\end{equation}
as operators in $\cB\big(H^{-1/2}(\partial\Omega);H^{1/2}(\partial\Omega)\big)$.
In particular, assuming Hypothesis \ref{h3.1}, then
\begin{equation} \lb{NaDL2}
\big[M_{\Theta,D,\Om}^{(0)}(z)\big]^* = M_{\Theta,D,\Om}^{(0)}(\ol{z}). 
\end{equation}
\end{lemma}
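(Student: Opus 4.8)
The plan is to establish the symmetry relation \eqref{NaD} at the level of the extended Robin-to-Dirichlet map $\wti M_{\Theta,D,\Om}^{(0)}(z)$ using the representation \eqref{3.52bis}, namely
\[
\wti M_{\Theta,D,\Om}^{(0)}(z)
= \gamma_D \big(-\wti\Delta_{\Theta,\Om} - z \wti I_\Om\big)^{-1}\gamma_D^*,
\]
from Corollary \ref{t3.XU}, and then restricting/specializing to the $L^2$-setting (under Hypothesis \ref{h3.1}) to obtain \eqref{NaDL2} from the compatibility statements already recorded. Thus the heart of the matter is a computation with adjoints of the three factors $\gamma_D$, $(-\wti\Delta_{\Theta,\Om}-z\wti I_\Om)^{-1}$, and $\gamma_D^*$.

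First I would take adjoints termwise in \eqref{3.52bis}. Since $\gamma_D\in\cB\big(H^1(\Om),H^{1/2}(\dOm)\big)$, its adjoint is $\gamma_D^*\in\cB\big(H^{-1/2}(\dOm),(H^1(\Om))^*\big)$ (this is \eqref{GGG}), and $(\gamma_D^*)^*=\gamma_D$ after the canonical identifications of $H^{1/2}(\dOm)$ and $H^1(\Om)$ with their biduals. Therefore
\[
\big[\wti M_{\Theta,D,\Om}^{(0)}(z)\big]^*
= \gamma_D \big[\big(-\wti\Delta_{\Theta,\Om} - z \wti I_\Om\big)^{-1}\big]^* \gamma_D^*,
\]
so everything reduces to identifying the adjoint of the resolvent $\big(-\wti\Delta_{\Theta,\Om}-z\wti I_\Om\big)^{-1}\in\cB\big((H^1(\Om))^*,H^1(\Om)\big)$ with $\big(-\wti\Delta_{\Theta,\Om}-\ol z\,\wti I_\Om\big)^{-1}$. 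For this I would use the defining sesquilinear identity \eqref{3.JqZ}: since $a_\Theta$ is symmetric by Hypothesis \ref{h2.2} (relations \eqref{2.1}, \eqref{2.2}), the form $u,v\mapsto \int_\Om \ol{\nabla u}\cdot\nabla v + \langle\gamma_D u,\wti\Theta\gamma_D v\rangle_{1/2}$ is symmetric, hence $\big({}_{H^1(\Om)}\langle u, -\wti\Delta_{\Theta,\Om}v\rangle_{(H^1(\Om))^*}\big)^- = {}_{H^1(\Om)}\langle v, -\wti\Delta_{\Theta,\Om}u\rangle_{(H^1(\Om))^*}$. This says precisely that $-\wti\Delta_{\Theta,\Om}-z\wti I_\Om$, viewed as a bounded map $H^1(\Om)\to (H^1(\Om))^*$, has adjoint $-\wti\Delta_{\Theta,\Om}-\ol z\,\wti I_\Om$; inverting (both are isomorphisms by Theorem \ref{t3.XV} since $z,\ol z\in\bbC\backslash\si(-\Delta_{\Theta,\Om})$, the spectrum being real-symmetric as $-\Delta_{\Theta,\Om}$ is self-adjoint), we get $\big[(-\wti\Delta_{\Theta,\Om}-z\wti I_\Om)^{-1}\big]^* = (-\wti\Delta_{\Theta,\Om}-\ol z\,\wti I_\Om)^{-1}$. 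Substituting back and using \eqref{3.52bis} again at the point $\ol z$ yields \eqref{NaD}.

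For the final claim \eqref{NaDL2}, under the stronger Hypothesis \ref{h3.1} one has $M_{\Theta,D,\Om}^{(0)}(z)\in\cB\big(\LdOm\big)$ by Theorem \ref{t3.5}, and the extension $\wti M_{\Theta,D,\Om}^{(0)}(z)$ restricts to it in a compatible way on $L^2(\dOm;d^{n-1}\omega)\hookrightarrow H^{-1/2}(\dOm)$ (Corollary \ref{t3.XU}, together with the compatibility relation \eqref{Obt.3bis} of Corollary \ref{L-fR8}); the Hilbert-space adjoint in $\cB\big(\LdOm\big)$ is the restriction of the Banach-space adjoint in $\cB\big(H^{-1/2}(\dOm),H^{1/2}(\dOm)\big)$ to $\LdOm$, since the $\LdOm$ inner product is the restriction of the $\langle\cdot,\cdot\rangle_{1/2}$ pairing (cf.\ \eqref{2.4}). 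Hence \eqref{NaDL2} follows from \eqref{NaD} by restriction.

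The main obstacle I anticipate is the careful bookkeeping of dual pairings and the canonical identifications needed to make "$(\gamma_D^*)^* = \gamma_D$" and the adjoint of the resolvent precise — in particular, checking that the pairing $\langle\cdot,\cdot\rangle_{1/2}$ between $H^{1/2}(\dOm)$ and $H^{-1/2}(\dOm)$ and the pairing ${}_{H^1(\Om)}\langle\cdot,\cdot\rangle_{(H^1(\Om))^*}$ are used consistently (both are conjugate-linear in the first slot here, per the paper's convention about $X^*$ being the conjugate dual), so that no stray complex conjugation is lost when passing between $z$ and $\ol z$. Once the symmetry of the form $a_{-\Delta_{\Theta,\Om}}$ is invoked correctly, the computation is short.
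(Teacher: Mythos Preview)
Your argument is correct and takes a genuinely different route from the paper's. The paper proves \eqref{NaD} by fixing $f,g\in H^{-1/2}(\dOm)$, introducing the solutions $u,v$ of the Robin problems \eqref{3.6bis} with spectral parameters $z$ and $\ol z$ respectively, and then carrying out a direct computation with Green's formula \eqref{wGreen} to show that $\big\langle \wti M_{\Theta,D,\Om}^{(0)}(z)f, g \big\rangle_{1/2} = \big\langle f, \wti M_{\Theta,D,\Om}^{(0)}(\ol z)g \big\rangle_{1/2}$. In contrast, you bypass the boundary value problems entirely by exploiting the factorization \eqref{3.52bis} and the self-adjointness of $-\wti\Delta_{\Theta,\Om}$ as an element of $\cB\big(H^1(\Om),(H^1(\Om))^*\big)$ (in the sense of \eqref{B.5}), which is a direct consequence of the symmetry of the form $a_{-\Delta_{\Theta,\Om}}$ and is implicit in the Appendix~\ref{sB} machinery. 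Your approach is shorter and more algebraic; the paper's computation is more self-contained and makes the role of Green's formula explicit. Both arrive at \eqref{NaDL2} the same way, by restriction to $\LdOm$ via the compatibility of $\wti M_{\Theta,D,\Om}^{(0)}(z)$ with $M_{\Theta,D,\Om}^{(0)}(z)$ and the fact that the $L^2$ inner product is the restriction of the $\langle\cdot,\cdot\rangle_{1/2}$ pairing.
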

\begin{proof}
Let $f,g\in H^{-1/2}(\partial\Omega)$. Then using the definition of 
$\wti M_{\Theta,D,\Om}^{(0)}(z)$ one infers  
\begin{equation}\label{miT1}
\big\langle \wti M_{\Theta,D,\Om}^{(0)}(z)f, g \big\rangle_{1/2}
= \big\langle\gamma_D u,
\big(\wti\ga_N + \wti\Theta \gamma_D\big)v\big\rangle_{1/2}
\end{equation}
where $u,v$ solve the Robin boundary value problems
\begin{equation} \lb{3.Wa}
\begin{cases}
(-\Delta - z)u = 0 \text{ in }\,\Om,\quad u \in H^{1}(\Om), \\
\big(\wti\ga_N + \wti \Theta \gamma_D\big) u = f \text{ on } \,\dOm,
\end{cases}
\end{equation}
and
\begin{equation} \lb{3.Wa2}
\begin{cases}
(-\Delta - \ol{z})v = 0 \text{ in }\,\Om,\quad v \in H^{1}(\Om), \\
\big(\wti\ga_N + \wti \Theta \gamma_D\big) v = g \text{ on } \,\dOm,
\end{cases} 
\end{equation} 
respectively. That this is possible is ensured by Theorem \ref{t3.2bis}. 
Using \eqref{wGreen} we may then write 
\begin{align}\label{lonG}
\big\langle\gamma_D u,
\big(\wti\ga_N + \wti\Theta \gamma_D\big)v\big\rangle_{1/2}
&= \langle\gamma_D u,\wti\ga_N v\rangle_{1/2}
+ \big\langle\gamma_D u,\wti\Theta \gamma_D v\big\rangle_{1/2}
\nonumber \\ 
&= (\nabla u, \nabla v)_{L^2(\Om;d^nx)^n} 
+{}_{H^1(\Om)}\langle u,\Delta v\rangle_{(H^{1}(\Om))^*}
+ \big\langle\gamma_D u,\wti\Theta \gamma_D v\big\rangle_{1/2}
\nonumber\\ 
&= (\nabla u, \nabla v)_{L^2(\Om;d^nx)^n} 
- \ol{z} \, (u,v)_{L^2(\Om;d^nx)^n} 
+ \big\langle\gamma_D u,\wti\Theta \gamma_D v\big\rangle_{1/2}
\nonumber\\ 
&= \ol{(\nabla v, \nabla u)_{L^2(\Om;d^nx)^n}
- z \, (v,u)_{L^2(\Om;d^nx)^n}}
+ \big\langle\gamma_D u,\wti\Theta \gamma_D v\big\rangle_{1/2}
\nonumber\\ 
&= \ol{(\nabla v, \nabla u)_{L^2(\Om;d^nx)^n}
+{}_{H^1(\Om)}\langle v,\Delta u\rangle_{(H^{1}(\Om))^*}}
+ \big\langle\gamma_D u,\wti\Theta \gamma_D v\big\rangle_{1/2}
\nonumber\\ 
&= \ol{\langle\gamma_D v,\wti\gamma_N u\rangle_{1/2}}
+ \big\langle\gamma_D u,\wti\Theta \gamma_Dv\big\rangle_{1/2}
\nonumber\\ 
&= \ol{\langle\gamma_D v,\wti\gamma_N u\rangle_{1/2}}
+ \big\langle \wti\Theta\gamma_D u,\gamma_D v\big\rangle_{1/2}
\nonumber\\ 
&= \ol{\langle\gamma_D v,\wti\gamma_N u\rangle_{1/2}
+ \big\langle \gamma_D v,\wti\Theta\gamma_D u\big\rangle_{1/2}}
\nonumber\\ 
&= \ol{\big\langle\gamma_D v,\big(\wti\gamma_N 
+ \wti\Theta\gamma_D\big)u\big\rangle_{1/2}}
\nonumber\\ 
&= \ol{\big\langle \wti M_{\Theta,D,\Om}^{(0)}(\ol{z})g, f\big\rangle_{1/2}}.
\nonumber\\ 
&= \big\langle f, \wti M_{\Theta,D,\Om}^{(0)}(\ol{z})g\big\rangle_{1/2}.
\end{align}
Now \eqref{NaD} follows from \eqref{miT1} and \eqref{lonG}. Finally, 
\eqref{NaDL2} follows from \eqref{NaD} by restriction of the latter to $\LdOm$.
\end{proof}

Next we briefly turn to the Herglotz property of the Robin-to-Dirichlet map.
 We recall that an operator-valued function $M(z)\in\cB(\cH)$, $z\in\bbC_+$ 
(where $\bbC_+=\{z\in\bbC\,|\, \Im(z)>0$), for some separable complex Hilbert 
space $\cH$, is called an {\it operator-valued Herglotz function} if 
$M(\dott)$ is analytic on $\bbC_+$ and
\begin{equation}
\Im(M(z)) \ge 0, \quad z\in\bbC_+.  \lb{4.41}
\end{equation} 
Here, as usual, $\Im(M)=(M-M^*)/(2i)$.

\begin{lemma} \lb{l4.13}
Assume Hypothesis \ref{h3.1bis} and suppose that $z\in\bbC_+$. Then
for every $g\in H^{-1/2}(\dOm)$, $g \neq 0$, 
\begin{equation}\lb{4.42}
\f{1}{2i}\big\langle\ g,\big[\wti M_{\Theta,D}(z) 
- \wti M_{\Theta,D}(z)^*\big]g\big\rangle_{1/2}=\Im(z) \|u_{\Theta}\|^2_{\LOm}
> 0, 
\end{equation}
where $u_{\Theta}$ satisfies
\begin{equation}\lb{4.43}  
\begin{cases}
(-\Delta - z)u = 0 \text{ in }\,\Om,\quad u \in H^{1}(\Om), \\
\big(\wti\ga_N + \wti \Theta \gamma_D\big) u = g \text{ on } \,\dOm.   
 \end{cases}
\end{equation}
In particular, assuming Hypothesis \ref{h3.1}, then 
\begin{equation} \lb{4.44}
\Im\big(M_{\Theta,D,\Om}^{(0)}(z)\big) \ge 0, \quad z\in\bbC_+,
\end{equation}
and hence $M_{\Theta,D,\Om}^{(0)}(\dott)$ is an operator-valued Herglotz 
function on $\LdOm$. 
\end{lemma}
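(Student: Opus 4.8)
The plan is to compute the sesquilinear quantity in \eqref{4.42} directly via a Green-type identity, the same way Lemma \ref{lA.3CC} was established, and then recognize the real part of the Dirichlet pairing as a norm. First I would note that by Theorem \ref{t3.2bis} the boundary value problem \eqref{4.43} has a unique solution $u_\Theta\in H^1(\Om)$ for $z\in\bbC_+\subset\bbC\backslash\si(-\Delta_{\Theta,\Om})$ (recall $-\Delta_{\Theta,\Om}$ is self-adjoint, so its spectrum is real). Likewise the adjoint relation \eqref{NaD} from Lemma \ref{lA.3CC} gives $\wti M_{\Theta,D}(z)^\ast=\wti M_{\Theta,D}(\ol z)$, so that
\begin{equation*}
\f{1}{2i}\big\langle g,\big[\wti M_{\Theta,D}(z)-\wti M_{\Theta,D}(z)^\ast\big]g\big\rangle_{1/2}
=\f{1}{2i}\Big(\big\langle g,\wti M_{\Theta,D}(z)g\big\rangle_{1/2}
-\ol{\big\langle g,\wti M_{\Theta,D}(\ol z)g\big\rangle_{1/2}}\Big).
\end{equation*}

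Next I would evaluate $\big\langle g,\wti M_{\Theta,D}(z)g\big\rangle_{1/2}=\big\langle g,\ga_D u_\Theta\big\rangle_{1/2}=\big\langle (\wti\ga_N+\wti\Theta\ga_D)u_\Theta,\ga_D u_\Theta\big\rangle_{1/2}$. Applying the weak Green formula \eqref{2.9} (i.e. the identity labelled \texttt{wGreen} in the excerpt) with $\Phi=u_\Theta$ and then using $(-\Delta-z)u_\Theta=0$, this equals
\begin{equation*}
\big\langle \wti\ga_N u_\Theta,\ga_D u_\Theta\big\rangle_{1/2}+\big\langle\wti\Theta\ga_D u_\Theta,\ga_D u_\Theta\big\rangle_{1/2}
=\|\nabla u_\Theta\|^2_{\LOm^n}-\ol z\,\|u_\Theta\|^2_{\LOm}+\big\langle\ga_D u_\Theta,\wti\Theta\ga_D u_\Theta\big\rangle_{1/2},
\end{equation*}
where I have used that $\big\langle\wti\Theta\ga_D u_\Theta,\ga_D u_\Theta\big\rangle_{1/2}=\ol{\big\langle\ga_D u_\Theta,\wti\Theta\ga_D u_\Theta\big\rangle_{1/2}}=\big\langle\ga_D u_\Theta,\wti\Theta\ga_D u_\Theta\big\rangle_{1/2}$ by the symmetry \eqref{2.1}, so the $\wti\Theta$-term is real. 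Both $\|\nabla u_\Theta\|^2$ and the $\wti\Theta$-term are real; only the $-\ol z\,\|u_\Theta\|^2$ term carries an imaginary part. Subtracting the conjugate of the same expression written for $\ol z$ (whose solution I also call $u_\Theta$, abusing notation as in the statement, but strictly a second solution $v$; one then has to check $v=u_\Theta$ or, more cleanly, just track that the real parts cancel) leaves
\begin{equation*}
\f{1}{2i}\big((-\ol z)-\ol{(-z)}\big)\|u_\Theta\|^2_{\LOm}=\f{1}{2i}(z-\ol z)\|u_\Theta\|^2_{\LOm}=\Im(z)\,\|u_\Theta\|^2_{\LOm}.
\end{equation*}
Since $z\in\bbC_+$ and $u_\Theta\neq 0$ (because $g\neq0$ and, by uniqueness in Theorem \ref{t3.2bis}, $u_\Theta=0$ would force $g=(\wti\ga_N+\wti\Theta\ga_D)u_\Theta=0$), the right-hand side is strictly positive, giving \eqref{4.42}.

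Finally, \eqref{4.44} follows by restriction: under Hypothesis \ref{h3.1} the operator $M_{\Theta,D,\Om}^{(0)}(z)\in\cB(\LdOm)$ is the restriction of $\wti M_{\Theta,D,\Om}^{(0)}(z)$, and \eqref{4.42} with $g\in\LdOm\hookrightarrow H^{-1/2}(\dOm)$ shows $\big(g,\Im(M_{\Theta,D,\Om}^{(0)}(z))g\big)_{\LdOm}=\Im(z)\|u_\Theta\|^2_{\LOm}\ge 0$; analyticity of $z\mapsto M_{\Theta,D,\Om}^{(0)}(z)$ on $\bbC_+$ is inherited from the analytic dependence of the single-layer representation (as in Theorem \ref{t3.2}), so $M_{\Theta,D,\Om}^{(0)}(\dott)$ is an operator-valued Herglotz function. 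The main obstacle I anticipate is the bookkeeping in the conjugation step: making precise the claim implicit in the statement that the same symbol $u_\Theta$ serves both $z$ and $\ol z$, i.e. verifying that the real parts ($\|\nabla u_\Theta\|^2$ and the $\wti\Theta$-pairing) genuinely cancel between the $z$-term and the conjugated $\ol z$-term even though they come from two a priori different solutions — this is exactly the content already worked out in the chain of equalities \eqref{lonG} in the proof of Lemma \ref{lA.3CC}, so in practice I would simply invoke that computation with $f=g$ and $v$ the solution for $\ol z$, and read off $\langle g,\wti M_{\Theta,D}(z)g\rangle_{1/2}-\langle g,\wti M_{\Theta,D}(\ol z)g\rangle_{1/2}^{\,\,\ast}$ from it.
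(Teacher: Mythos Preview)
Your proof is correct and the core computation---expanding $\langle g,\wti M_{\Theta,D}(z)g\rangle_{1/2}$ via Green's formula \eqref{wGreen} and identifying the only non-real contribution as $-\ol z\,\|u_\Theta\|^2_{\LOm}$---is exactly what the paper does. The one difference is that you take an unnecessary detour through Lemma~\ref{lA.3CC}: by writing $\wti M_{\Theta,D}(z)^*=\wti M_{\Theta,D}(\ol z)$ you are forced to introduce a second boundary value problem at $\ol z$ with its own solution $v$, and then you face the bookkeeping obstacle you flag (matching the real parts of two different solutions), which you resolve by appealing back to \eqref{lonG}. The paper bypasses this entirely by using only the elementary identity $\langle g,\wti M(z)^*g\rangle_{1/2}=\langle \wti M(z)g,g\rangle_{1/2}=\ol{\langle g,\wti M(z)g\rangle_{1/2}}$, so that the whole expression reduces to $\Im\langle g,\wti M(z)g\rangle_{1/2}$ and only the single solution $u_\Theta$ at parameter $z$ ever appears. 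In fact your own display already contains everything needed: once you have shown $\langle g,\wti M(z)g\rangle_{1/2}=\|\nabla u_\Theta\|^2-\ol z\|u_\Theta\|^2+(\text{real }\wti\Theta\text{-term})$, taking the imaginary part directly gives $\Im(z)\|u_\Theta\|^2$ without ever invoking $\ol z$ or a second solution.
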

\begin{proof}
Let $u_{\Theta}$ be given by the solution of \eqref{4.43}. Then 
$M_{\Theta,D,\Om}^{(0)}g=\gamma_D u_{\Theta}$ by \eqref{3.48}, and using 
self-adjointness of $\wti \Theta$ (in the sense of \eqref{B.5}) 
and the Green's formula \eqref{wGreen}, one computes,
\begin{align} 
\f{1}{2i}\big\langle\ g,\big[\wti M_{\Theta,D}(z) 
- \wti M_{\Theta,D}(z)^*\big]g\big\rangle_{1/2} 
& =
\f{1}{2i}\big[\big\langle\ g, \wti M_{\Theta,D}(z) g\big\rangle_{1/2} - 
\big\langle\ \wti M_{\Theta,D}(z) g,g\big\rangle_{1/2}\big]  \no \\
& = \f{1}{2i}\big[\big\langle \big(\wti\ga_N + \wti\Theta \gamma_D\big) u_{\Theta}, 
\gamma_D u_{\Theta}\big\rangle_{1/2} -
\big \langle\gamma_D u_{\Theta},\big(\wti\ga_N + \wti\Theta \gamma_D\big) 
u_{\Theta}\big\rangle_{1/2}\big]  \no \\ 
& = \f{1}{2i}\big[\langle \wti\ga_N u_{\Theta}, 
\gamma_D u_{\Theta}\rangle_{1/2} -
 \langle\gamma_D u_{\Theta}, \wti\ga_N u_{\Theta}\rangle_{1/2}\big]  \no \\
&\quad + \f{1}{2i}\big[\big\langle \wti\Theta \gamma_D u_{\Theta}, 
\gamma_D u_{\Theta}\big\rangle_{1/2} -
 \big\langle\gamma_D u_{\Theta}, \wti\Theta \gamma_D u_{\Theta}\big\rangle_{1/2}\big]\no\\
& =\Im(\langle \wti\ga_N u_{\Theta}, 
\gamma_D u_{\Theta}\rangle_{1/2}) \no \\
& = \Im[(\nabla u_{\Theta}, \nabla u_{\Theta})_{\LOm} + 
{}_{H^1(\Om)}\langle \Delta u_{\Theta}, u_{\Theta}\rangle_{(H^{1}(\Om))^*}]\no\\
&= \Im(- {\ol z} \, 
{}_{H^1(\Om)}\langle u_{\Theta}, u_{\Theta}\rangle_{(H^{1}(\Om))^*})  \no \\
& = \Im(z) \, {}_{H^1(\Om)}\langle u_{\Theta}, u_{\Theta}\rangle_{(H^{1}(\Om))^*}
\no \\
& = \Im(z) \|u_{\Theta}\|^2_{\LOm} > 0   \label{4.45}
\end{align}
since $g \neq 0$ implies $u_\Theta \neq 0$. This proves \eqref{4.42}. 
Restriction of \eqref{4.42} to $g\in\LdOm$ then yields \eqref{4.44}. 
\end{proof}

Returning to the principal goal of this section, we now prove the 
following variant of a Krein-type resolvent formula relating 
$\wti\Delta_{\Theta,\Om}$ and $\wti\Delta_{D,\Om}$:

\begin{theorem} \lb{tA.3LL}
Assume Hypothesis \ref{h3.1bis} and suppose that 
$z\in\bbC\backslash(\si(-\Delta_{\Theta,\Om})\cup\si(-\Delta_{D,\Om}))$.
Then the following Krein formula holds on $\bigl(H^1(\Omega)\bigr)^*$, 
\begin{align} \lb{NaK2}
\begin{split}
& \big(- \wti \Delta_{\Theta,\Om}-z \wti I_\Om\big)^{-1} 
= \big(- \wti \Delta_{D,\Om}-z \wti I_\Om\big)^{-1}\circ R_{\Om}   
\\ 
& \quad +\big[\wti\gamma_{\cN}
\big(\big(-\wti\Delta_{D,\Om}-\ol{z}\wti I_\Om\big)^{-1}\circ R_{\Om}, 
I_{\bbR^n}\big)\big]^*\wti M_{\Theta,D,\Om}^{(0)}(z)\big[\wti\gamma_{\cN}
\big(\big(-\wti\Delta_{D,\Om}-z\wti I_\Om\big)^{-1}\circ R_{\Om}, 
I_{\bbR^n}\big)\big].
\end{split}
\end{align}
\end{theorem}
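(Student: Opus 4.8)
The plan is to obtain \eqref{NaK2} from the $\big(H^1(\Om)\big)^*$-level resolvent relation of Lemma~\ref{lA.3BB} by rewriting its second summand in terms of $\wti M_{\Theta,D,\Om}^{(0)}(z)$. Throughout, abbreviate
\begin{equation*}
B_w:=\wti\ga_{\cN}\big(\big(-\wti\Delta_{D,\Om}-w\wti I_\Om\big)^{-1}\circ R_\Om,\,I_{\bbR^n}\big)
\in\cB\big(\big(H^1(\Om)\big)^*,H^{-1/2}(\dOm)\big),\qquad w\in\bbC\backslash\si(-\Delta_{D,\Om}),
\end{equation*}
so that $B_w^*\in\cB\big(H^{1/2}(\dOm),H^1(\Om)\big)$ by \eqref{Obt.3P}. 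With this notation Lemma~\ref{lA.3BB} says $\big(-\wti\Delta_{\Theta,\Om}-z\wti I_\Om\big)^{-1}=\big(-\wti\Delta_{D,\Om}-z\wti I_\Om\big)^{-1}\circ R_\Om+\big(-\wti\Delta_{\Theta,\Om}-z\wti I_\Om\big)^{-1}\ga_D^*\,B_z$, and since its first summand already coincides with the one in \eqref{NaK2}, the theorem reduces to the factorization identity
\begin{equation*}
\big(-\wti\Delta_{\Theta,\Om}-z\wti I_\Om\big)^{-1}\ga_D^*=B_{\ol z}^*\,\wti M_{\Theta,D,\Om}^{(0)}(z)
\qquad\text{in }\ \cB\big(H^{-1/2}(\dOm),H^1(\Om)\big).
\end{equation*}

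I would prove this by dualizing and checking the adjoint relation directly. Since the sesquilinear form \eqref{3.JqZ} is Hermitian, $\big[\big(-\wti\Delta_{\Theta,\Om}-z\wti I_\Om\big)^{-1}\big]^*=\big(-\wti\Delta_{\Theta,\Om}-\ol z\wti I_\Om\big)^{-1}$, while $\big[\wti M_{\Theta,D,\Om}^{(0)}(z)\big]^*=\wti M_{\Theta,D,\Om}^{(0)}(\ol z)$ by Lemma~\ref{lA.3CC}; as $\si(-\Delta_{\Theta,\Om})$ and $\si(-\Delta_{D,\Om})$ are real, the factorization identity is equivalent to $\ga_D\big(-\wti\Delta_{\Theta,\Om}-z\wti I_\Om\big)^{-1}=\wti M_{\Theta,D,\Om}^{(0)}(z)\,B_z$. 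To verify the latter, fix $w\in\big(H^1(\Om)\big)^*$ and set $u:=\big(-\wti\Delta_{\Theta,\Om}-z\wti I_\Om\big)^{-1}w\in H^1(\Om)$ and $\psi:=\big(-\wti\Delta_{D,\Om}-z\wti I_\Om\big)^{-1}(R_\Om w)\in H^1_0(\Om)$; by Theorem~\ref{t3.XV}, Remark~\ref{r3.YY} and \eqref{Obt.1P}, $u$ solves \eqref{3.Jq}, the pair $\psi$ satisfies $(-\Delta-z)\psi=w|_\Om$ in $\mathcal{D}'(\Om)$ with $\ga_D\psi=0$, and $B_zw=\wti\ga_{\cN}(\psi,w)$. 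Then $u-\psi\in H^1(\Om)$ solves the homogeneous equation $(-\Delta-z)(u-\psi)=0$ in $\mathcal{D}'(\Om)$, so $\wti\ga_{\cN}(u-\psi,0)=\wti\ga_N(u-\psi)$ by \eqref{2.12X}; expanding $\wti\ga_N(u-\psi)=\wti\ga_{\cN}(u,w)-\wti\ga_{\cN}(\psi,w)$ by linearity of $\wti\ga_{\cN}$ on $W_z(\Om)$, inserting the boundary condition $\wti\ga_{\cN}(u,w)=-\wti\Theta\ga_D u$ from \eqref{3.Jq}, and using $\ga_D\psi=0$, one finds that $u-\psi$ solves the generalized Robin problem \eqref{3.6bis} with datum $-B_zw$. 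Uniqueness for \eqref{3.6bis} (Theorem~\ref{t3.2bis}) together with the definition of $\wti M_{\Theta,D,\Om}^{(0)}$ (Corollary~\ref{t3.XU}) then gives $\ga_D u=\ga_D(u-\psi)=-\wti M_{\Theta,D,\Om}^{(0)}(z)\,B_zw$; dualizing this (via the self-adjointness noted above and Lemma~\ref{lA.3CC}) yields the factorization identity, and substituting it back into Lemma~\ref{lA.3BB} produces \eqref{NaK2}. Careful sign bookkeeping through \eqref{2.9X}, \eqref{2.12X} and Green's formula \eqref{wGreen} is needed to arrive at the asserted form.

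Two compatibility facts are invoked to cross-check the result against its $L^2$ version: that $\big(-\wti\Delta_{\Theta,\Om}-z\wti I_\Om\big)^{-1}\ga_D^*$ agrees on $L^2(\dOm)$ with $\big[\ga_D(-\Delta_{\Theta,\Om}-\ol z I_\Om)^{-1}\big]^*$ (Corollary~\ref{L-fR8}), and that $B_{\ol z}^*$ agrees on $H^{1/2}(\dOm)$ with $\big[\wti\ga_N(-\Delta_{D,\Om}-\ol z I_\Om)^{-1}\big]^*$ (Lemma~\ref{new-L1} and \eqref{Obt.3bq}); together with \eqref{3.9}, \eqref{3.35} and \eqref{3.47} these ensure that the $L^2(\Om)$-restriction of \eqref{NaK2} coincides with the Krein formula of Lemma~\ref{l4.8}. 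I expect the main obstacle to be exactly this bookkeeping of adjoints and compatibility identifications across the spaces $L^2(\Om)$, $H^1(\Om)$, $\big(H^1(\Om)\big)^*$ and their boundary analogues, and — closely related — keeping every sign correct so that \eqref{NaK2} emerges in precisely the stated form.
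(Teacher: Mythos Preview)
Your approach is essentially the same as the paper's: both start from Lemma~\ref{lA.3BB}, establish the factorization $\big(-\wti\Delta_{\Theta,\Om}-z\wti I_\Om\big)^{-1}\gamma_D^*=B_{\ol z}^*\,\wti M_{\Theta,D,\Om}^{(0)}(z)$, and substitute it back into \eqref{Na1B}. The only substantive difference lies in how the adjoint identity $\gamma_D\big(-\wti\Delta_{\Theta,\Om}-z\wti I_\Om\big)^{-1}=\wti M_{\Theta,D,\Om}^{(0)}(z)\,B_z$ is obtained. The paper's route is shorter: it simply applies $\gamma_D$ to both sides of \eqref{Na1B}, uses that $\gamma_D\circ\big(-\wti\Delta_{D,\Om}-z\wti I_\Om\big)^{-1}\circ R_\Om=0$, and then recognizes $\gamma_D\big(-\wti\Delta_{\Theta,\Om}-z\wti I_\Om\big)^{-1}\gamma_D^*=\wti M_{\Theta,D,\Om}^{(0)}(z)$ from \eqref{3.52bis}; dualizing (with Lemma~\ref{lA.3CC}) yields the factorization, and substitution finishes the proof.

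Your direct BVP derivation of the same identity is valid in principle, but your own caveat about sign bookkeeping is on target: as written, your computation gives $\big(\wti\gamma_N+\wti\Theta\gamma_D\big)(u-\psi)=-B_z w$, hence $\gamma_D u=-\wti M_{\Theta,D,\Om}^{(0)}(z)\,B_z w$, which is the \emph{opposite} sign from the factorization identity you stated just before. You would have to chase this sign through \eqref{Na1B} itself (and through the adjoint step) to see that the two minus signs cancel in \eqref{NaK2}; the paper's purely algebraic maneuver avoids this pitfall entirely. The compatibility check in your final paragraph is not part of the paper's proof of Theorem~\ref{tA.3LL}; it is exactly the content of the subsequent Theorem~\ref{t4.11}.
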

\begin{proof}
Applying $\gamma_D$ from the left to both sides of \eqref{Na1B} yields
\begin{equation} \lb{NaK3}
\gamma_D\big(-\wti \Delta_{\Theta,\Om}-z \wti I_\Om\big)^{-1}
=\gamma_D\big(-\wti \Delta_{\Theta,\Om}-z \wti I_\Om\big)^{-1}
\gamma_D^*\wti\gamma_{\cN}
\big(\big(-\wti\Delta_{D,\Om}-z \wti I_\Om\big)^{-1}\circ R_{\Om}, I_{\bbR^n}\big)
\end{equation}
since $\gamma_D\big(-\wti\Delta_{D,\Om}-z \wti I_\Om\big)^{-1}=0$. Thus, by 
\eqref{3.52bis}, 
\begin{equation} \lb{NaK4}
\gamma_D\big(-\wti\Delta_{\Theta,\Om}-z \wti I_\Om\big)^{-1}=
\wti M_{\Theta,D,\Om}^{(0)}(z)\wti\gamma_{\cN}
\big(\big(-\wti\Delta_{D,\Om}-z \wti I_\Om\big)^{-1}\circ R_{\Om}, I_{\bbR^n}\big),
\end{equation}
as operators in $\cB\big(\big(H^1(\Om)\big)^*, H^{1/2}(\dOm)\big)$. 
Taking adjoints in \eqref{NaK4} (written with $\ol{z}$ in place of $z$) 
then leads to 
\begin{align} \lb{NaK5}
\big(-\wti\Delta_{\Theta,\Om}-z \wti I_\Om\big)^{-1}\gamma_D^* & =  
\big[\gamma_D\big(-\wti\Delta_{\Theta,\Om}-\ol{z} \wti I_\Om\big)^{-1}\big]^* 
\nonumber\\ 
& = \big[\wti\gamma_N\big(-\wti\Delta_{D,\Om}-\ol{z} \wti I_\Om\big)^{-1}\big]^* 
\big[\wti M_{\Theta,D,\Om}^{(0)}(\ol{z})\big]^*
\nonumber\\ 
& =\big[\wti\gamma_{\cN}
\big(\big(-\wti\Delta_{D,\Om}-\ol{z}\wti I_\Om\big)^{-1}\circ R_{\Om}, 
I_{\bbR^n}\big)\big]^*\wti M_{\Theta,D,\Om}^{(0)}(z),
\end{align}
by Lemma \ref{lA.3CC}. Replacing this back into \eqref{Na1B} then readily yields
\eqref{NaK2}.  
\end{proof}

The $\LOm$-analog of Theorem \ref{tA.3LL} then reads as follows:

\begin{theorem} \lb{t4.11}
Assume Hypothesis \ref{h3.1} and suppose that 
$z\in\bbC\backslash(\si(-\Delta_{\Theta,\Om})\cup\si(-\Delta_{D,\Om}))$.
Then the following Krein formula holds on $\LOm$: 
\begin{align} \lb{NaK6}
\begin{split}
(- \Delta_{\Theta,\Om}-zI_\Om)^{-1} &= (- \Delta_{D,\Om}-zI_\Om)^{-1}  \\ 
& \quad +\big[\wti\gamma_N(-\Delta_{D,\Om}-\ol{z}I_\Om)^{-1}\big]^* 
M_{\Theta,D,\Om}^{(0)}(z)
\big[\wti\gamma_N (- \Delta_{D,\Om}-zI_\Om)^{-1}\big].
\end{split}
\end{align}
\end{theorem}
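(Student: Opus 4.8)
The plan is to descend from the $\bigl(H^1(\Omega)\bigr)^*$-identity \eqref{NaK2} of Theorem \ref{tA.3LL} to the asserted $\LOm$-identity \eqref{NaK6} by restricting the former to the subspace $\LOm\hookrightarrow\bigl(H^1(\Omega)\bigr)^*$ and replacing each operator occurring in \eqref{NaK2} by its $\LOm$-counterpart via the compatibility statements proved earlier. Since Hypothesis \ref{h3.1} implies Hypothesis \ref{h3.1bis} (cf.\ \eqref{4.3}), Theorem \ref{tA.3LL} applies and \eqref{NaK2} holds on all of $\bigl(H^1(\Omega)\bigr)^*$, in particular on $\LOm$.

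First I would record that every operator in \eqref{NaK6} is a bounded operator on $\LOm$: the resolvents $(-\Delta_{D,\Om}-zI_\Om)^{-1}$ and $(-\Delta_{\Theta,\Om}-zI_\Om)^{-1}$ lie in $\cB\bigl(\LOm\bigr)$ because $z$ avoids both spectra; $\wti\gamma_N(-\Delta_{D,\Om}-zI_\Om)^{-1}\in\cB\bigl(\LOm,\LdOm\bigr)$ and its adjoint $\in\cB\bigl(\LdOm,\LOm\bigr)$ by Lemma \ref{L-Bbb} (i.e.\ \eqref{3.23}--\eqref{3.24}); and $M_{\Theta,D,\Om}^{(0)}(z)\in\cB\bigl(\LdOm\bigr)$ by Theorem \ref{t3.5} (in fact compact, cf.\ \eqref{3.51}, and mapping into $H^1(\dOm)$ by \eqref{3.50}). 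Consequently it suffices to verify \eqref{NaK6} applied to an arbitrary $f\in\LOm$.

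Now evaluate \eqref{NaK2} on such an $f$, viewed as an element of $\bigl(H^1(\Omega)\bigr)^*$. The left-hand side $\bigl(-\wti\Delta_{\Theta,\Om}-z\wti I_\Om\bigr)^{-1}f$ equals $(-\Delta_{\Theta,\Om}-zI_\Om)^{-1}f$ by the compatibility of \eqref{faH} with \eqref{fcH} established at the end of the proof of Theorem \ref{t3.XV}. The first term on the right, $\bigl(-\wti\Delta_{D,\Om}-z\wti I_\Om\bigr)^{-1}(R_\Om f)$, equals $(-\Delta_{D,\Om}-zI_\Om)^{-1}f$, because $R_\Om$ composed with the inclusion $\LOm\hookrightarrow\bigl(H^1(\Omega)\bigr)^*$ is the canonical inclusion $\LOm\hookrightarrow H^{-1}(\Om)$ (cf.\ the remark following \eqref{jk-10}), on which $\bigl(-\wti\Delta_{D,\Om}-z\wti I_\Om\bigr)^{-1}$ restricts to $(-\Delta_{D,\Om}-zI_\Om)^{-1}$ by Remark \ref{r3.YY}. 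For the last term I would work from the inside out: by \eqref{Obt.3q} of Lemma \ref{new-L1}, $\wti\gamma_{\cN}\bigl(\bigl(-\wti\Delta_{D,\Om}-z\wti I_\Om\bigr)^{-1}\circ R_\Om,I_{\bbR^n}\bigr)f=\wti\gamma_N(-\Delta_{D,\Om}-zI_\Om)^{-1}f\in\LdOm$; next, Corollary \ref{t3.XU} (compatibility of $\wti M_{\Theta,D,\Om}^{(0)}(z)$ with $M_{\Theta,D,\Om}^{(0)}(z)$ on $\LdOm$) turns this into $M_{\Theta,D,\Om}^{(0)}(z)\wti\gamma_N(-\Delta_{D,\Om}-zI_\Om)^{-1}f$, which by \eqref{3.50} lies in $H^1(\dOm)\hookrightarrow H^{1/2}(\dOm)$; finally, \eqref{Obt.3bq} of Lemma \ref{new-L1} identifies $\bigl[\wti\gamma_{\cN}\bigl(\bigl(-\wti\Delta_{D,\Om}-\ol z\wti I_\Om\bigr)^{-1}\circ R_\Om,I_{\bbR^n}\bigr)\bigr]^*$ applied to that $H^{1/2}(\dOm)$-element with $\bigl[\wti\gamma_N(-\Delta_{D,\Om}-\ol z I_\Om)^{-1}\bigr]^*$ applied to the same element. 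Assembling the three pieces gives exactly \eqref{NaK6}.

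The one point requiring care — which I would state explicitly — is the mapping-property bookkeeping needed to legitimately invoke \eqref{Obt.3bq}: that compatibility requires its argument to belong to $H^{1/2}(\dOm)$, and this forces one to use that $M_{\Theta,D,\Om}^{(0)}(z)$ maps $\LdOm$ into $H^1(\dOm)\hookrightarrow H^{1/2}(\dOm)$ rather than merely into $\LdOm$; it is precisely the stronger Hypothesis \ref{h3.1} (not the weaker Hypothesis \ref{h3.1bis}) that guarantees this extra boundary regularity, via Theorem \ref{t3.5}. For an alternative, essentially self-contained route one can bypass \eqref{NaK2} entirely: restrict \eqref{NaK4} to $\LOm$ (using the same compatibilities) to obtain $\gamma_D(-\Delta_{\Theta,\Om}-zI_\Om)^{-1}=M_{\Theta,D,\Om}^{(0)}(z)\,\wti\gamma_N(-\Delta_{D,\Om}-zI_\Om)^{-1}$ on $\LOm$, take adjoints with $\ol z$ in place of $z$, use \eqref{NaDL2} of Lemma \ref{lA.3CC} to rewrite $\bigl[\gamma_D(-\Delta_{\Theta,\Om}-\ol z I_\Om)^{-1}\bigr]^*=\bigl[\wti\gamma_N(-\Delta_{D,\Om}-\ol z I_\Om)^{-1}\bigr]^*M_{\Theta,D,\Om}^{(0)}(z)$, and substitute this into the $\LOm$-resolvent relation \eqref{Na1C} of Lemma \ref{l4.8}; this reproduces \eqref{NaK6}.
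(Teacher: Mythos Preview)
Your proposal is correct and follows essentially the same approach as the paper: descend from the $(H^1(\Omega))^*$-identity \eqref{NaK2} of Theorem~\ref{tA.3LL} to $\LOm$ via the compatibility results, chiefly those of Lemma~\ref{new-L1}. The paper's own proof is a one-line reference to Theorem~\ref{tA.3LL} and Lemma~\ref{new-L1}; you have spelled out the bookkeeping (including the role of Hypothesis~\ref{h3.1} in ensuring $M_{\Theta,D,\Om}^{(0)}(z)$ lands in $H^{1/2}(\dOm)$ so that \eqref{Obt.3bq} applies) and also sketched a second, equally valid route through \eqref{NaK4}, \eqref{NaDL2}, and Lemma~\ref{l4.8}.
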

\begin{proof}
This follows from Theorem \ref{tA.3LL} and the compatibility results 
established in Lemma \ref{new-L1}. 
\end{proof}

An attractive feature of the Krein-type formula \eqref{NaK6} lies in 
the fact that $M_{\Theta,D,\Om}^{(0)}(z)$ encodes spectral information about 
$\Delta_{\Theta,\Om}$. This will be pursued in future work. 

Assuming Hypothesis \ref{h2.1}, the special case $\Theta =0$ then 
connects the Neumann and Dirichlet resolvents, 
\begin{align} \lb{NDK2}
& \big(- \wti \Delta_{N,\Om}-z \wti I_\Om\big)^{-1} 
= \big(- \wti \Delta_{D,\Om}-z \wti I_\Om\big)^{-1}\circ R_{\Om}   
\no\\ 
& \quad +\big[\wti\gamma_{\cN}
\big(\big(-\wti\Delta_{D,\Om}-\ol{z}\wti I_\Om\big)^{-1}\circ R_{\Om}, 
I_{\bbR^n}\big)\big]^*\wti M_{N,D,\Om}^{(0)}(z)\big[\wti\gamma_{\cN}
\big(\big(-\wti\Delta_{D,\Om}-z\wti I_\Om\big)^{-1}\circ R_{\Om}, 
I_{\bbR^n}\big)\big], \\
& \hspace*{8.7cm}
z\in\bbC\backslash(\si(-\Delta_{N,\Om})\cup\si(-\Delta_{D,\Om})), \no
\end{align}
on $\bigl(H^1(\Omega)\bigr)^*$, and similarly,
\begin{align} \lb{NDK3}
(- \Delta_{N,\Om}-zI_\Om)^{-1} &= (- \Delta_{D,\Om}-z I_\Om)^{-1}  \no  \\ 
& \quad +\big[\wti\gamma_N(-\Delta_{D,\Om}-\ol{z} I_\Om)^{-1}\big]^* 
M_{N,D,\Om}^{(0)}(z)
\big[\wti\gamma_N (- \Delta_{D,\Om}-z I_\Om)^{-1}\big], \\ 
& \hspace*{4.65cm}
z\in\bbC\backslash(\si(-\Delta_{N,\Om})\cup\si(-\Delta_{D,\Om})), \no 
\end{align}
on $\LOm$. Here $\wti M_{N,D,\Om}^{(0)}(z)$ and $M_{N,D,\Om}^{(0)}(z)$ denote 
the corresponding Neumann-to-Dirichlet operators. 

\begin{remark}\lb{R-C1}
In the case when Hypothesis \ref{h2.8} is enforced, it can be shown that 
\begin{eqnarray}\label{MCv.1}
M_{N,D,\Om}^{(0)}(z)\in 
\cB\bigl(H^{1/2}(\partial\Omega),H^{3/2}(\partial\Omega)\bigr),
\quad
z\in\bbC\backslash(\si(-\Delta_{N,\Om})\cup\si(-\Delta_{D,\Om})),
\end{eqnarray}
and 
\begin{eqnarray}\label{MCv.2}
(-\Delta_{D,\Om}-z I_\Om)^{-1}\in\cB\bigl(\LOm,H^2(\Omega)\bigl),\quad
z\in\bbC\backslash\si(-\Delta_{D,\Om}).
\end{eqnarray}
Note that, by duality, the latter membership also entails 
\begin{eqnarray}\label{MCv.3}
(-\Delta_{D,\Om}-zI_\Om)^{-1}\in\cB\bigl((H^2(\Omega))^*,\LOm\bigl),\quad
z\in\bbC\backslash\si(-\Delta_{D,\Om}),
\end{eqnarray}
and, given \eqref{2.7}, 
\begin{eqnarray}\label{MCv.4}
\gamma_N(-\Delta_{D,\Om}-z I_\Om)^{-1}
\in\cB\bigl(\LOm,H^{1/2}(\partial\Omega)\bigl),\quad
z\in\bbC\backslash\si(-\Delta_{D,\Om}). 
\end{eqnarray}
Since, in the current scenario, we also have 
\begin{eqnarray}\label{MCv.5}
\gamma_N^*\in\cB\bigl(H^{-1/2}(\partial\Omega),(H^{2}(\Omega))^*\bigl),
\end{eqnarray}
it follows that \eqref{NDK3} takes the form 
\begin{align} \lb{NDK3.1}
(- \Delta_{N,\Om}-zI_\Om)^{-1} &= (- \Delta_{D,\Om}-z I_\Om)^{-1}
\nonumber \\ 
& \quad +(-\Delta_{D,\Om}-zI_\Om)^{-1}\gamma_N^* M_{N,D,\Om}^{(0)}(z)
\gamma_N (- \Delta_{D,\Om}-z I_\Om)^{-1}, \\[4pt] 
& \hspace*{3.9cm}
z\in\bbC\backslash(\si(-\Delta_{N,\Om})\cup\si(-\Delta_{D,\Om})), \no 
\end{align}
on $\LOm$, where the composition of the various operators involved is 
well-defined by the above discussion. Formula \eqref{NDK3.1} should be viewed as 
a variant of \eqref{NaK6} in which the Neumann trace operator can be decoupled from 
the two resolvents of $-\Delta_{D,\Om}$ in the second term on the right-hand side of \eqref{NaK6}.  
\end{remark}

Due to the fundamental importance of Krein-type resolvent formulas (and more generally, Robin-to-Dirichlet maps) in connection with the spectral and inverse spectral theory of ordinary and partial differential operators, abstract versions, connected to boundary value spaces (boundary triples) and self-adjoint extensions of closed symmetric operators with equal (possibly infinite) deficiency spaces, have received enormous attention in the literature. In particular, we note that Robin-to-Dirichlet maps in the context of ordinary differential operators reduce to the celebrated (possibly, matrix-valued) Weyl--Titchmarsh function, the basic object of spectral analysis in this context.  Since it is impossible to cover the literature in this paper, we refer, for instance, to 
\cite[Sect.\ 84]{AG93}, \cite{ADKK07}, \cite{AT03}, \cite{AT05}, \cite{BL07}, \cite{BMT01}, \cite{BT04}, 
\cite{BGW08}, \cite{BMNW08}, \cite{BGP07}, \cite{GMT98}, \cite{GM09}, \cite[Ch.\ 13]{Gr09}, 
\cite{KK02}, \cite{Ko00}--\cite{LT77}, \cite{Ma92}, \cite{MM06}, \cite{MPP07}, 
\cite{Ne83}--\cite{Po08}, \cite{Sa65}, \cite{St50}--\cite{St70a}, 
and the references cited therein. We add, however, that the case of infinite deficiency indices in the context of partial differential operators (in our concrete case, related to the deficiency indices of the operator closure of $-\Delta\upharpoonright_{C^\infty_0(\Om)}$ in $\LOm$), is much less studied and the results obtained in this section, especially, under the assumption of Lipschitz (i.e., minimally smooth) domains, to the best of our knowledge, are new.

Finally, we emphasize once more that Remark \ref{r3.6} also applies 
to the content of this section (assuming that $V$ is real-valued in connection with 
Lemmas \ref{lA.3CC} and \ref{l4.13}).

\appendix
\section{Properties of Sobolev Spaces and Boundary Traces \\ 
for $C^{1,r}$ and Lipschitz Domains} \lb{sA}
\renewcommand{\theequation}{A.\arabic{equation}}
\renewcommand{\thetheorem}{A.\arabic{theorem}}
\setcounter{theorem}{0} \setcounter{equation}{0}

The purpose of this appendix is to recall some basic facts in connection with 
Sobolev spaces corresponding to Lipschitz domains 
$\Om\subset\bbR^n$, $n\in\bbN$, $n\geq 2$, and on domains 
satisfying Hypothesis \ref{h2.8}.  

In this manuscript we use the following notation for the standard
Sobolev Hilbert spaces ($s\in\bbR$),
\begin{align}
H^{s}(\bbR^n) &=\bigg\{U\in \cS(\bbR^n)^\prime \,\bigg|\,
\norm{U}_{H^{s}(\bbR^n)}^2 = \int_{\bbR^n} d^n \xi \, \big|\hatt
U(\xi)\big|^2\big(1+\abs{\xi}^{2s}\big) <\infty \bigg\},
\\
H^{s}(\Om) &=\left\{u\in \cD^\prime(\Om) \,|\, u=U|_\Om \text{
for some } U\in H^{s}(\bbR^n) \right\},
\\
H_0^{s}(\Om) &=\{u\in H^s(\bbR^n)\,|\, \supp\,(u)\subseteq\ol{\Om}\}.
\end{align}
Here $\cD^\prime(\Om)$ denotes the usual set of distributions on
$\Omega\subseteq \bbR^n$, $\Omega$ open and nonempty,
$\cS(\bbR^n)^\prime$ is the space of tempered distributions on
$\bbR^n$, and $\hatt U$ denotes the Fourier transform of $U\in
\cS(\bbR^n)^\prime$. It is then immediate that
\begin{equation}\label{incl-xxx}
H^{s_1}(\Omega)\hookrightarrow H^{s_0}(\Omega) \, \text{ for } \,
-\infty<s_0\leq s_1<+\infty,
\end{equation}
continuously and densely.

Next, we recall the
definition of a $C^{1,r}$-domain $\Omega\subseteq\bbR^n$, $\Om$
open and nonempty, for convenience of the reader: Let ${\mathcal
N}$ be a space of real-valued functions in $\bbR^{n-1}$.  One
calls a bounded domain $\Omega\subset\bbR^n$ of class ${\mathcal
N}$ if there exists a finite open covering $\{{\mathcal
O}_j\}_{1\leq j\leq N}$ of the boundary $\partial\Omega$ of $\Om$
with the property that, for every $j\in\{1,...,N\}$, ${\mathcal
O}_j\cap\Omega$ coincides with the portion of ${\mathcal O}_j$
lying in the over-graph of a function $\varphi_j\in{\mathcal N}$
(considered in a new system of coordinates obtained from the
original one via a rigid motion). Two special cases are going to
play a particularly important role in the sequel. First, if
${\mathcal  N}$ is ${\rm Lip}\,(\bbR^{n-1})$, the space of
real-valued functions satisfying a (global) Lipschitz condition in
$\bbR^{n-1}$, we shall refer to $\Omega$ as being a Lipschitz
domain; cf.\ \cite[p.\ 189]{St70}, where such domains are called
``minimally smooth''. Second, corresponding to  the case when
${\mathcal N}$ is the subspace of ${\rm Lip}\,(\bbR^{n-1})$
consisting of functions whose first-order derivatives satisfy a
(global) H\"older condition of order $r\in(0,1)$, we shall say
that $\Omega$ is of class $C^{1,r}$. The classical theorem of
Rademacher of almost everywhere differentiability of Lipschitz
functions ensures that, for any  Lipschitz domain $\Omega$, the
surface measure $d^{n-1} \omega$ is well-defined on  $\partial\Omega$ and
that there exists an outward  pointing normal vector $\nu$ at
almost every point of $\partial\Omega$. 

Call a bounded, open set $\Omega\subset\bbR^n$ a star-like Lipschitz domain
with respect to a point $x^*$ (called center of star-likeness) 
if $\Omega$ is Lipschitz domain and 
\begin{eqnarray}\label{Star-C1}
x^*+t(x-x^*)\in\Omega\,\,\,\mbox{ for every $x\in\Omega$ and $t\in[0.1]$}.
\end{eqnarray}
The above geometrical characterization of Lipschitz domains can be used 
to show that, given a bounded Lipschitz domain $\Omega\subset\bbR^n$
then there exists a finite family of open sets $\Omega_j$, $1\leq j\leq N$, 
such that 
\begin{eqnarray}\label{Lip-S}
\Omega=\bigcup_{j=1}^N\Omega_j,\quad
\mbox{ $\Omega_j$ star-like Lipschitz domain, $1\leq j\leq N$}.
\end{eqnarray}

For a Lipschitz domain $\Omega\subset\bbR^n$ it is known that
\begin{equation}\lb{dual-xxx}
\bigl(H^{s}(\Omega)\bigr)^*=H^{-s}(\Omega), \quad - 1/2 <s< 1/2.
\end{equation}
See \cite{Tr02} for this and other related properties. We also refer to 
our convention of using the {\it adjoint} 
(rather than the dual) space $X^*$ of a Banach space $X$ as described near 
the end of the introduction.

Next, assume that $\Omega\subset\bbR^n$ is the domain lying above
the graph of a function $\varphi\colon\bbR^{n-1}\to\bbR$ of class
$C^{1,r}$. Then for $0\leq s<1+r$, the Sobolev space
$H^s(\partial\Omega)$ consists of functions $f\in
L^2(\partial\Omega;d^{n-1} \omega)$ such that $f(x',\varphi(x'))$,
as a function of $x'\in\bbR^{n-1}$, belongs to $H^s(\bbR^{n-1})$.
This definition is easily adapted to the case when $\Omega$ is a 
domain of class $C^{1,r}$ whose boundary is compact, by using a
smooth partition of unity. Finally, for $-1-r<s<0$, we set
$H^s(\partial\Omega)=\big(H^{-s}(\partial\Omega)\big)^*$. 
The same construction concerning $H^s(\partial\Omega)$ applies in the 
case when $\Om\subset\bbR^n$ is a Lipschitz domain 
(i.e., $\varphi\colon\bbR^{n-1}\to\bbR$ is only Lipschitz) provided 
$0\le s \le 1$. In this scenario we set 
\begin{equation}
H^s(\dOm) = \big(H^{-s}(\dOm)\big)^*, \quad -1 \le s \le 0.   \lb{A.6}
\end{equation}
It is useful to observe that this entails
\begin{equation}\label{Pk-D2}
\|f\|_{H^{-s}(\partial\Omega)}\approx
\|\sqrt{1+|\nabla\varphi(\dott)|^2}f(\dott,\varphi(\dott))\|
_{H^{-s}(\bbR^{n-1})},\quad 0\leq s\leq 1.
\end{equation}

To define $H^s(\dOm)$, $0\leq s \le 1$, when $\Om$ is a Lipschitz domain with 
compact boundary, we use a smooth partition of unity to reduce matters to the 
graph case. More precisely, if $0\leq s\leq 1$ then $f\in H^s(\partial\Omega)$ 
if and only if the assignment 
${\mathbb{R}}^{n-1}\ni x'\mapsto (\psi f)(x',\varphi(x'))$ is in 
$H^s({\mathbb{R}}^{n-1})$ whenever $\psi\in C^\infty_0({\mathbb{R}}^n)$
and $\varphi\colon {\mathbb{R}}^{n-1}\to{\mathbb{R}}$ is a Lipschitz function
with the property that if $\Sigma$ is an appropriate rotation and
translation of $\{(x',\varphi(x'))\in\bbR^n \,|\,x'\in{\mathbb{R}}^{n-1}\}$, 
then $(\supp\, (\psi) \cap\partial\Omega)\subset\Sigma$ (this appears to 
be folklore, but a proof will appear in \cite[Proposition 2.4]{MM07}). 
Then Sobolev spaces with a negative amount of smoothness are defined as 
in \eqref{A.6} above. 

From the above characterization of $H^s(\partial\Omega)$ it follows that 
any property of Sobolev spaces (of order $s\in[-1,1]$) defined in Euclidean 
domains, which are invariant under multiplication by smooth, compactly 
supported functions as well as composition by bi-Lipschitz diffeomorphisms, 
readily extends to the setting of $H^s(\partial\Omega)$ (via localization and
pull-back). As a concrete example, for each Lipschitz domain $\Omega$ 
with compact boundary, one has  
\begin{equation} \label{EQ1}
H^s(\partial\Omega)\hookrightarrow L^2(\partial\Omega;d^{n-1} \omega)
\, \text{ compactly if }\,0<s\leq 1.  
\end{equation}
For additional background 
information in this context we refer, for instance, to \cite{Au04}, 
\cite{Au06}, \cite[Chs.\ V, VI]{EE89}, \cite[Ch.\ 1]{Gr85}, 
\cite[Ch.\ 3]{Mc00}, \cite[Sect.\ I.4.2]{Wl87}.

For a Lipschitz domain $\Om\subset\bbR^n$ with compact boundary, an equivalent 
definition of the Sobolev space $H^1(\partial\Omega)$ is the collection of 
functions in $L^2(\partial\Omega;d^{n-1}\omega)$ with the property that the
(pointwise, Euclidean) norm of their tangential gradient belongs to 
$L^2(\partial\Omega;d^{n-1}\omega)$. To make this precise, 
consider the first-order tangential derivative operators 
$\partial/\partial\tau_{j,k}$, $1\leq j,k\leq n$, acting on a function 
$\psi$ of class $C^1$ in a neighborhood of $\partial\Omega$ by 
\begin{eqnarray}\label{def-TAU}
\partial\psi/\partial\tau_{j,k}=\nu_j(\partial_k\psi)\bigl|_{\partial\Omega}
-\nu_k(\partial_j\psi)\bigl|_{\partial\Omega}.
\end{eqnarray}
For every $f\in L^1(\partial\Omega)$ define the functional 
$\partial f/\partial\tau_{j,k}$ by setting 
\begin{eqnarray}\label{IBP-tau}
\partial f/\partial\tau_{j,k}:C^1({\mathbb{R}}^{n})\ni\psi\mapsto
\int_{\partial\Omega}d^{n-1}\omega\,f\,(\partial\psi/\partial\tau_{k,j})
\end{eqnarray}
When $f\in L^1(\partial\Omega;d^{n-1}\omega)$ has 
$\partial f/\partial\tau_{j,k}\in L^1(\partial\Omega;d^{n-1}\omega)$, 
the following integration by parts formula holds: 
\begin{eqnarray}\label{IBP-t2}
\int_{\partial\Omega} d^{n-1}\omega\,f\,(\partial\psi/\partial\tau_{k,j})
=\int_{\partial\Omega} d^{n-1}\omega\,(\partial f/\partial\tau_{j,k})\,\psi, 
\quad \psi\in C^1({\mathbb{R}}^{n}).
\end{eqnarray}
We then have the Sobolev-type description of $H^1(\partial\Omega)$: 
\begin{equation}\label{M1.1}
H^{1}(\dOm) = \big\{f\in\LdOm \,\big|\, \partial f/\partial\tau_{j,k}
\in\LdOm, \; j,k = 1,\dots,n\big\}
\end{equation}
with 
\begin{equation}\label{M1.1y}
\|f\|_{H^{1}(\dOm)}\approx\|f\|_{\LdOm}+\sum_{j,k = 1}^n
\|\partial f/\partial\tau_{j,k}\|_{\LdOm}, 
\end{equation}
($\approx$ denoting equivalent norms), or equivalently,
\begin{align}
H^{1}(\dOm) &= \bigg\{f\in\LdOm \,\bigg|\, \, 
\text{there exists a constant $c>0$
such that for every $v\in C_0^\infty(\bbR^n)$,}  \no \\
& \qquad \; \bigg|\int_\dOm d^{n-1} \omega f\,\partial
v/\partial\tau_{j,k}\bigg| \leq c \norm{v}_{\LdOm},\;
j,k=1,\dots,n\bigg\}. \lb{A.64}
\end{align}

Let us also point out here that if $\Omega\subset\bbR^n$ is a bounded
Lipschitz domain then for any $j,k\in\{1,...,n\}$ the operator 
\begin{equation}\label{Pf-2}
\partial/\partial\tau_{j,k}:H^s(\partial\Omega)\to H^{s-1}(\partial\Omega),
\quad 0\leq s\leq 1, 
\end{equation}
is well-defined, linear and bounded. This is proved by interpolating 
the case $s=1$ and its dual version. In fact, the following more general
result (extending \eqref{M1.1}) is true. 

\begin{lemma}\label{Lg-T}
Assume that $\Omega\subset\bbR^n$ is a bounded Lipschitz domain.
Then for every $s\in[0,1]$, 
\begin{equation}\label{Pf-2.3}
H^s(\partial\Omega)=\{f\in L^2(\partial\Omega;d^{n-1}\omega)\,|\,
\partial f/\partial\tau_{j,k}\in H^{s-1}(\partial\Omega), \, 
1\leq j,k\leq n\}
\end{equation}
and 
\begin{equation}\label{Pf-2.4}
\|f\|_{H^s(\partial\Omega)}\approx \|f\|_{L^2(\partial\Omega;d^{n-1}\omega)}
+\sum_{j,k=1}^n\|\partial f/\partial\tau_{j,k}\|_{H^{s-1}(\partial\Omega)}.
\end{equation}
\end{lemma}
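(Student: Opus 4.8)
The plan is to deduce Lemma \ref{Lg-T} by interpolation, starting from the two endpoint cases $s=1$ and $s=0$, which are already essentially available in the excerpt. For $s=1$ the asserted characterization is precisely \eqref{M1.1}, \eqref{M1.1y}, and for $s=0$ the statement is a tautology since $H^{-1}(\partial\Omega)=(H^1(\partial\Omega))^*$ is large enough to contain $\partial f/\partial\tau_{j,k}$ for every $f\in L^2(\partial\Omega;d^{n-1}\omega)$ (by \eqref{Pf-2} with $s=0$), and the sum in \eqref{Pf-2.4} is then dominated by $\|f\|_{L^2(\partial\Omega;d^{n-1}\omega)}$ via boundedness of $\partial/\partial\tau_{j,k}:L^2(\partial\Omega;d^{n-1}\omega)\to H^{-1}(\partial\Omega)$. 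So both endpoints hold, and it remains to bootstrap to intermediate $s$.

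First I would fix the functional-analytic framework. Consider the operator $\mathcal{T}$ sending $f$ to the vector $(f,(\partial f/\partial\tau_{j,k})_{1\le j,k\le n})$. By \eqref{Pf-2}, $\mathcal{T}$ maps $H^s(\partial\Omega)$ boundedly into $H^s(\partial\Omega)\oplus\bigoplus_{j,k}H^{s-1}(\partial\Omega)$ for every $s\in[0,1]$; this gives the ``$\lesssim$'' half of \eqref{Pf-2.4} for all $s\in[0,1]$ directly, with no interpolation needed. The content of the lemma is therefore the reverse inequality, i.e.\ that $f\in L^2(\partial\Omega;d^{n-1}\omega)$ with all $\partial f/\partial\tau_{j,k}\in H^{s-1}(\partial\Omega)$ forces $f\in H^s(\partial\Omega)$ with the corresponding norm bound. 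To get this I would use the right-inverse / retraction strategy: the endpoint cases $s=0$ and $s=1$ exhibit the space on the right-hand side of \eqref{Pf-2.3} as a retract of the direct sum $X_s:=L^2(\partial\Omega;d^{n-1}\omega)\oplus\bigoplus_{j,k}H^{s-1}(\partial\Omega)$, via $\mathcal{T}$ and a bounded left-inverse $\mathcal{R}$ that reconstructs $f$ from the data. Since retracts commute with real interpolation, and since $(X_0,X_1)_{\theta,2}=L^2(\partial\Omega;d^{n-1}\omega)\oplus\bigoplus_{j,k}(H^{-1}(\partial\Omega),L^2(\partial\Omega;d^{n-1}\omega))_{\theta,2}=L^2(\partial\Omega;d^{n-1}\omega)\oplus\bigoplus_{j,k}H^{\theta-1}(\partial\Omega)$ by \eqref{Gh-a2} (localized via a partition of unity as in the definition of $H^s(\partial\Omega)$ in Appendix \ref{sA}), interpolation between $s=0$ and $s=1$ yields exactly \eqref{Pf-2.3}, \eqref{Pf-2.4} for $s=\theta\in(0,1)$.

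I would then reduce to the model situation of a single Lipschitz graph $x'\mapsto\varphi(x')$ by a smooth partition of unity subordinate to the boundary charts (using that $H^s(\partial\Omega)$ is defined precisely this way and that $\partial/\partial\tau_{j,k}$ transforms correctly under multiplication by $C^\infty_0$ functions and bi-Lipschitz pullback, as noted after \eqref{EQ1}). On a graph the tangential derivatives $\partial/\partial\tau_{j,k}$ are, up to the bounded and boundedly invertible factor $\sqrt{1+|\nabla\varphi|^2}$ (cf.\ \eqref{Pk-D2}), linear combinations of the flat derivatives $\partial_\ell$ on $\bbR^{n-1}$, and $\{\partial_\ell\}$ together with the identity form a coercive first-order system whose associated reconstruction operator is bounded from $H^{s-1}(\bbR^{n-1})^{n-1}\oplus L^2(\bbR^{n-1})$ to $H^s(\bbR^{n-1})$ for $s=0,1$ (e.g.\ via the Fourier multiplier $(1+|\xi|^2)^{-1/2}$ acting componentwise, or simply because $\|f\|_{H^s}\approx\|f\|_{L^2}+\sum_\ell\|\partial_\ell f\|_{H^{s-1}}$ for $s\in\{0,1\}$ by Plancherel). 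This supplies the explicit left-inverse $\mathcal{R}$ at the endpoints.

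The main obstacle I anticipate is bookkeeping rather than conceptual: making sure the partition-of-unity localization genuinely commutes with the operations $\partial/\partial\tau_{j,k}$ and with real interpolation, and that the equivalence constants are uniform across charts. One has to check that cutting off by $\psi\in C^\infty_0$ produces only lower-order (in fact, $L^2$-bounded) commutator terms $f\,\partial\psi/\partial\tau_{j,k}$, which are harmless because they are controlled by $\|f\|_{L^2(\partial\Omega;d^{n-1}\omega)}$; and that the Lipschitz factor $\sqrt{1+|\nabla\varphi|^2}$, being in $L^\infty$ with positive lower bound, is a multiplier on $H^{s-1}(\bbR^{n-1})$ for $s-1\in[-1,0]$ — which is true since multiplication by a Lipschitz function is bounded on $H^t$ for $|t|\le 1$, hence on $H^{s-1}$ here. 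Once these routine points are dispatched, the interpolation argument closes the proof.
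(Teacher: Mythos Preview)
Your retraction--interpolation strategy is workable, but there is a concrete error in the final step. You claim that $\sqrt{1+|\nabla\varphi|^2}$ is a multiplier on $H^{s-1}(\bbR^{n-1})$ ``since multiplication by a Lipschitz function is bounded on $H^t$ for $|t|\le 1$''. The latter fact is true, but $\sqrt{1+|\nabla\varphi|^2}$ is \emph{not} Lipschitz: $\varphi$ is only Lipschitz, so $\nabla\varphi$ is merely $L^\infty$, and a bare $L^\infty$ weight fails to be a multiplier on $H^t$ for $t\neq 0$. The repair is that you never actually need this multiplier claim. The chain rule gives $(\partial f/\partial\tau_{j,n})(\cdot,\varphi(\cdot))=(1+|\nabla\varphi|^2)^{-1/2}\,\partial_j[f(\cdot,\varphi(\cdot))]$, and the very formula \eqref{Pk-D2} you cite inserts the reciprocal Jacobian factor when one computes the $H^{s-1}(\partial\Omega)$ norm, so the two factors cancel exactly and one lands on $\|\partial_j[f(\cdot,\varphi(\cdot))]\|_{H^{s-1}(\bbR^{n-1})}$ with no multiplier issue at all.

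The paper's argument exploits precisely this cancellation and is more direct than yours: after the same localization to a graph, it records the Jacobian cancellation and then simply invokes the Euclidean lifting result $H^{s}(\bbR^{n-1})=\{g\in H^{s-1}(\bbR^{n-1})\,|\,\partial_\ell g\in H^{s-1}(\bbR^{n-1}),\ 1\le \ell\le n-1\}$, valid for all real $s$ (cf.\ \cite[Sect.\,2.1.4]{RuSi}). No interpolation and no retraction operator are used. Your scheme, once the Jacobian issue is repaired, amounts to reproving this Euclidean identity for $s\in[0,1]$ from its endpoint cases --- correct, but a detour around a result that can just be cited.
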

\begin{proof}
The left-to-right inclusion in \eqref{Pf-2.3} along with the 
right-pointing inequality in \eqref{Pf-2.4} are consequences of the
boundedness of \eqref{Pf-2}. As for the opposite directions, we note that
using a smooth partition of unity and making a rigid transformation of the 
space, matters can be localized near a boundary point where $\partial\Omega$ 
coincides with the graph of a Lipschitz function 
$\varphi:\bbR^{n-1}\to\bbR$. Then for each sufficiently nice function 
$f:\partial\Omega\to\bbR$ the Chain Rule yields 
\begin{equation}\lb{M.3}
\Big(\frac{\partial f}{\partial\tau_{j,n}}\Bigr)(x,\varphi(x))
=\frac{1}{\sqrt{1+|\nabla\varphi(x)|^2}}\frac{\partial}{\partial x_j}
\bigl[f(x,\varphi(x))\bigr], \quad 1\leq j\leq n-1.
\end{equation}
On account of this and \eqref{Pk-D2}, we then deduce (upon noticing 
that $\partial/\partial\tau_{n,n}=0$) that 
\begin{eqnarray}\lb{M.3jx}
\sum_{j=1}^n\|\partial f/\partial\tau_{j,n}\|_{H^{s-1}(\partial\Omega)}
\approx \sum_{j=1}^{n-1}\|\partial_j[f(\dott,\varphi(\dott))]
\|_{H^{s-1}(\bbR^{n-1})}.
\end{eqnarray}
Furthermore, we also have 
\begin{eqnarray}\lb{M.4jx}
\|f\|_{L^2(\partial\Omega;d^{n-1}\omega)}
\approx\|f(\dott,\varphi(\dott))\|_{L^2(\bbR^{n-1};d^{n-1}x)}.
\end{eqnarray}
Next, we recall the general Euclidean lifting result 
\begin{equation}\lb{M.4}
H^{s}(\bbR^{n-1})=\{f\in H^{s-1}(\bbR^{n-1})\,|\,\partial_j f\in 
H^{s-1}(\bbR^{n-1}), \, 1\leq j\leq n-1\},\quad s\in\bbR,
\end{equation}
which can be found in \cite[Section 2.1.4]{RuSi}. 
Now, the right-to-left inclusion in \eqref{Pf-2.3}, as well as 
the left-pointing inequality in \eqref{Pf-2.4}, follow based on 
\eqref{M.3jx}, \eqref{M.4jx} and the estimate which naturally 
accompanies \eqref{M.4}.
\end{proof}

\begin{lemma}\label{Lhf-2}
Assume Hypothesis \ref{h2.1}. Then for every $s\in[0,1]$ and 
$j,k\in\{1,...,n\}$
\begin{eqnarray}\lb{Ibp-w}
\langle\partial f/\partial\tau_{j,k}\,,\,g\rangle_{1-s}
=\langle f\,,\,\partial g/\partial\tau_{k,j}\rangle_{1-s}
\end{eqnarray}
for every $f\in H^s(\partial\Omega)$ and $g\in H^{1-s}(\partial\Omega)$.
\end{lemma}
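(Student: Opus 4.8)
The plan is to prove the duality identity \eqref{Ibp-w} by density, bootstrapping from the integration by parts formula \eqref{IBP-t2} which already holds for the special class of test functions $g = \psi|_{\partial\Omega}$ with $\psi\in C^1(\mathbb R^n)$ (equivalently $C^\infty_0(\mathbb R^n)$). First I would recall from \eqref{def-TAU} and Lemma~\ref{Lg-T} that for each $s\in[0,1]$ and each pair $j,k$, the tangential derivative $\partial/\partial\tau_{j,k}$ is a bounded operator $H^s(\partial\Omega)\to H^{s-1}(\partial\Omega)$, and also (by \eqref{Pf-2} applied with $1-s$ in place of $s$) a bounded operator $H^{1-s}(\partial\Omega)\to H^{-s}(\partial\Omega)$. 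Consequently both pairings in \eqref{Ibp-w} are well-defined: on the left, $\partial f/\partial\tau_{j,k}\in H^{s-1}(\partial\Omega) = (H^{1-s}(\partial\Omega))^*$ is paired against $g\in H^{1-s}(\partial\Omega)$; on the right, $f\in H^s(\partial\Omega) = (H^{-s}(\partial\Omega))^*$ is paired against $\partial g/\partial\tau_{k,j}\in H^{-s}(\partial\Omega)$. Both sides are jointly continuous bilinear forms on $H^s(\partial\Omega)\times H^{1-s}(\partial\Omega)$, with operator norms controlled by the norms of the tangential derivative operators on the relevant scales.

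Next I would exploit density. The space $\{\psi|_{\partial\Omega} : \psi\in C^\infty_0(\mathbb R^n)\}$ is dense in $H^\sigma(\partial\Omega)$ for every $\sigma\in[0,1]$ (this follows from the partition-of-unity/pull-back description of $H^\sigma(\partial\Omega)$ recalled in Appendix~\ref{sA}, reducing to density of $C^\infty_0(\mathbb R^{n-1})$ in $H^\sigma(\mathbb R^{n-1})$), and hence is dense in $H^\sigma(\partial\Omega)$ for $\sigma\in[0,1]$ in particular for $\sigma=s$ and $\sigma=1-s$. For smooth $\psi_1,\psi_2\in C^\infty_0(\mathbb R^n)$ with boundary traces $f_0=\psi_1|_{\partial\Omega}$, $g_0 = \psi_2|_{\partial\Omega}$, the identity \eqref{Ibp-w} holds: indeed $\partial f_0/\partial\tau_{j,k}$ and $\partial g_0/\partial\tau_{k,j}$ are then genuine $L^2(\partial\Omega;d^{n-1}\omega)$ functions (given by \eqref{def-TAU}), the pairings $\langle\cdot,\cdot\rangle_{1-s}$ and $\langle\cdot,\cdot\rangle_{s}$ collapse to the $L^2$ inner products by \eqref{2.4}, and \eqref{Ibp-w} reduces to \eqref{IBP-t2} (with the observation that $\partial/\partial\tau_{j,k} = -\,\partial/\partial\tau_{k,j}$, which accounts for the index swap on the two sides; one checks the sign bookkeeping is consistent with the antisymmetry built into \eqref{def-TAU} and the definition \eqref{IBP-tau}).

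Finally I would pass to the limit. Given arbitrary $f\in H^s(\partial\Omega)$ and $g\in H^{1-s}(\partial\Omega)$, choose sequences $f_\ell\to f$ in $H^s(\partial\Omega)$ and $g_\ell\to g$ in $H^{1-s}(\partial\Omega)$ with $f_\ell, g_\ell$ boundary traces of functions in $C^\infty_0(\mathbb R^n)$. By continuity of $\partial/\partial\tau_{j,k}$ as a map $H^s\to H^{s-1}$ and of $\partial/\partial\tau_{k,j}$ as a map $H^{1-s}\to H^{-s}$, together with the continuity of the two duality pairings, both sides of \eqref{Ibp-w} for $(f_\ell,g_\ell)$ converge to the corresponding sides for $(f,g)$; since equality holds along the sequence, it holds in the limit. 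This completes the proof.

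The main obstacle I anticipate is not analytical depth but careful sign/index accounting: making sure the swap $\tau_{j,k}\leftrightarrow\tau_{k,j}$ between the two sides of \eqref{Ibp-w} is exactly what \eqref{IBP-t2} produces, given the antisymmetric convention $\partial\psi/\partial\tau_{j,k} = \nu_j(\partial_k\psi) - \nu_k(\partial_j\psi)$. A secondary point requiring a line of justification is the density of smooth-function traces in $H^\sigma(\partial\Omega)$ for the full range $\sigma\in[0,1]$ on a mere Lipschitz boundary; this is standard via the localization recalled in Appendix~\ref{sA} but should be cited rather than assumed silently.
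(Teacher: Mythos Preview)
Your proposal is correct and follows essentially the same approach as the paper: reduce by density of smooth-function traces in $H^\sigma(\partial\Omega)$ (which the paper records as \eqref{Ibp-w2}) to the case $f=u|_{\partial\Omega}$, $g=v|_{\partial\Omega}$ with $u,v$ smooth on $\bbR^n$, and verify the identity there. The only cosmetic difference is that you invoke \eqref{IBP-t2} for the smooth case, whereas the paper re-derives it directly from Green's formula in $\Omega$; your observation that the key technical content is just careful sign/index bookkeeping plus the density statement is exactly right.
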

\begin{proof}
Since for every $s\in[0,1]$
\begin{eqnarray}\lb{Ibp-w2}
C^\infty(\bbR^n)\big|_{\partial\Omega}\hookrightarrow 
H^s(\partial\Omega)\, \mbox{ densely}, 
\end{eqnarray}
it suffices to prove \eqref{Ibp-w} in the case when 
$f=u|_{\partial\Omega}$ and $g=v|_{\partial\Omega}$ for 
$u,v\in C^\infty(\bbR^n)$. In this scenario, we need to establish that
\begin{eqnarray}\lb{Ibp-w3}
\int_{\partial\Omega}d^{n-1}\omega\,(\partial u/\partial\tau_{j,k})v
=\int_{\partial\Omega}d^{n-1}\omega\,u(\partial v/\partial\tau_{k,j}),\quad
1\leq j,k\leq n.
\end{eqnarray}
To this end, we rely on Green's formula (valid for Lipschitz domains) 
to write 
\begin{align}\lb{Ibp-w4}
\int_{\partial\Omega}d^{n-1}\omega\,(\partial u/\partial\tau_{j,k})v
&= \int_{\partial\Omega}d^{n-1}\omega\,(\nu_j\partial_ku-\nu_k\partial_ju)v
\nonumber\\
&=\int_{\Omega}d^nx\,[\partial_j(v\partial_ku)-\partial_k(v\partial_ju)]
\nonumber\\
&=\int_{\Omega}d^nx\,[(\partial_jv)(\partial_ku)-(\partial_kv)(\partial_ju)]. 
\end{align}
One observes that the right-most integrand above is an antisymmetric expression 
in the indices $j,k$. Consequently, so is the left-most integral 
in \eqref{Ibp-w4}. This, however, is equivalent to \eqref{Ibp-w3}. 
\end{proof}

Moving on, we next consider the following bounded linear map
\begin{equation}
\begin{cases} \big\{(w,f)\in
L^2(\Omega;d^nx)^n\times \big(H^1(\Omega)\big)^*
\,\big|\,{\rm div}(w)=f|_{\Omega}\big\} \to
H^{-1/2}(\partial\Omega)
=\big(H^{1/2}(\partial\Omega)\big)^* \\
\hspace*{7.5cm} w\mapsto \nu\cdot (w,f)  \end{cases}  \lb{A.11}
\end{equation}
by setting
\begin{equation}
{}_{H^{1/2}(\dOm)}\langle \phi, \nu\cdot (w,f) \rangle_{(H^{1/2}(\dOm)^*}
=\int_{\Omega}d^nx\, \ol{\nabla\Phi(x)} \cdot w(x)
+ {}_{H^1(\Om)}\langle \Phi,f\rangle_{(H^1(\Om))^*}    \lb{A.11a}
\end{equation}
whenever $\phi\in H^{1/2}(\partial\Omega)$ and $\Phi\in
H^{1}(\Omega)$ is such that $\ga_D\Phi=\phi$. Here  
${}_{H^1(\Om)}\langle \Phi,f\rangle_{(H^1(\Om))^*}$ 
in \eqref{A.11a} is the natural pairing  between functionals 
in $\big(H^1(\Omega)\big)^*$ and elements in $H^1(\Omega)$ 
(which, in turn, is compatible with the (bilinear) distributional 
pairing). It should be remarked that the above definition is independent 
of the particular extension $\Phi\in H^{1}(\Omega)$ of $\phi$. 

Going further, one can introduce the ultra weak Neumann trace operator
$\wti\gamma_{\cN}$ as follows:
\begin{equation}\lb{A.16}
\wti\gamma_{\cN}\colon \begin{cases} 
\big\{(u,f)\in H^1(\Om) \times \big(H^1(\Omega)\big)^*
\,\big|\, \Delta u =f|_{\Omega}\big\}\to H^{-1/2}(\dOm)\\ 
\hspace*{6.13cm}  u \mapsto \wti\gamma_{\cN}(u,f)=\nu\cdot(\nabla u,f),  
\end{cases}
\end{equation}
with the dot product  understood in the sense of \eqref{A.11}. We
emphasize that the ultra weak Neumann trace operator $\wti\gamma_{\cN}$ in
\eqref{A.16} is a re-normalization of the operator $\gamma_N$ introduced
in \eqref{2.7} relative to the extension of $\Delta u\in H^{-1}(\Omega)$
to an element $f$ of the space $\big(H^1(\Omega)\big)^*
=\{g\in H^{-1}(\bbR^n)\,|\, \supp\, (g) \subseteq\ol{\Om}\}$. 
For the relationship between the weak and ultra weak Neumann trace operators, 
see \eqref{2.10X}--\eqref{2.12X}. In addition, one can show that 
the ultra weak Neumann trace operator \eqref{A.16} is onto (indeed, this is 
a corollary of Theorem \ref{t3.XV}). We note that \eqref{A.11a} and 
\eqref{A.16} yield the following Green's formula
\begin{equation}
\langle \ga_D\Phi, \wti\gamma_{\cN}(u,f)\rangle_{1/2} 
= (\nabla \Phi, \nabla u)_{\LOm^n} 
+  {}_{H^1(\Om)}\langle \Phi, f\rangle_{(H^1(\Om))^*},
\lb{wGreen}
\end{equation}
valid for any $u\in H^{1}(\Om)$, $f\in \big(H^{1}(\Om)\big)^*$
with $\Delta u=f|_{\Om}$, and any $\Phi\in H^{1}(\Om)$. The
pairing on the left-hand side of \eqref{wGreen} is between
functionals in $\big(H^{1/2}(\dOm)\big)^*$ and elements in
$H^{1/2}(\dOm)$, whereas the last pairing on the right-hand side
is between functionals in $\big(H^{1}(\Om)\big)^*$ and elements in
$H^{1}(\Om)$. For further use, we also note that the adjoint of
\eqref{2.6} maps boundedly as follows
\begin{equation}\lb{ga*}
\ga_D^* \colon \big(H^{s-1/2}(\dOm)\big)^* \to (H^{s}(\Om)\big)^*,
\quad 1/2<s<3/2. 
\end{equation} 

\begin{remark} \lb{rA.4}
While it is tempting to view $\ga_D$ as an unbounded but densely
defined operator on $\LOm$ whose domain contains the space
$C_0^\infty(\Om)$, one should note that in this case its adjoint
$\ga_D^*$ is not densely defined: Indeed (cf.\ \cite[Remark A.4]{GLMZ05}),
$\dom(\gamma_D^*)=\{0\}$ and hence $\gamma_D$ is not a closable linear 
operator in $\LOm$.
\end{remark}

Next we recall the following result from \cite{GMZ07} (and reproduce 
its proof for subsequent use in the proofs of Lemmas \ref{lA.6} 
and \ref{L-Kb}).  

\begin{lemma} [cf.\ \cite{GMZ07}, Lemma A.6] \lb{lA.6x}
Suppose $\Om\subset\bbR^n$, $n\geq 2$, is an open Lipschitz domain with 
a compact, nonempty boundary $\dOm$. Then the Dirichlet trace operator 
$\ga_D$ $($originally considered as in \eqref{2.6}$)$ satisfies 
\eqref{A.62x}. 
\end{lemma}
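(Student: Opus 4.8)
The plan is to combine the already available trace estimate \eqref{2.6} with the Sobolev-type description \eqref{M1.1}, \eqref{M1.1y} of $H^1(\dOm)$ in terms of tangential derivatives. We may clearly assume $\varepsilon\in(0,1)$, since $H^{(3/2)+\varepsilon'}(\Om)\hookrightarrow H^{(3/2)+\varepsilon}(\Om)$ whenever $\varepsilon'\ge\varepsilon$. The crux is the trace identity
\begin{equation*}
\frac{\partial(\gamma_D u)}{\partial\tau_{j,k}}
=\nu_j\,\gamma_D(\partial_k u)-\nu_k\,\gamma_D(\partial_j u),
\qquad 1\le j,k\le n,\quad u\in H^{(3/2)+\varepsilon}(\Om),
\end{equation*}
which is the analogue of \eqref{Mam-17}. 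Granting this, the rest is bookkeeping: if $u\in H^{(3/2)+\varepsilon}(\Om)$ then $\partial_j u\in H^{(1/2)+\varepsilon}(\Om)$ and, since $1/2<(1/2)+\varepsilon<3/2$, the first line of \eqref{2.6} gives $\gamma_D(\partial_j u)\in H^{\varepsilon}(\dOm)\hookrightarrow \LdOm$ with $\|\gamma_D(\partial_j u)\|_{\LdOm}\le C\|u\|_{H^{(3/2)+\varepsilon}(\Om)}$; as $\nu_j\in L^\infty(\dOm)$ with $\|\nu_j\|_{L^\infty(\dOm)}\le 1$, the displayed identity yields $\partial(\gamma_D u)/\partial\tau_{j,k}\in\LdOm$ with a comparable bound. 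Since also $\gamma_D u\in\LdOm$ (from \eqref{2.6} and $H^{(3/2)+\varepsilon}(\Om)\hookrightarrow H^{1}(\Om)$), the norm equivalence \eqref{M1.1y} forces $\gamma_D u\in H^1(\dOm)$ with $\|\gamma_D u\|_{H^1(\dOm)}\le C\|u\|_{H^{(3/2)+\varepsilon}(\Om)}$, which is exactly \eqref{A.62x}; compatibility with $\gamma_D^0$ is automatic since both agree with the original trace on $C^\infty(\overline\Om)$.

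It remains to prove the trace identity. For $u\in C^\infty(\overline\Om)$ it is immediate from the very definition \eqref{def-TAU} of $\partial/\partial\tau_{j,k}$ together with the integration by parts formula \eqref{IBP-t2} (equivalently, it is contained in \eqref{Ibp-w3}--\eqref{Ibp-w4} from the proof of Lemma \ref{Lhf-2}). For general $u\in H^{(3/2)+\varepsilon}(\Om)$ I would argue by density: pick $u_m\in C^\infty(\overline\Om)$ with $u_m\to u$ in $H^{(3/2)+\varepsilon}(\Om)$. Then $\gamma_D u_m\to\gamma_D u$ and $\gamma_D(\partial_j u_m)\to\gamma_D(\partial_j u)$ in $\LdOm$ by \eqref{2.6} (used with $s=1$ and with $s=(1/2)+\varepsilon$, respectively), so the right-hand side of the identity, written for $u_m$, converges in $\LdOm$; by the bound derived above, $\{\gamma_D u_m\}$ is Cauchy in $H^1(\dOm)$, hence converges there, necessarily to $\gamma_D u$ (its $\LdOm$-limit), and the identity passes to the limit. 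This simultaneously establishes \eqref{A.62x}.

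The one non-elementary ingredient is the density of $C^\infty(\overline\Om)$ in $H^{(3/2)+\varepsilon}(\Om)$ for a merely Lipschitz domain at smoothness above $1$; this is where I expect the only real work, and it follows from the existence of a bounded linear Sobolev extension operator $H^s(\Om)\to H^s(\bbR^n)$ valid for all $s$ (Rychkov's universal extension operator, \cite{Ry99}), by restricting to $\Om$ the mollifications of the extension of $u$. I note in passing that \eqref{A.62x} can alternatively be deduced, without any density argument, as a special case of the second assertion of Lemma \ref{Gam-L1}: indeed $u\in H^{(3/2)+\varepsilon}(\Om)$ implies $\Delta u\in H^{-(1/2)+\varepsilon}(\Om)=H^{1+s}(\Om)$ with $s=-(3/2)+\varepsilon>-3/2$, so that $H^{(3/2)+\varepsilon}(\Om)\hookrightarrow\{u\in H^{3/2}(\Om)\,|\,\Delta u\in H^{1+s}(\Om)\}$ continuously for the graph norm, and composing with the bounded operator \eqref{Mam-2} yields the claim.
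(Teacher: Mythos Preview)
Your proof is correct and follows essentially the same approach as the paper: density of $C^\infty(\overline{\Omega})$ in $H^{(3/2)+\varepsilon}(\Omega)$, integration by parts to handle the tangential derivatives, and the characterization of $H^1(\dOm)$. The only cosmetic difference is that the paper works directly with the dual characterization \eqref{A.64} (bounding $\int_{\dOm}\gamma_D u\,\partial v/\partial\tau_{j,k}$ for test functions $v$), whereas you first establish the pointwise trace identity and then invoke the norm equivalence \eqref{M1.1y}; these are two sides of the same coin. Your remark that the density step rests on a universal Sobolev extension (Rychkov) is a worthwhile clarification that the paper leaves implicit, and your closing observation that \eqref{A.62x} also falls out of \eqref{Mam-2} in Lemma~\ref{Gam-L1} is correct and not circular, since the proof of Lemma~\ref{Gam-L1} does not invoke Lemma~\ref{lA.6x}.
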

\begin{proof}
Let $u\in H^{(3/2)+\eps}(\Om)$, $v\in C_0^\infty(\bbR^n)$,
and $u_\ell \in C^\infty(\ol{\Om})\hookrightarrow
H^{(3/2)+\eps}(\Om)$, $\ell\in\bbN$, be a sequence of functions
approximating $u$ in $H^{(3/2)+\eps}(\Om)$. It follows from
\eqref{2.6} and \eqref{incl-xxx} that $\ga_D u, \ga_D(\nabla u)\in\LdOm$. 
Utilizing \eqref{IBP-t2}, one computes for all $j,k=1,\dots,n$,
\begin{align}
\bigg|\int_\dOm d^{n-1} \omega\, \ga_D u \frac{\partial
v}{\partial\tau_{j,k}} \bigg| &= \bigg|\lim_{\ell\to\infty} \int_\dOm
d^{n-1} \omega\, u_\ell \frac{\partial v}{\partial\tau_{j,k}}\bigg| =
\bigg|\lim_{\ell\to\infty} \int_\dOm d^{n-1} \omega\, v \frac{\partial
u_\ell}{\partial\tau_{j,k}}\bigg| \lb{A.65}
\\ &\leq
c \, \bigg|\lim_{\ell\to\infty} \int_\dOm d^{n-1} \omega\,v\, \ga_D(\nabla
u_\ell)\bigg| \leq c\norm{\ga_D(\nabla u)}_{\LdOm} \norm{v}_{\LdOm}.
\no
\end{align}
Thus, it follows from \eqref{A.64} and \eqref{A.65} that $\ga_D
u \in H^1(\dOm)$.
\end{proof}

Next, we prove the following fact:

\begin{lemma} \lb{lA.6}
Suppose $\Om\subset\bbR^n$, $n\geq 2$, is a bounded Lipschitz domain. 
Then for each $r\in(1/2,1)$, the space $C^r(\partial\Omega)$ is a module 
over $H^{1/2}(\partial\Omega)$. More precisely, if $M_f$ denotes the 
operator of multiplication by $f$, then there exists $C=C(\Omega,r)>0$ 
such that
\begin{equation}\lb{M.x1}
M_f\in{\mathcal{B}}\bigl(H^{1/2}(\partial\Omega)\bigl)\mbox{ and } \, 
\|M_f\|_{{\mathcal{B}}\bigl(H^{1/2}(\partial\Omega)\bigl)}
\leq C\|f\|_{C^r(\partial\Omega)}\mbox{ for every }f\in C^r(\partial\Omega).
\end{equation}
As a consequence, if $\Om$ is actually a bounded $C^{1,r}$-domain
with $r\in(1/2,1)$, then the Neumann and Dirichlet trace operators 
$\ga_N$, $\ga_D$ satisfy
\begin{equation}\lb{A.62}
\ga_N\in \cB\big(H^{2}(\Om), H^{1/2}(\dOm)\big)
\end{equation}
and 
\begin{equation}\lb{A.62B}
\ga_D\in \cB\big(H^{2}(\Om), H^{3/2}(\dOm)\big). 
\end{equation}
\end{lemma}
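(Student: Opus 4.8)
The plan is to prove Lemma \ref{lA.6} in three stages: first the module property \eqref{M.x1}, then the Dirichlet trace statement \eqref{A.62B}, and finally the Neumann trace statement \eqref{A.62}, which follows from \eqref{A.62B} together with \eqref{2.7} and the module property applied to the components $\nu_j$ of the unit normal. For the module property, I would localize via a smooth partition of unity and a bi-Lipschitz pull-back (as described in the appendix after \eqref{EQ1}), reducing matters to showing that $C^r(\bbR^{n-1})$ multiplies $H^{1/2}(\bbR^{n-1})$ boundedly. This is a standard fact: using the Gagliardo--Slobodeckij seminorm
\begin{equation}
[g]_{H^{1/2}(\bbR^{n-1})}^2 = \int_{\bbR^{n-1}}\int_{\bbR^{n-1}}
\frac{|g(x)-g(y)|^2}{|x-y|^{n}}\,d^{n-1}x\,d^{n-1}y,
\end{equation}
one writes $f(x)g(x)-f(y)g(y) = f(x)(g(x)-g(y)) + (f(x)-f(y))g(y)$; the first term is controlled by $\|f\|_{L^\infty}[g]_{H^{1/2}}$, and for the second one uses $|f(x)-f(y)|\le \|f\|_{C^r}|x-y|^r$ together with the fact that $\int_{\bbR^{n-1}}|x-y|^{2r-n}\,d^{n-1}y$ is locally integrable near $y=x$ precisely because $r>1/2$ (so that $2r - n > -(n-1)$ is not quite enough — one needs the standard estimate that $|x-y|^{2r}/|x-y|^n$ integrated against $|g(y)|^2$ in a neighborhood, combined with a bounded-overlap/dyadic decomposition, is finite; this is where the hypothesis $r>1/2$ is essential). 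Passing back through the partition of unity and bi-Lipschitz maps (which preserve $H^{1/2}$ and $C^r$) yields \eqref{M.x1}.

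For \eqref{A.62B}, I would use the already-established Lemma \ref{Gam-L1}, specifically the operator \eqref{Mam-2}: since $\Om$ is $C^{1,r}\subset$ Lipschitz and $u\in H^2(\Om)$ satisfies $\Delta u\in L^2(\Om;d^nx)\subset H^{1+s}(\Om)$ for, say, $s=-1\in(-3/2,\infty)$, Lemma \ref{Gam-L1} gives $\ga_D u\in H^1(\dOm)$ with $\|\ga_D u\|_{H^1(\dOm)}\le C\|u\|_{H^2(\Om)}$. To upgrade from $H^1(\dOm)$ to $H^{3/2}(\dOm)$ I would invoke the Sobolev-type characterization of $H^s(\dOm)$ for $0\le s\le 1$ given in Lemma \ref{Lg-T} applied with $s=1/2$ to the tangential derivatives, i.e., I would show $\partial(\ga_D u)/\partial\tau_{j,k}\in H^{1/2}(\dOm)$. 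Using the key identity \eqref{Mam-17}, $\partial(\ga_D u)/\partial\tau_{j,k} = \nu_j\ga_D(\partial_k u) - \nu_k\ga_D(\partial_j u)$, and noting that $\partial_k u, \partial_j u\in H^1(\Om)$ so that by \eqref{2.6} their Dirichlet traces lie in $H^{1/2}(\dOm)$, the module property \eqref{M.x1} with $f=\nu_j$ (here using that $\Om$ is $C^{1,r}$, so $\nu\in C^r(\dOm)$) gives $\nu_j\ga_D(\partial_k u)\in H^{1/2}(\dOm)$, hence $\partial(\ga_D u)/\partial\tau_{j,k}\in H^{1/2}(\dOm)$ with the appropriate bound. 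Then applying the characterization \eqref{Pf-2.3}--\eqref{Pf-2.4} of Lemma \ref{Lg-T} with $s=3/2$ — wait, that lemma only covers $s\in[0,1]$; instead I would note that $\ga_D u\in H^1(\dOm)$ already and its tangential derivatives lie in $H^{1/2}(\dOm)$, which by the lifting-type result (the $C^{1,r}$ analog of \eqref{M.4}, valid for $H^s(\dOm)$ with $|s|<1+r$ since $3/2<1+r$ fails unless $r>1/2$ — indeed $r>1/2$ gives $1+r>3/2$) yields $\ga_D u\in H^{3/2}(\dOm)$.

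For \eqref{A.62}, by definition \eqref{2.7} one has $\ga_N u = \nu\cdot\ga_D(\nabla u)$; since $u\in H^2(\Om)$ implies $\nabla u\in H^1(\Om)^n$, \eqref{2.6} gives $\ga_D(\partial_j u)\in H^{1/2}(\dOm)$, and then the module property \eqref{M.x1} applied to each $\nu_j\in C^r(\dOm)$ gives $\nu_j\ga_D(\partial_j u)\in H^{1/2}(\dOm)$; summing over $j$ yields $\ga_N u\in H^{1/2}(\dOm)$ with $\|\ga_N u\|_{H^{1/2}(\dOm)}\le C\|u\|_{H^2(\Om)}$. I expect the main obstacle to be the careful verification of the module property \eqref{M.x1} — specifically, the interplay between the Hölder exponent $r$ and the fractional smoothness $1/2$, and ensuring that the localization and bi-Lipschitz transfer genuinely preserve both the $C^r$ norm on the boundary and the $H^{1/2}$ norm; bi-Lipschitz maps preserve $H^{1/2}$ but one must check that composition with a $C^{1,r}$ chart keeps Hölder continuity of order $r$ intact (it does, since the charts are $C^{1,r}$, hence $C^1$ with $r$-Hölder derivatives, so $C^r$ is preserved). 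The upgrade from $H^1(\dOm)$ to $H^{3/2}(\dOm)$ via tangential derivatives is the other delicate point, as it requires a boundary lifting theorem valid up to smoothness $1+r>3/2$, which is exactly where the $C^{1,r}$ hypothesis (rather than merely Lipschitz) is used.
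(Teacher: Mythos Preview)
Your approach is essentially the same as the paper's: both derive \eqref{A.62} from $\ga_N=\nu\cdot\ga_D\nabla$ together with \eqref{2.6} and the module property applied to $\nu_j\in C^r(\dOm)$, and both derive \eqref{A.62B} by first placing $\ga_D u$ in $H^1(\dOm)$, then using the tangential-derivative identity \eqref{Mam-17} (the paper's \eqref{M.1}) plus the module property to get $\partial(\ga_D u)/\partial\tau_{j,k}\in H^{1/2}(\dOm)$, and finally invoking the $C^{1,r}$ lifting characterization of $H^{3/2}(\dOm)$ (which is exactly Lemma~\ref{K-t1} in the paper --- the result you correctly anticipated needing).

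The one genuine difference is in the module property \eqref{M.x1}: the paper simply cites a general pointwise-multiplication theorem for Triebel--Lizorkin spaces from Runst--Sickel, whereas you work directly with the Gagliardo--Slobodeckij seminorm. Your route is more elementary and self-contained; the paper's is shorter but relies on heavier machinery. Your hesitation in the parenthetical is a bit muddled --- the condition $2r-n>-(n-1)$ \emph{is} exactly $r>1/2$, and it \emph{is} enough for local integrability near the diagonal; the reason a naive global estimate on $\bbR^{n-1}$ fails is that $|x-y|^{2r-n}$ is not integrable at infinity, but this is harmless because after localization your $g$ has compact support. Once that is stated cleanly, your direct argument goes through.
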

\begin{proof}
The first part of the lemma is a direct consequence of general results 
about pointwise multiplication of functions in Triebel--Lizorkin spaces 
(a scale which contains both H\"older and Sobolev spaces);
see \cite[Theorem 2 on p.\,177]{RuSi}. Then \eqref{A.62} follows 
from this, \eqref{2.6}, the fact that $\gamma_N=\nu\cdot\gamma_D$, and
$\nu\in C^r(\partial\Omega)$. Next, one observes that for each 
$u\in H^2(\Omega)$ one has  $\gamma_Du\in H^{1}(\partial\Omega)$ 
by Lemma \ref{lA.6x}. In addition, 
\begin{equation}\lb{M.1}
\frac{\partial}{\partial\tau_{j,k}}(\gamma_Du)
=\big(\nu_j\gamma_D(\partial_k u)-\nu_k\gamma_D(\partial_j u)\big) 
\in H^{1/2}(\partial\Omega),
\end{equation} 
with a naturally accompanying estimate, by \eqref{2.6} and the fact that, 
as observed in the first part of the current proof, multiplication by 
$\nu_j$ (for $1\leq j\leq n$) preserves $H^{1/2}(\partial\Omega)$.
Consequently, \eqref{A.62B} follows from this and \eqref{M.2}, \eqref{M.2x}
below.
\end{proof}

Our next result should be compared with \eqref{M1.1} and Lemma \ref{Lg-T}.

\begin{lemma}\label{K-t1}
If $\Om\subset\bbR^n$ is a bounded $C^{1,r}$-domain with $r\in(1/2,1)$ then 
\begin{equation}\lb{M.2}
H^{3/2}(\dOm)=\{f\in H^1(\dOm)\,|\,\partial f/\partial\tau_{j,k}\in 
H^{1/2}(\dOm)\, 1\leq j,k\leq n\}
\end{equation}
and
\begin{equation}\lb{M.2x}
\|f\|_{H^{3/2}(\dOm)}\approx\|f\|_{H^1(\dOm)}
+\sum_{j,k=1}^n\|\partial f/\partial\tau_{j,k}\|_{H^{1/2}(\dOm)}. 
\end{equation}
\end{lemma}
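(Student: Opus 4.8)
The plan is to prove Lemma \ref{K-t1} by the same localization-and-pullback strategy that underlies the identification \eqref{M1.1} and Lemma \ref{Lg-T}, using the extra regularity afforded by the $C^{1,r}$-assumption (with $r>1/2$) to make sense of the tangential derivatives and the multiplication operators that appear. First I would reduce matters to the graph case: using a smooth partition of unity $\{\psi_\ell\}$ subordinate to a finite boundary cover and rigid motions, it suffices to establish \eqref{M.2}, \eqref{M.2x} for functions supported in a single coordinate patch in which $\partial\Omega$ coincides with the graph of a $C^{1,r}$ function $\varphi\colon\bbR^{n-1}\to\bbR$. Here one uses that multiplication by $\psi_\ell$ preserves each of the spaces $H^{3/2}(\dOm)$, $H^{1}(\dOm)$, $H^{1/2}(\dOm)$ (for $H^{1/2}$ this is part of Lemma \ref{lA.6}, for $H^1$ it follows from \eqref{M1.1}, and for $H^{3/2}$ one uses Lemma \ref{lA.6} again together with the Leibniz rule for $\partial/\partial\tau_{j,k}$).

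Second, in the graph chart I would transport everything to $\bbR^{n-1}$ via $x'\mapsto(x',\varphi(x'))$. The key computational input is the Chain Rule identity \eqref{M.3}, which expresses the tangential derivatives $\partial f/\partial\tau_{j,n}$ in terms of the Euclidean derivatives $\partial_j[f(\dott,\varphi(\dott))]$ up to multiplication by the Lipschitz function $(1+|\nabla\varphi|^2)^{-1/2}$, together with the norm equivalence \eqref{Pk-D2} with $s=1/2$ (valid since $0\le 1/2\le 1$). One then invokes the Euclidean lifting result \eqref{M.4} one notch higher than in the proof of Lemma \ref{Lg-T}: namely
\begin{equation*}
H^{3/2}(\bbR^{n-1})=\big\{g\in H^{1}(\bbR^{n-1})\,\big|\,\partial_j g\in H^{1/2}(\bbR^{n-1}),\ 1\le j\le n-1\big\},
\end{equation*}
with equivalence of the natural norms, which is again contained in \cite[Section 2.1.4]{RuSi}. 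Combining this with \eqref{M.3}, \eqref{Pk-D2}, the equivalence $\|f\|_{H^1(\dOm)}\approx\|f(\dott,\varphi(\dott))\|_{H^1(\bbR^{n-1})}$ (which follows from \eqref{M1.1y} and the $s=1$ case of \eqref{Pk-D2}, or directly), and the fact that $\partial/\partial\tau_{n,n}=0$, yields both inclusions in \eqref{M.2} and the two-sided estimate \eqref{M.2x} in the graph case.

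The one genuinely delicate point — and the step I expect to be the main obstacle — is justifying that the multiplication operators built from $\varphi$ and $\nu$ (the factor $(1+|\nabla\varphi|^2)^{-1/2}$ in \eqref{M.3}, and the components $\nu_j$ that appear when passing between $\partial f/\partial\tau_{j,n}$ and $\partial f/\partial\tau_{j,k}$ for general $k$) act boundedly on $H^{1/2}$ and $H^{1}$ of the boundary, so that the chain-rule manipulations are legitimate in the $H^{3/2}$-scale rather than merely formally. For $H^{1/2}$ this is exactly Lemma \ref{lA.6} (since $\varphi\in C^{1,r}$ forces $\nabla\varphi,\ \nu\in C^{r}$ with $r>1/2$, and $(1+|\nabla\varphi|^2)^{-1/2}\in C^r$ by composition with a smooth function); for $H^{1}$ the corresponding multiplier claim again reduces, via \eqref{M1.1} and the Leibniz rule, to the $C^r$-multiplier property on $H^{1/2}$ plus the obvious $L^\infty$-bound on $L^2$. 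Once these mapping properties are in hand, the rest is the routine bookkeeping of reassembling the partition-of-unity pieces, and I would close the proof there.
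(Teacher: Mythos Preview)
Your proposal is correct and follows essentially the same approach as the paper: localize to a graph patch via a smooth partition of unity, pull back to $\bbR^{n-1}$, and combine the Chain Rule identity \eqref{M.3}, the $C^r$-multiplier property \eqref{M.x1} from Lemma~\ref{lA.6}, and the Euclidean lifting result \eqref{M.4} at $s=3/2$. The paper's proof is a three-line sketch citing exactly these ingredients; your version simply fills in the bookkeeping (partition-of-unity stability, the passage from $\partial/\partial\tau_{j,n}$ to general $\partial/\partial\tau_{j,k}$, and the multiplier checks) that the paper leaves implicit.
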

\begin{proof}
To justify \eqref{M.2} and \eqref{M.2x} we use
a smooth cut-off function to localize the problem near a boundary point where 
$\partial\Omega$ coincides with the graph of a $C^{1,r}$ function 
$\varphi:\bbR^{n-1}\to\bbR$. In this setting, the desired conclusions 
follow from \eqref{M.3}, \eqref{M.x1}, and \eqref{M.4} used with $s=3/2$. 
\end{proof}

\section{Sesquilinear Forms and Associated Operators}
\lb{sB}
\renewcommand{\theequation}{B.\arabic{equation}}
\renewcommand{\thetheorem}{B.\arabic{theorem}}
\setcounter{theorem}{0} \setcounter{equation}{0}

In this appendix we describe a few basic facts on sesquilinear forms and 
linear operators associated with them.

Let $\cH$ be a complex separable Hilbert space with scalar product 
$(\dott,\dott)_{\cH}$ (antilinear in the first and linear in the second 
argument), $\cV$ a reflexive Banach space continuously and densely embedded 
into $\cH$. Then also $\cH$ embeds continuously and densely into $\cV^*$. 
\begin{equation}
\cV  \hookrightarrow \cH  \hookrightarrow \cV^*.     \lb{B.1}
\end{equation}
Here the continuous embedding $\cH\hookrightarrow \cV^*$ is accomplished via 
the identification
\begin{equation}
\cH \ni u \mapsto (\dott,u)_{\cH} \in \cV^*,     \lb{B.2}
\end{equation}
and we recall the convention in this manuscript (cf.\ the discussion at 
the end of the introduction) that if $X$ denotes a Banach space, $X^*$ denotes 
the {\it adjoint space} of continuous  conjugate linear functionals on $X$, also 
known as the {\it conjugate dual} of $X$.

In particular, if the sesquilinear form 
\begin{equation}
{}_{\cV}\langle \dott, \dott \rangle_{\cV^*} \colon \cV \times \cV^* \to \bbC
\end{equation}
denotes the duality pairing between $\cV$ and $\cV^*$, then  
\begin{equation}
{}_{\cV}\langle u,v\rangle_{\cV^*} = (u,v)_{\cH}, \quad u\in\cV, \; 
v\in\cH\hookrightarrow\cV^*,   \lb{B.3}
\end{equation}
that is, the $\cV, \cV^*$ pairing 
${}_{\cV}\langle \dott,\dott \rangle_{\cV^*}$ is compatible with the 
scalar product $(\dott,\dott)_{\cH}$ in $\cH$.

Let $T \in\cB(\cV,\cV^*)$. Since $\cV$ is reflexive, $(\cV^*)^* = \cV$, one has
\begin{equation}
T \colon \cV \to \cV^*, \quad  T^* \colon \cV \to \cV^*   \lb{B.4}
\end{equation}
and
\begin{equation}
{}_{\cV}\langle u, Tv \rangle_{\cV^*} 
= {}_{\cV^*}\langle T^* u, v\rangle_{(\cV^*)^*} 
= {}_{\cV^*}\langle T^* u, v \rangle_{\cV} 
= \ol{{}_{\cV}\langle v, T^* u \rangle_{\cV^*}}. 
\end{equation}
{\it Self-adjointness} of $T$ is then defined by $T=T^*$, that is, 
\begin{equation}
{}_{\cV}\langle u,T v \rangle_{\cV^*} 
= {}_{\cV^*}\langle T u, v \rangle_{\cV}
= \ol{{}_{\cV}\langle v, T u \rangle_{\cV^*}}, \quad u, v \in \cV,    \lb{B.5}
\end{equation} 
{\it nonnegativity} of $T$ is defined by 
\begin{equation}
{}_{\cV}\langle u, T u \rangle_{\cV^*} \geq 0, \quad u \in \cV,    \lb{B.6}
\end{equation} 
and {\it boundedness from below of $T$ by $c_T \in\bbR$} is defined by
\begin{equation}
{}_{\cV}\langle u, T u \rangle_{\cV^*} \geq c_T \|u\|^2_{\cH}, 
\quad u \in \cV.   
\lb{B.6a}
\end{equation} 
(By \eqref{B.3}, this is equivalent to 
${}_{\cV}\langle u, T u \rangle_{\cV^*} \geq c_T \,
{}_{\cV}\langle u, u \rangle_{\cV^*}$, $u \in \cV$.)
 
Next, let the sesquilinear form $a(\dott,\dott)\colon\cV \times \cV \to \bbC$ 
(antilinear in the first and linear in the second argument) be 
{\it $\cV$-bounded}, that is, there exists a $c_a>0$ such that 
\begin{equation}
|a(u,v)| \le c_a \|u\|_{\cV} \|v\|_{\cV},  \quad u, v \in \cV. 
\end{equation}
Then $\wti A$ defined by
\begin{equation}
\wti A \colon \begin{cases} \cV \to \cV^*, \\ 
\, v \mapsto \wti A v = a(\dott,v), \end{cases}    \lb{B.7}
\end{equation} 
satisfies
\begin{equation}
\wti A \in\cB(\cV,\cV^*) \, \text{ and } \, 
{}_{\cV}\big\langle u, \wti A v \big\rangle_{\cV^*} 
= a(u,v), \quad  u, v \in \cV.    \lb{B.8}
\end{equation}
Assuming further that $a(\dott,\dott)$ is {\it symmetric}, that is,
\begin{equation}
a(u,v) = \ol{a(v,u)},  \quad u,v\in \cV,    \lb{B.9}
\end{equation} 
and that $a$ is {\it $\cV$-coercive}, that is, there exists a constant 
$C_0>0$ such that
\begin{equation}
a(u,u)  \geq C_0 \|u\|^2_{\cV}, \quad u\in\cV,    \lb{B.10}
\end{equation}
respectively, then,
\begin{equation}
\wti A \colon \cV \to \cV^* \, \text{ is bounded, self-adjoint, and boundedly 
invertible.}    \lb{B.11}
\end{equation}
Moreover, denoting by $A$ the part of $\wti A$ in $\cH$ defined by 
\begin{align}
\dom(A) = \big\{u\in\cV \,|\, \wti A u \in \cH \big\} \subseteq \cH, \quad 
A= \wti A\big|_{\dom(A)}\colon \dom(A) \to \cH,   \lb{B.12} 
\end{align}
then $A$ is a (possibly unbounded) self-adjoint operator in $\cH$ satisfying 
\begin{align}
& A \geq C_0 I_{\cH},   \lb{B.13}  \\
& \dom\big(A^{1/2}\big) = \cV.  \lb{B.14}
\end{align}
In particular, 
\begin{equation}
A^{-1} \in\cB(\cH).   \lb{B.15}
\end{equation}
The facts \eqref{B.1}--\eqref{B.15} are a consequence of the Lax--Milgram 
theorem and the second representation theorem for symmetric sesquilinear forms.
Details can be found, for instance, in \cite[\S VI.3, \S VII.1]{DL00}, 
\cite[Ch.\ IV]{EE89}, and \cite{Li62}.  

Next, consider a symmetric form $b(\dott,\dott)\colon \cV\times\cV\to\bbC$ 
and assume that $b$ is {\it bounded from below by $c_b\in\bbR$}, that is,
\begin{equation}
b(u,u) \geq c_b \|u\|_{\cH}^2, \quad u\in\cV.  \lb{B.19}
\end{equation} 
Introducing the scalar product 
$(\dott,\dott)_{\cV(b)}\colon \cV\times\cV\to\bbC$ 
(with  associated norm $\|\dott\|_{\cV(b)}$) by
\begin{equation}
(u,v)_{\cV(b)} = b(u,v) + (1- c_b)(u,v)_{\cH}, \quad u,v\in\cV,  \lb{B.20}
\end{equation}
turns $\cV$ into a pre-Hilbert space $(\cV; (\dott,\dott)_{\cV(b)})$, 
which we denote by 
$\cV(b)$. The form $b$ is called {\it closed} if $\cV(b)$ is actually 
complete, and hence a Hilbert space. The form $b$ is called {\it closable} 
if it has a closed extension. If $b$ is closed, then
\begin{equation}
|b(u,v) + (1- c_b)(u,v)_{\cH}| \le \|u\|_{\cV(b)} \|v\|_{\cV(b)}, 
\quad u,v\in \cV,   
\lb{B.21}
\end{equation}
and 
\begin{equation}
|b(u,u) + (1 - c_b)\|u\|_{\cH}^2| = \|u\|_{\cV(b)}^2, \quad u \in \cV,   
\lb{B.22}
\end{equation}
show that the form $b(\dott,\dott)+(1 - c_b)(\dott,\dott)_{\cH}$ is a 
symmetric, $\cV$-bounded, and $\cV$-coercive sesquilinear form. Hence, 
by \eqref{B.7} and \eqref{B.8}, there exists a linear map
\begin{equation}
\wti B_{c_b} \colon \begin{cases} \cV(b) \to \cV(b)^*, \\ 
\hspace*{.51cm}  
v \mapsto \wti B_{c_b} v = b(\dott,v) +(1 - c_b)(\dott,v)_{\cH},  
\end{cases}    
\lb{B.23}
\end{equation}
with 
\begin{equation}
\wti B_{c_b} \in\cB(\cV(b),\cV(b)^*) \, \text{ and } \, 
{}_{\cV(b)}\big\langle u, \wti B_{c_b} v \big\rangle_{\cV(b)^*} 
= b(u,v)+(1 -c_b)(u,v)_{\cH}, \quad  u, v \in \cV.    \lb{B.24}
\end{equation}
Introducing the linear map
\begin{equation}
\wti B = \wti B_{c_b} + (c_b - 1)\wti I \colon \cV(b)\to\cV(b)^*,
\lb{B.24a}
\end{equation}
where $\wti I\colon \cV(b)\hookrightarrow\cV(b)^*$ denotes 
the continuous inclusion (embedding) map of $\cV(b)$ into $\cV(b)^*$, one 
obtains a self-adjoint operator $B$ in $\cH$ by restricting $\wti B$ to $\cH$,
\begin{align}
\dom(B) = \big\{u\in\cV \,\big|\, \wti B u \in \cH \big\} \subseteq \cH, \quad 
B= \wti B\big|_{\dom(B)}\colon \dom(B) \to \cH,   \lb{B.25} 
\end{align} 
satisfying the following properties:
\begin{align}
& B \geq c_b I_{\cH},  \lb{B.26} \\
& \dom\big(|B|^{1/2}\big) = \dom\big((B - c_bI_{\cH})^{1/2}\big) 
= \cV,  \lb{B.27} \\
& b(u,v) = \big(|B|^{1/2}u, U_B |B|^{1/2}v\big)_{\cH}    \lb{B.28b} \\
& \hspace*{.97cm} 
= \big((B - c_bI_{\cH})^{1/2}u, (B - c_bI_{\cH})^{1/2}v\big)_{\cH} 
+ c_b (u, v)_{\cH}  
\lb{B.28} \\
& \hspace*{.97cm} 
= {}_{\cV(b)}\big\langle u, \wti B v \big\rangle_{\cV(b)^*},  
\quad u, v \in \cV, \lb{B.28a} \\
& b(u,v) = (u, Bv)_{\cH}, \quad  u\in \cV, \; v \in\dom(B),  \lb{B.29} \\
& \dom(B) = \{v\in\cV\,|\, \text{there exists an $f_v\in\cH$ such that}  \no \\
& \hspace*{3.05cm} b(w,v)=(w,f_v)_{\cH} \text{ for all $w\in\cV$}\},  
\lb{B.30} \\
& Bu = f_u, \quad u\in\dom(B),  \no \\
& \dom(B) \text{ is dense in $\cH$ and in $\cV(b)$}.  \lb{B.31}
\end{align}
Properties \eqref{B.30} and \eqref{B.31} uniquely determine $B$. 
Here $U_B$ in \eqref{B.28} is the partial isometry in the polar 
decomposition of $B$, that is, 
\begin{equation}
B=U_B |B|, \quad  |B|=(B^*B)^{1/2}.   \lb{B.32}
\end{equation}
The operator $B$ is called the {\it operator associated with the form $b$}.

The norm in the Hilbert space $\cV(b)^*$ is given by
\begin{equation}
\|\ell\|_{\cV(b)^*} 
= \sup \{|{}_{\cV(b)}\langle u, \ell \rangle_{\cV(b)^*}| \,|\, 
\|u\|_{\cV(b)} \le 1\}, \quad \ell \in \cV(b)^*,   \lb{B.34}
\end{equation}
with associated scalar product,
\begin{equation}
(\ell_1,\ell_2)_{\cV(b)^*} 
= {}_{\cV(b)}\big\langle 
\big(\wti B+(1-c_b)\wti I\big)^{-1}
\ell_1, \ell_2 \big\rangle_{\cV(b)^*}, 
\quad \ell_1, \ell_2 \in \cV(b)^*.   \lb{B.35}
\end{equation}
Since
\begin{equation}
\big\|\big(\wti B + (1 - c_b)\wti I\big)v \big\|_{\cV(b)^*} 
= \|v\|_{\cV(b)}, \quad v\in\cV,   \lb{B.36}
\end{equation}
the Riesz representation theorem yields 
\begin{equation} 
\big(\wti B+(1-c_b)\wti I\big)\in\cB(\cV(b),\cV(b)^*) \, 
\text{ and }\big(\wti B + (1 - c_b)\wti I\big) \colon \cV(b) 
\to \cV(b)^* \, \text{ is unitary.}   \lb{B.37}
\end{equation}
In addition,
\begin{align}
\begin{split}
{}_{\cV(b)}\big\langle u,\big(\wti B 
+ (1 - c_b)\wti I\big) v \big\rangle_{\cV(b)^*} 
& = \big(\big(B+(1-c_b)I_{\cH}\big)^{1/2}u, 
\big(B+(1-c_b)I_{\cH}\big)^{1/2}v \big)_{\cH}  \\
& = (u,v)_{\cV(b)},  \quad  u, v \in \cV(b).   \lb{B.38}
\end{split}
\end{align}
In particular, 
\begin{equation}
\big\|(B+(1-c_b)I_{\cH})^{1/2}u\big\|_{\cH} = \|u\|_{\cV(b)}, 
\quad u \in \cV(b),   \lb{B.39}
\end{equation}
and hence 
\begin{equation}
(B+(1-c_b)I_{\cH})^{1/2}\in\cB(\cV(b),\cH) \, \text{ and } 
(B + (1 - c_b)I_{\cH})^{1/2} \colon \cV(b) \to \cH \, \text{ is unitary.}   
\lb{B.40}
\end{equation} 
The facts \eqref{B.19}--\eqref{B.40} comprise the second representation 
theorem of sesquilinear forms (cf.\ \cite[Sect.\ IV.2]{EE89}, 
\cite[Sects.\ 1.2--1.5]{Fa75}, and \cite[Sect.\ VI.2.6]{Ka80}).

A special but important case of nonnegative closed forms is obtained as 
follows: Let $\cH_j$, $j=1,2$, be complex separable Hilbert spaces, and 
$T\colon \dom(T)\to\cH_2$, $\dom(T)\subseteq \cH_1$, a densely defined 
operator. Consider the nonnegative form 
$a_T\colon \dom(T)\times \dom(T)\to\bbC$ defined by 
\begin{equation}
a_T(u,v)=(Tu,Tv)_{\cH_2}, \quad u, v \in\dom(T).   \lb{B.42}
\end{equation}
Then the form $a_T$ is closed (resp., closable) if and only if $T$ is. 
If $T$ is closed, the unique nonnegative self-adjoint operator associated 
with $a_T$ in $\cH_1$, whose existence is guaranteed by the second 
representation theorem for forms, then equals $T^*T$. In particular, 
one obtains
\begin{equation}
a_T(u,v) = (|T|u,|T|v)_{\cH_1}, \quad u, v \in\dom(T)=\dom(|T|).   \lb{B.43}
\end{equation} 
In addition, since
\begin{align}
\begin{split}
& b(u,v) +(1-c_b)(u,v)_{\cH} 
= \big((B+(1-c_b)I_{\cH})^{1/2}u, (B+(1-c_b)I_{\cH})^{1/2}v\big)_{\cH}, \\ 
& \hspace*{6.1cm}  u, v \in \dom(b) = \dom\big(|B|^{1/2}\big)=\cV,   \lb{B.43a}
\end{split}
\end{align} 
and $(B+(1-c_b)I_{\cH})^{1/2}$ is self-adjoint (and hence closed) in 
$\cH$, a symmetric, $\cV$-bounded, and $\cV$-coercive form is densely 
defined in $\cH\times\cH$ and closed (a fact we used in the proof of 
Theorem \ref{t2.3}). We refer to \cite[Sect.\ VI.2.4]{Ka80} and 
\cite[Sect.\ 5.5]{We80} for details.

Next we recall that if $a_j$ are sesquilinear forms defined on 
$\dom(a_j)\times\dom(a_j)$, $j=1,2$, bounded from below and closed, then also
\begin{equation}
(a_1+a_2)\colon \begin{cases} (\dom(a_1)\cap\dom(a_2))\times 
(\dom(a_1)\cap\dom(a_2)) \to \bbC, \\
(u,v) \mapsto (a_1+a_2)(u,v) = a_1(u,v) + a_2(u,v) \end{cases}   \lb{B.44}
\end{equation} 
is bounded from below and closed (cf.\ \cite[Sect.\ VI.1.6]{Ka80}). 

Finally, we also recall the following perturbation theoretic fact: 
Suppose $a$ is a sesquilinear form defined on $\cV\times\cV$, bounded 
from below and closed, and let $b$ be a symmetric sesquilinear form 
bounded with respect to $a$ with bound less than one, that is, 
$\dom(b)\supseteq \cV\times\cV$, and that there exist $0\le \alpha < 1$ and 
$\beta\ge 0$ such that
\begin{equation}
|b(u,u)| \le \alpha |a(u,u)| + \beta \|u\|_{\cH}^2, \quad u\in \cV.   \lb{B.45}
\end{equation}
Then
\begin{equation}
(a+b)\colon \begin{cases} \cV\times\cV \to \bbC, \\
\hspace*{.12cm} (u,v) \mapsto (a+b)(u,v) = a(u,v) + b(u,v) \end{cases}  
\lb{B.46}
\end{equation} 
defines a sesquilinear form that is bounded from below and closed 
(cf. \cite[Sect.\,VI.1.6]{Ka80}). In the special case where $\alpha$ 
can be chosen arbitrarily small, the form $b$ is called infinitesimally 
form bounded with respect to $a$.

\section{Estimates for the Fundamental Solution of the Helmholtz Equation}
\lb{sC}
\renewcommand{\theequation}{C.\arabic{equation}}
\renewcommand{\thetheorem}{C.\arabic{theorem}}
\setcounter{theorem}{0} \setcounter{equation}{0}

The principal aim of this appendix is to recall and prove some estimates 
for the fundamental solution (i.e., the Green's function) of the Helmholtz 
equation and its $x$-derivatives up to the second order.

Let $E_n(z;x)$ be the fundamental solution of the Helmholtz equation 
$(-\Delta -z)\psi(z;\dott) =0$ in $\bbR^n$, $n\in\bbN$, $n\ge 2$, already 
introduced in \eqref{2.52}, and reproduced for convenience below: 
\begin{align}
& E_n(z;x) = \begin{cases} \f{i}{4} \Big(\f{2\pi |x|}{z^{1/2}}\Big)^{(2-n)/2} 
H^{(1)}_{(n-2)/2}\big(z^{1/2}|x|\big), & n\ge 2, \; z\in\bbC\backslash\{0\}, \\
\f{-1}{2\pi} \ln(|x|), & n=2, \; z=0, \\
\f{1}{(n-2) \omega_{n-1}} |x|^{2-n}, & n \ge 3, \; z=0, 
\end{cases}    \lb{C.1}  \\
& \hspace*{5.5cm}   \Im\big(z^{1/2}\big)\geq 0, \; x \in\bbR^n\backslash\{0\}, 
\no
\end{align}
where $H^{(1)}_{\nu}(\dott)$ denotes the Hankel function of the first kind 
with index $\nu\geq 0$ (cf.\ \cite[Sect.\ 9.1]{AS72}) and 
$\omega_{n-1}=2\pi^{n/2}/\Gamma(n/2)$ ($\Gamma(\dott)$ the Gamma function, 
cf.\ \cite[Sect.\ 6.1]{AS72}) represents the area of the unit sphere 
$S^{n-1}$ in $\bbR^n$. 

As $z\to 0$, $E_n(z,x)$, $x\in\bbR^n\backslash\{0\}$ is continuous for 
$n\geq 3$,
\begin{align}
E_n(z,x) & \underset{z\to 0}{=} E_n(0,x) 
= \f{1}{(n-2) \omega_{n-1}} |x|^{2-n}, 
\quad x\in\bbR^n\backslash\{0\}, \; n \ge 3,    \lb{C.1a} \\
\intertext{but discontinuous for $n=2$ as}
E_2(z,x) & \underset{z\to 0}{=}  \f{-1}{2\pi} \ln\big(z^{1/2}|x|/2\big)
\big[1+\Oh\big(z|x|^2\big)\big] 
+ \f{1}{2\pi} \psi(1) +\Oh\big(z|x|^2\big), 
\quad  x\in\bbR^2\backslash\{0\}, \; n=2.   \lb{C.1b}
\end{align} 
Here $\psi(w)=\Gamma'(w)/\Gamma(w)$ denotes the digamma function 
(cf. \cite[Sect.\,6.3]{AS72}). Thus, we simply {\it define} 
$E_2(0;x)=\f{-1}{2\pi} \ln(|x|)$, $x\in\bbR^2\backslash\{0\}$ as 
in \eqref{C.1}. 

To estimate $E_n$ we recall that (cf. \cite[Sect.\,9.1]{AS72})
\begin{equation}
H^{(1)}_{(n-2)/2}(\dott) = J_{(n-2)/2}(\dott) + i Y_{(n-2)/2}(\dott), \lb{C.2}
\end{equation}
with $J_\nu$ and $Y_\nu$ the regular and irregular Bessel functions, 
respectively.

We start considering small values of $|x|$ and for this purpose recall 
the following absolutely convergent expansions (cf. \cite[Sect.\,9.1]{AS72}):
\begin{align}
J_\nu(\zeta) &= \bigg(\f{\zeta}{2}\bigg)^{\nu} \sum_{k=0}^\infty 
\f{(-1)^k \zeta^{2k}}{4^k k! \Gamma(\nu+k+1)}, 
\quad \zeta\in\bbC\backslash (-\infty,0], 
\; \nu\in\bbR\backslash (-\bbN),  \lb{C.3} \\
J_{-m}(\zeta) &= (-1)^m J_m(\zeta), \quad \zeta\in\bbC,\, m\in\bbN_0,  
\lb{C.4} \\
Y_\nu (\zeta) &= \f{J_\nu(\zeta)\cos(\nu\pi)- J_{-\nu}(\zeta)}{\sin(\nu \pi)}, 
\quad \zeta\in\bbC\backslash (-\infty,0], \; \nu\in(0,\infty)\backslash\bbN,  
\lb{C.5} \\
Y_m(\zeta) &= - \f{\zeta^{-m}}{2^m \pi} \sum_{k=0}^{m-1} \f{(m-k-1)!}{k!}
\f{\zeta^{2k}}{4^k} + \f{2}{\pi} J_m(\zeta) \, \ln(\zeta/2)  \no \\
& \quad - \f{\zeta^m}{2^m \pi} \sum_{k=0}^\infty [\psi(k+1) + \psi(m+k=1)] 
\f{(-1)^k \zeta^{2k}}{4^k k! (m+k)!},  
\quad \zeta\in\bbC\backslash (-\infty,0], \; 
m\in\bbN_0.   \lb{C.6}
\end{align}
We note that all functions in \eqref{C.3}, \eqref{C.5}, and 
\eqref{C.6} are analytic in $\bbC\backslash (-\infty,0]$ and that 
$J_m(\dott)$ is entire for $m\in\bbZ$. In addition, all functions in 
\eqref{C.3}--\eqref{C.6} have continuous nontangential limits as 
$\zeta \to \eta<0$, with generally different values on either side of 
the cut $(-\infty,0]$ due to the presence of the functions $\zeta^\nu$ 
and $\ln(\zeta)$. (We chose $\nu\in\bbR$ and subsequently usually $\nu\ge 0$ 
for simplicity only; complex values of $\nu$ are discussed in 
\cite[Ch.\,9]{AS72}.)

Due to the presence of the logarithmic term for even dimensions we next 
distinguish even and odd space dimensions $n$:

$(i)$ $n=2m+2$, $m\in\bbN_0$, and $z\in\bbC\backslash\{0\}$ fixed:   

\begin{align}
E_{2m+2}(z;x) &= \f{i}{4} \bigg(\f{2\pi |x|}{z^{1/2}}\bigg)^{-m} H^{(1)}_m 
\big(z^{1/2}|x|\big)  \no \\
& = \f{i}{4} \bigg(\f{2\pi |x|}{z^{1/2}}\bigg)^{-m} 
\big[J_m \big(z^{1/2}|x|\big) 
+ i Y_m \big(z^{1/2}|x|\big)\big]  \no \\
& = \f{i}{4} \bigg(\f{2\pi |x|}{z^{1/2}}\bigg)^{-m} \bigg\{\Oh\big(|x|^m\big) 
+ \f{2i}{\pi} \ln\bigg(\f{z^{1/2}|x|}{2}\bigg) \Oh\big(|x|^m\big)    
\lb{C.7}  \\
& \quad 
-\f{i}{\pi} \bigg(\f{z^{1/2}|x|}{2}\bigg)^{-m} (1-\delta_{m,0}) \bigg[(m-1)! 
+(1-\delta_{m,1}) (m-2)! \bigg(\f{z|x|^2}{4}\bigg)
+\Oh\big(|x|^4\big)\bigg]\bigg\}.   \no 
\end{align}

$(ii)$ $n=2m+1$, $m\in\bbN$, and $z\in\bbC\backslash\{0\}$ fixed:

\begin{align}
E_{2m+1}(z;x) &= \f{i}{4} \bigg(\f{2\pi |x|}{z^{1/2}}\bigg)^{(1/2)-m} 
H^{(1)}_{m-(1/2)} \big(z^{1/2}|x|\big)  \no \\
& = \f{i}{4} z^{m/2}\pi^{-m}2^{1-m} |x|^{1-m} h^{(1)}_{m-1} 
\big(z^{1/2}|x|\big) \no \\
& = \f{i}{2z^{1/2}} (2\pi i |x|)^{-m} e^{iz^{1/2}|x|} \sum_{k=0}^{m-1} 
\f{(m+k-1)!}{k! (m-k-1)!} \big(-2iz^{1/2}|x|\big)^{-k}  \no \\
& \hspace*{-.22cm} \underset{|x|\to 0}{=} \begin{cases}  (4\pi |x|)^{-1} 
\big[1 + i z^{1/2}|x| + \Oh\big(|x|^2\big)\big], & m=1, \\
[(2m-1)\omega_{2m}]^{-1} |x|^{1-2m} \big[1 + \Oh\big(|x|^2\big)\big], 
& m\geq 2,
\end{cases}    \lb{C.8}
\end{align}
with $h^{(1)}_\ell(\dott)$ defined in \cite[Sect.\ 10.1]{AS72}.

Given these expansions we can now summarize the behavior of $E_n(z;x)$ and 
its derivatives up to the second order as $|x|\to 0$: 

\begin{lemma}  \lb{lC.1}
Fix $z\in\bbC\backslash\{0\}$. Then the fundamental solution $E_n(z;\dott)$ 
of the Helmholtz equation $(-\Delta - z)\psi(z;\dott)=0$ and its derivatives 
up to the second order satisfy the following estimates for 
$0<|x| < R$, with $R>0$ fixed: 
\begin{align}
|E_n(z;x) - E_n(0;x)| & \leq \begin{cases} C, & n=2,3, \\
C[|\ln(|x|)| + 1], & n=4, \\
C \big[|x|^{4-n} + 1\big], & n\ge 5, 
\end{cases}    \lb{C.9}  \\[1mm]
|\partial_j E_n(z;x) - \partial_j E_n(0;x)| & \leq \begin{cases} C, & n=2,3, \\
C \big[|x|^{3-n} + 1\big], & n\ge 4, 
\end{cases}    \lb{C.10}  \\[1mm]
|\partial_j \partial_kE_n(z;x) - \partial_j \partial_kE_n(0;x)| 
& \leq \begin{cases} C[|\ln(|x|)| + 1], & n=2, \\
C \big[|x|^{2-n} + 1\big], & n\ge 3. 
\end{cases}    \lb{C.11}  
\end{align}
Here $C=C(R,n,z)$ represent various different constants in 
\eqref{C.9}--\eqref{C.11} and $\partial_j=\partial/\partial x_j$, 
$1\le j \le n$.
\end{lemma}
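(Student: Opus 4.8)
The plan is to read the three estimates off directly from the explicit small-argument expansions of the Hankel functions recorded in \eqref{C.2}--\eqref{C.8}, together with \eqref{C.1a}--\eqref{C.1b}, using two structural observations. First, by the very construction of $E_n(0;\dott)$ it is precisely the most singular ($|x|^{2-n}$, respectively $\ln|x|$ for $n=2$) term in the Laurent-type expansion of $E_n(z;\dott)$ about $x=0$; hence $E_n(z;x)-E_n(0;x)$ is the (absolutely convergent) remainder of that expansion. Second, matching powers of $|x|$ in the identity $(-\Delta-z)E_n(z;\dott)=\delta$ shows that the coefficient of the would-be next term $|x|^{3-n}$ in $E_n(z;\dott)$ vanishes for $n\ge 4$ (it could only be fed by a $|x|^{1-n}$ term of $zE_n(z;\dott)$, which is absent), so that the remainder is in fact $O(|x|^{4-n})$ for $n\ge 5$, $O(\ln|x|)$ for $n=4$, and $O(1)$ for $n=2,3$; for $n=3$ the coefficient of $|x|^{3-n}=|x|^0$ need not vanish (indeed $E_3(z;x)-E_3(0;x)=(4\pi|x|)^{-1}(e^{iz^{1/2}|x|}-1)$ tends to the nonzero constant $iz^{1/2}/(4\pi)$), which is why \eqref{C.9} only asserts boundedness there. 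Since $E_n(z;\dott)$ and $E_n(0;\dott)$ are radial, one then obtains \eqref{C.10} and \eqref{C.11} by differentiating the expansion term by term, using $\partial_j|x|^\alpha=\alpha|x|^{\alpha-2}x_j$ together with $\partial_j\ln|x|=x_j|x|^{-2}$ and $\partial_j\partial_k\ln|x|=\delta_{jk}|x|^{-2}-2x_jx_k|x|^{-4}$: each derivative lowers the order of a monomial by one, and differentiating a term of the form $|x|^0\ln|x|$ produces a pure power with no logarithm, which is exactly what keeps the derivative bounds logarithm-free in the dimensions where \eqref{C.10}, \eqref{C.11} say so.

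Concretely, I would split into odd and even dimensions. For $n=2m+1$, $m\in\bbN$, formula \eqref{C.8} exhibits $E_{2m+1}(z;x)$ as a \emph{finite} sum over $k=0,\dots,m-1$ of terms proportional to $z^{(m-1-k)/2}|x|^{-(m+k)}e^{iz^{1/2}|x|}$; the most singular ($k=m-1$) term, paired with the constant term of $e^{iz^{1/2}|x|}$, has vanishing power of $z^{1/2}$ and equals $E_n(0;x)$, while expanding $e^{iz^{1/2}|x|}=\sum_{\ell\ge 0}(iz^{1/2})^\ell|x|^\ell/\ell!$ in the remaining contributions and invoking the cancellation of the $|x|^{3-n}$ coefficient yields the claimed orders; all remaining terms are finite sums of monomials in $|x|$, so termwise differentiation is immediate. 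For $n=2m+2$, $m\in\bbN_0$, I would use \eqref{C.7} (and \eqref{C.1b} when $n=2$): besides finitely many monomials in $|x|$, $E_{2m+2}(z;x)$ now carries a logarithmic contribution which, after factoring the prefactor $(2\pi|x|/z^{1/2})^{-m}$ and writing $J_m(\zeta)=(\zeta/2)^m g_m(\zeta^2)$ with $g_m$ entire, takes the form $g_m(z|x|^2)\ln(z^{1/2}|x|/2)$, i.e.\ (entire in $|x|^2$) times $\ln|x|$; for $n=2$ its leading $\ln|x|$ piece is cancelled by $E_2(0;x)$, leaving $E_2(z;x)-E_2(0;x)=\mathrm{const}+O(|x|^2\ln|x|)$, whereas for $n\ge 4$ the $\ln|x|$ piece survives but is dominated by $|x|^{4-n}$ once $n\ge 6$, giving \eqref{C.9}. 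Differentiating $g_m(z|x|^2)\ln|x|$ gives $g_m(z|x|^2)\,x_j|x|^{-2}+2z\,g_m'(z|x|^2)x_j\ln|x|$, so the top-order part of the first derivative is the logarithm-free $g_m(0)x_j|x|^{-2}$, and similarly for the second derivative; this is what lands us inside \eqref{C.10} and \eqref{C.11}, with a logarithm surviving only for $n=2$ in \eqref{C.11}. The borderline cases $n=2,3,4$ I would treat by hand from \eqref{C.1b}, from \eqref{C.8} with $m=1$, and from \eqref{C.7} with $m=1$, respectively, and the generic estimate $n\ge 5$ uniformly from \eqref{C.7}--\eqref{C.8}.

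The work here is routine but error-prone: the substance is entirely in the power/logarithm bookkeeping across dimensions and across $0$, $1$, $2$ derivatives, the only genuinely interesting points being the vanishing of the $|x|^{3-n}$ coefficient in $E_n(z;\dott)$ for $n\ne 3$ and the fact that differentiating the $|x|^0\ln|x|$ contributions peels off one logarithm at top order. No analytic subtlety arises: for fixed $z\in\bbC\backslash\{0\}$ the argument $z^{1/2}|x|$ stays bounded and away from the branch cut $(-\infty,0]$ for $0<|x|<R$ (recall $\Im(z^{1/2})\ge 0$), so the expansions \eqref{C.2}--\eqref{C.8} and their termwise derivatives are all legitimate, and the constants $C=C(R,n,z)$ are simply bounds on the relevant series tails.
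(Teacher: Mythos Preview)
Your argument is correct. For \eqref{C.9} it coincides with the paper's: both read off the bounds from the expansions \eqref{C.7}--\eqref{C.8}, your PDE-based observation that the $|x|^{3-n}$ coefficient vanishes for $n\neq 3$ being a pleasant alternative to direct inspection of those expansions.

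Where you diverge from the paper is in \eqref{C.10} and \eqref{C.11}. You differentiate the small-$|x|$ expansions term by term, tracking how $\partial_j$ acts on each monomial and on $g_m(z|x|^2)\ln|x|$. The paper instead exploits the Bessel recursion $\frac{d}{d\zeta}(\zeta^{-\nu}\cC_\nu(\zeta))=-\zeta^{-\nu}\cC_{\nu+1}(\zeta)$ to obtain the dimension-shift identity
\[
\partial_j E_n(z;x)=-2\pi x_j E_{n+2}(z;x),
\]
valid for both $z\neq 0$ and $z=0$, and its iterate $\partial_j\partial_k E_n = 4\pi^2 x_jx_k E_{n+4} - 2\pi\delta_{j,k}E_{n+2}$. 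This reduces \eqref{C.10} and \eqref{C.11} to \eqref{C.9} applied at dimensions $n+2$ and $n+2,n+4$, multiplied by $|x|$ or $|x|^2$; the case analysis then collapses to a two-line check. Your approach is more elementary in that it does not appeal to any identity beyond the explicit series \eqref{C.3}--\eqref{C.6}, at the price of redoing the power/log bookkeeping at each differentiation; the paper's approach is shorter and more structural, but requires recognizing that the radial derivative of the $n$-dimensional fundamental solution is (up to a factor $x_j$) the $(n+2)$-dimensional one.
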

\begin{proof}
The estimates in \eqref{C.9} follow from combining \eqref{C.1}, \eqref{C.7}, 
and \eqref{C.8}. The estimates in \eqref{C.10} follow from the fact that
\begin{equation}
\partial_j E_n(z;x) = - 2\pi x_j E_{n+2}(z;x), \quad z\in\bbC\backslash\{0\}, 
\; x\in\bbR^n\backslash\{0\}, \; 1\le j \le n, \;  n\ge 2,   \lb{C.12}
\end{equation}
which permits one to reduce them essentially to \eqref{C.9} with $n$ 
replaced by $n+2$. The recursion relation \eqref{C.12} is a consequence 
of the well-known identity (cf. \cite[Sect.\,9.1]{AS72})
\begin{equation}
\f{d}{d\zeta}\big(\zeta^{-\nu} \cC_\nu(\zeta)\big) 
=-\zeta^{-\nu}\cC_{\nu+1}(\zeta), 
\quad \zeta\in\bbC\backslash\{0\}, \; \nu\in\bbR,   \lb{C.13}
\end{equation}
where $\cC_{\nu}(\dott)$ denotes any linear combination of Bessel 
functions of order $\nu$ with $\zeta$ and $\nu$ independent coefficients. 
Iterating \eqref{C.12} yields
\begin{align}
\begin{split}
\partial_j \partial_k E_n(z;x) = 4\pi^2 x_j x_k E_{n+4}(z;x) 
- 2\pi \delta_{j,k} E_{n+2}(z;x),& \\
z\in\bbC\backslash\{0\}, \; x\in\bbR^n\backslash\{0\}, \; 1\le j,k\le n, \;  
n\ge 2.&   \lb{C.14}
\end{split}
\end{align}
Combining \eqref{C.9} and \eqref{C.14} then yields \eqref{C.11}.
\end{proof}

Finally, we mention for completeness that for large values of $|x|$, 
\eqref{C.1} implies the following simple asymptotic behavior 
(cf. \cite[Sect.\,9.1]{AS72}):

\begin{equation}
E_n(z;x) \underset{|x|\to\infty}{=} 
\f{i}{z^{1/2}} \bigg(\f{2 \pi |x|}{z^{1/2}}\bigg)^{(1-n)/2} 
e^{i\big(z^{1/2}|x| -\pi((n-1)/4)\big)} \big[1 + \Oh\big(|x|^{-1}\big)\big], 
\quad z\in\bbC\backslash\{0\}, \; \Im\big(z^{1/2}\big)\ge 0.    \lb{C.15}
\end{equation} 
In particular, as long as $z\in\bbC\backslash [0,\infty)$ (and hence 
$\Im\big(z^{1/2}\big)>0$), $E_n(z;x)$ decays exponentially with respect 
to $x$ as $|x|\to\infty$.

\section{Calder\'on--Zygmund Theory on Lipschitz Surfaces}
\lb{sD}
\renewcommand{\theequation}{D.\arabic{equation}}
\renewcommand{\thetheorem}{D.\arabic{theorem}}
\setcounter{theorem}{0} \setcounter{equation}{0}

This appendix records various useful consequences of the Calder\'on--Zygmund 
theory on Lipschitz surfaces.

Our first result, Lemma \ref{L-cpt} below, is modeled upon a more general 
result in \cite{HMT}. For the sake of completeness we include the full 
argument. 

\begin{lemma}\label{L-cpt}
Let $\Omega\subset{\mathbb{R}}^n$ be a Lipschitz domain with compact 
boundary and let $k(\dott,\dott)$ be a real-valued, measurable function on 
$\partial\Omega\times\partial\Omega$ satisfying
\begin{equation}\label{MM-1.1}
|k(x,y)|\leq\frac{\psi(|x-y|)}{|x-y|^{n-1}},\quad x,y\in\partial\Omega,
\end{equation}
where $\psi$ is monotone increasing and satisfies
\begin{equation}\label{MM-1.2}
\int_0^1 dt \, \frac{\psi(t)}{t} <\infty.
\end{equation}
Consider
\begin{equation}\label{MM-1.3}
(Kf)(x)=\int_{\partial\Omega}d^{n-1}\omega(y)\,k(x,y)f(y),\quad
x\in\partial\Omega.
\end{equation}
Then  
\begin{equation}\label{MM-1.3a}
K\in \cB_\infty\big(L^p(\partial\Omega;d^{n-1}\omega)\big)
\end{equation}
for each $p\in (1,\infty)$.  
\end{lemma}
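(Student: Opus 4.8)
The plan is to show that the integral operator $K$ is the norm-limit (in $\cB(L^p(\dOm;d^{n-1}\omega))$) of truncated operators with bounded kernels, each of which is compact. First I would split the kernel at a small distance: for $\eps>0$, set $k_\eps(x,y)=k(x,y)\,\mathbf 1_{\{|x-y|>\eps\}}$ and $K_\eps$ the corresponding integral operator. Since $\dOm$ is compact and $k_\eps$ is bounded (by $\psi(\diam(\dOm))\eps^{1-n}$) and supported off the diagonal, $k_\eps\in L^2(\dOm\times\dOm;d^{n-1}\omega\otimes d^{n-1}\omega)$, so $K_\eps$ is Hilbert--Schmidt on $L^2(\dOm;d^{n-1}\omega)$, hence compact there. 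A bounded kernel on a finite measure space also gives a compact operator on every $L^p$, $1<p<\infty$ (e.g.\ by factoring through $L^2$ via Hölder, or by the Riesz--Thorin/interpolation argument together with the fact that the kernel is approximable in $L^\infty$ by finite-rank kernels on a compact set); I would record this as a standard fact.

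Next I would estimate the ``tail'' $K-K_\eps$, whose kernel is $k(x,y)\mathbf 1_{\{|x-y|\le\eps\}}$, using Schur's test. By \eqref{MM-1.1} and the Ahlfors regularity of the Lipschitz surface $\dOm$ (the surface measure satisfies $\omega(\dOm\cap B(x;r))\le C r^{n-1}$), one has, uniformly in $x\in\dOm$,
\[
\int_{\dOm}d^{n-1}\omega(y)\,|k(x,y)|\mathbf 1_{\{|x-y|\le\eps\}}
\le C\int_0^\eps dt\,\frac{\psi(t)}{t},
\]
after decomposing the region $\{|x-y|\le\eps\}$ into dyadic annuli and using that $\psi$ is monotone. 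By symmetry of the bound in $x$ and $y$, the same estimate holds for the integral in $x$. Schur's lemma then yields $\|K-K_\eps\|_{\cB(L^p(\dOm;d^{n-1}\omega))}\le C\int_0^\eps dt\,\psi(t)/t$ for every $p\in(1,\infty)$, and by the Dini-type condition \eqref{MM-1.2} this tends to $0$ as $\eps\downarrow0$.

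Combining the two steps, $K$ is a norm-limit of compact operators on $L^p(\dOm;d^{n-1}\omega)$ and is therefore itself compact, giving \eqref{MM-1.3a}. The main obstacle — and the only place any geometry enters — is the dyadic-annulus estimate of the tail: one must invoke the Ahlfors (upper) regularity of the Lipschitz boundary to control $\omega(\dOm\cap B(x;2^{-j}\eps))$ and then sum $\sum_j 2^{-j(n-1)\cdot}\,\eps^{n-1}\,\psi(2^{-j}\eps)(2^{-j}\eps)^{1-n}\sim\sum_j\psi(2^{-j}\eps)$, which is comparable to $\int_0^\eps \psi(t)\,dt/t$ precisely because $\psi$ is increasing. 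Everything else (Hilbert--Schmidt compactness of $K_\eps$, passage to $L^p$, closedness of the compacts under norm limits) is routine.
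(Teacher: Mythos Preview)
Your proposal is correct and follows essentially the same approach as the paper: decompose $k=k_\eps+k_b$ at distance $\eps$, show the bounded-kernel piece $K_b$ is Hilbert--Schmidt on $L^2$ and hence compact, then push compactness to all $L^p$ by interpolation, and finally control $\|K-K_\eps\|_{\cB(L^p)}$ via Schur's test and a dyadic-annulus estimate using the Ahlfors regularity of $\dOm$, obtaining the Dini bound $C\int_0^\eps\psi(t)\,dt/t\to 0$. The only point where the paper is sharper is the passage from $L^2$-compactness of $K_b$ to $L^p$-compactness: your ``factoring through $L^2$'' or ``$L^\infty$-approximation by finite-rank kernels'' sketch is a bit loose (a general bounded measurable kernel need not be approximable in $L^\infty$), whereas the paper simply invokes Krasnoselski's interpolation theorem (compact on $L^2$, bounded on $L^1$ and $L^\infty$ by Schur, hence compact on every $L^p$, $1<p<\infty$).
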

\begin{proof} For a fixed, arbitrary $\varepsilon>0$, decompose 
$k(x,y)=k_{\varepsilon}(x,y)+k_b(x,y)$, where 
\begin{equation}\label{MM-1.4}
k_{\varepsilon}(x,y)=\begin{cases} 
k(x,y), & |x-y|\leq\varepsilon, \\ 
0, & |x-y|>\varepsilon.
\end{cases}
\end{equation}
Then $K=K_{\varepsilon}+K_b$, where $K_{\varepsilon}$, $K_b$ 
are integral operators on $\partial\Omega$ with integral kernels 
$k_{\varepsilon}(x,y)$ and $k_b(x,y)$, respectively. Setting 
\begin{equation}\label{MM-1.5}
S_j(x)=\big\{y\in\partial\Omega\,|\,2^{-j-1}\leq |x-y|<2^{-j}\big\},\quad 
x\in\partial\Omega, \; j\in\bbN, 
\end{equation}
for each $x\in\partial\Omega$ 
we may then compute (with the logarithm taken in base $2$) 
\begin{align}\label{MM-1.6}
\int_{\partial\Omega}d^{n-1}\omega(y)\,|k_{\varepsilon}(x,y)| & \leq 
C\sum_{j\geq\log 1/\varepsilon}\int_{S_j(x)}d^{n-1}\omega(y)\,
\frac{\psi(|x-y|)}{|x-y|^{n-1}} 
\nonumber\\
& \leq  C\sum_{j\geq\log 1/\varepsilon}\psi(e^{-j})
\leq C\int_0^\varepsilon  dt \, \frac{\psi(t)}{t}. 
\end{align}
Of course, there is a similar estimate for 
$\int_{\partial\Omega}d^{n-1}\omega(x)\,|k_{\varepsilon}(x,y)|$, uniformly for 
$y\in\partial\Omega$. Schur's lemma then yields 
\begin{equation}\label{MM-1.7}
\|K_{\varepsilon}\|_{\cB(L^p(\partial\Omega;d^{n-1}\omega))}
\leq C\int_0^\varepsilon dt \, \frac{\psi(t)}{t} \rightarrow 0\, \text{ as }\, 
\varepsilon\rightarrow 0. 
\end{equation}
Thus, it suffices to show that $K_b$ is compact on each 
$L^p(\partial\Omega;d^{n-1}\omega)$ space, for $p\in (1,\infty)$, under 
the hypothesis that $k_b(x,y)$ is bounded. First note that $K_b$ is compact 
on $L^2(\partial\Omega,d^{n-1}\omega)$, since it is Hilbert-Schmidt, 
due to the fact that $\omega(\partial\Omega)<\infty$. The compactness of 
$K_b$ on $L^p(\partial\Omega,d^{n-1}\omega)$ for each $p\in (1,\infty)$ 
then follows from an interpolation theorem of Krasnoselski 
(see, e.g., \cite[Theorem 2.9, p.\,203]{BS}). 
\end{proof}

We now record a basic result from the theory of singular integral
operators of Calder\'on--Zygmund-type on Lipschitz domains. To state it,
we recall that $\hat f$ denotes the Fourier transform of appropriate functions 
$f\colon{\mathbb{R}}^n\to\bbC$.
Moreover, given a Lipschitz domain $\Omega\subset{\mathbb{R}}^n$, set
$\Omega_+=\Omega$, $\Omega_-={\mathbb{R}}^n\backslash\ol{\Omega}$, 
we define the nontangential approach regions $\Gamma^\pm_{\kappa}(x)$,
$x\in\partial\Omega$, by $\Gamma^\pm_{\kappa}(x)=\{y\in\Omega_\pm\,|\,
|x-y|<(1+\kappa)\,{\rm dist}\,(y,\partial\Omega)\}$, where $\kappa>0$ is a
fixed parameter. Next, at every boundary point the nontangential maximal
function of a mapping $u$ (defined in either $\Omega_{+}$ or $\Omega_{-}$) 
is given by 
\begin{equation}\label{Rb-2}
(M u)(x)=\sup\,\{|u(y)| \,|\, y\in\Gamma^\pm_{\kappa}(x)\}
\end{equation}
(with the choice of sign depending on whether $u$ is 
defined in $\Omega_+$, or $\Omega_-$) and, for $u$ defined in $\Omega_\pm$, 
we set
\begin{equation}\label{MM-0}
(\gamma_{\rm n.t.} u)(x)=\lim_{\stackrel{y\to x}
{y\in\Gamma^\pm_{\kappa}(x)}}u(y) \, 
\text{ for a.e.\ }x\in\partial\Omega.
\end{equation}
For future reference, let us record here a useful
estimate proved in \cite{DM02}, valid for any Lipschitz domain 
$\Omega\subset{\mathbb{R}}^n$ which is either bounded or has 
an unbounded boundary. In this setting, for any $p\in(0,\infty)$ and 
any function $u$ defined in $\Omega$, 
\begin{equation}\label{MD-E}
\|u\|_{L^{np/(n-1)}(\Omega;d^nx)}\leq C(\Omega,n,p)
\|Mu\|_{L^p(\partial\Omega;d^{n-1}\omega)}. 
\end{equation}

\begin{theorem}\label{T-CZ}
There exists a positive integer $N=N(n)$ with the following significance.
Let $\Omega\subset{\mathbb{R}}^n$ be a Lipschitz domain with compact boundary, 
and assume that  
\begin{equation}\label{ker}
k\in C^N({\mathbb{R}}^n\backslash\{0\})\,\text{ with }\, 
k(-x)=-k(x)\, \text{ and }\, k(\lambda x)=\lambda^{-(n-1)}k(x), \; 
\lambda>0, \; x\in\bbR^n\backslash\{0\}. 
\end{equation}
Define the singular integral operator
\begin{equation}\label{T-layer}
({\mathcal{T}}f)(x)=\int_{\partial\Omega}d^{n-1}\omega(y)\,k(x-y)f(y),
\quad x\in{\mathbb{R}}^n\backslash\partial\Omega.
\end{equation}
Then for each $p\in(1,\infty)$ there exists a finite constant
$C=C(p,n,\partial\Omega)>0$ such that
\begin{equation}\label{T-nt}
\|M({\mathcal{T}}f)\|_{L^p(\partial\Omega;d^{n-1}\omega)}
\leq C\|k|_{S^{n-1}}\|_{C^N}\|f\|_{L^p(\partial\Omega;d^{n-1}\omega)}.
\end{equation}
Furthermore, for each $p\in (1,\infty)$, 
$f\in L^p(\partial\Omega;d^{n-1}\omega)$, the limit 
\begin{equation}\label{main-lim}
(Tf)(x)={\rm p.v.}\int_{\partial\Omega}d^{n-1}\omega(y)\,k(x-y)f(y)
=\lim_{\varepsilon\to 0^+}
\int_{\stackrel{y\in\partial\Omega}{|x-y|>\varepsilon}}
d^{n-1}\omega(y)\,k(x-y)f(y)
\end{equation}
exists for a.e. $x\in\partial\Omega$, and the jump-formula 
\begin{equation}\label{main-jump}
\gamma_{\rm n.t.}({\mathcal{T}}f)(x)=
\lim_{\stackrel{z\to x}{z\in\Gamma^{\pm}_{\kappa}(x)}}({\mathcal{T}}f)(z)
=\pm{\textstyle{\frac{1}{2i}}}{\widehat{k}}(\nu(x))f(x) + (Tf)(x)
\end{equation}
is valid at a.e.\ $x\in\partial\Omega$, where $\nu$ denotes the
unit normal pointing outwardly relative to $\Omega$ (recall that `hat' denotes
the Fourier transform in $\bbR^n$). 

Finally, 
\begin{equation}\label{H1/2}
\|{\mathcal{T}}f\|_{H^{1/2}(\Omega)}\leq 
C\|f\|_{L^2(\partial\Omega;d^{n-1}\omega)}. 
\end{equation}
\end{theorem}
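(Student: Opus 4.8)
The plan is to reduce \eqref{H1/2} to the single layer potential estimate \eqref{S-map}, which is already available. The starting point is the elementary identity
\[
k(z)=\sum_{j=1}^{n}\partial_{z_j}\bigl(z_j\,k(z)\bigr),\qquad z\in\bbR^n\backslash\{0\},
\]
valid for any $k$ which is positively homogeneous of degree $-(n-1)$ and of class $C^1$ off the origin; indeed, it follows immediately from Euler's relation $z\cdot\nabla k=-(n-1)k$. Each kernel $V_j(z):=z_j\,k(z)$ is even, of class $C^{N}$ on $\bbR^n\backslash\{0\}$, and positively homogeneous of degree $-(n-2)$, that is, of exactly the type of the kernel of a single layer potential. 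Setting
\[
(\mathcal{U}_j f)(x)=\int_{\partial\Omega}d^{n-1}\omega(y)\,V_j(x-y)\,f(y),\qquad x\in\Omega,
\]
and differentiating under the integral sign — which is legitimate for $x\in\Omega$, since there $V_j(x-\dott)$ is $C^N$ on $\partial\Omega$ with bounds uniform on compact subsets of $\Omega$ — the identity above yields $\mathcal{T}f=\sum_{j=1}^{n}\partial_j(\mathcal{U}_j f)$ in $\Omega$.

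The next step is to note that a direct inspection of the proof of \eqref{S-map} in \cite{MT00} shows that it uses only the homogeneity and the $C^N$-smoothness of the kernel away from the origin (for $\mathcal{S}_{-1}$ one first splits off the homogeneous principal part of $E_n(-1;\dott)$); hence it applies, in fact in a slightly simpler form, to each $\mathcal{U}_j$, giving
\[
\mathcal{U}_j\in\cB\bigl(L^2(\partial\Omega;d^{n-1}\omega),\,H^{3/2}(\Omega)\bigr),\qquad j=1,\dots,n .
\]
Since $\partial_j\colon H^{3/2}(\Omega)\to H^{1/2}(\Omega)$ is bounded and $\mathcal{U}_j f\in C^N(\Omega)$, so that its classical and distributional derivatives in $\Omega$ agree, one concludes that $\mathcal{T}f=\sum_j\partial_j(\mathcal{U}_j f)\in H^{1/2}(\Omega)$ with $\|\mathcal{T}f\|_{H^{1/2}(\Omega)}\leq C\sum_j\|\mathcal{U}_j f\|_{H^{3/2}(\Omega)}\leq C\|f\|_{L^2(\partial\Omega;d^{n-1}\omega)}$, which is \eqref{H1/2}.

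For $n=2$ the kernels $V_j$ are merely bounded (positively homogeneous of degree $0$), and this reduction becomes essentially circular; in that case I would instead run the Calder\'on--Zygmund argument directly. Concretely, after localizing by a smooth partition of unity and flattening the graph of the defining Lipschitz function to the upper half-space — using the stability of $H^{1/2}$ under bi-Lipschitz maps and under multiplication by $C_0^\infty$ functions, cf.\ Appendix \ref{sA} — one combines the nontangential maximal function bound \eqref{T-nt} with a square-function (Lusin area integral) estimate for $\nabla\mathcal{T}f$ along the Lipschitz graph, which rests on the $L^2$-theory of Calder\'on commutators and of odd, homogeneous singular integrals along Lipschitz graphs, and feeds these into the corresponding tent-space/Triebel--Lizorkin description of $H^{1/2}(\Omega)$; this is precisely the scheme of \cite{MT00}, which already covers $n=2$. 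I expect the main obstacle to be exactly this last, purely Calder\'on--Zygmund, square-function estimate (equivalently, the verification that the proof of \eqref{S-map} carries over to the general homogeneous kernels $V_j$); the localization and the passage from the maximal-function/area-integral bounds to the $H^{1/2}(\Omega)$-estimate are then routine.
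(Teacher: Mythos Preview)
The paper does not actually prove Theorem \ref{T-CZ}; immediately after the statement it writes ``See the discussion in \cite{CDM}, \cite{CMM}, \cite{MMT01}.'' So there is no argument in the paper to compare against --- your proposal is in effect supplying a partial proof where the authors chose to cite.

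Your proposal treats only \eqref{H1/2}; the maximal-function bound \eqref{T-nt}, the a.e.\ existence of the principal value \eqref{main-lim}, and the jump relation \eqref{main-jump} are not addressed except as black boxes in your $n=2$ fallback. If that restriction is deliberate, say so explicitly.

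For $n\ge 3$ your Euler-identity reduction $k=\sum_j\partial_{z_j}(z_jk)$ is correct and clean, and the kernels $V_j=z_jk$ indeed are even, $C^N$ off the origin, and homogeneous of degree $-(n-2)$, hence of single-layer type. The one soft point is logical rather than analytic: within \emph{this} paper, the $H^{3/2}$ mapping property of the single layer (Theorem \ref{T-SH}) is derived \emph{from} \eqref{H1/2} via Theorem \ref{liftOmega}, so you cannot appeal to Theorem \ref{T-SH} without circularity. You avoid this by grounding the $\mathcal U_j\colon L^2(\partial\Omega)\to H^{3/2}(\Omega)$ bound directly in \cite{MT00} and asserting that the argument there uses only homogeneity and $C^N$-regularity of the kernel; that is true, but it is an external claim you should state clearly rather than leave to ``direct inspection.''

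For $n=2$ you correctly observe that $V_j$ drops to degree zero and the reduction collapses. Your outlined fallback (localize and flatten, then combine the nontangential maximal bound with a square-function estimate and the Triebel--Lizorkin description of $H^{1/2}$) is precisely the route taken in the references the paper cites, so at that point you are reproducing \cite{MT00}/\cite{MMT01} rather than shortcutting them. Net effect: your argument genuinely simplifies \eqref{H1/2} for $n\ge 3$ by trading the direct Calder\'on--Zygmund square-function estimate for an algebraic identity plus an existing single-layer bound; for $n=2$, and for the other three conclusions of the theorem, it offers no economy over the cited literature.
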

\noindent See the discussion in \cite{CDM}, \cite{CMM}, \cite{MMT01}. 

\begin{lemma}\label{L-Kb}
Whenever $\Om$ is a Lipschitz domain with compact boundary in $\bbR^n$, 
\begin{equation}\lb{MM-1.8}
K^{\#}_z \in \cB\big(\LdOm\big), \quad z\in\bbC,   
\end{equation}
and 
\begin{align}\lb{MM-1.9}
&\big(K^{\#}_{z_1} - K^{\#}_{z_2}\big)\in \cB_\infty\big(\LdOm\big), 
\quad z_1,z_2\in\bbC,   \\ 
& \gamma_D\cS_{z}\in \cB\big(\LdOm,H^1(\dOm)\big),\quad z\in\bbC.
\lb{MM-1.9X}
\end{align}
\end{lemma}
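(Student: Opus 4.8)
All three assertions will be reduced to the Calder\'on--Zygmund machinery recorded in Theorem \ref{T-CZ}, together with the pointwise estimates for $E_n(z;\dott)$ and its derivatives from Lemma \ref{lC.1} and the compactness criterion of Lemma \ref{L-cpt}. I would handle \eqref{MM-1.8} and \eqref{MM-1.9} simultaneously by writing $K^{\#}_z = K^{\#}_0 + (K^{\#}_z - K^{\#}_0)$. For the first term, the recursion \eqref{C.12} lets one express the kernel of $K^{\#}_0$ as $\partial_{\nu_x}E_n(0;x-y)=\sum_{j=1}^n\nu_j(x)\,(\partial_j E_n)(0;x-y)$, and for each $j$ the function $k_j:=(\partial_j E_n)(0;\dott)$ is smooth on $\bbR^n\setminus\{0\}$, odd, and positive-homogeneous of degree $-(n-1)$. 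Hence Theorem \ref{T-CZ} (with $p=2$) applies to the convolution kernel $k_j$: the associated principal-value operator $T_j$ on $\partial\Omega$ is bounded on $\LdOm$ (from \eqref{main-lim}, \eqref{main-jump}, and \eqref{T-nt}, using $|\gamma_{\rm n.t.}(\mathcal{T}f)|\le M(\mathcal{T}f)$ and $\widehat{k_j}|_{S^{n-1}}\in L^\infty$). Since $\nu_j\in L^\infty(\partial\Omega)$, it follows that $K^{\#}_0=\sum_j M_{\nu_j}T_j\in\cB(\LdOm)$, where $M_{\nu_j}$ denotes multiplication by $\nu_j$.

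\textbf{The difference.} The kernel of $K^{\#}_z - K^{\#}_0$ is $\sum_j\nu_j(x)\big[(\partial_jE_n)(z;x-y)-(\partial_jE_n)(0;x-y)\big]$, which by \eqref{C.10} is dominated, for $x,y$ ranging over the bounded set $\partial\Omega$, by a constant when $n\in\{2,3\}$ and by $C\big(|x-y|^{3-n}+1\big)$ when $n\ge 4$. In every case this kernel obeys $|k(x,y)|\le\psi(|x-y|)/|x-y|^{n-1}$ with $\psi(t)=C(t^{2}+t^{n-1})$ monotone increasing and $\int_0^1\psi(t)\,t^{-1}\,dt<\infty$, so Lemma \ref{L-cpt} (with $p=2$) gives $K^{\#}_z - K^{\#}_0\in\cB_\infty(\LdOm)$. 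Adding this to the previous step proves \eqref{MM-1.8} (bounded plus compact), while $K^{\#}_{z_1}-K^{\#}_{z_2}=(K^{\#}_{z_1}-K^{\#}_0)-(K^{\#}_{z_2}-K^{\#}_0)$ proves \eqref{MM-1.9}.

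\textbf{The trace of the single layer.} For \eqref{MM-1.9X} I would first observe that $\cS_z$ maps $\LdOm$ boundedly into $\{u\in H^{3/2}(\Omega)\,|\,\Delta u\in L^2(\Omega;d^nx)\}$: boundedness into $H^{3/2}(\Omega)$ is Theorem \ref{T-SH} (which itself rests on \eqref{H1/2} applied to the degree-$(1-n)$ kernels $(x_j-y_j)E_{n+2}(z;x-y)$ produced by \eqref{C.12}, with the weak-singularity correction controlled by \eqref{C.10}), and then $\Delta\cS_z g=-z\,\cS_z g\in H^{3/2}(\Omega)\hookrightarrow L^2(\Omega;d^nx)$ with a matching bound. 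Composing with the boundedness of $\gamma_D$ on $\{u\in H^{3/2}(\Omega)\,|\,\Delta u\in L^2(\Omega;d^nx)\}$ into $H^1(\partial\Omega)$ — precisely \eqref{Mam-2} of Lemma \ref{Gam-L1} specialized to $s=-1$ — yields \eqref{MM-1.9X}. Alternatively one can bypass Theorem \ref{T-SH} and argue directly through the tangential-derivative description \eqref{M1.1}--\eqref{M1.1y}: by \eqref{Mam-17}, $\partial(\gamma_D\cS_z g)/\partial\tau_{j,k}=\nu_j\gamma_D(\partial_k\cS_z g)-\nu_k\gamma_D(\partial_j\cS_z g)$, and each $\gamma_D(\partial_\ell\cS_z g)$ lies in $\LdOm$ with norm $\lesssim\|g\|_{\LdOm}$ by Theorem \ref{T-CZ} and Lemma \ref{L-cpt} applied to the kernel $(\partial_\ell E_n)(z;x-y)$.

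\textbf{Main obstacle.} The genuinely delicate point is \eqref{MM-1.9X}: the single layer smooths by exactly $3/2$ orders, so its range sits at the borderline $s=3/2$ at which the classical trace theorem fails on Lipschitz domains, and one cannot simply invoke $\gamma_D\in\cB(H^{3/2}(\Omega),H^1(\partial\Omega))$. The argument must therefore be routed through the refined trace operator of Lemma \ref{Gam-L1}, equivalently through the characterization of $H^1(\partial\Omega)$ by tangential derivatives. The first two assertions are comparatively routine once the kernel estimates of Appendix \ref{sC} are in hand.
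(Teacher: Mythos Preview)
Your proof is correct and tracks the paper's approach: for \eqref{MM-1.8}--\eqref{MM-1.9} you decompose exactly as the paper does (Theorem~\ref{T-CZ} for $K^{\#}_0$, then Lemma~\ref{L-cpt} combined with \eqref{C.10} for the difference), and for \eqref{MM-1.9X} your alternative via the tangential-derivative characterization \eqref{M1.1y} is precisely what the paper's one-line argument invokes. Your first route for \eqref{MM-1.9X}, through Theorem~\ref{T-SH} and \eqref{Mam-2}, is also valid and is in fact the combination the paper itself uses later in \eqref{Ine-2}; there is no circularity, since neither Theorem~\ref{T-SH} nor Lemma~\ref{Gam-L1} relies on Lemma~\ref{L-Kb}.
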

\begin{proof} 
We recall the fundamental solution $E_n(z;\dott)$ for the 
Helmholtz equation $(-\Delta-z)\psi(z;\dott)=0$ in ${\mathbb{R}}^n$ 
introduced in \eqref{2.52}. Then the integral kernel of the operator 
$K^{\#}_{z} - K^{\#}_{0}$ is given by 
\begin{equation}\label{MM-1.10}
k(x,y)=\nu(x)\cdot\big(\nabla E_n(z;x-y)-\nabla E_n(0;x-y)\big),\quad
x,y\in\partial\Omega. 
\end{equation}
By \eqref{C.10} we therefore have $|k(x,y)|\leq C|x-y|^{2-n}$, 
hence \eqref{MM-1.1} holds with $\psi(t)=t$. Note that \eqref{MM-1.2} is
satisfied for this choice of $\psi$, so \eqref{MM-1.9} is a 
consequence of Lemma \ref{L-cpt}. In addition, \eqref{MM-1.8} follows 
from \eqref{MM-1.9} and Theorem \ref{T-CZ}, according to which 
$K^{\#}_0 \in \cB\big(\LdOm\big)$. Finally, the reasoning for 
\eqref{MM-1.9X} is similar (here \eqref{M1.1y} is useful). 
\end{proof}

\begin{lemma}\label{L-KbX}
If $\Om$ is a $C^{1,r}$, $r>1/2$, domain in $\bbR^n$ with compact boundary, then
\begin{eqnarray}\lb{MM-1G}
\big(K^{\#}_{z_1} - K^{\#}_{z_2}\big)\in \cB_\infty\big(H^{1/2}(\dOm)\big), 
\quad z_1,z_2\in\bbC.
\end{eqnarray}
\end{lemma}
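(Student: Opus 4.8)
The plan is to upgrade the $L^2(\dOm)$-compactness already available from Lemma~\ref{L-Kb} to compactness on $H^{1/2}(\dOm)$ by interpolation. Recall from \eqref{MM-1.9} that $K^{\#}_{z_1}-K^{\#}_{z_2}\in\cB_\infty\big(\LdOm\big)$, which applies here since a $C^{1,r}$-domain is in particular Lipschitz. The extra ingredient I would establish is boundedness on a Sobolev space of positive order on the boundary, namely $K^{\#}_{z_1}-K^{\#}_{z_2}\in\cB\big(H^1(\dOm)\big)$ whenever $\Om$ is of class $C^{1,r}$ with $r>1/2$. Granting this, Cwikel's theorem (Theorem~\ref{T-Cwikel}), applied to the compatible couple $\big(H^1(\dOm),\LdOm\big)$ — on which $K^{\#}_{z_1}-K^{\#}_{z_2}$ is bounded at the first endpoint and compact at the second — together with \eqref{Gh-a2} (which identifies the real interpolation space $\big(H^1(\dOm),\LdOm\big)_{1/2,2}$ with $H^{1/2}(\dOm)$), yields $K^{\#}_{z_1}-K^{\#}_{z_2}\in\cB_\infty\big(H^{1/2}(\dOm)\big)$, that is, \eqref{MM-1G}.

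The substance of the argument lies in the $H^1(\dOm)$-boundedness, and it hinges on the fact that passing to the \emph{difference} produces a much milder kernel than either $K^{\#}_{z_j}$ separately. Writing the kernel of $K^{\#}_{z_1}-K^{\#}_{z_2}$ as $k(x,y)=\nu(x)\cdot G(x-y)$ with $G(w)=\nabla E_n(z_1;w)-\nabla E_n(z_2;w)$, the Appendix~\ref{sC} estimates give $|G(w)|\le C|w|^{2-n}$ (as in the proof of Lemma~\ref{L-Kb}, via \eqref{C.10}) and, crucially, also $|\nabla G(w)|=|\nabla^2E_n(z_1;w)-\nabla^2E_n(z_2;w)|\le C|w|^{2-n}$ by \eqref{C.11}; thus $K^{\#}_{z_1}-K^{\#}_{z_2}$ is a Calder\'on--Zygmund operator \emph{of order $-1$}, one order smoother than $K^{\#}_{z_j}$. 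Using the boundary description \eqref{M1.1} of $H^1(\dOm)$, one must bound $\partial\big[(K^{\#}_{z_1}-K^{\#}_{z_2})f\big]/\partial\tau_{j,k}$ in $\LdOm$ for $f\in H^1(\dOm)$; via the tangential integration by parts formula (Lemma~\ref{Lhf-2}) and a commutator analysis in the spirit of the one carried out for single-layer potentials in the proof of Theorem~\ref{t2.10} (cf.\ \eqref{imp-id}), tangential differentiation produces, besides operators with weakly singular kernels of size $\le C|x-y|^{2-n}$ — compact, hence bounded, on $\LdOm$ by Lemma~\ref{L-cpt} — only terms built from the factor $\nu$, which is a multiplier on $\LdOm$ (trivially) and on $H^{1/2}(\dOm)$ (Lemma~\ref{lA.6}), acting on tangential derivatives of $f$, which belong to $\LdOm$. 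This gives $\|(K^{\#}_{z_1}-K^{\#}_{z_2})f\|_{H^1(\dOm)}\le C\|f\|_{H^1(\dOm)}$. Alternatively, one may simply invoke the Calder\'on--Zygmund calculus on $C^{1,r}$-surfaces of \cite{MMT01}, \cite{MT00}, which directly furnishes $K^{\#}_{z_1}-K^{\#}_{z_2}\in\cB\big(H^{1/2}(\dOm),H^{3/2}(\dOm)\big)$ when $r>1/2$; in that case \eqref{MM-1G} follows at once from the (Rellich-type) compact embedding $H^{3/2}(\dOm)\hookrightarrow H^{1/2}(\dOm)$, legitimate on a $C^{1,r}$-domain since $3/2<1+r$, and Cwikel's theorem is not even needed.

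I expect the commutator analysis sketched above to be the main obstacle, since it is precisely the place where one uses that $\dOm$ is $C^{1,r}$ rather than merely Lipschitz, and it must be handled with some care: $\nu$ is only H\"older continuous, so its tangential derivatives are distributions rather than functions, which forces one to differentiate only the genuinely singular part of $k(x,y)$ and to keep $\nu$ under the integral sign wherever possible. Everything goes through because the difference $K^{\#}_{z_1}-K^{\#}_{z_2}$ trades the order-$0$ kernel $\nu(x)\cdot\nabla E_n(z;x-y)$ for the order-$(-1)$ kernel $\nu(x)\cdot G(x-y)$, creating exactly the room needed to gain regularity — and thus compactness — on a boundary that is only slightly better than Lipschitz.
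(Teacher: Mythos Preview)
Your route via Cwikel interpolation is workable in principle but is more circuitous than the paper's argument, and the very step you flag as the ``main obstacle'' (boundedness of $K^{\#}_{z_1}-K^{\#}_{z_2}$ on $H^1(\dOm)$) is genuinely problematic: with $\nu$ only in $C^r(\dOm)$, $r<1$, multiplication by $\nu_\ell$ is generally \emph{not} bounded on $H^1(\dOm)$, so the factorization $M_\nu T$ does not yield an $H^1\to H^1$ map without the delicate commutator argument you only sketch. One can repair this by replacing $H^1$ with $H^s$ for some $s\in(1/2,r)$ (where $M_\nu$ \emph{is} a multiplier, by the same \cite{RuSi} result behind Lemma~\ref{lA.6}), but at that point Cwikel becomes superfluous: once you know the operator maps $L^2(\dOm)$ into some $H^s(\dOm)$ with $s>1/2$, compactness on $H^{1/2}(\dOm)$ follows directly from the compact embedding $H^s(\dOm)\hookrightarrow H^{1/2}(\dOm)$.

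This is precisely what the paper does, and it sidesteps the differentiation of $\nu$ altogether. Write $K^{\#}_{z_1}-K^{\#}_{z_2}=\sum_\ell M_{\nu_\ell}T_\ell$, where $T_\ell$ is the boundary operator with the translation-invariant kernel $\partial_\ell E_n(z_1;x-y)-\partial_\ell E_n(z_2;x-y)$. By \eqref{C.11}, the tangential derivatives of this kernel are $O(|x-y|^{2-n})$, so Lemma~\ref{L-cpt} (with $\psi(t)=t$) and the description \eqref{M1.1} give $T_\ell\in\cB\bigl(L^2(\dOm),H^1(\dOm)\bigr)$. Hence $T_\ell\colon H^{1/2}(\dOm)\hookrightarrow L^2(\dOm)\to H^1(\dOm)\hookrightarrow H^{1/2}(\dOm)$ is compact, and composing with $M_{\nu_\ell}\in\cB\bigl(H^{1/2}(\dOm)\bigr)$ (Lemma~\ref{lA.6}, which is exactly where $r>1/2$ enters) finishes. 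The point is that the paper uses $M_\nu$ only on $H^{1/2}$, never on $H^1$, so the Leibniz-rule issue with $\partial_\tau\nu$ simply does not arise; your interpolation machinery and your second alternative via the pseudodifferential calculus of \cite{MMT01} are both unnecessary.
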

\begin{proof} 
The integral kernel of the operator $K^{\#}_{z} - K^{\#}_{0}$ is given by 
\eqref{MM-1.10}. By Lemma~\ref{lA.6}, the operator of multiplication 
by components of $\nu\in [C^r(\dOm)]^n$ belongs to 
$\cB\bigl(H^{1/2}(\dOm)\bigr)$. Hence, it suffices to show that
the boundary integral operators whose integral kernels are of the form 
\begin{equation}\label{MK-1}
\partial_j E_n(z;x-y)-\partial_j E_n(0;x-y)\big),\quad
x,y\in\partial\Omega,\,\,\,j\in\{1,...,n\},
\end{equation}
belong to $\cB\bigl(L^2(\dOm;d^{n-1}\omega),H^1(\dOm)\bigr)$.
This, however, is a consequence of \eqref{Pf-2.3}, \eqref{Pf-2.4} (with $s=1$),
\eqref{C.11}, and Lemma~\ref{L-cpt} (with $\psi(t)=t$). 
\end{proof}

\begin{lemma}\label{L-M2}
Let $0<\alpha<(n-1)$ and $1<p<q<\infty$ be related by 
\begin{equation}\label{MM-3.1}
\frac{1}{q}=\frac{1}{p}-\Bigl(\alpha+\frac{1}{p}\Bigr)\frac{1}{n}.
\end{equation}
Then the the operator $J_\alpha$ defined by 
\begin{equation}\label{MM-3.2}
J_\alpha f(x)=\int_{{\mathbb{R}}^{n-1}}d^{n-1}y\,
\frac{1}{|x-y|^{n-1-\alpha}}f(y),\quad x\in\overline{{\mathbb{R}}^n_+}, \; 
f\in L^p({\mathbb{R}}^{n-1};d^{n-1}x), 
\end{equation}
is bounded from $L^p({\mathbb{R}}^{n-1};d^{n-1}x)$ to 
$L^q({\mathbb{R}}^n_+;d^nx)$, that is, for some constant $C_{\alpha,p,q}>0$,
\begin{equation}\label{MM-3.3}
\|J_\alpha f\|_{L^q({\mathbb{R}}^n_+;d^nx)}\leq 
C_{\alpha,p,q}\|f\|_{L^p({\mathbb{R}}^{n-1};d^{n-1}x)}, \quad 
f\in L^p({\mathbb{R}}^{n-1};d^{n-1}x).
\end{equation}
\end{lemma}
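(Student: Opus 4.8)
The plan is to prove Lemma~\ref{L-M2} by estimating the kernel $k(x,y)=|x-y|^{-(n-1-\alpha)}$ in suitable weak Lebesgue norms --- one in each variable, uniformly with respect to the other --- and then appealing to the standard Schur-type boundedness principle for integral operators with such kernels (a weak-type refinement of Young's inequality, obtained by real interpolation from two endpoint estimates).

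First I would record two elementary distribution-function bounds. Put $a=n/(n-1-\alpha)$ and $b=(n-1)/(n-1-\alpha)$, so that $1<a,b<\infty$ since $0<\alpha<n-1$. For fixed $y\in\bbR^{n-1}$ and $\lambda>0$, the superlevel set $\{x\in\bbR^n_+\,|\,k(x,y)>\lambda\}$ is contained in the Euclidean ball of radius $\lambda^{-1/(n-1-\alpha)}$ about $y$, whence $\bigl|\{x\in\bbR^n_+\,|\,k(x,y)>\lambda\}\bigr|\leq c_n\,\lambda^{-a}$; consequently $\sup_{y\in\bbR^{n-1}}\|k(\dott,y)\|_{L^{a,\infty}(\bbR^n_+;d^nx)}<\infty$. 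For the complementary variable, write $x=(x',x_n)$ and note that $|x-y|^2=|x'-y|^2+x_n^2\geq x_n^2$; hence $\{y\in\bbR^{n-1}\,|\,k(x,y)>\lambda\}$ is empty once $\lambda^{-1/(n-1-\alpha)}<x_n$ and is otherwise a ball in $\bbR^{n-1}$ of radius at most $\lambda^{-1/(n-1-\alpha)}$, so that $\bigl|\{y\in\bbR^{n-1}\,|\,k(x,y)>\lambda\}\bigr|\leq c_{n-1}\,\lambda^{-b}$ and therefore $\sup_{x\in\bbR^n_+}\|k(x,\dott)\|_{L^{b,\infty}(\bbR^{n-1};d^{n-1}y)}<\infty$. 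It is precisely the presence of the strictly positive coordinate $x_n$ that renders this last bound uniform in $x$, despite $\bbR^{n-1}$ having infinite measure.

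Next I would convert these kernel estimates into endpoint operator bounds. H\"older's inequality in Lorentz spaces gives that $J_\alpha$ maps $L^{b',1}(\bbR^{n-1})$ boundedly into $L^\infty(\bbR^n_+)$, while dualizing the analogous statement for the formal transpose $g\mapsto\int_{\bbR^n_+}k(\dott,y)g$ shows that $J_\alpha$ maps $L^1(\bbR^{n-1})$ boundedly into $L^{a,\infty}(\bbR^n_+)$. Real interpolation (the off-diagonal Marcinkiewicz theorem in its Lorentz-space form) between these two endpoints --- which in $(1/p,1/q)$-coordinates are $(1,1/a)$ and $(1/b',0)$ --- yields that $J_\alpha\colon L^p(\bbR^{n-1})\to L^q(\bbR^n_+)$ is bounded for every pair with $1<p<q<\infty$ on the connecting segment, i.e., satisfying $\tfrac{a}{q}+\tfrac{b}{p'}=1$. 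Multiplying the latter identity by $n-1-\alpha$ turns it into $\tfrac{n}{q}+\tfrac{n-1}{p'}=n-1-\alpha$, which upon rearrangement becomes $\tfrac1q=\tfrac{n-1}{np}-\tfrac{\alpha}{n}=\tfrac1p-\bigl(\alpha+\tfrac1p\bigr)\tfrac1n$, i.e., exactly \eqref{MM-3.1}; and the resulting operator bound is \eqref{MM-3.3}. Finally, a short computation shows that the requirement $q<\infty$ (built into the hypotheses) combined with \eqref{MM-3.1} is equivalent to $p<(n-1)/\alpha$, which is in turn exactly the condition placing the interpolation parameter $\theta=b/p'$ strictly between $0$ and $1$ (and note that \eqref{MM-3.1} automatically forces $p<q$), so the argument covers all admissible exponent pairs.

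I expect the only mildly delicate point to be bookkeeping: confirming that the two weak-type kernel bounds hold with constants independent of the frozen variable (the one subtlety being the use of $x_n>0$ to make the $\bbR^{n-1}$-integral converge) and checking that the exponent relation generated by the interpolation segment is precisely \eqref{MM-3.1}. No analytic ingredient beyond the weak-type Young inequality and Marcinkiewicz interpolation is required.
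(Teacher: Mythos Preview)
Your argument is correct and takes a genuinely different route from the paper. The paper does not carry out a direct kernel analysis; instead it reduces the claim to two external ingredients. First it observes the pointwise nontangential maximal bound $M(J_\alpha f)(x')\le C\,J_\alpha(|f|)(x')$ for $x'\in\partial\bbR^n_+$, so that $J_\alpha f$ in the interior is controlled by the boundary Riesz potential of $|f|$. It then combines the general trace-to-interior estimate \eqref{MD-E}, namely $\|u\|_{L^{nr/(n-1)}(\Omega)}\le C\|Mu\|_{L^r(\partial\Omega)}$ (cited from \cite{DM02}), with the classical Hardy--Littlewood--Sobolev inequality on $\bbR^{n-1}$, which gives $\|J_\alpha(|f|)\|_{L^r(\bbR^{n-1})}\le C\|f\|_{L^p(\bbR^{n-1})}$ for $1/r=1/p-\alpha/(n-1)$; setting $q=nr/(n-1)$ recovers exactly \eqref{MM-3.1}.

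Your approach, by contrast, is self-contained: you extract uniform weak-$L^a(\bbR^n_+)$ and weak-$L^b(\bbR^{n-1})$ bounds on the kernel in each variable and interpolate the resulting endpoint operator bounds $L^1\to L^{a,\infty}$ and $L^{b',1}\to L^\infty$, in effect proving a half-space HLS inequality from scratch. The paper's route is shorter once \eqref{MD-E} and HLS are taken as given; yours avoids both citations and makes plain that only the homogeneity of the kernel is being used. Either way the exponent bookkeeping lands on the same line \eqref{MM-3.1}.
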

\begin{proof} A direct proof appears in \cite{MT05}. An alternative 
argument is to observe that $M(J_\alpha f)(x)\leq CJ_\alpha(|f|)(x)$,
uniformly for $x\in\partial{\mathbb{R}}^n_+$, and then to invoke the general
estimate \eqref{MD-E} in concert with the classical Hardy-Littlewood-Sobolev 
fractional integration theorem (cf., e.g., \cite{St70}, Theorem 1 on p.\,119). 
\end{proof}

Next, we record a lifting result for Sobolev spaces in Lipschitz domains
in \cite{JK95}. 

\begin{theorem}\label{liftOmega}
Let $\Omega\subset \mathbb{R}^n$ be a Lipschitz domain with compact boundary.
Then, for every $\alpha>0$, the following equivalence of norms holds: 
\begin{equation}\label{liftBb}
\|u\|_{H^{\alpha+1}(\Omega)}\approx
\|u\|_{L^2(\Omega)}+\|\nabla u\|_{H^{\alpha}(\Omega)}. 
\end{equation}
\end{theorem}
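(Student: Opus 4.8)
\textbf{Proof plan for Theorem \ref{liftOmega}.} The statement asserts the equivalence of norms $\|u\|_{H^{\alpha+1}(\Omega)}\approx \|u\|_{L^2(\Omega)}+\|\nabla u\|_{H^{\alpha}(\Omega)}$ on a bounded Lipschitz domain, for every $\alpha>0$. This is a purely interior/functional-analytic regularity statement about Sobolev spaces on Lipschitz domains, and the cited source is \cite{JK95}. The plan is to reduce the matter, via a smooth partition of unity subordinate to a boundary atlas (together with one interior patch) and the bi-Lipschitz change of variables flattening the boundary, to a corresponding statement on the model half-space $\mathbb{R}^n_+$; there the equivalence is classical and follows from the lifting property \eqref{M.4} applied in the $n$-dimensional setting. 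The inequality $\|u\|_{L^2(\Omega)}+\|\nabla u\|_{H^{\alpha}(\Omega)}\le C\|u\|_{H^{\alpha+1}(\Omega)}$ is the trivial direction, since each first-order derivative maps $H^{\alpha+1}(\Omega)\to H^{\alpha}(\Omega)$ boundedly and $H^{\alpha+1}(\Omega)\hookrightarrow L^2(\Omega)$ continuously (recall \eqref{incl-xxx}); the content is entirely in the reverse estimate.

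For the nontrivial direction, first I would reduce to the half-space model. Cover $\partial\Omega$ by finitely many open sets $\mathcal{O}_j$ in each of which $\Omega$ agrees, after a rigid motion, with the region above the graph of a Lipschitz function $\varphi_j:\mathbb{R}^{n-1}\to\mathbb{R}$; add one more open set $\mathcal{O}_0$ compactly contained in $\Omega$, so that $\{\mathcal{O}_j\}_{0\le j\le N}$ covers $\overline{\Omega}$, and pick $\psi_j\in C^\infty_0(\mathcal{O}_j)$ with $\sum_j\psi_j\equiv 1$ near $\overline{\Omega}$. Since multiplication by a fixed $C^\infty_0$ function is bounded on $H^s(\Omega)$ for all $s\in\mathbb{R}$ (and likewise on $H^s(\mathbb{R}^n_+)$), and since $\nabla(\psi_j u)=\psi_j\nabla u+u\nabla\psi_j$, estimating $\|u\|_{H^{\alpha+1}(\Omega)}$ reduces to estimating each $\|\psi_j u\|_{H^{\alpha+1}}$ by $\|\psi_j u\|_{L^2}+\|\nabla(\psi_j u)\|_{H^\alpha}$, modulo lower-order terms already controlled by $\|u\|_{L^2(\Omega)}+\|\nabla u\|_{H^\alpha(\Omega)}$. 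The interior piece $\psi_0 u$ is handled directly by the full-space lifting result: the $n$-dimensional analogue of \eqref{M.4}, namely $H^{s}(\mathbb{R}^{n})=\{f\in H^{s-1}(\mathbb{R}^{n})\,|\,\partial_j f\in H^{s-1}(\mathbb{R}^{n}),\ 1\le j\le n\}$ with equivalent norms (again from \cite[Section 2.1.4]{RuSi}), applied with $s=\alpha+1$. For each boundary piece, the map $(x',x_n)\mapsto(x',x_n-\varphi_j(x'))$ is a bi-Lipschitz diffeomorphism carrying $\mathcal{O}_j\cap\Omega$ onto an open subset of $\mathbb{R}^n_+$; pull-back under such a map preserves $H^s$ for $|s|\le 1$, but here $\alpha+1$ may exceed the bi-Lipschitz stability threshold, so some care is needed.

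The main obstacle is precisely this last point: a mere bi-Lipschitz change of variables does not preserve $H^{s}$ for $s$ larger than $1$ (nor does a $C^{1,r}$ one preserve $H^s$ for $s>1+r$), so one cannot naively transport $H^{\alpha+1}(\Omega)$ to $H^{\alpha+1}(\mathbb{R}^n_+)$ when $\alpha>0$. The resolution — which is what \cite{JK95} actually carries out — is to avoid transporting $u$ directly at high order: instead, one transports only $u$ and its first-order derivatives (which live in $H^{\le 1}$-type regularity relative to the hypothesis) and then \emph{reconstructs} the higher smoothness intrinsically, on $\Omega$, using that the tangential and normal derivatives of $u$ restricted to the region above a Lipschitz graph can be expressed through $\nabla u$. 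Concretely, near a boundary patch one writes $\partial_j u=\sum_k a_{jk}(x')\,\widetilde{\partial}_k u$ where $\widetilde{\partial}_k$ are the (pulled-back) coordinate derivatives and the coefficients $a_{jk}$ involve $\nabla\varphi_j$; then one argues inductively in $\alpha$, using the fact that $\nabla u\in H^\alpha$ and the flat-space lifting result in the $x'$-variables, together with an ODE-type argument in the transversal variable to recover $H^{\alpha+1}$ regularity in $x_n$ from knowledge of $\partial_n u\in H^\alpha$. Since the excerpt permits us to assume results stated earlier — and Theorem \ref{liftOmega} is quoted from \cite{JK95} as an established tool rather than reproved — I would at this stage simply cite \cite{JK95} for the full technical execution of the boundary induction, after having set up the partition-of-unity reduction and disposed of the interior and trivial directions as above.
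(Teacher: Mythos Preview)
The paper does not prove Theorem \ref{liftOmega} at all: it is merely recorded as a known lifting result from \cite{JK95}, with no argument supplied. Your proposal is therefore consistent with the paper's treatment --- you correctly identify \cite{JK95} as the source and, in fact, go further by sketching the partition-of-unity reduction and flagging the genuine obstacle (that bi-Lipschitz flattening does not preserve $H^s$ for $s>1$) before deferring the technical boundary step to \cite{JK95}. There is no discrepancy to report, since the paper offers nothing to compare against beyond the citation.
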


\begin{theorem}\label{T-SH}
Let $\Omega\subset\bbR^n$ be a bounded Lipschitz domain. Then 
for every $z\in{\mathbb{C}}$, 
\begin{equation}\label{Sz-M6}
{\mathcal{S}}_z\in\cB\big(L^2(\partial\Omega;d^{n-1}\omega),H^{3/2}(\Omega)\big),
\end{equation}
and 
\begin{equation}\label{Sz-MM5}
{\mathcal{S}}_z\in\cB\big(H^{-1}(\partial\Omega),H^{1/2}(\Omega)\big).
\end{equation}
In particular,
\begin{equation}\label{Sz-MM7}
{\mathcal{S}}_z\in\cB\big(H^{s-1}(\partial\Omega),H^{s+(1/2)}(\Omega)\big), 
\quad 0 \le s \le 1.
\end{equation}
\end{theorem}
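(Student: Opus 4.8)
The plan is to deduce Theorem \ref{T-SH} from the two endpoint mapping properties together with the Calder\'on--Zygmund machinery recalled in Appendix \ref{sD}. The statement to prove is that $\mathcal{S}_z \in \cB\big(L^2(\partial\Omega;d^{n-1}\omega),H^{3/2}(\Omega)\big)$ and $\mathcal{S}_z \in \cB\big(H^{-1}(\partial\Omega),H^{1/2}(\Omega)\big)$, and then \eqref{Sz-MM7} follows by interpolation.

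First I would prove \eqref{Sz-MM5}. Here the key input is the estimate \eqref{H1/2} of Theorem \ref{T-CZ}, which gives $\|\mathcal{S}_0 f\|_{H^{1/2}(\Omega)} \leq C\|f\|_{L^2(\partial\Omega;d^{n-1}\omega)}$ directly for the single layer potential of the Laplacian (the kernel $E_n(0;\cdot)$ is odd and positively homogeneous of degree $-(n-1)$ after one differentiation, so $\nabla E_n(0;\cdot)$ fits the hypotheses of Theorem \ref{T-CZ}); one then bootstraps from $L^2$ to $H^{-1}(\partial\Omega)$ on the boundary by a duality/regularity argument, or alternatively uses the fractional integration Lemma \ref{L-M2} with the appropriate exponent $\alpha$ (localizing $\partial\Omega$ to a Lipschitz graph) to handle the weakly singular kernel $|x-y|^{2-n}$ acting on $H^{-1}(\partial\Omega)$-data. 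The passage from $\mathcal{S}_0$ to $\mathcal{S}_z$ for general $z\in\bbC$ is harmless: the difference kernel $E_n(z;x-y)-E_n(0;x-y)$ is, by Lemma \ref{lC.1} (estimates \eqref{C.9}--\eqref{C.11}), strictly less singular than $E_n(0;\cdot)$ itself — indeed the first two $x$-derivatives of the difference are bounded by $C(|x-y|^{3-n}+1)$ and $C(|x-y|^{2-n}+1)$ respectively — so the difference operator maps into at least as smooth a space, by another application of Lemma \ref{L-cpt} or Lemma \ref{L-M2}.

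Next I would prove \eqref{Sz-M6}. This is the more delicate endpoint. The natural route is to use the lifting theorem, Theorem \ref{liftOmega}, which reduces $\|\mathcal{S}_z f\|_{H^{3/2}(\Omega)}$ to controlling $\|\mathcal{S}_z f\|_{L^2(\Omega)}$ (immediate from \eqref{Sz-MM5} or crudely) plus $\|\nabla \mathcal{S}_z f\|_{H^{1/2}(\Omega)}$. Since $\partial_j \mathcal{S}_z f$ has integral kernel $\partial_{x_j}E_n(z;x-y)$, which is odd and positively homogeneous of degree $-(n-1)$ in the leading term, Theorem \ref{T-CZ}, specifically the estimate \eqref{H1/2} applied to $\partial_j E_n(0;\cdot)$ (i.e., to $E_{n}$ differentiated once, which via the recursion \eqref{C.12} relates to $E_{n+2}$), yields $\|\partial_j\mathcal{S}_0 f\|_{H^{1/2}(\Omega)}\leq C\|f\|_{L^2(\partial\Omega;d^{n-1}\omega)}$; the $z$-perturbation is again absorbed via Lemma \ref{lC.1}. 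Combining with Theorem \ref{liftOmega} gives \eqref{Sz-M6}.

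Finally, \eqref{Sz-MM7} follows by real interpolation: the scale $(H^{-1}(\partial\Omega),L^2(\partial\Omega;d^{n-1}\omega))_{\theta,2}=H^{-1+\theta}(\partial\Omega)$ and $(H^{1/2}(\Omega),H^{3/2}(\Omega))_{\theta,2}=H^{1/2+\theta}(\Omega)$ for $\theta\in[0,1]$ (using \eqref{Gh-a2} on the boundary and the analogous statement in $\Omega$), so that $\mathcal{S}_z\in\cB\big(H^{s-1}(\partial\Omega),H^{s+1/2}(\Omega)\big)$ for $0\le s\le 1$ with $s=\theta$. The main obstacle I anticipate is the $H^{3/2}$ endpoint \eqref{Sz-M6}: precisely at $s=3/2$ one is on the edge of the range where the Calder\'on--Zygmund estimates on a merely Lipschitz surface remain valid, and the argument must route carefully through Theorem \ref{liftOmega} and the half-integer smoothing estimate \eqref{H1/2} rather than attempting a direct $H^{3/2}$ boundedness of a singular integral. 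Care is also needed to confirm that differentiating the single layer potential genuinely produces a kernel to which Theorem \ref{T-CZ} applies with the right homogeneity, which is exactly what the recursion \eqref{C.12} is designed to furnish.
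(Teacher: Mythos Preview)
Your approach to \eqref{Sz-M6} matches the paper's: apply \eqref{H1/2} to $\partial_j\mathcal{S}_0$ (whose kernel $\partial_jE_n(0;\cdot)$ is odd and homogeneous of degree $-(n-1)$), combine with Theorem \ref{liftOmega}, and control the difference $\mathcal{S}_z-\mathcal{S}_0$ via Lemma \ref{lC.1}. One nitpick: \eqref{H1/2} applied to $\nabla E_n(0;\cdot)$ bounds $\|\nabla\mathcal{S}_0 f\|_{H^{1/2}(\Omega)}$, not $\|\mathcal{S}_0 f\|_{H^{1/2}(\Omega)}$ as you write in your discussion of \eqref{Sz-MM5}; the former is precisely the input Theorem \ref{liftOmega} needs. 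The paper handles the perturbation by showing $\nabla^2(\mathcal{S}_z-\mathcal{S}_0)\in\cB(L^2(\partial\Omega;d^{n-1}\omega),L^2(\Omega;d^nx))$ via \eqref{C.11} and Lemma \ref{L-M2} (with $\alpha=1$), but your route is equivalent. The interpolation for \eqref{Sz-MM7} is fine.

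Your argument for \eqref{Sz-MM5} has a real gap. The step you describe as ``bootstraps from $L^2$ to $H^{-1}$ by a duality/regularity argument'' is the entire content of this endpoint, and neither of your two suggestions delivers it: dualizing $\mathcal{S}_0:L^2(\partial\Omega)\to H^{3/2}(\Omega)$ does not produce a map from a negative boundary Sobolev space, and Lemma \ref{L-M2} is an $L^p\!\to\!L^q$ fractional integration estimate that says nothing about $H^{-1}$ data. The mechanism the paper uses is the structural identity
\[
H^{-1}(\partial\Omega)=\biggl\{g+\sum_{1\leq j,k\leq n}\frac{\partial f_{j,k}}{\partial\tau_{j,k}}\,\bigg|\,g,f_{j,k}\in L^2(\partial\Omega;d^{n-1}\omega)\biggr\}
\]
(this is \eqref{Mz-2}). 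Given $h\in H^{-1}(\partial\Omega)$ decomposed this way, the $L^2$ piece $g$ is already covered by the $s=1$ case; for each tangential-derivative piece one integrates by parts on $\partial\Omega$ (Lemma \ref{Lhf-2}) to transfer $\partial/\partial\tau_{j,k}$ onto the kernel, producing integral operators whose kernels are bounded functions of $y$ times odd convolution kernels of degree $-(n-1)$ in $x-y$, to which \eqref{H1/2} applies directly. This integration-by-parts-against-tangential-derivatives device is the idea your sketch is missing.
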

\begin{proof} Given $f\in L^2(\partial\Omega;d^{n-1}\omega)$, write 
${\mathcal{S}}_zf
={\mathcal{S}}_0f+({\mathcal{S}}_z-{\mathcal{S}}_0)f$. From
\eqref{H1/2} and Lemma \ref{liftOmega}
we know that $\|{\mathcal{S}}_0f\|_{H^{3/2}(\Omega)}\leq C
\|f\|_{L^2(\partial\Omega;d^{n-1}\omega)}$, for some constant $C>0$
independent of $f$. Using \eqref{C.11} and Lemma~\ref{L-M2} (with $\alpha=1$) 
one concludes that 
\begin{equation}\label{Sz-M7}
\nabla^2 ({\mathcal{S}}_z-{\mathcal{S}}_0)
\in\cB\big(L^2(\partial\Omega;d^{n-1}\omega), L^2(\Omega;d^nx)\big),
\end{equation}
and \eqref{Sz-M6} follows from this. The proof of \eqref{Sz-MM5}, 
is analogous and has as starting point the fact that 
${\mathcal{S}}_0f\in\cB\bigl(H^{-1}(\partial\Omega),H^{1/2}(\Omega)\bigr)$, 
itself a consequence of \eqref{H1/2} and the following description of 
$H^{-1}(\partial\Omega)$:
\begin{eqnarray}\label{Mz-2}
H^{-1}(\partial\Omega)
=\biggl\{g+\sum_{1\leq j,k\leq n}(\partial f_{j,k}/\partial
\tau_{j,k})\,\bigg|\,g,f_{j,k}\in\LdOm\biggr\}.
\end{eqnarray}
Then \eqref{Sz-M7} ensures that 
\begin{equation}\label{Sz-M7b}
({\mathcal{S}}_z-{\mathcal{S}}_0)
\in\cB\big(H^{-1}(\partial\Omega), H^1(\Omega)\big),
\end{equation}
and \eqref{Sz-MM5} follows. 
\end{proof}

We recall the adjoint double layer on $\partial\Omega$ introduced 
in \eqref{Ksharp} and denote by 
\begin{equation}\label{Ks-1}
(K_z g)(x)={\rm p.v.}\int_{\partial\Omega}d^{n-1}\omega(y)\,
\partial_{\nu_y}E_n(z;y-x)g(y),
\quad x\in\partial\Omega,
\end{equation}
its adjoint. It is well-known (cf., e.g., \cite{Ve84}) that 
\begin{equation}\label{Ks-2}
{\rm Hypothesis}~\ref{h2.1}\Longrightarrow 
K\in \cB\big(L^2(\dOm;d^{n-1}\omega)\big)\cap 
\cB\big(H^1(\dOm)\big)
\end{equation}
and (cf. \cite{FJR78} and \eqref{MM-1.9}) that 
\begin{equation}\label{Ks-3}
\Omega\mbox{ a bounded $C^1$-domain }\Longrightarrow 
K_z\in \cB_\infty\big(L^2(\dOm;d^{n-1}\omega)\big),\quad z\in\bbC.
\end{equation}
It follows from \eqref{Ks-2}, \eqref{Ks-3}, \eqref{Gh-a2}, and 
Theorem \ref{T-Cwikel} that 
\begin{equation}\label{Ks-4}
\Omega\mbox{ a bounded $C^1$-domain }\Longrightarrow 
K_z\in\cB_\infty\big(H^s(\dOm)\big),\quad s\in(0,1),\; z\in\bbC.
\end{equation}
We wish to complement this with the following compactness result. 

\begin{theorem}\label{K-CPT}
If $\Om\subset\bbR^n$ is a bounded $C^{1,r}$-domain with $r\in(1/2,1)$ then 
\begin{equation}\label{Ks-4B}
K^{\#}_z\in\cB_\infty\big(H^{1/2}(\dOm)\big),\quad z\in\bbC.
\end{equation}
\end{theorem}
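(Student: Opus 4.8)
The plan is to reduce Theorem~\ref{K-CPT} to the already-established compactness result \eqref{Ks-3} (and its Sobolev consequence \eqref{Ks-4}) by exploiting the fact that a $C^{1,r}$-domain with $r>1/2$ is, in particular, a bounded $C^1$-domain, together with the interpolation machinery provided by Theorem~\ref{T-Cwikel} and the module property from Lemma~\ref{lA.6}. First I would recall that the integral kernel of $K^{\#}_z$ is $\nu(x)\cdot\nabla_xE_n(z;x-y)$, so that, writing $K^{\#}_z = K^{\#}_0 + (K^{\#}_z - K^{\#}_0)$, it suffices to treat the two pieces separately on $H^{1/2}(\dOm)$: the difference $K^{\#}_z-K^{\#}_0$ is already known to be compact on $H^{1/2}(\dOm)$ by Lemma~\ref{L-KbX}, so the entire matter reduces to showing
\begin{equation*}
K^{\#}_0\in\cB_\infty\big(H^{1/2}(\dOm)\big).
\end{equation*}

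For the term $K^{\#}_0$ I would proceed in two steps. Step one: since $\Om$ is a bounded $C^1$-domain, \eqref{Ks-3} gives $K_0\in\cB_\infty\big(L^2(\dOm;d^{n-1}\omega)\big)$ for the transpose $K_0$, and by \eqref{Ks-2} (Verchota) one has $K_0\in\cB\big(H^1(\dOm)\big)$; applying Theorem~\ref{T-Cwikel} with the interpolation identity \eqref{Gh-a2} between $L^2(\dOm)=H^0(\dOm)$ and $H^1(\dOm)$ yields $K_0\in\cB_\infty\big(H^{1/2}(\dOm)\big)$ — this is exactly \eqref{Ks-4} with $s=1/2$. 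Step two: transfer this from $K_0$ to $K^{\#}_0$. The cleanest route is duality: $K^{\#}_0$ is the adjoint of $K_0$, and since $H^{1/2}(\dOm)$ is a Hilbert space whose (conjugate) dual is $H^{-1/2}(\dOm)$, compactness of $K_0$ on $H^{1/2}(\dOm)$ gives compactness of $K^{\#}_0$ on $H^{-1/2}(\dOm)$; one then needs to bootstrap back up to $H^{1/2}(\dOm)$. Alternatively — and I think more directly — one can mimic the proof of Lemma~\ref{L-KbX}: decompose $\nu(x)\cdot\nabla_xE_n(0;x-y)$ using the tangential-derivative identities as in \eqref{Pf-2.3}, \eqref{Pf-2.4}, and observe that since multiplication by the components of $\nu\in[C^r(\dOm)]^n$ preserves $H^{1/2}(\dOm)$ by Lemma~\ref{lA.6}, it suffices to show that the operators with kernels $\partial_jE_n(0;x-y)$ map $L^2(\dOm;d^{n-1}\omega)$ boundedly into $H^1(\dOm)$; combining that with the compact embedding $H^1(\dOm)\hookrightarrow H^{1/2}(\dOm)\hookrightarrow L^2(\dOm)$ from \eqref{EQ1} and \eqref{Gh-a2} would furnish the needed compactness. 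The homogeneity $\partial_jE_n(0;x-y)=O(|x-y|^{1-n})$ together with Calder\'on--Zygmund theory (Theorem~\ref{T-CZ}) and the Sobolev description \eqref{M1.1y} of $H^1(\dOm)$ handles the mapping into $H^1(\dOm)$.

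The main obstacle, I expect, will be the transfer step (step two): passing compactness of the transpose operator $K_0$ on $H^{1/2}(\dOm)$ to compactness of $K^{\#}_0$ itself on $H^{1/2}(\dOm)$ is not automatic, because duality naturally lands one in $H^{-1/2}(\dOm)$, not $H^{1/2}(\dOm)$. One must either run the ``direct'' kernel argument above carefully — checking that the relevant singular-integral operators genuinely improve regularity by one full derivative on the boundary, which in the $C^{1,r}$ setting with $r>1/2$ is exactly where the $C^{1,r}$ hypothesis (via Lemma~\ref{lA.6} and Lemma~\ref{K-t1}) is used — or invoke a symmetry/self-improvement argument. A convenient way to close the loop is: $K^{\#}_0\in\cB\big(H^{1/2}(\dOm)\big)$ follows from the $C^{1,r}$ boundedness of boundary singular integrals of this type (this is the $C^{1,r}$ refinement of \eqref{Ks-2}), and then, since we have shown it is compact on $L^2(\dOm)$ (by \eqref{Ks-3} for the adjoint together with duality in the Hilbert space $L^2(\dOm)$) and bounded on $H^1(\dOm)$, a final application of Theorem~\ref{T-Cwikel} with \eqref{Gh-a2} delivers $K^{\#}_0\in\cB_\infty\big(H^{1/2}(\dOm)\big)$, completing the proof of \eqref{Ks-4B}.
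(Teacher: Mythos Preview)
Your reduction to $z=0$ via Lemma~\ref{L-KbX} is correct, and so is the observation that $K_0\in\cB_\infty\big(H^{1/2}(\dOm)\big)$ by \eqref{Ks-4}. The gap is in your ``transfer step'' from $K_0$ to $K^{\#}_0$, and both of the routes you propose break down. First, the assertion that the operators $T_j$ with kernel $\partial_jE_n(0;x-y)$ map $L^2(\dOm)$ into $H^1(\dOm)$ is false: these are genuine Calder\'on--Zygmund operators on $\dOm$ (kernel $O(|x-y|^{1-n})$), and a tangential derivative of $T_jf$ produces a hypersingular kernel $O(|x-y|^{-n})$, which is \emph{not} bounded on $L^2(\dOm)$. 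Theorem~\ref{T-CZ} only gives $T_j\in\cB\big(L^2(\dOm)\big)$, and \eqref{goal-2B} only gives $T_j\in\cB\big(H^{1/2}(\dOm)\big)$. Second, your final interpolation argument needs $K^{\#}_0\in\cB\big(H^1(\dOm)\big)$, but \eqref{Ks-2} is a statement about $K_0$, not $K^{\#}_0$; writing $K^{\#}_0=\sum_j M_{\nu_j}T_j$ does not help because multiplication by $\nu_j\in C^r(\dOm)$ with $r<1$ does \emph{not} preserve $H^1(\dOm)$ (it only maps $H^1(\dOm)$ into $H^r(\dOm)$).

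The missing idea is to decompose $K^{\#}_0=K_0+R$ where $R=K^{\#}_0-K_0$ has the integral kernel $(\nu(x)-\nu(y))\cdot\nabla E_n(0;x-y)$; it is precisely the factor $(\nu(x)-\nu(y))$ that supplies the extra smoothing needed. The paper then approximates $\nu$ in $C^r(\dOm)$ by smooth $\eta_\alpha$, and invokes the nontrivial Theorem~\ref{T-CZ2} (developed for exactly this purpose via the maximal-function machinery of Theorem~\ref{t2.6X}) to show that for smooth $\eta$ the operator with kernel $(\eta(x)-\eta(y))k(x-y)$ maps $L^2(\dOm)$ into $H^1(\dOm)$, hence lies in $\cB_\infty\big(H^{1/2}(\dOm)\big)$. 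Convergence $R_\alpha\to R$ in $\cB\big(H^{1/2}(\dOm)\big)$ then follows from Lemma~\ref{lA.6} and \eqref{goal-2B}, giving $R\in\cB_\infty\big(H^{1/2}(\dOm)\big)$; combined with \eqref{Ks-4} for $K_0$ this yields \eqref{Ks-4B}. Your argument could be repaired only if you could establish $K^{\#}_0\in\cB\big(H^s(\dOm)\big)$ for some $s\in(1/2,r)$ via the limited-smoothness pseudodifferential calculus, but that is itself nontrivial and not what you wrote.
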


The proof of this result (presented at the end of this section)
requires a number of tools from the theory of singular integral 
operators which we now review, or develop. 

\begin{theorem}\label{t2.5X}
Let $A:{\mathbb{R}}^n\to{\mathbb{R}}^m$ be a Lipschitz function, and assume 
that $F:{\mathbb{R}}^m\to{\mathbb{R}}$, $F\in C^1(\mathbb{R}^m)$, $F$ is an odd 
function. For $x,y\in{\mathbb{R}}^n$ with $x\not=y$ set
$K(x,y)= \frac{1}{|x-y|^n}F\left(\frac{A(x)-A(y)}{|x-y|}\right)$, and
for $\varepsilon>0$, $f$ a Lipschitz function with compact support in 
${\mathbb{R}}^n$, define the truncated operator
\begin{equation}\label{eq2.1-3}
(T_\varepsilon f)(x)= \int\limits_{\stackrel{y\in\bbR^n}{|x-y|>\varepsilon}}d^n y\,
K(x,y)f(y),\quad x\in\bbR^n.
\end{equation}
Then, for each $1<p<\infty$, the following assertions hold: 
\begin{enumerate}
\item[(i)] The maximal operator
$(T_* f)(x)= {\rm sup}\,\{|(T_{\varepsilon} f)(x)| \,|\,\varepsilon>0\}$ 
is bounded on $L^p({\mathbb{R}}^n;d^nx)$. 

\item[(ii)] If $1<p<\infty$ and $f\in L^p({\mathbb{R}}^n;d^nx)$ then the limit
$\lim_{\varepsilon\to 0} (T_{\varepsilon} f)(x)$ exists for almost every
$x\in{\mathbb{R}}^n$ and the operator

\begin{equation}\label{operatorT}
(T f)(x)= \lim_{\varepsilon\to 0} (T_{\varepsilon} f)(x)
\end{equation}
\noindent is bounded on $L^p({\mathbb{R}}^n;d^nx)$.
\end{enumerate}
\end{theorem}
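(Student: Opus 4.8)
The plan is to reduce the theorem to a single deep input --- the $L^2(\bbR^n;d^nx)$-boundedness of the principal value operator $T$ of \eqref{operatorT} --- and then to run the standard Calder\'on--Zygmund machinery for the $L^p$-theory, the maximal truncations, and almost everywhere convergence. First I would record the structure of the kernel. Since $A$ is Lipschitz, $|A(x)-A(y)|\le\|A\|_{\mathrm{Lip}}|x-y|$, so the difference quotient $g(x,y)=\bigl(A(x)-A(y)\bigr)/|x-y|$ takes values in the fixed closed ball $\ol{B(0,\|A\|_{\mathrm{Lip}})}\subset\bbR^m$, on which $F$ and $\nabla F$ are bounded; this gives the size bound $|K(x,y)|\le C|x-y|^{-n}$. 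Differentiating $K=|x-y|^{-n}F(g(\dott,\dott))$ (the chain rule applies for a.e.\ $x$, with $y$ fixed, since $A$ is differentiable a.e.\ by Rademacher) and using $|\nabla_x g(x,y)|+|\nabla_y g(x,y)|\le C|x-y|^{-1}$, one obtains $|\nabla_x K(x,y)|+|\nabla_y K(x,y)|\le C|x-y|^{-n-1}$, so $K$ is a standard Calder\'on--Zygmund kernel on $\bbR^n$. The oddness of $F$ yields the antisymmetry $K(y,x)=-K(x,y)$; in particular $T^{*}=-T$, and the weak boundedness property for the family $\{T_\varepsilon\}$ is then automatic (for any antisymmetric standard kernel, symmetrizing the bilinear form and using the smoothness estimate on the near-diagonal piece gives the bound $r_B^n$).

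The crux --- and the main obstacle --- is the $L^2$-boundedness of $T$, which is precisely the circle of results of Calder\'on and of Coifman--McIntosh--Meyer extending their theorem on the Cauchy integral along Lipschitz curves. I would either cite it directly or re-derive it via the David--Journ\'e $T(1)$ theorem (equivalently, the $T(b)$ theorem with $b\equiv1$). In the $T(1)$ approach the antisymmetry disposes of the weak boundedness property and of one of the two cancellation hypotheses (by $T^{*}=-T$, the condition $T^{*}1\in\mathrm{BMO}$ reduces to $T1\in\mathrm{BMO}$), and the one remaining point, $T1\in\mathrm{BMO}(\bbR^n)$ in the sense of the truncations, is exactly what the stopping-time and paraproduct estimates at the technical core of the Coifman--McIntosh--Meyer method deliver. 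It is worth noting what this amounts to concretely: when $F$ is a polynomial, $T$ is a finite sum of higher Calder\'on commutators, i.e.\ operators whose kernels are (multilinear combinations of) $\bigl(A(x)-A(y)\bigr)^{2k+1}|x-y|^{-n-2k-1}$ --- only odd powers occur, by oddness of $F$ --- and the $L^2$-operator norms of these commutators grow at most polynomially in $k$; the general $C^1$ case then follows either from $T(1)$ directly or by approximation.

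Granted $T\in\cB\big(L^2(\bbR^n;d^nx)\big)$ together with the kernel estimates above, the Calder\'on--Zygmund theorem yields $T\in\cB\big(L^p(\bbR^n;d^nx)\big)$ for all $p\in(1,\infty)$ (and the weak-type $(1,1)$ bound). A Cotlar-type pointwise inequality $(T_*f)(x)\le C\bigl((\cM(Tf))(x)+(\cM f)(x)\bigr)$, with $\cM$ the Hardy--Littlewood maximal operator, then shows that the maximal truncated operator $T_*$ is bounded on $L^p(\bbR^n;d^nx)$ for every $p\in(1,\infty)$, which is assertion (i). For assertion (ii), I would first treat $f$ in the dense subspace of compactly supported Lipschitz functions and show that $\lim_{\varepsilon\to0}(T_\varepsilon f)(x)$ exists for a.e.\ $x$: split $(T_\varepsilon f)(x)=\int_{\varepsilon<|x-y|<1}K(x,y)f(y)\,d^ny+\int_{|x-y|\ge1}K(x,y)f(y)\,d^ny$, the second term being a fixed absolutely convergent integral, and handle the first by writing $f(y)=f(x)+(f(y)-f(x))$ --- the increment part has integrand $\Oh(|x-y|^{1-n})$, hence integrable, while the $f(x)$ part converges thanks to the cancellation carried by the antisymmetry of $K$ near the diagonal (the leading term of $K(x,\dott)$ at $y\to x$ is odd, at every point $x$ where $A$ is differentiable). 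Finally, the standard density argument --- using that the oscillation $\limsup_{\varepsilon\to0}(T_\varepsilon f)-\liminf_{\varepsilon\to0}(T_\varepsilon f)$ is dominated pointwise by $2T_*f$ and that $T_*$ is of weak type $(p,p)$ --- promotes a.e.\ convergence to all $f\in L^p(\bbR^n;d^nx)$, and the limit operator $T$ of \eqref{operatorT}, being pointwise dominated by $T_*$, is bounded on $L^p(\bbR^n;d^nx)$, completing the proof.
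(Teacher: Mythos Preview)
Your outline is correct and follows the standard Calder\'on--Zygmund route; the paper itself does not prove this theorem but simply cites Meyer's \emph{Ondelettes et op\'erateurs} for a proof, and your sketch is precisely the approach developed there. One minor presentational point: for the standard kernel estimate you do not need Rademacher --- the H\"older-type bound $|K(x,y)-K(x',y)|\le C|x-x'|\,|x-y|^{-n-1}$ for $|x-x'|<\tfrac12|x-y|$ follows directly from the Lipschitz condition on $A$ and the $C^1$ bound on $F$ via the mean value theorem applied to $F$, which is cleaner than invoking a.e.\ differentiability.
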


A proof of this result can be found in \cite{Me90}.

\begin{theorem}\label{t2.6X} 
Let $A:{\mathbb{R}}^n\to{\mathbb{R}}^m$,
$B=(B_1,...,B_\ell):{\mathbb{R}}^n\to{\mathbb{R}}^\ell$ be two Lipschitz 
functions and let $F:{\mathbb{R}}^m\times{\mathbb{R}}^\ell\to{\mathbb{R}}$ 
be a $C^N$ (with $N=N(n,m,\ell)$ a sufficiently large integer) odd function 
which satisfies the decay conditions
\begin{align}
& |F(a,b)|\leq C(1+|b|)^{-n},   \label{eq2.23} \\
& |\nabla_I F(a,b)|\leq C,    \label{eq2.24} \\
& |\nabla_{II} F(a,b)|\leq C(1+|b|)^{-1},    \label{eq2.25} 
\end{align}
uniformly for $a$ in compact subsets of ${\mathbb{R}}^n$ and
arbitrary $b\in{\mathbb{R}}^\ell$. $($Above, $\nabla_I$ and $\nabla_{II}$ 
denote the gradients with respect to the first and second sets of variables.$)$ 
For $x,y\in{\mathbb{R}}^n$ with $x\not=y$ and $t>0$ we set
\begin{equation}\label{eq2.26}
K^t(x,y)= \frac{1}{|x-y|^n}
F\left(\frac{A(x)-A(y)}{|x-y|},\frac{B_1(x)-B_1(y)+t}{|x-y|},\cdots,
\frac{B_\ell(x)-B_\ell(y)+t}{|x-y|}\right).
\end{equation}
In addition, for each $t>0$ we introduce
\begin{equation}\label{e3.3.5}
(T^t f)(x)= \int_{{\mathbb{R}}^n}d^n y\,K^t(x,y)f(y),
\quad x\in{\mathbb{R}}^n,
\end{equation}
and, for some fixed, positive $\kappa$, 
\begin{equation}\label{e3.3.6}
(T_{**} f)(x)
={\rm sup}\,\{|(T^t f)(z)|\,|\, z\in\bbR^n,\,\,t>0,\,\,|x-z|<\kappa t\},
\quad x\in{\mathbb{R}}^n.
\end{equation}

Then, for each $1<p<\infty$, the following assertions are valid:
\begin{enumerate}
\item[(1)] The nontangential maximal operator 
$T_{**}$ is bounded on $L^p({\mathbb{R}}^n;d^nx)$. 

\item[(2)] For each $f\in L^p({\mathbb{R}}^n;d^nx)$, the limit
\begin{equation}\label{eq2.27}
({\mathcal T} f)(x)= \lim_{\stackrel{|x-z|<\kappa t}{z\to x,\,t\to 0}} (T^t f)(z)
\end{equation}
exists at almost every $x\in{\mathbb{R}}^n$ and the operator 
${\mathcal T}$ is bounded on $L^p({\mathbb{R}}^n;d^nx)$.
\end{enumerate}
\end{theorem}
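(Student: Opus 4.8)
The plan is to deduce Theorem \ref{t2.6X} from Theorem \ref{t2.5X} by a device that converts the shift parameter $t$ into a genuine spatial difference. I would split the argument into three parts: (i) a uniform-in-$t$ bound, $\sup_{t>0}\big(\|T^t\|_{\cB(L^p(\bbR^n;d^nx))}+\|T^t_\ast\|_{\cB(L^p(\bbR^n;d^nx))}\big)<\infty$, where $T^t$ is as in \eqref{e3.3.5} and $T^t_\ast f(x)=\sup_{\varepsilon>0}|(T^t_\varepsilon f)(x)|$ is the associated truncated maximal operator; (ii) a standard Calderón--Zygmund transfer of (i) to the nontangential maximal operator $T_{**}$ of \eqref{e3.3.6}; and (iii) the a.e.\ existence of the limit \eqref{eq2.27} via Stein's maximal principle. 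Part (i) is the heart of the matter.

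For part (ii) I would first record that, for each fixed $t>0$, the kernel $K^t(x,y)$ of \eqref{eq2.26} satisfies the size bound $|K^t(x,y)|\leq C|x-y|^{-n}$ (from \eqref{eq2.23}) and the smoothness bounds $|\nabla_xK^t(x,y)|+|\nabla_yK^t(x,y)|\leq C|x-y|^{-n-1}$ (differentiate \eqref{eq2.26} and invoke \eqref{eq2.24}, \eqref{eq2.25}), with $C$ independent of $t$. Given these, a routine comparison argument — exploiting that on the nontangential region $\{z:|x-z|<\kappa t\}$ the truncation scale is comparable to $t$ — dominates $\sup\{|(T^tf)(z)|:|x-z|<\kappa t\}$ by $C\,M\big(T^t_\ast f\big)(x)$ plus a harmless tail, where $M$ is the Hardy--Littlewood maximal operator; taking the supremum also over $t>0$ and using (i) then yields assertion (1). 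I would present this step only in outline.

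The core is part (i). The reason Theorem \ref{t2.5X} does not apply off the shelf is that the shift $+t$ in the numerator of \eqref{eq2.26} cannot be absorbed into a Lipschitz function of $x$ alone, and it drives the argument of $F$ to infinity; the decay hypotheses \eqref{eq2.23}--\eqref{eq2.25} are exactly what is needed to cope with this. The plan is: introduce an auxiliary variable $s\in\bbR$; on $\bbR^{n+1}$ set $\mathcal A(x,s)=A(x)$ and $\mathcal B_j(x,s)=B_j(x)+s$, which are Lipschitz with constants controlled by those of $A,B$; and use the identity $B_j(x)-B_j(y)+t=\mathcal B_j(x,s)-\mathcal B_j(y,\sigma)$ valid whenever $s-\sigma=t$, so that along parallel slices the shift becomes an honest difference. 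One then checks that $G(a,b,c):=F(a,b+c\mathbf 1)$, $\mathbf 1=(1,\dots,1)\in\bbR^\ell$, is odd and $C^N$, and — after a harmless smooth truncation in the $b$-variable (legitimate since in the operator $b=(B(x)-B(y))/|x-y|$ is bounded by $\mathrm{Lip}(B)$) — inherits from \eqref{eq2.23}--\eqref{eq2.25} the decay that makes the $s$-integration absolutely convergent and $t$-independent. Realizing $T^t$ as the slice $\{s-\sigma=t\}$ of the $\bbR^{n+1}$-operator attached to $G$ (Theorem \ref{t2.5X}, whose proof \cite{Me90} is insensitive to the ambient dimension, being the relevant engine) and then integrating out the extra variable against the decay weight via Minkowski's inequality and a Schur-type estimate based on \eqref{eq2.23}, one descends the $L^p$ bound, and similarly the maximal bound from Theorem \ref{t2.5X}(i), to $L^p(\bbR^n;d^nx)$ uniformly in $t$. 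The main obstacle is precisely the bookkeeping reconciling the $n$-dimensional scaling of $K^t$ with the $(n+1)$-dimensional picture: the lifted kernel carries $\bbR^{n+1}$-homogeneity of degree $-n$ rather than $-(n+1)$, so one must verify carefully that the extra integration, weighted by the decay coming from \eqref{eq2.23}--\eqref{eq2.25}, reproduces exactly the $|x-y|^{-n}$ homogeneity and that the interchange of integrations is legitimate uniformly in $t$; an alternative route, differentiating in $t$ to expose a $|x-y|^{-n-1}$ kernel and integrating back, meets the same difficulty in a slightly different guise.

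Finally, for part (iii): once (1) is known, the a.e.\ existence of $(\mathcal Tf)(x)=\lim_{z\to x,\,t\to 0,\,|x-z|<\kappa t}(T^tf)(z)$ follows by the Banach-principle scheme. For $f$ in the dense class of compactly supported Lipschitz functions one verifies directly — using $|K^t(x,y)|\leq C|x-y|^{-n}$, the uniform equicontinuity in $t$ from \eqref{eq2.24}--\eqref{eq2.25}, and dominated convergence — that $(T^tf)(z)\to (Tf)(x)$ at a.e.\ $x$, where $Tf$ is the principal-value operator of Theorem \ref{t2.5X}(ii) corresponding to $t=0$ (note $K^0=K$ recovers the unshifted kernel). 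The weak-type $(p,p)$ bound on $T_{**}$ supplied by assertion (1) then upgrades this to a.e.\ convergence for all $f\in L^p(\bbR^n;d^nx)$, and $\mathcal T$ is bounded on $L^p(\bbR^n;d^nx)$ because it is pointwise dominated by $T_{**}$.
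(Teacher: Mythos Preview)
Your proposal has a genuine gap at its core. The lifting to $\bbR^{n+1}$ in part (i) does not produce a kernel to which Theorem \ref{t2.5X} applies: as you yourself note, the lifted kernel has homogeneity $-n$ rather than $-(n+1)$, and the denominator in \eqref{eq2.26} involves $|x-y|$, not $|(x,s)-(y,\sigma)|$. Your suggestion to ``integrate out the extra variable'' is where the argument must actually happen, but you give no mechanism for it; Theorem \ref{t2.5X} yields nothing until the kernel is in the right form, and the decay \eqref{eq2.23}--\eqref{eq2.25} by itself does not manufacture the missing factor of $|X-Y|^{-1}$. The alternative route you mention (differentiate in $t$, integrate back) has the same defect.

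Part (ii) has a separate problem. Even granting uniform-in-$t$ bounds $\sup_t\|T^t_*f\|_{L^p}\leq C\|f\|_{L^p}$, your Cotlar-type comparison gives, for each fixed $t$, a pointwise bound $\sup_{|x-z|<\kappa t}|(T^tf)(z)|\leq C\,T^t_*f(x)+C\cM f(x)$. Taking the supremum over $t$ then forces you to control $\|\sup_t T^t_*f\|_{L^p}$, which is a two-parameter maximal operator; a uniform bound on each $\|T^t_*f\|_{L^p}$ does not furnish this.

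The paper's approach sidesteps both issues by comparing directly to the \emph{unshifted} ($t=0$) operator. Writing $K(x,y)$ for the kernel \eqref{eq2.26} with $t=0$ and $T_*$ for its truncated maximal operator (bounded by Theorem \ref{t2.5X}), one establishes the pointwise inequality
\[
T_{**}f(x)\leq C\,T_*f(x)+C\,\cM f(x),
\]
by showing that $\big|(T^tf)(z)-\int_{|x-y|>\alpha t}K(x,y)f(y)\,dy\big|\leq C\cM f(x)$ whenever $|x-z|<\kappa t$. The local piece ($|x-y|<\alpha t$) uses the crucial size estimate $|K^t(z,y)|\leq C t^{-n}$, which follows from \eqref{eq2.23} since $|B_j(z)-B_j(y)+t|+|z-y|\geq ct$; the far piece uses the mean-value estimate $|K^t(z,y)-K(x,y)|\leq Ct|x-y|^{-n-1}$, obtained from \eqref{eq2.24}--\eqref{eq2.25}. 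This is exactly the missing idea: the decay hypotheses are used not to make a lift work, but to compare the shifted kernel to the unshifted one at the appropriate scale, so that a single application of Theorem \ref{t2.5X} (at $t=0$) suffices.
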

\begin{proof}
Fix $p\in(1,\infty)$. For $x,y\in{\mathbb{R}}^n$ with $x\not=y$ 
consider the kernel 
\begin{equation}\label{eq2.28}
K(x,y)= \frac{1}{|x-y|^n}
F\left(\frac{A(x)-A(y)}{|x-y|},\frac{B(x)-B(y)}{|x-y|}\right),
\end{equation}
and let $T,T_*$ be the operators canonically associated with this
integral kernel as in Theorem \ref{t2.5X}. The crux of the matter is 
establishing the a.e. pointwise estimate
\begin{equation}\label{eq2.29}
T_{**}f\leq CT_*f+C{\mathcal M}f\, \text{ in }\, \bbR^n,
\end{equation}
uniformly for $f\in L^p({\mathbb{R}}^n;d^nx)$, where ${\mathcal M}$ is 
the Hardy-Littlewood maximal operator in $\bbR^n$. Then the first claim in the 
statement of the theorem follows from Theorem \ref{t2.5X} and the 
well-known fact that ${\mathcal M}$ is bounded on $L^p({\mathbb{R}}^n;d^nx)$.

To this end, fix $x,z\in{\mathbb{R}}^n$, $t>0$ such that
$|x-z|<\kappa t$, and let $\alpha>0$ be a large constant,
to be specified later. Then
\begin{align}\label{eq2.30}
& \left|\int_{{\mathbb{R}}^n}d^n y\,K^t(z,y)f(y)
-\int_{|x-y|>\alpha t}d^n y\,K(x,y)f(y)\right|  \\
& \quad \leq\int_{|x-y|<\alpha t}d^ny\,|K^t(z,y)||f(y)|
+\int_{|x-y|>\alpha t}d^n y\,|K^t(z,y)-K(x,y)||f(y)|= I + II.   \label{eq2.31}
\end{align}
Clearly, it suffices to show that $|I|$, $|II|\leq
C{\mathcal M}f$. To see this, first observe that
\begin{equation}\label{eq2.32}
|K^t(z,y)|\leq Ct^{-n}\,\text{ uniformly for any }
\, z,y\in{\mathbb{R}}^n,\,\, z\not=y
\end{equation}
(in fact, this also justifies that 
$T^t$ is well-defined). Indeed, using the fact that for each 
$j\in\{1,...,\ell\}$ one has $Ct\leq|B_j(z)-B_j(y)+t|+|z-y|$ 
(easily seen by analyzing the cases
$|z-y|\geq\frac{t}{2\|\nabla B_j\|_{L^\infty}}$ and
$|z-y|\leq\frac{t}{2\|\nabla B_j\|_{L^\infty}}$), we may infer that 
\begin{equation}\label{eq2.33}
\left(1+\sum_{j=1}^\ell\frac{|B_j(z)-B_j(y)+t|}{|z-y|}\right)^{-n} \leq
C\left(\frac{t}{|z-y|}\right)^{-n}.
\end{equation}
With this at hand, the estimate (\ref{eq2.32}) is a
direct consequence of (\ref{eq2.23}). Returning to $I$, from
(\ref{eq2.32}), we deduce that $|I|\leq C{\mathcal M}f(x)$.

Thus, we are left with analyzing $II$ in \eqref{eq2.31}. 
To begin with, we shall prove that 
\begin{equation}\label{eq2.34}
|K^t(z,y)-K(x,y)|\leq
Ct|x-y|^{-n-1}\,\text{ for }\, |x-y|>\alpha t.
\end{equation}
Let $G_y(x,t)= K^t(x,y)$. Then
\begin{equation}\label{eq2.35}
|K^t(z,y)-K(x,y)|=|G_y(z,t)-G_y(x,0)|
\end{equation}
can be estimated using the Mean Value Theorem by
\begin{equation}\label{eq2.36}
Ct(|\nabla_IG_y(w,s)|+|\nabla_{II}G_y(w,s)|),
\end{equation}
where $w=(1-\theta)z+\theta x$, $s=(1-\theta)t$ for some
$0<\theta<1$. Next,
\begin{align}
|\nabla_IG_y(w,s)|&\leq \frac{C}{|w-y|^{n+1}}
\left|F\left(\frac{A(w)-A(y)}{|w-y|},\frac{B_1(w)-B_1(y)+s}{|w-y|},...,
\frac{B_\ell(w)-B_\ell(y)+s}{|w-y|}\right)\right|  \no \\[1mm] 
& \quad +\frac{C}{|w-y|^{n+1}}
\left|\nabla_I F\left(\frac{A(w)-A(y)}{|w-y|},
\frac{B_1(w)-B_1(y)+s}{|w-y|},...,\frac{B_\ell(w)-B_\ell(y)+s}
{|w-y|}\right)\right|  \no \\[1mm]
& \quad +\frac{C}{|w-y|^{n}}\left|\nabla_{II} F\left(\frac{A(w)-A(y)}{|w-y|},
\frac{B_1(w)-B_1(y)+s}{|w-y|},...,\frac{B_\ell(w)-B_\ell(y)+s}{|w-y|}
\right)\right|   \no \\[1mm] 
& \hspace*{2.2cm}
\times\left(\frac{C}{|w-y|}+\sum_{i=1}^\ell
\frac{|B_i(w)-B_i(y)+s|}{|w-y|^2}\right).
\end{align}
Keeping in mind the restrictions on the size of the derivatives
of the function $F$ stated in \eqref{eq2.23}--\eqref{eq2.25}, we conclude that
the above expression is bounded by $C|w-y|^{-(n+1)}$.

Similarly, it can be shown that
\begin{equation}\label{eq2.38}
|\nabla_{II}G_y(w,s)|\leq\frac{C}{|w-y|^{n+1}}.
\end{equation}
To continue, one observes that if we choose $\alpha>\kappa$ then, in the 
current context, 
\begin{eqnarray}\label{eq2.40}
|w-x|\leq |z-x|\leq \kappa t=\left(\frac{\kappa}{\alpha}\right)\alpha t
<\left(\frac{\kappa}{\alpha}\right)|x-y|,
\end{eqnarray}
and $|w-x|+|w-y|\geq |x-y|$. Hence, 
\begin{eqnarray}\label{eq2.40G}
|w-y|\geq \left(1-\frac{\kappa}{\alpha}\right)|x-y|,
\end{eqnarray}
and, therefore, 
\begin{equation}\label{eq2.41}
|K^t(z,y)-K(x,y)|\leq Ct|x-y|^{-n-1}.
\end{equation}

Next, we split the domain of integration of $II$ 
(appearing in \eqref{eq2.31}) into dyadic annuli of the form 
$2^j\alpha t\leq|x-y|\leq 2^{j+1}\alpha t$, $j=0,1,2,...$. Then
\begin{align}\label{eq2.42}
 \int_{|x-y|>\alpha t}d^n y\,|K^t(z,y) - K(x,y)||f(y)| 
& \leq \sum_{j=0}^{\infty} \int_{2^{j}\alpha t<|x-y|<2^{j+1}\alpha t}d^n y\,
\frac{t}{|x-y|^{n+1}}|f(y)|  \no \\
&  \leq C\sum_{j=0}^{\infty} 2^{-j}(\cM f)(x)=C (\cM f)(x). 
\end{align}
This yields the desired inequality, that is, $|II|\leq C{\mathcal M}f(x)$.

The proof of last second claim in the statement of the theorem utilizes
a well-known principle (cf., e.g., \cite{FJR78}) to the effect that pointwise 
convergence for a dense class along with the boundedness of the maximal 
operator associated with the type of convergence in question always 
entails a.e. convergence for the entire space $L^p(\bbR^n;d^nx)$. 
Thus, it suffices to identify a dense subspace ${\mathcal V}$ of
$L^p({\mathbb{R}}^n;d^nx)$ such that for any $f\in {\mathcal V}$ the limit 
in question exists for almost every $x\in{\mathbb{R}}^n$. Then the boundedness 
of the maximal operator associated with the type of convergence under 
discussion ensures that this limit exists for any 
$f\in L^p({\mathbb{R}}^n;d^nx)$ at almost every $x\in{\mathbb{R}}^n$. 

In our situation, we may take ${\mathcal V}= C^1_0({\mathbb{R}}^n)$ 
and observe that 
\begin{equation}\label{e3.3.23}
\lim_{\stackrel{|x-z|<\kappa t}{z\to x,\,t\to 0}} (T^t f)(z)
=\lim_{\varepsilon\to 0}\lim_{\stackrel{|x-z|<\kappa t}{z\to x,\,t\to 0}}
\Bigl[I+II+III\Bigr],
\end{equation}
where 
\begin{align}\label{e3.3.24}
I & =   \int_{|x-y|>1}d^n y\,K^t(z,y)f(y),  \no \\
II & =   \int_{1>|x-y|>\varepsilon}d^n y\,K^t(z,y)[f(y)-f(x)],  \\
III & =   f(x)\int_{1>|x-y|>\varepsilon}d^n y\,K^t(z,y).  \no
\end{align}
Consequently, 
\begin{align}
&\lim_{\varepsilon\to 0}\lim_{\stackrel{|x-z|<\kappa t}{z\to x,\,t\to 0}}I
=\int_{|x-y|>1}d^n y\,K(x,y)f(y),  \\
&\lim_{\varepsilon\to 0}\lim_{\stackrel{|x-z|<\kappa t}{z\to x,\,t\to 0}}II
=\int_{1>|x-y|}d^n y\,K(x,y)[f(y)-f(x)],
\end{align}
whereas 
\begin{equation}\label{e3.3.27}
\lim_{\varepsilon\to 0}\lim_{\stackrel{|x-z|<\kappa t}{z\to x,\,t\to 0}}III
=\lim_{\varepsilon\to 0}\int_{1>|x-y|>\varepsilon}d^n y\,K(x,y).
\end{equation}
Now, this last limit is known to exists at a.e. 
$x\in{\mathbb{R}}^n$ (see, e.g., \cite{MW08}). 
Once the pointwise definition of the operator ${\mathcal T}$ has been 
shown to be meaningful, the boundedness of this operator on 
$L^p({\mathbb{R}}^n;d^nx)$, $1<p<\infty$, is implied by that of $T_{\ast\ast}$.
\end{proof}

\begin{theorem}\label{T-CZ2}
There exists a positive integer $N=N(n)$ with the following significance: 
Let $\Omega\subset{\mathbb{R}}^n$ be a Lipschitz domain with compact boundary, 
and assume that  
\begin{equation}\label{kerX}
k\in C^N({\mathbb{R}}^n\backslash\{0\})\,\text{ with }\, 
k(-x)=-k(x)\, \text{ and }\, k(\lambda x)=\lambda^{-(n-1)}k(x), \; 
\lambda>0, \; x\in\bbR^n\backslash\{0\}. 
\end{equation}
Fix $\eta\in C^N({\mathbb{R}}^n)$ and define the singular integral operator
\begin{equation}\label{T-layerX}
(T f)(x)={\rm p.v.}\int_{\partial\Omega}d^{n-1}\omega(y)\,(\eta(x)-\eta(y))
k(x-y)f(y),\quad x\in\partial\Omega.
\end{equation}
Then 
\begin{equation}\label{T-ntX}
T\in\cB\bigl(L^2(\dOm;d^{n-1}\omega),H^1(\dOm)\bigr).
\end{equation}
\end{theorem}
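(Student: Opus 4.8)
The plan is to reduce the claim to a combination of the Calder\'on--Zygmund machinery developed in Theorems \ref{t2.5X}, \ref{t2.6X}, and \ref{T-CZ2}, together with the characterization of $H^1(\dOm)$ via tangential derivatives from Lemma \ref{Lg-T} (with $s=1$). First I would note that since $\eta\in C^N(\bbR^n)$ and $\dOm$ is compact, $\|Tf\|_{L^2(\dOm;d^{n-1}\omega)}\le C\|f\|_{L^2(\dOm;d^{n-1}\omega)}$ follows immediately: the kernel $(\eta(x)-\eta(y))k(x-y)$ satisfies $|(\eta(x)-\eta(y))k(x-y)|\le C|x-y|\,|x-y|^{-(n-1)}=C|x-y|^{2-n}$, which is weakly singular, so Lemma \ref{L-cpt} (with $\psi(t)=t$) applies and gives boundedness (indeed compactness) on $L^2$. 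Thus the entire content of \eqref{T-ntX} is the statement that $\partial(Tf)/\partial\tau_{j,k}\in L^2(\dOm;d^{n-1}\omega)$ with the appropriate norm control, for all $j,k\in\{1,\dots,n\}$, which by \eqref{M1.1}, \eqref{M1.1y} is exactly what is needed.

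The key step is to compute the tangential derivative $\partial(Tf)/\partial\tau_{j,k}$ and recognize it as an operator of Calder\'on--Zygmund type of the form already analyzed. Differentiating tangentially, one moves the derivative onto the kernel; since $\partial/\partial\tau_{j,k}=\nu_j\partial_k-\nu_k\partial_j$ acts only in the $x$-variable, one obtains schematically
\begin{align*}
\frac{\partial(Tf)}{\partial\tau_{j,k}}(x)
&= \text{p.v.}\int_{\dOm}d^{n-1}\omega(y)\,
\Bigl[\bigl(\tfrac{\partial\eta}{\partial\tau_{j,k}}(x)\bigr)k(x-y)
+ (\eta(x)-\eta(y))\,\tfrac{\partial}{\partial\tau_{j,k}}\bigl(k(x-y)\bigr)\Bigr]f(y).
\end{align*}
The first term has a weakly singular kernel ($|x-y|^{2-n}$ times a bounded factor $\partial\eta/\partial\tau_{j,k}\in C^{N-1}$), hence is bounded on $L^2(\dOm;d^{n-1}\omega)$ by Lemma \ref{L-cpt} again. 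For the second term, the factor $\eta(x)-\eta(y)$ gains one power of $|x-y|$ which exactly compensates the extra singularity produced by differentiating $k$ (which is homogeneous of degree $-n$), so the resulting kernel is of the form $(\eta(x)-\eta(y))\widetilde k(x-y)$ with $\widetilde k$ odd and homogeneous of degree $-n$; this is precisely the type of kernel appearing in \eqref{T-layerX}, so Theorem \ref{T-CZ2} (or, more primitively, the $L^p$-boundedness furnished by Theorems \ref{t2.5X}--\ref{t2.6X} after localizing to graph coordinates) gives the needed $L^2$-bound. One must be careful that the tangential derivative applied to a principal-value singular integral is justified; this is handled by the standard device of writing $Tf$ as a limit of truncated integrals, differentiating under the integral for the truncations, and passing to the limit using the maximal-function estimates in Theorems \ref{t2.5X}, \ref{t2.6X}.

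The main obstacle I anticipate is the rigorous justification of commuting the tangential derivative past the principal-value integral and correctly bookkeeping the jump/correction terms that this operation can generate; in particular, one needs to verify that no additional boundary delta-type contributions arise (they do not, precisely because of the cancellation furnished by the factor $\eta(x)-\eta(y)$, which vanishes on the diagonal). A secondary technical point is that the characterization \eqref{M1.1y} of $H^1(\dOm)$ and the integration-by-parts identity \eqref{IBP-t2} must be invoked to convert the distributional statement ``$\partial(Tf)/\partial\tau_{j,k}$ is represented by an $L^2$ function'' into genuine membership in $H^1(\dOm)$; this is routine once the pointwise kernel identity above is established, but requires testing against $\psi\in C^1(\bbR^n)$ and moving the tangential derivative onto $\psi$ via \eqref{IBP-t2}, then recognizing the adjoint singular integral operator as being of the same type. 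Finally, the constant in \eqref{T-ntX} will depend on $\|k|_{S^{n-1}}\|_{C^N}$, on $\|\eta\|_{C^N}$, and on the Lipschitz character of $\dOm$, all of which are finite under Hypothesis \ref{h2.1} and the compactness of $\dOm$.
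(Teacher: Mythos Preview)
Your approach differs from the paper's in a way that matters. You attempt to differentiate the principal-value integral tangentially \emph{on the boundary}; the paper instead extends the integral to $x\in\Omega$ by setting
\[
u(x)=\int_{\partial\Omega}d^{n-1}\omega(y)\,(\eta(x)-\eta(y))k(x-y)f(y),\qquad x\in\Omega,
\]
takes the full Euclidean gradient $\nabla u$, and proves $\|M(\nabla u)\|_{L^2(\partial\Omega)}\le C\|f\|_{L^2(\partial\Omega)}$. Since $Tf=u|_{\partial\Omega}$, this nontangential maximal estimate for $\nabla u$ automatically yields $Tf\in H^1(\partial\Omega)$. The point is that this interior route \emph{bypasses} precisely the ``main obstacle'' you yourself flag---commuting a tangential derivative (which on a Lipschitz surface involves the merely bounded $\nu$) past a principal-value integral. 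You never actually resolve that obstacle; the truncation-and-limit device you propose is not straightforward here because the tangential derivative of the truncation cutoff $\chi_{\{|x-y|>\varepsilon\}}$ produces a surface measure on a set of the form $\{|x-y|=\varepsilon\}\cap\partial\Omega$, with no evident uniform control. The paper's decomposition $\partial_j u=u_1+u_2$ (with $u_1$ carrying $(\partial_j\eta)(x)k(x-y)$ and $u_2$ carrying $(\eta(x)-\eta(y))(\partial_jk)(x-y)$), followed by localization to graph coordinates and a further splitting of $u_2$ into two pieces tailored to Theorem~\ref{t2.6X}, is how the hard work is actually done.

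There is also a concrete error in your treatment of the first term. After tangential differentiation the first piece has kernel $\bigl(\partial\eta/\partial\tau_{j,k}\bigr)(x)\,k(x-y)$; since $k$ is homogeneous of degree $-(n-1)$, this is of size $|x-y|^{-(n-1)}$, not $|x-y|^{2-n}$. It is a genuine Calder\'on--Zygmund singular integral (times a bounded function of $x$), so Lemma~\ref{L-cpt} does not apply; one needs Theorem~\ref{T-CZ} here. For the second term, note that $\partial_mk$ is \emph{even} and homogeneous of degree $-n$, so the kernel is not literally of the form in \eqref{T-layerX} and certainly cannot be handled by invoking Theorem~\ref{T-CZ2} itself (that would be circular). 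Your parenthetical fallback to Theorems~\ref{t2.5X}--\ref{t2.6X} after localization is the right instinct and is exactly what the paper does---but for the interior gradient $\nabla u$, not for a boundary tangential derivative, which is the cleaner and correct setting for those theorems.
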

\begin{proof}
Fix an arbitrary $f\in L^2(\dOm;d^{n-1}\omega)$ and consider 
\begin{equation}\label{T-X2}
u(x)= \int_{\partial\Omega}d^{n-1}\omega(y)\,(\eta(x)-\eta(y))
k(x-y)f(y),\quad x\in\Omega.
\end{equation}
Since $Tf=u|_{\partial\Omega}$, it suffices to show that 
\begin{equation}\label{T-X1}
\|M(\nabla u)\|_{L^2(\dOm;d^{n-1}\omega)}\leq C\|f\|_{L^2(\dOm;d^{n-1}\omega)},
\end{equation}
for some finite constant $C=C(\Omega)>0$ (where the nontangential maximal
operator $M$ is as in \eqref{Rb-2}). With this goal in mind, for a given 
$j\in\{1,...,n\}$, we decompose 
\begin{equation}\label{T-X3}
(\partial_ju)(x)=u_1(x)+u_2(x),\quad x\in\Omega,
\end{equation}
where 
\begin{equation}\label{T-X4}
u_1(x)= (\partial_j\eta)(x)\int_{\partial\Omega}d^{n-1}\omega(y)\,k(x-y)f(y)
\end{equation}
and 
\begin{equation}\label{T-X5}
u_2(x)= \int_{\partial\Omega}d^{n-1}\omega(y)\,(\eta(x)-\eta(y))
(\partial_jk)(x-y)f(y).
\end{equation}
Theorem \ref{T-CZ} immediately gives that 
\begin{equation}\label{T-X6}
\|Mu_1\|_{L^2(\dOm;d^{n-1}\omega)}\leq C\|f\|_{L^2(\dOm;d^{n-1}\omega)},
\end{equation}
so it remains to prove a similar estimate with $u_2$ in place of $u_1$. 
To this end, we note that the problem localizes, so we may assume that
$\eta$ is compactly supported and $\Omega$ is the domain above the graph 
of a Lipschitz function $\varphi:\bbR^{n-1}\to\bbR$. In this scenario, 
by passing to Euclidean coordinates and denoting $g(y')= f(y',\varphi(y'))$, 
$y'\in\bbR^{n-1}$, it suffices to show the following. 
For $x'\in\bbR^{n-1}$, $t>0$, set  
\begin{equation}\label{T-X7}
v(x',t)= \int_{\bbR^{n-1}}d^{n-1}y'\,
(\eta(x',\varphi(x')+t)-\eta(y',\varphi(y')))
(\partial_jk)(x'-y',\varphi(x')-\varphi(y')+t)g(y')
\end{equation}
and, with $\kappa>0$ fixed, consider 
\begin{equation}\label{T-X8}
v_{**}(x')= {\rm sup}\,\{|v(z',t)|\,|\,|x'-z'|<\kappa t\},\quad x'\in\bbR^{n-1}.
\end{equation}
Then  
\begin{equation}\label{T-X9}
\|v_{**}\|_{L^2(\bbR^{n-1};d^{n-1}x')}\leq C\|g\|_{L^2(\bbR^{n-1};d^{n-1}x')}.
\end{equation}
To establish \eqref{T-X9}, fix a smooth, even function $\psi$ defined in 
${\bbR}^{n}$, with the property that $\psi\equiv 0$ near the origin and 
$\psi(x)=1$ for $|x|\geq\frac12$. We then further decompose 
\begin{equation}\label{T-X13}
v(x',t)=v^1(x',t)+v^2(x',t)
\end{equation}
where 
\begin{align}\label{T-X10}
& v^1(x',t)= \int_{\bbR^{n-1}}d^{n-1}y'\,
(\eta(x',\varphi(x'))-\eta(y',\varphi(y')))
(\partial_jk)(x'-y',\varphi(x')-\varphi(y')+t)g(y')  \\
& \quad 
=\int_{\bbR^{n-1}}d^{n-1}y'\,\frac{1}{|x'-y'|^{n-1}}
\frac{\eta(x',\varphi(x'))-\eta(y',\varphi(y'))}{|x'-y'|}
(\partial_jk)\Bigl(\frac{x'-y'}{|x'-y'|},
\frac{\varphi(x')-\varphi(y')+t}{|x'-y'|}\Bigr)g(y')
\nonumber\\ 
& \quad 
=\int_{\bbR^{n-1}}d^{n-1}y'\,\frac{1}{|x'-y'|^{n-1}}
\frac{\eta(x',\varphi(x'))-\eta(y',\varphi(y'))}{|x'-y'|}
(\psi\,\partial_jk)\Bigl(\frac{x'-y'}{|x'-y'|},
\frac{\varphi(x')-\varphi(y')+t}{|x'-y'|}\Bigr)g(y')
\nonumber
\end{align}
and
\begin{equation}\label{T-X11}
v^2(x',t)= t^{-1}(\eta(x',\varphi(x')+t)-(\eta(x',\varphi(x'))v^3(x',t), 
\end{equation}
where
\begin{align}\label{T-X12}
v^3(x',t) &=  
\int_{\bbR^{n-1}}d^{n-1}y'\,t(\partial_jk)(x'-y',\varphi(x')-\varphi(y')+t)g(y')  \no \\ 
&=\int_{\bbR^{n-1}}d^{n-1}y'\,\frac{1}{|x'-y'|^{n-1}}\frac{t}{|x'-y'|}
(\partial_jk)\Bigl(\frac{x'-y'}{|x'-y'|},
\frac{\varphi(x')-\varphi(y')+t}{|x'-y'|}\Bigr)g(y')  \no \\
&=\int_{\bbR^{n-1}}d^{n-1}y'\,\frac{1}{|x'-y'|^{n-1}}\frac{t}{|x'-y'|}
(\psi\,\partial_jk)\Bigl(\frac{x'-y'}{|x'-y'|},
\frac{\varphi(x')-\varphi(y')+t}{|x'-y'|}\Bigr)g(y').   
\end{align}
(In each case, the role of the function $\psi$ is to truncate the singularity
of $\partial_j k$ at the origin.) Consequently, if 
\begin{equation}\label{T-X14}
v^j_{**}(x')= {\rm sup}\,\{|v^j(z',t)|\,|\,|x'-z'|<\kappa t\},
\quad x'\in\bbR^{n-1},\,\,\,j=1,3,
\end{equation}
we have 
\begin{equation}\label{T-X15}
\|v_{**}\|_{L^2(\bbR^{n-1};d^{n-1}x')}\leq 
\|v^1_{**}\|_{L^2(\bbR^{n-1};d^{n-1}x')}+C\|v^3_{**}\|_{L^2(\bbR^{n-1};d^{n-1}x')}.
\end{equation}
As a consequence, it is enough to prove that 
\begin{equation}\label{T-X16}
\|v^j_{**}\|_{L^2(\bbR^{n-1};d^{n-1}x')}\leq C\|g\|_{L^2(\bbR^{n-1};d^{n-1}x')},
\end{equation}
for $j=1,3$. We shall do so by relying on Theorem \ref{t2.6X}
(considered with $n$ replaced by $n-1$). When $j=1$, we apply this theorem with 
\begin{eqnarray}\label{T-X17}
\begin{array}{l}
m= n,\quad\ell= 1,\quad a= (a_1,a_2)\in\bbR\times\bbR^{n-1},\quad b\in\bbR,\quad
\\[4pt]
F(a,b)= a_1(\psi\,\partial_jk)(a_2,b),\quad
B= \varphi,\quad
A= (\eta(\cdot,\varphi(\cdot)),\cdot).
\end{array}
\end{eqnarray}
When $j=3$, Theorem \ref{t2.6X} is used with (again $n-1$ in place of $n$) and 
\begin{eqnarray}\label{T-X18}
\begin{array}{l}
m= n,\quad \ell= 2,\quad b= (b_1,b_2)\in\bbR\times\bbR,
\quad a\in\bbR^{n-1},\quad
\\[4pt]
F(a,b)= \zeta(b_1-b_2)b_2(\psi\,\partial_jk)(a,b_1),\quad
B_1= \varphi,\quad B_2= 0,\quad
A= I_{\bbR^{n-1}}, 
\end{array}
\end{eqnarray}
where $\zeta\in C^\infty_0(\bbR)$ is an even function with the property 
that $\zeta\equiv 1$ on $[-M,M]$ (where $M$ is the Lipschitz constant of
$\varphi$). In each case, the hypotheses on $F$ made in the statement of 
Theorem~\ref{t2.6X} are verified, and part $(1)$ in Theorem \ref{t2.6X} 
yields the corresponding version of \eqref{T-X16}. This finishes the 
proof of Theorem \ref{T-CZ2}.
\end{proof}

After these preparations, we are finally ready to present the following proof:

\begin{proof}[Proof of Theorem \ref{K-CPT}.] 
We work under the assumption that $\Omega$ is a $C^{1,r}$-domain, for 
some $r>1/2$. In particular, $\nu\in [C^r(\dOm)]^n$. 
Thanks to Lemma~\ref{L-KbX} it suffices to show that \eqref{Ks-4B} holds 
for $z=0$. To this end, we write 
\begin{equation}\label{Ks-5}
K^{\#}_0=K_0+(K^{\#}_0-K_0)
\end{equation}
and observe that the integral kernel of the operator $R= K^{\#}_0-K_0$ is given by 
\begin{eqnarray}\label{Ks-6}
(\nu(x)-\nu(y))\cdot\nabla E_n(0;x-y),\quad x,y\in\dOm.
\end{eqnarray}
Let $\eta_\alpha\in [C^\infty(\bbR^n)]^n$, $\alpha\in\bbN$, be a sequence 
of vector-valued functions with the property that 
\begin{eqnarray}\label{Ks-7}
\eta_\alpha|_{\dOm}\to \nu\, \text{ in }\, [C^r(\dOm)]^n
\, \text{ as }\, \alpha\to\infty, 
\end{eqnarray}
and denote by $R_\alpha$ the integral operator with kernel 
\begin{eqnarray}\label{Ks-8}
(\eta_\alpha(x)-\eta_\alpha(y))\cdot \nabla E_n(0;x-y),\quad x,y\in\dOm.
\end{eqnarray}
From \eqref{goal-2} we know that
\begin{equation}\label{goal-2B}
\gamma_D\nabla{\mathcal S}_0
\in \cB\big(H^{1/2}(\partial\Omega),H^{1/2}(\partial\Omega)^n\big),
\end{equation}
which implies that for each $j\in\{1,...,n\}$, the principal-value 
boundary integral operator with kernel $\partial_j E_n(0;x-y)$ maps 
$H^{1/2}(\partial\Omega)$ boundedly into itself. From this, 
\eqref{Ks-6}, \eqref{Ks-7}, and Lemma~\ref{lA.6} we may then conclude that 
\begin{eqnarray}\label{Ks-9}
R_\alpha\to R\,\text{ in }\, \cB\bigl(H^{1/2}(\dOm)\bigr)
\,\text{ as }\, \alpha\to\infty.
\end{eqnarray}
Also, from Theorem \ref{T-CZ2} we have 
\begin{eqnarray}\label{Ks-10}
R_\alpha\in\cB\bigl(L^2(\dOm;d^{n-1}\omega),H^1(\dOm)\bigr)
\hookrightarrow\cB_\infty\bigl(H^{1/2}(\dOm)\bigr)
\, \text{ for every }\, \alpha\in\bbN.
\end{eqnarray}
From \eqref{Ks-9} and \eqref{Ks-10} we may then conclude that 
\begin{eqnarray}\label{Ks-11}
R\in\cB_\infty\bigl(H^{1/2}(\dOm)\bigr),
\end{eqnarray}
hence, ultimately, 
\begin{eqnarray}\label{Ks-12}
K^{\#}_0\in\cB_\infty\bigl(H^{1/2}(\dOm)\bigr),
\end{eqnarray}
by \eqref{Ks-5}, \eqref{Ks-11} and \eqref{Ks-4}.
\end{proof}

\noindent {\bf Acknowledgments.}
We wish to thank Gerd Grubb for questioning an inaccurate claim in an 
earlier version of the paper and Maxim Zinchenko for helpful discussions 
on this topic. 


\end{document}